\documentclass[a4paper,twoside]{article}
\usepackage[utf8]{inputenc}
\usepackage{enumitem}
\usepackage[T1]{fontenc}
\usepackage{amsmath}
\usepackage{amsfonts}
\usepackage{amssymb}
\usepackage[left=4cm,right=4cm,top=4cm,bottom=4cm]{geometry}
\usepackage{amsthm}
\usepackage{graphicx}
\usepackage{caption}
\usepackage{subcaption}
\usepackage{mathtools}
\usepackage{tikz}
\usepackage{import}
\usepackage{fancyhdr}
\usepackage[hidelinks]{hyperref}
\usepackage{float}

\usetikzlibrary{matrix}

\hypersetup{
  colorlinks   = true, 
  urlcolor     = black,
  linkcolor    = red, 
  citecolor   = blue 
}

\graphicspath{{drawings/}}

\newcommand{\Q}{\ensuremath{\mathbb{Q}}}
\newcommand{\Z}{\ensuremath{\mathbb{Z}}}
\newcommand{\N}{\ensuremath{\mathbb{N}}}

\newcommand{\C}{\ensuremath{\mathbb{C}}}

\DeclareRobustCommand{\qbinom}{\genfrac[]{0pt}{}}

\theoremstyle{plain}
\newtheorem{thm}{Theorem}
\newtheorem{prop}[thm]{Proposition}
\newtheorem{lemme}[thm]{Lemma}
\newtheorem{cor}[thm]{Corollary}
\newtheorem{res}{Result}

\theoremstyle{definition}
\newtheorem{defn}[thm]{Definition}
				
\theoremstyle{remark}
\newtheorem{rem}[thm]{Remark}

\fancypagestyle{mypagestyle}{
  \fancyhf{}
  \fancyhead[OC]{\textsc{s.willetts}}
  \fancyhead[EC]{\textsc{a unification of the ado and colored jones polynomials of a knot}}
  \fancyfoot[C]{\thepage}
  
}
\pagestyle{mypagestyle}

\newcommand{\address}{{
  \bigskip
  \footnotesize

  \textsc{Institut de Mathématiques de Toulouse, UMR5219,}\par\nopagebreak
  \textsc{UPS, F-31062 Toulouse Cedex 9, France}\par\nopagebreak
  \textit{E-mail address}: \texttt{sonny.willetts@math.univ-toulouse.fr}

}}

\begin{document}
\title{\textbf{\textsc{a unification of the ado and colored jones polynomials of a knot}}}
\author{\textsc{Sonny Willetts}}
\date{}
\maketitle

\begin{abstract}
In this paper we prove that the family of colored Jones polynomials of a knot in $S^3$ determines the family of ADO polynomials of this knot. More precisely, we construct a two variables knot invariant unifying both the ADO and the colored Jones polynomials. On one hand, the first variable $q$ can be evaluated at $2r$ roots of unity with $r \in \N^*$ and we obtain the ADO polynomial over the Alexander polynomial. On the other hand, the second variable $A$ evaluated at $A=q^n$ gives the colored Jones polynomials. From this, we exhibit a map sending, for any knot, the family of colored Jones polynomials to the family of ADO polynomials. As a direct application of this fact, we will prove that every ADO polynomial is holonomic and is annihilated by the same polynomial as of the colored Jones function. The construction of the unified invariant will use completions of rings and algebra. We will also show how to recover our invariant from Habiro's quantum $\mathfrak{sl}_2$ completion studied in \cite{habiro2007integral}.
\end{abstract}

\section{Introduction}
\paragraph{Main results:}~\medskip

In \cite{akutsu1992invariants}, Akutsu, Deguchi and Ohtsuki gave a generalisation of the Alexander polynomial, building a colored link invariant at each root of unity. These ADO invariants, also known as colored Alexander's polynomials, can be obtained as the action on 1-1 tangles of the usual ribbon functor on some representation category of a version of quantum $\mathfrak{sl}_2$ at roots of unity (see \cite{costantino2014quantum}, \cite{geer2009modified}). On the other hand, we have the colored Jones polynomials, a family of invariants obtained by taking the usual ribbon functor of quantum $\mathfrak{sl}_2$ on finite dimensional representations. It is known (\cite{costantino2015relations}) that given the ADO polynomials of a knot, one can recover the colored Jones polynomials of this knot. One of the results of the present paper is to show the other way around: given the Jones polynomials of a knot, one can recover the ADO polynomials of this knot.\\
We denote $ADO_r(A,\mathcal{K})$ the ADO invariant at $2r$ root of unity seen as a polynomial in the variable $A$, $J_n(q, \mathcal{K})$ the $n$-th colored Jones polynomial in the variable $q$ and $A_{\mathcal{K}}(A)$ the Alexander polynomial in the variable $A$.

\begin{res}
There is a well defined map such that for any knot $\mathcal{K}$ in $S^3$,
\[ \{ J_n(q, \mathcal{K}) \}_{n \in \N^*} \mapsto \{ ADO_r(A,\mathcal{K}) \}_{r \in \N^*},\]
(Detailed version: Theorem \ref{Thm_ADO_Jones}).
\end{res}

The above result is a consequence of the construction of a unified knot invariant containing both the ADO polynomials and the colored Jones polynomials of the knot. Briefly put, we get it by looking at the action of the universal invariant (see \cite{lawrence1988universal}  \cite{lawrence1990universal}, and also \cite{ohtsuki2002quantum}) on some Verma module with coefficients in some ring completion. For the sake of simplicity let's state the result for $0$ framed knots.

\begin{res}
In some ring completion of $\Z[q^{\pm1},A^{\pm1}]$ equipped with suitable evaluation maps, for any $0$-framed knot $\mathcal{K}$ in $S^3$, there exists a well defined knot invariant $F_{\infty}(q,A,\mathcal{K})$ such that:
\[ F_{\infty}(\zeta_{2r},A,\mathcal{K})=\frac{ADO_r(A, \mathcal{K})}{A_{\mathcal{K}} (A^{2r})}, \ \ \  F_{\infty}(q,q^n,\mathcal{K})= J_n(q^2, \mathcal{K}) \]
(Detailed version: Theorem \ref{ADO_factor_thm} and Corollary \ref{cor_Jones}).
\end{res}

A visual representation of the relationship between all these invariants is given at Figure~\ref{intro_schema}.

Lets denote $J_{\bullet}(q^2, \mathcal{K}) = \{ J_n(q, \mathcal{K}) \}_{n \in \N^*}$ and call it colored Jones function of $\mathcal{K}$.\\
The holonomy of the unified invariant and of the ADO polynomials will follow as a simple application of the two previous results and of the $q$-holonomy of the colored Jones function as shown in \cite{garoufalidis2005colored}. Mainly, there are two operators $Q$ and $E$ on the set of discrete function over $\Z[q^{\pm1}]$ that forms a quantum plane and for any knot $\mathcal{K}$, there is a two variable polynomial $\alpha_{\mathcal{K}}$ such that $\alpha_{\mathcal{K}}(Q,E)J_{\bullet}(q^2, \mathcal{K})=0$.
We say that the colored Jones function is $q$-holonomic.\\
This paper gives a proof that the same polynomial $\alpha_{\mathcal{K}}$, in some similar operators as $Q$ and $E$, annihilates the unified invariant $F_{\infty}(q,A,\mathcal{K})$ and, at roots of unity, annihilates $ADO_r(A, \mathcal{K})$.

\begin{res}
For any $0$-framed knot $\mathcal{K}$ and any $r\in \N^*$:
\begin{itemize}
\item The unified invariant $F_{\infty}(q,A,\mathcal{K})$ is $q$-holonomic.
\item The ADO invariant $ADO_r(A, \mathcal{K})$ is $\zeta_{2r}$-holonomic.
\end{itemize}
Moreover they are annihilated by the same polynomial as of the colored Jones function.\\
(Detailed version: Theorems \ref{thm_unified_holonomy} and \ref{thm_ado_holonomy}).
\end{res}

\begin{rem}
Keep in mind that these results cover only the case of a knot $\mathcal{K}$ in $S^3$.
\end{rem}

\begin{figure}[h!]
\begin{subfigure}[b]{1\textwidth}
 \centering
  \def\svgwidth{190mm}
    \resizebox{120mm}{!}{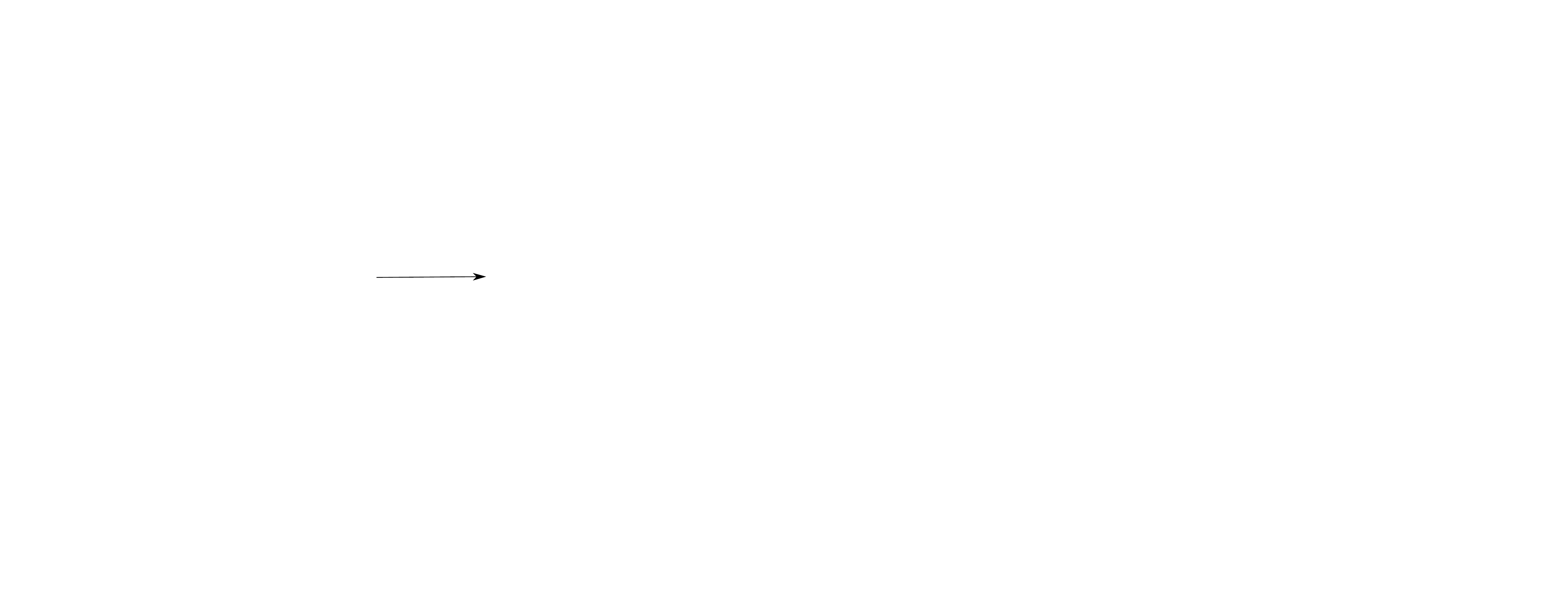}
 \end{subfigure}%
   \caption{Visual representation of the unified knot invariant.}
   \label{intro_schema}
 \end{figure}

\paragraph*{Summary of the paper:}~
\medskip

A way to build a unified element for ADO invariants is to do it by hand. First, one can explicit a formula for the ADO invariant at a $2r$ root of unity by decomposing it as a sum of what we will call state diagrams. This explicit formula will allow us to see what are the obstructions to unify the invariants: first, it will depends on the root of unity $\zeta_{2r}$, secondly the range of the sums coming from the action of the truncated $R$ matrix depends on the order $2r$ of the root of unity. The first obstruction is easy to overcome since taking a formal variable $q$ instead of each occurrence of $\zeta_{2r}$ will do the trick. But for the second one, one could ask that the ranges go to infinity, and this will bring some convergence issues. A way to make these sums convergent is to use a completion of the ring $\Z[q^{\pm 1}, A^{\pm 1}]$ denoted $\hat{R}^{\hat{I}}$, this will allows us to define a good candidate for the unification.\\
But then, we will have to check that this element contains the ADO invariants. We will show that at each root of unity of order $2r$ with $r \in \N^*$, one can define an evaluation map that evaluates $q$ in $\zeta_{2r}$, and that the result can be factorized into a product of an invertible element of the complete ring and the ADO invariant.\\
So we'll get an element containing ADO invariants, but the way we built this element depends on the chosen diagram of the knot. A way to prove that this element is really a knot invariant itself is to recover it with a more advance machinery: the universal invariant of a knot. The universal invariant of a knot was introduced in \cite{lawrence1988universal} and \cite{lawrence1990universal}, the construction can also be found in \cite{ohtsuki2002quantum}. It is a knot invariant and an element of the $h$-adic version of quantum $\mathfrak{sl}_2$, we will use this fact to construct an integral subalgebra in which the universal invariant of a $0$-framed knot will lie. The integrality of the subalgebra will allow us to build a Verma module of it whose coefficients will lie in $\hat{R}^{\hat{I}}$, and on which the scalar action of the universal invariant gives our unified element. A corollary will be that the unified element is a knot invariant.\\
Completions were studied by Habiro in \cite{habiro2007integral}. For the sole purpose of the factorization at roots of unity, we had to use a different completion than the ones mentioned in \cite{habiro2007integral}. But as we will see, we can also recover our unified invariant from his algebraic setup.\\
Once we have this connection between quantum $\mathfrak{sl}_2$ and this unified invariant, we can henceforth relate it also to the colored Jones polynomials, this will allow us to use the Melvin Morton Rozansky conjecture proved by Bar-Natan and Garoufalidis in \cite{bar1996melvin} in order to get some information on the factorization at roots of unity: briefly put, the unified invariant factorize at root of unity as ADO polynomial over the Alexander polynomial.\\
Now we have a unified invariant for both the ADO polynomials and the colored Jones polynomials, the maps recovering them are also well understood. This will allow us to prove that, given the colored Jones polynomials, one may recover the ADO polynomials.\\
From the fact that the colored Jones polynomials recovers the unified invariant and from the factorisation at roots of unity, we will prove that the unified invariant and ADO polynomials follow the same holonomic rule as of the colored Jones function (see \cite{garoufalidis2005colored}). In the same time this paper was made, Brown, Dimofte and Geer got a more general result covering the case of links in their Theorem 4.3 \cite{brown_dimofte_geer2020holonomic}.\\
We will also see that the unified invariant is an integral version of the $h$-adic loop expansion of the colored Jones function and remark that even if it is not clear in general if it's a power serie, it has similar properties as the power serie invariant conjectured by Gukov and Manolescu in \cite{gukov2019two} (Conjectures 1.5, 1.6).\\
Finally we will give some computations of the unified invariant and its factorization at roots of unity, showing how the inverse of the Alexander polynomial appears.
\paragraph*{Acknowledgments:} I would like to thank my Ph.D. advisors François Costantino and Bertrand Patureau-Mirand for their help and guidance.\\ I would also like to thank Christian Blanchet and Anna Beliakova for their useful remarks.

\section{The ADO invariant for knots} \label{ADOsection}
In this article, any knots and links are in $S^3$ and supposed oriented and framed.

\subsection{Definition of the ADO invariants from quantum algebra}
We will expose in this section how to obtain ADO invariants for links \cite{akutsu1992invariants}, also called colored Alexander's polynomials, from a non semi-simple category of module over an unrolled version of $U_q(\mathfrak{sl}_2)$. A more detailed and thorough construction can be found in \cite{costantino2014quantum} \cite{geer2009modified}.

\medskip
For any variable $q$, we denote $ \{ n \} = q^{n}-q^{-n}$, $ [n]= \frac{ \{ n \} }{\{ 1 \}}$, $\{ n \} ! = \prod_{i=1}^n \{ i \}$, $[n]! = \prod_{i=1}^n [i]$, $\qbinom{n}{k}_q = \frac{[n]!}{[n-k]![k]!}$.

\medskip
In order to define ADO invariants for knots, and for the sake of simplicity, in this section $q$ will be an even root of unity.
\begin{defn}
Let $q=e^{\frac{i\pi}{r}}=\zeta_{2r}$ root of unity.\\
We work with an "unrolled" version of $U_{\zeta_{2r}}(\mathfrak{sl}_2)$ denoted by $U_{\zeta_{2r}}^H (\mathfrak{sl}_2)$ and defined as follow:\\
Generators: \[ E, F, K, K^{-1}, H \]
Relations:
$$
\begin{array}{llll}
KK^{-1}=K^{-1} K=1 & KE=\zeta_{2r}^2 EK & KF =\zeta_{2r}^{-2} FK & [E,F]= \frac{K-K^{-1}}{\zeta_{2r}-\zeta_{2r}^{-1}}\\
KH=HK & [H,E]=2E &  [H,F]=-2F &  E^r=F^r=0
\end{array}
$$
\end{defn}
This algebra has a Hopf algebra structure:
$$
\begin{array}{lll}
\Delta(E)=1 \otimes E + E \otimes K & \epsilon(E)=0 &S(E)=-EK^{-1} \\
\Delta(F)=K^{-1} \otimes F + F \otimes 1 & \epsilon(F)=0 &S(E)=-KF \\
\Delta(H)=1 \otimes H + H \otimes 1 & \epsilon(H)=0 &S(H)=-H \\
\Delta(K)=K \otimes K  & \epsilon(K)=1 &S(K)=K^{-1} \\
\Delta(K^{-1})=K^{-1} \otimes K^{-1} & \epsilon(K^{-1})=1 &S(K^{-1})=K
\end{array}
$$

Now, we can look at some category of finite dimensional representation of this algebra and endow it with a ribbon structure.

\begin{defn}
Let $Rep$ be the category of finite dimensional $U_{\zeta_{2r}}^H (\mathfrak{sl}_2)$-modules such that:
\begin{enumerate}
\item The action of $H$ is diagonalizable.
\item The action of $K$ and of $\zeta_{2r}^H$ are the same.
\end{enumerate}
\end{defn}

\begin{prop}
The irreducible representations of $U_{\zeta_{2r}}^H(\mathfrak{sl}_2)$ are $V_{\alpha}$ for $\alpha \in (\C - \Z) \cup r\Z$ and $S_i$ for $i \in \{0, \dots, r-2 \}$ where $S_i$ is the highest weight module of  weight $i$ and dimension $i+1$, $V_{\alpha}$ is the highest weight module of weight $\alpha+r-1$ and dimension $r$.
\end{prop}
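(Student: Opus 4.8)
The plan is to run the standard highest-weight analysis for a quantum $\mathfrak{sl}_2$ at a root of unity, adapted to the unrolled algebra where $H$ supplies an honest weight grading. First I would show that every finite-dimensional simple module $V$ is a highest weight module. Decomposing $V$ into $H$-eigenspaces (weight spaces) is possible since $H$ acts diagonalizably; the relation $[H,E]=2E$ shows $E$ raises the weight by $2$, and as $E^r=0$ (equivalently, by finite dimensionality) there is a nonzero vector $v_0$ with $Ev_0=0$. Writing $Hv_0=\lambda v_0$ and using the constraint $K=\zeta_{2r}^H$, so that $Kv_0=\zeta_{2r}^{\lambda}v_0$, I would then argue via the commutation relations (a PBW-type argument) that the submodule generated by $v_0$ is spanned by the vectors $v_i:=F^iv_0$; irreducibility forces $V=\operatorname{span}\{v_i\}_{i\ge 0}$, with $Hv_i=(\lambda-2i)v_i$ and $Kv_i=\zeta_{2r}^{\lambda-2i}v_i$.

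The second step is the key computation: by induction on $i$, using the identity $[E,F^i]=[i]\,F^{i-1}\tfrac{\zeta_{2r}^{-(i-1)}K-\zeta_{2r}^{i-1}K^{-1}}{\zeta_{2r}-\zeta_{2r}^{-1}}$, one obtains
\[ Ev_i=[i]\,[\lambda-i+1]\,v_{i-1}. \]
Together with $F^r=0$, which forces $v_r=0$, this shows $\dim V\le r$ and that $\{v_0,\dots,v_{r-1}\}$ spans $V$. The module structure is then read off from where the raising operator annihilates a basis vector: $\operatorname{span}\{v_k,v_{k+1},\dots\}$ is a submodule exactly when $Ev_k=0$, i.e. when $[k]\,[\lambda-k+1]=0$.

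The heart of the argument, and the step I expect to be most delicate, is the resulting case analysis at $\zeta_{2r}=e^{i\pi/r}$. Here I would use that $[m]=0$ if and only if $\zeta_{2r}^{2m}=1$, i.e. $m\in r\Z$. For $1\le k\le r-1$ one has $[k]\ne 0$, so a singular vector $v_k$ occurs iff $[\lambda-k+1]=0$, i.e. iff $\lambda\in\Z$ and $\lambda\equiv k-1 \pmod r$. Splitting into cases: if $\lambda\notin\Z$ there is no singular vector in range and $V$ is simple of dimension $r$; if $\lambda\in\Z$ with $\lambda\equiv -1\pmod r$ the only candidate index satisfies $k\equiv 0\pmod r$, again outside $\{1,\dots,r-1\}$, so $V$ is simple of dimension $r$; and if $\lambda\in\Z$ with $\lambda\equiv i\pmod r$ for some $i\in\{0,\dots,r-2\}$ then $v_{i+1}$ is singular and the simple quotient has dimension $i+1$. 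Writing the highest weight as $\lambda=\alpha+r-1$, the first two cases correspond precisely to $\alpha\in(\C-\Z)\cup r\Z$ and give the dimension-$r$ modules $V_{\alpha}$, while the atypical case gives, for the representative weight $\lambda=i$, the modules $S_i$ of dimension $i+1$ with $i\in\{0,\dots,r-2\}$.

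Finally I would close the loop by checking the converse: for each listed highest weight the explicit formula for $Ev_i$ shows there is no proper singular vector in the relevant range, so each $V_{\alpha}$ and each $S_i$ is genuinely simple, and the construction above shows every finite-dimensional simple module arises in this way. The only real subtlety beyond bookkeeping is making the reduction to highest weight modules rigorous (combining diagonalizability of $H$, the weight shift by $E$ and $F$, and nilpotency of $E$) and tracking the root-of-unity vanishing conditions $[m]=0$ carefully enough to separate the typical dimension-$r$ modules from the atypical small ones.
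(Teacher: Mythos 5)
The paper never proves this proposition: it is quoted as background, with the reader referred to \cite{costantino2014quantum} and \cite{geer2009modified}, so there is no internal argument to compare yours against. Taken on its own terms, your proof follows the standard route and its core computations are correct: the reduction of any simple object to a highest weight module (diagonalizability of $H$, the shift $[H,E]=2E$, and $E^r=0$), the identity $[E,F^i]=[i]\,F^{i-1}\frac{\zeta_{2r}^{-(i-1)}K-\zeta_{2r}^{i-1}K^{-1}}{\zeta_{2r}-\zeta_{2r}^{-1}}$ yielding $Ev_i=[i][\lambda-i+1]v_{i-1}$, the criterion $[m]=0\iff m\in r\Z$ at $\zeta_{2r}=e^{i\pi/r}$, and the resulting dimension count. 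One point you should make explicit: this is a classification within the category $Rep$ (modules with $H$ diagonalizable on which $K$ acts as $\zeta_{2r}^{H}$); both hypotheses enter your argument, and without the constraint $K=\zeta_{2r}^{H}$ the statement would be false, since the highest $K$-eigenvalue would then be a parameter independent of $\lambda$.

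The one genuine weak spot is your atypical case. For $\lambda\in\Z$ with $\lambda\equiv i\pmod{r}$, $i\in\{0,\dots,r-2\}$, your own analysis produces a simple highest weight module of dimension $i+1$ for \emph{every} such $\lambda$, not only for $\lambda=i$; and in the unrolled setting these are pairwise non-isomorphic, because $H$ acts by the actual scalars $\lambda-2j$ rather than only through $K$. (In the literature these are the twists $S_i\otimes\C^{H}_{kr}$ of $S_i$, with $\lambda=i+kr$, by the one-dimensional modules on which $H$ acts by $kr$; they do lie in $Rep$.) Your phrase ``for the representative weight $\lambda=i$'' silently discards them, so what your argument actually establishes is the classification up to tensoring by these one-dimensional modules. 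To be fair, the proposition as printed carries exactly the same imprecision --- the cited references state the full list including the twists --- but a proof that claims to enumerate \emph{all} irreducibles must either list these extra modules or state the convention under which they are suppressed; as written, the final deduction ``hence the simples are exactly the $V_{\alpha}$ and the $S_i$'' does not follow from your case analysis.
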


\begin{defn}
In $V_{\alpha}$, we say that $x$ has weight level $n$ if $Kx=\zeta_{2r}^{\alpha +r-1 -2n} x$.
\end{defn}

We can endow $Rep$ with a ribbon structure by giving the action of a $R$-matrix and a ribbon element.

We set $F^{(n)}=\frac{\{1\}^n F^n}{[n]!}$ for $0\leq n < r-1$.
\begin{prop}
$R= \zeta_{2r}^{\frac{H \otimes H}{2}}\underset{n=0}{\overset{r-1}{\sum}} \zeta_{2r}^{\frac{n(n-1)}{2}} E^n \otimes F^{(n)}$  is an $R$-matrix whose action is well defined on $Rep$ and it's inverse is $R^{-1} = (\underset{n=0}{\overset{r-1}{\sum}} (-1)^n \zeta_{2r}^{-\frac{n(n-1)}{2}} E^n \otimes F^{(n)}) \zeta_{2r}^{-\frac{H \otimes H}{2}}$.
\end{prop}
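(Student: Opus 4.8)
The plan is to factor $R=D\,\check{R}$, where $D=\zeta_{2r}^{\frac{H\otimes H}{2}}$ and $\check{R}=\sum_{n=0}^{r-1}\zeta_{2r}^{\frac{n(n-1)}{2}}\,E^{n}\otimes F^{(n)}$, and to treat well-definedness, the quasi-triangularity axioms, and invertibility in turn. For the action on $Rep$: since $H$ acts diagonalizably on every object (condition (1)) and $\zeta_{2r}^{x}:=e^{i\pi x/r}$ is defined for all $x\in\C$, the operator $D$ acts on a tensor product of weight vectors of weights $\lambda,\mu$ by the scalar $\zeta_{2r}^{\lambda\mu/2}$ --- this is precisely where one needs the unrolled generator $H$ and not merely $K$. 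The sum defining $\check{R}$ is finite, and each $F^{(n)}=\{1\}^{n}F^{n}/[n]!$ with $0\le n\le r-1$ is well defined because $[n]\ne 0$ for $1\le n\le r-1$ (as $\zeta_{2r}^{2}$ is a primitive $r$-th root of unity), so $[n]!$ is invertible; the relations $E^{r}=F^{r}=0$ make the truncation at $r-1$ natural. Hence $R$ is a well-defined operator on $V\otimes W$ for all $V,W\in Rep$.

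Next I would establish the intertwining relation $\Delta^{\mathrm{op}}(x)R=R\Delta(x)$. By the usual multiplicativity argument (if it holds for $x$ and for $y$ it holds for $xy$) it suffices to check the generators. For $K$ and $H$ the two coproducts coincide, and one checks directly that $R$ commutes with $K\otimes K$ and with $H\otimes 1+1\otimes H$: indeed $(K\otimes K)(E^{n}\otimes F^{(n)})(K^{-1}\otimes K^{-1})=\zeta_{2r}^{2n}\zeta_{2r}^{-2n}E^{n}\otimes F^{(n)}=E^{n}\otimes F^{(n)}$ and $[H\otimes 1+1\otimes H,\,E^{n}\otimes F^{(n)}]=(2n-2n)\,E^{n}\otimes F^{(n)}=0$, while $D$ is diagonal in any weight basis. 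The cases $x=E$ and $x=F$ are the genuine computation, and I expect this to be the main obstacle. The crucial auxiliary fact is the way $D$ conjugates the elementary operators: using condition (2) that $K$ acts as $\zeta_{2r}^{H}$, one finds $D(E\otimes 1)=(E\otimes 1)(1\otimes K)\,D$ and the analogous relations for $1\otimes E$, $F\otimes 1$, $1\otimes F$, each weight shift by $\pm2$ producing a correction $K^{\pm1}$ in the opposite leg. Substituting these into $\Delta^{\mathrm{op}}(E)R=R\Delta(E)$ and matching coefficients of $E^{n}\otimes F^{(n)}$ reduces the identity to the $q$-commutator relation for $[E,F^{(n)}]$ together with $[E,F]=\frac{K-K^{-1}}{\zeta_{2r}-\zeta_{2r}^{-1}}$; the $F$-case is symmetric. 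Carrying the non-central factor $D$ correctly through this bookkeeping is the delicate point.

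I would then verify the remaining quasi-triangularity axioms $(\Delta\otimes\mathrm{id})(R)=R_{13}R_{23}$ and $(\mathrm{id}\otimes\Delta)(R)=R_{13}R_{12}$. Expanding $\Delta(E^{n})$ and $\Delta(F^{(n)})$ by the quantum binomial theorem (each a sum over $q$-binomial coefficients of terms $E^{k}K^{n-k}\otimes E^{n-k}$ and the analogue for $F^{(n)}$), writing $D$ in the threefold tensor product, and moving the $D$-factors into position with the same conjugation relations as above, both identities reduce to a $q$-binomial summation identity. These two axioms together yield the Yang--Baxter equation, so that $R$ defines a braiding; alternatively, all the quasi-triangularity axioms may be obtained by specializing the quasi-triangular structure of the $h$-adic $U_{h}(\mathfrak{sl}_2)$, the infinite sum truncating at $n\le r-1$ precisely because $E^{r}=F^{r}=0$ and $[n]!$ stays invertible there (see also \cite{costantino2014quantum}, \cite{geer2009modified}).

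Finally, for invertibility I would write the candidate as $R^{-1}=\bar{R}\,D^{-1}$ with $\bar{R}=\sum_{m=0}^{r-1}(-1)^{m}\zeta_{2r}^{-\frac{m(m-1)}{2}}E^{m}\otimes F^{(m)}$, so that $R^{-1}R=\bar{R}\check{R}$ and $RR^{-1}=D(\check{R}\bar{R})D^{-1}$; it therefore suffices to show $\check{R}\bar{R}=\bar{R}\check{R}=1\otimes 1$. Using $E^{n}E^{m}=E^{n+m}$ and $F^{(n)}F^{(m)}=\qbinom{n+m}{n}_{q}F^{(n+m)}$, the coefficient of $E^{k}\otimes F^{(k)}$ in $\check{R}\bar{R}$ is $\sum_{n=0}^{k}(-1)^{k-n}\zeta_{2r}^{\frac{n(n-1)}{2}-\frac{(k-n)(k-n-1)}{2}}\qbinom{k}{n}_{q}$. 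The $k=0$ term equals $1\otimes 1$, for $1\le k\le r-1$ the alternating sum vanishes by the standard $q$-binomial identity, and for $k\ge r$ the term dies by $E^{r}=0$; the computation for $\bar{R}\check{R}$ is identical. This proves $R^{-1}$ is a two-sided inverse and completes the verification that $R$ is an $R$-matrix on $Rep$.
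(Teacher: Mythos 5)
The paper never actually proves this proposition: it is stated as background for the construction of the unrolled quantum group, with the details deferred to \cite{costantino2014quantum} and \cite{geer2009modified}. So your direct verification cannot be compared to an in-paper argument and has to be judged on its own merits. The parts you carry out in full are correct. Well-definedness is handled properly (diagonalizability of $H$ makes $\zeta_{2r}^{\frac{H\otimes H}{2}}$ act by the scalar $\zeta_{2r}^{\lambda\mu/2}$, and $[n]!$ is invertible for $n\le r-1$ since $\zeta_{2r}^2$ is a primitive $r$-th root of unity). Your conjugation relation $D(E\otimes 1)=(E\otimes 1)(1\otimes K)D$ checks out on weight vectors. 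Most importantly, the invertibility argument is complete and valid: the reduction to $\check{R}\bar{R}=\bar{R}\check{R}=1\otimes 1$ is legitimate, the product formulas $E^nE^m=E^{n+m}$ and $F^{(n)}F^{(m)}=\qbinom{n+m}{n}_qF^{(n+m)}$ are right, the coefficient of $E^k\otimes F^{(k)}$ is the alternating sum you state, which equals $(-1)^kq^{-\frac{k(k-1)}{2}}\prod_{j=0}^{k-1}(1-q^{2j})$ by the Gauss binomial identity and hence vanishes for $1\le k\le r-1$, and terms with $n+m\ge r$ are killed by $E^r=0$ alone, exactly as you say.

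The genuine gap is that the defining properties of an $R$-matrix --- the intertwining identity $\Delta^{\mathrm{op}}(x)R=R\Delta(x)$ for $x=E,F$ and the two coproduct (hexagon) axioms --- are only \emph{reduced} in your text, never verified: you defer them to ``the $q$-commutator relation for $[E,F^{(n)}]$'' and ``a $q$-binomial summation identity'' and explicitly flag them as the main obstacle. Since these axioms are the substance of the claim ``$R$ is an $R$-matrix,'' the proposal as written is a correct plan rather than a proof. The fallback you offer --- specializing the quasi-triangular structure of $U_h(\mathfrak{sl}_2)$ --- also needs an actual argument: the $h$-adic coefficients $\{1\}^{2n}/\{n\}!$ have denominators vanishing at $q=\zeta_{2r}$ for $n\ge r$, so one must explain why truncating at $n\le r-1$ via $E^r=F^r=0$ is compatible with the axioms in the quotient, not merely assert it. There is also a small slip in the sketch: with $\Delta(E)=1\otimes E+E\otimes K$, the expansion of $\Delta(E^n)$ produces terms of the form $E^k\otimes K^kE^{n-k}$ (the $K$'s appear in the \emph{second} leg), not $E^kK^{n-k}\otimes E^{n-k}$ as you wrote; the version with powers of $K^{-1}$ in the first leg occurs for $\Delta(F^{(n)})$. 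None of this is a wrong turn --- the skeleton is the standard one and would go through if fleshed out --- but the braiding axioms remain unproven.
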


\begin{prop}
$K^{1-r}$ is a pivotal element for $U_{\zeta_{2r}} ^H (\mathfrak{sl}_2)$ compatible with the braiding.
\end{prop}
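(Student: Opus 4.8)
The plan is to verify the two defining properties of a pivotal element in turn: first that $g := K^{1-r}$ is a group-like element implementing the square of the antipode by conjugation, and then that $g$ is compatible with the quasi-triangular structure, in the sense that it yields a ribbon element. The group-like part is immediate: from $\Delta(K)=K\otimes K$, $\epsilon(K)=1$ and $S(K)=K^{-1}$ the same relations pass to every power of $K$, so $\Delta(g)=g\otimes g$, $\epsilon(g)=1$ and $S(g)=g^{-1}=K^{r-1}$, and in particular $g$ is invertible.

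Next I would establish the pivotal identity $S^2(x)=gxg^{-1}$ for all $x$. Since both $S^2$ and conjugation by $g$ are algebra endomorphisms, it suffices to check this on the generators $E,F,K,H$. Using $S(E)=-EK^{-1}$, $S(F)=-KF$ and the anti-multiplicativity of $S$ one gets $S^2(E)=KEK^{-1}=\zeta_{2r}^{2}E$ and $S^2(F)=KFK^{-1}=\zeta_{2r}^{-2}F$, while $S^2$ fixes $K$ and $H$. On the other side the commutation relations give $gEg^{-1}=\zeta_{2r}^{2(1-r)}E$ and $gFg^{-1}=\zeta_{2r}^{-2(1-r)}F$, and $g$ commutes with $K$ and $H$. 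The decisive point is that $\zeta_{2r}^{2r}=(e^{i\pi/r})^{2r}=1$, so $K^{-r}$ is central and $\zeta_{2r}^{2(1-r)}=\zeta_{2r}^{2}$; hence $gxg^{-1}=S^2(x)$ on every generator. This also shows that any power $K^{1+2rk}$, $k\in\Z$, would satisfy the pivotal identity alone, so the specific exponent $1-r$ must be pinned down by the compatibility condition in the next step.

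Finally, compatibility with the braiding. I would read this as the requirement that $g$ together with $R$ produce a ribbon element, i.e. that $v:=ug^{-1}$ be a ribbon element, where $u=\sum_i S(b_i)a_i$ is the Drinfeld element attached to $R=\sum_i a_i\otimes b_i$ (with the convention $S^2(x)=uxu^{-1}$). A useful preliminary observation is that centrality of $v$ is automatic: since both $u$ and $g$ implement $S^2$ by conjugation, $v x v^{-1}=u\,(g^{-1}xg)\,u^{-1}=u\,S^{-2}(x)\,u^{-1}=x$, so $v$ is central. What remains are the genuine ribbon axioms $\epsilon(v)=1$, $S(v)=v$, $v^2=uS(u)$ and, above all, the coproduct axiom $\Delta(v)=(R_{21}R)^{-1}(v\otimes v)$. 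I would compute $u$ from the given formula for $R$, carefully handling the factor $\zeta_{2r}^{H\otimes H/2}$ (which is not group-like and entangles the two tensor legs) together with the truncated part $\sum_{n=0}^{r-1}\zeta_{2r}^{n(n-1)/2}E^n\otimes F^{(n)}$.

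The main obstacle I expect is exactly this last computation: extracting $u$ from the non-group-like Gaussian factor and the nilpotent truncated sum, and then verifying the coproduct axiom, which is where the exponent $1-r$ should be forced rather than, say, $1$. Because $u$ is most naturally handled through its action, I would instead carry out the verification at the level of the category $Rep$: on each highest weight module $V_\alpha$ (resp. $S_i$) the central element $v$ acts as a scalar twist, and I would compute this scalar from the double braiding corrected by the pivot $g$, then check naturality and the tensor-product rule for the twist, which is equivalent to the coproduct axiom. Confirming that this scalar is well defined on each module and that it splits correctly on tensor products is the crux; combined with the group-like and $S^2$ properties already established, this shows that $K^{1-r}$ is a pivotal element compatible with the braiding.
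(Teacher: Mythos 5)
You should first note that the paper offers no proof of this proposition at all: it is recalled as a known fact about the unrolled quantum group, with the references given at the head of Section~\ref{ADOsection} (\cite{costantino2014quantum}, \cite{geer2009modified}) standing in for the argument. So the comparison can only be against what a complete proof would require, and by that standard your proposal has a genuine gap. The part you do prove is fine: group-likeness of $K^{1-r}$ is immediate, and the verification of $S^2(x)=K^{1-r}xK^{r-1}$ on generators is correct --- the identity $\zeta_{2r}^{2(1-r)}=\zeta_{2r}^{2}$, coming from $\zeta_{2r}^{2r}=1$, is exactly the right point, and you rightly observe that this property alone cannot single out the exponent $1-r$ (indeed every $K^{1+rk}$, $k\in\Z$, implements $S^2$, a slightly larger family than the $K^{1+2rk}$ you mention). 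But the assertion that remains --- compatibility with the braiding, which is precisely the content that distinguishes $K^{1-r}$ from $K$ --- is never proved: your last two paragraphs consist of intentions (``I would compute $u$\dots'', ``I would instead carry out the verification\dots'') and end by labelling the decisive computation ``the crux'' without performing it. An outline of where the difficulty lies is not a proof of the statement.

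There is also a structural problem with the first route you propose. In $U^H_{\zeta_{2r}}(\mathfrak{sl}_2)$ the factor $\zeta_{2r}^{H\otimes H/2}$ is not an element of the algebraic tensor square; the $R$-matrix exists only through its action on pairs of modules in $Rep$. Consequently the Drinfeld element $u=\sum_i S(b_i)a_i$ cannot be formed inside the algebra, and axioms such as $\Delta(v)=(R_{21}R)^{-1}(v\otimes v)$ cannot even be written down algebraically: the whole verification has to be done categorically, module by module, which is your fallback plan. That plan is the right one --- compute the scalar by which the twist determined by the double braiding and the pivot acts on each $V_\alpha$ and $S_i$, then check naturality, the tensor-product rule, and compatibility with duals, the last of which is what forces the exponent $1-r$ --- but it is exactly the part you leave undone. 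As it stands, you have proved that $K^{1-r}$ is a pivot in the weak sense (group-like and implementing $S^2$), not the proposition as stated.
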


We can now take the usual ribbon functor $RT$ in order to obtain a link invariant, but on the $V_{\alpha}$ it will be $0$ (since the quantum trace is $0$). Hence we need to be more subtle in order to retrieve some information.

On irreducible representations, a 1-1 tangle can be seen as a scalar:\\
If $L$ is a link obtain by closure of a 1-1 tangle $T$, we denote $RT(T) v_0 = ADO(T) v_0$ where $v_0$ is a highest weight vector. Notice that it depends on the 1-1 tangle $T$ chosen. In order to have a link invariant, we must multiply it by a "modified trace".
\bigskip

\noindent We denote $ \{ \alpha \}_{\zeta_{2r}} = \zeta_{2r}^{\alpha}-\zeta_{2r}^{\alpha}$, $\{ \alpha +k \}_{\zeta_{2r}} = \zeta_{2r}^{\alpha +k } - \zeta_{2r}^{-\alpha -k }$, $\{ \alpha;n \}_{\zeta_{2r}}= \prod_{i=0}^{n-1} \{ \alpha -i\}_{\zeta_{2r}}$.

\begin{prop}
If $L$ a link and $T$ is any 1-1 tangle whose closure is $L$ such that the open component is colored with $V_{\alpha}$, set $d(\alpha)= \frac{\{ \alpha \} }{ \{ r \alpha \} }$, then $ADO_r'(L):= d(\alpha) ADO_r(T) $ is a link invariant.
\end{prop}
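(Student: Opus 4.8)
The plan is to realise $ADO_r'(L)$ as the value of a \emph{modified trace} applied to the endomorphism $RT(T) \in \operatorname{End}(V_\alpha)$, and to deduce the invariance from the two defining properties of such a trace: cyclicity and partial-trace invariance. Along the way the scalar $d(\alpha)$ gets identified with the modified dimension of $V_\alpha$, which is what forces its precise form.

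First I would recall that for $\alpha \in \C - \Z$ the module $V_\alpha$ is simple \emph{and} projective in $Rep$, so that $\operatorname{End}(V_\alpha) = \C \cdot \operatorname{id}$ by Schur's lemma; this is exactly what allows one to write $RT(T) = ADO_r(T)\operatorname{id}_{V_\alpha}$ in the first place. The projective modules form a two-sided ideal $\mathcal{I}$ of $Rep$, and the key input (as in \cite{geer2009modified}, \cite{costantino2014quantum}) is that $\mathcal{I}$ carries a nonzero modified trace: a family of linear maps $\mathsf{t}_W : \operatorname{End}(W) \to \C$, for $W \in \mathcal{I}$, satisfying (i) cyclicity, $\mathsf{t}_W(gf) = \mathsf{t}_{W'}(fg)$ for $f : W \to W'$ and $g : W' \to W$ with $W, W' \in \mathcal{I}$, and (ii) the partial-trace property $\mathsf{t}_{W \otimes X}(h) = \mathsf{t}_W\big(\operatorname{ptr}_X(h)\big)$ for $W \in \mathcal{I}$ and $X \in Rep$, where $\operatorname{ptr}_X$ is the right partial trace built from the ribbon structure.

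Next I would set $\mathsf{d}(V_\alpha) := \mathsf{t}_{V_\alpha}(\operatorname{id}_{V_\alpha})$ and define, for any $1$-$1$ tangle $T$ with closure $L$ and open strand colored by $V_\alpha$,
\[ F'(L) := \mathsf{t}_{V_\alpha}\big(RT(T)\big) = ADO_r(T)\,\mathsf{d}(V_\alpha), \]
the last equality coming from Schur. It then remains to prove two things: that $F'(L)$ does not depend on the chosen $T$, and that $\mathsf{d}(V_\alpha) = d(\alpha) = \frac{\{\alpha\}}{\{r\alpha\}}$. For independence under sliding the basepoint along one component, if $T = T_2 \circ T_1$ and $T' = T_1 \circ T_2$ are the two cuttings, then $RT(T)$ and $RT(T')$ are equal scalars because endomorphisms of $V_\alpha$ commute, so $F'$ is unchanged. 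For a change of which component is opened, I would express the passage from one cutting to the other as an isotopy and apply property (ii): tracing out the remaining strands first and then applying $\mathsf{t}_{V_\alpha}$ yields the same number regardless of which component is cut open. Finally I would compute $\mathsf{d}(V_\alpha)$ directly from the explicit weight-basis action on $V_\alpha$, using the $R$-matrix and the pivotal element $K^{1-r}$ recalled above, and check that it equals $\{\alpha\}/\{r\alpha\}$.

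The main obstacle is the partial-trace invariance in this non-semisimple setting: because the ordinary quantum dimension of $V_\alpha$ vanishes, the usual Reshetikhin--Turaev closure argument degenerates to $0$, and the whole point is that the \emph{modified} trace is the correct replacement. Establishing (or citing precisely) the existence and uniqueness up to scalar of $\mathsf{t}$ on $\mathcal{I}$, and then checking that the geometric move relating two different cuttings of $L$ corresponds exactly to property (ii), is the crux of the argument; the modified-dimension computation giving $d(\alpha)=\{\alpha\}/\{r\alpha\}$ is then a routine, if slightly involved, linear-algebra verification.
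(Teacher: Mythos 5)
The paper never actually proves this proposition: it is stated as recalled background, with the construction deferred to \cite{costantino2014quantum} and \cite{geer2009modified}, and the modified-trace argument you outline is precisely the one in those references. So your overall strategy coincides with the one the paper relies on; the problem is with one specific step of your sketch.

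The step that fails is your treatment of moving the cut point along the \emph{same} component. You claim the two cuttings decompose as $T = T_2 \circ T_1$ and $T' = T_1 \circ T_2$ with $T_1, T_2 \in \operatorname{End}(V_\alpha)$, so that commutativity of scalars concludes. In general no such decomposition exists: the arc between the two cut points is entangled with the rest of the diagram, so any level at which you try to split $T$ meets several strands, and the intermediate object is a tensor product of all their colors, not $V_\alpha$. Worse, the desired conclusion is not a formal consequence of simplicity at all: if it were, the bracket of a $1$-$1$ tangle would be cut-point independent for \emph{every} simple color in \emph{every} ribbon category, and the ambidexterity theorems of \cite{geer2009modified} would be pointless. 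The standard check --- comparing full quantum traces of the two partial closures --- only gives $\lambda \cdot \operatorname{qdim}(V_\alpha) = \mu \cdot \operatorname{qdim}(V_\alpha)$, which is vacuous here precisely because $\operatorname{qdim}(V_\alpha)=0$; this is exactly the degeneracy the modified trace exists to repair, and it is present in the same-component case just as much as in the change-of-component case. The correct uniform argument, which your properties (i) and (ii) do support, is to open $L$ at \emph{both} points at once, yielding $h \in \operatorname{End}(V_\alpha \otimes V_\alpha)$ (or $\operatorname{End}(V_\alpha \otimes V_\beta)$ when two different components are opened); the two cuttings are then the right and left partial closures of $h$, and
\[ \mathsf{t}_{V_\alpha}\bigl(RT(T)\bigr) = \mathsf{t}_{V_\alpha \otimes V_\alpha}\bigl(RT(h)\bigr) = \mathsf{t}_{V_\alpha}\bigl(RT(T')\bigr), \]
where the first equality is your (ii) and the second is the left partial-trace property, which for a modified trace on an ideal in a ribbon category is a consequence of (i) and (ii). Two smaller points: the modified trace is unique only up to a global scalar, so your computation pins down $d(\alpha)=\{\alpha\}/\{r\alpha\}$ only once a normalization is fixed (harmless for invariance, but the precise form is a choice, not forced); and simplicity and projectivity of $V_\alpha$ should be invoked for all $\alpha \in (\C - \Z)\cup r\Z$, not only $\alpha \in \C - \Z$, since the proposition covers those colors too.
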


Although we don't have to specify $\alpha$ and obtain a polynomial in $q^{\alpha}$, we cannot do the same for $q$. The root of unity $q$ must be fixed in order to define the invariant, hence it is a natural question for one to ask how such invariants behave when the root of unity changes.

\subsection{Useful form of the ADO invariant}
From now on we will only work with knots.

\medskip
\noindent To see how the ADO polynomials behave when the root of unity changes, we will explicit a formula for the invariant using the ribbon functor on a diagram $D$ of a knot.

\bigskip
Let $\mathcal{K}$ a knot colored by $V_{\alpha -r+1}$ and $T$ a 1-1 tangle whose closure is $\mathcal{K}$, since we are working with knots $ADO_r(A, \mathcal{K}):=ADO_r(T)$ is well defined. Where $A$ is the free variable $\zeta_{2r}^{\alpha}$.

Let's study this element, by choosing a basis of $V_{\alpha-r+1}$ and computing the invariant with state diagrams.

\begin{rem}
$V_{\alpha-r+1}$ is generated by $v_0, v_1, \dots , v_{r-1}$ where $v_0$ is a highest weight vector, and $v_i= \frac{F^{(i)} v_0}{\{ \alpha;i \}_{\zeta_{2r}} }$.
\end{rem}

\begin{prop}~ \\
$
\begin{array}{ll}
 E v_0=0 & E v_i=v_{i-1}\\
 F v_i= [i+1] [\alpha -i] v_{i+1}& F^{(k)}v_i=\qbinom{k+i}{k}_{\zeta_{2r}} \{ \alpha -i ; k \}_{\zeta_{2r}} v_{k+i} \\
 K v_i=\zeta_{2r}^{\alpha-2i} v_i & \zeta_{2r}^{\frac{H \otimes H}{2}} v_i \otimes v_j = \zeta_{2r}^{\frac{\alpha^2}{2}} \zeta_{2r}^{-(i+j)\alpha} \zeta_{2r}^{2ij} v_i \otimes v_j
\end{array}
$
\end{prop}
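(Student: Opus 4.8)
The plan is to compute the action of each operator directly on the basis $v_i = F^{(i)} v_0 / \{ \alpha; i \}_{\zeta_{2r}}$, starting from the highest weight vector $v_0$, which by construction satisfies $E v_0 = 0$ and $K v_0 = \zeta_{2r}^{\alpha} v_0$ (its weight being $\alpha$, since $V_{\alpha - r + 1}$ has highest weight $\alpha$). The weight formula $K v_i = \zeta_{2r}^{\alpha - 2i} v_i$ follows immediately from the commutation relation $KF = \zeta_{2r}^{-2} FK$: applying it $i$ times to $K F^i v_0$ gives $\zeta_{2r}^{-2i} F^i K v_0 = \zeta_{2r}^{\alpha - 2i} F^i v_0$, and $v_i$ is proportional to $F^i v_0$.

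Next I would establish the lowering formulas by unpacking the definitions $F^{(n)} = \{1\}^n F^n / [n]!$ and $\{ \alpha; n \}_{\zeta_{2r}} = \prod_{j=0}^{n-1} \{ \alpha - j \}_{\zeta_{2r}}$. For $F^{(k)} v_i$, the key is the multiplicativity of divided powers $F^{(k)} F^{(i)} = \qbinom{k+i}{k}_{\zeta_{2r}} F^{(k+i)}$, which is a direct consequence of $[k+i]! = \qbinom{k+i}{k}_{\zeta_{2r}} [k]! [i]!$. Combining this with $F^{(k+i)} v_0 = \{ \alpha; k+i \}_{\zeta_{2r}} v_{k+i}$ and telescoping the ratio $\{ \alpha; k+i \}_{\zeta_{2r}} / \{ \alpha; i \}_{\zeta_{2r}} = \{ \alpha - i; k \}_{\zeta_{2r}}$ yields the stated expression; the single-step formula $F v_i = [i+1][\alpha - i] v_{i+1}$ is then the case $k = 1$.

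The raising formula $E v_i = v_{i-1}$ is the one step that genuinely requires the algebra, and is where I expect the main difficulty. I would argue by induction on $i$, the base case being $E v_0 = 0$. Writing $v_{i+1} = ([i+1][\alpha - i])^{-1} F v_i$ and using $[E, F] = (K - K^{-1})/(\zeta_{2r} - \zeta_{2r}^{-1})$, which acts on $v_i$ as the scalar $[\alpha - 2i]$, the inductive hypothesis gives $E F v_i = (FE + [E,F]) v_i = ([i][\alpha - i + 1] + [\alpha - 2i]) v_i$. The whole computation then hinges on the $q$-number identity $[i+1][\alpha - i] - [i][\alpha - i + 1] = [\alpha - 2i]$, which I would verify by expanding each $[n]$ as $(\zeta_{2r}^n - \zeta_{2r}^{-n})/(\zeta_{2r} - \zeta_{2r}^{-1})$ and checking the telescoping cancellation. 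This identity reduces $E F v_i$ exactly to $[i+1][\alpha - i] v_i$, hence $E v_{i+1} = v_i$, closing the induction.

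Finally, the action of the diagonal part of the $R$-matrix is purely a weight computation: since $K$ and $\zeta_{2r}^H$ agree on $Rep$, the vector $v_i$ has $H$-eigenvalue $\alpha - 2i$, so $\frac{H \otimes H}{2}$ acts on $v_i \otimes v_j$ by $\frac{(\alpha - 2i)(\alpha - 2j)}{2}$. Expanding this exponent as $\frac{\alpha^2}{2} - (i+j)\alpha + 2ij$ and exponentiating gives the last formula. No convergence or completion issues arise here, since everything takes place inside the finite-dimensional module $V_{\alpha - r + 1}$.
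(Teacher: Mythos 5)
Your proof is correct. There is nothing in the paper to compare it with: the proposition is stated there without any proof, as a standard consequence of the definition $v_i = F^{(i)}v_0/\{\alpha;i\}_{\zeta_{2r}}$ given in the remark just before it, so your argument supplies precisely the verification the author leaves to the reader. All the key steps check out: the divided-power identity $F^{(k)}F^{(i)}=\qbinom{k+i}{k}_{\zeta_{2r}}F^{(k+i)}$ together with the telescoping $\{\alpha;k+i\}_{\zeta_{2r}}=\{\alpha;i\}_{\zeta_{2r}}\,\{\alpha-i;k\}_{\zeta_{2r}}$ gives the $F^{(k)}$ formula (and its $k=1$ case); the $q$-number identity $[i+1][\alpha-i]-[i][\alpha-i+1]=[\alpha-2i]$, which does hold by the expansion you describe, closes the induction for $E$; and the last formula is the weight computation $\tfrac{1}{2}(\alpha-2i)(\alpha-2j)=\tfrac{\alpha^2}{2}-(i+j)\alpha+2ij$. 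Two details deserve a word. First, your inductive step divides by $[i+1][\alpha-i]$; this is harmless because the induction only needs to produce $Ev_{i+1}$ for $i+1\le r-1$ (so $[i+1]\neq 0$ at $\zeta_{2r}$), and $[\alpha-i]\neq 0$ is part of the same genericity of $\alpha$ already required for the basis $v_i$ to be defined at all; note also that for $i=r-1$ the formula $Fv_{r-1}=[r][\alpha-r+1]v_{r}$ correctly reads $0$ since $[r]=0$ at $\zeta_{2r}$, consistent with the module having dimension $r$. Second, the relation $K=\zeta_{2r}^{H}$ by itself pins down the $H$-eigenvalue of $v_i$ only modulo $2r$; to get exactly $\alpha-2i$ one should invoke $Hv_0=\alpha v_0$ (the $H$-weight of the highest weight vector in the unrolled module) together with $[H,F]=-2F$. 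Neither point is a gap, only places where the wording should be tightened.
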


\begin{cor}
 We have $ ADO_r(A,\mathcal{K}) \in \zeta_{2r}^{\frac{f \alpha^2}{2}} \Z[\zeta_{2r}, \zeta_{2r}^{\pm \alpha}]$ where $f$ is the framing of the knot.
\end{cor}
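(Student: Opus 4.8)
The plan is to compute $ADO_r(T)$ explicitly as a \emph{state sum} on a diagram $D$ of the $1$-$1$ tangle $T$ and to track precisely where powers of $\alpha^2$ can appear. First I would cut $D$ into a vertical stack of elementary tangles---cups, caps, positive and negative crossings, and identity strands---so that $RT(T)$ becomes a composition of the corresponding morphisms on tensor powers of $V_{\alpha-r+1}$. Labelling every edge of $D$ by a basis vector $v_i$ (with $0 \le i \le r-1$) turns the scalar $ADO_r(T)$ into a finite sum, over all admissible labellings (states), of products of the local matrix coefficients of these elementary morphisms. The claim will then reduce to checking, coefficient by coefficient, that every local factor lies in $\Z[\zeta_{2r},\zeta_{2r}^{\pm\alpha}]$ apart from one controlled source of $\zeta_{2r}^{\alpha^2/2}$.

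Next I would read off the local contributions from the preceding Proposition. A crossing is sent to the braiding, i.e. to $R$ or $R^{-1}$ followed by a flip. Writing $R=\zeta_{2r}^{H\otimes H/2}\sum_n \zeta_{2r}^{n(n-1)/2}E^n\otimes F^{(n)}$, the off-diagonal part acts on $v_i\otimes v_j$ through $E^n v_i=v_{i-n}$ and $F^{(n)}v_j=\qbinom{n+j}{n}_{\zeta_{2r}}\{\alpha-j;n\}_{\zeta_{2r}}\,v_{n+j}$; here the Gaussian binomial is an integral Laurent polynomial in $\zeta_{2r}$ and $\{\alpha-j;n\}_{\zeta_{2r}}$ is a product of terms $\zeta_{2r}^{\alpha+m}-\zeta_{2r}^{-\alpha-m}$, so both lie in $\Z[\zeta_{2r},\zeta_{2r}^{\pm\alpha}]$. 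The only remaining factor is the diagonal part, and by the last formula of the Proposition $\zeta_{2r}^{H\otimes H/2}$ acts on any $v_i\otimes v_j$ as $\zeta_{2r}^{\alpha^2/2}\,\zeta_{2r}^{-(i+j)\alpha}\zeta_{2r}^{2ij}$; the factors $\zeta_{2r}^{-(i+j)\alpha}$ and $\zeta_{2r}^{2ij}$ are in $\Z[\zeta_{2r},\zeta_{2r}^{\pm\alpha}]$, and each braiding thus produces \emph{exactly one} factor $\zeta_{2r}^{\alpha^2/2}$. For a negative crossing the same analysis applied to $R^{-1}=(\sum_n(-1)^n\zeta_{2r}^{-n(n-1)/2}E^n\otimes F^{(n)})\zeta_{2r}^{-H\otimes H/2}$ produces exactly one factor $\zeta_{2r}^{-\alpha^2/2}$. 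Finally, the cups and caps are the (co)evaluations of the pivotal structure, whose only nontrivial ingredient is the action of $K^{1-r}$, and $K^{1-r}v_i=\zeta_{2r}^{(1-r)(\alpha-2i)}v_i\in\Z[\zeta_{2r},\zeta_{2r}^{\pm\alpha}]\cdot v_i$; the dual pairings contribute only Kronecker deltas. Hence no cap, cup, or identity ever creates a power of $\alpha^2$.

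With this bookkeeping in hand I would collect the factors. Because the factor $\zeta_{2r}^{\pm\alpha^2/2}$ coming from each crossing is \emph{independent of the state labels} $i,j$, it pulls out of the entire state sum as a single global monomial. If $D$ has $c_+$ positive and $c_-$ negative crossings, this global monomial is $\zeta_{2r}^{(c_+-c_-)\alpha^2/2}$, and the remaining (finite) state sum has all of its coefficients in $\Z[\zeta_{2r},\zeta_{2r}^{\pm\alpha}]$. It then suffices to identify the writhe $w(D)=c_+-c_-$ with the framing: with the blackboard framing convention the self-linking number of $D$ equals its writhe, and since $ADO_r(A,\mathcal{K})$ is a framed-knot invariant this writhe is exactly the framing $f$. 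Therefore $ADO_r(A,\mathcal{K})=\zeta_{2r}^{f\alpha^2/2}\cdot(\text{element of }\Z[\zeta_{2r},\zeta_{2r}^{\pm\alpha}])$, which is the claim.

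I expect the main obstacle to be the \emph{exactness} of the crossing count rather than the membership statements, which are routine once the local formulas are in place: one must verify that every braiding morphism contributes one and only one factor $\zeta_{2r}^{\pm\alpha^2/2}$, with sign equal to the sign of the crossing. Here the only point needing care is the ordering of the diagonal and off-diagonal parts of $R^{\pm1}$: the diagonal part $\zeta_{2r}^{\pm H\otimes H/2}$ is applied to the shifted labels $v_{i-n}\otimes v_{j+n}$, but its $\alpha^2$-exponent equals $\pm\tfrac12$ for \emph{every} pair of labels and is therefore unaffected by $n$, so a single $\zeta_{2r}^{\pm\alpha^2/2}$ survives per crossing. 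The second genuinely geometric input is the equality $w(D)=f$, which rests on the invariance of $ADO_r$ under the choice of $1$-$1$ tangle and under regular isotopy.
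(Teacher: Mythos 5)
Your proposal is correct and follows essentially the same route as the paper: the corollary there is an immediate consequence of the module-action formulas (in particular $\zeta_{2r}^{H\otimes H/2}v_i\otimes v_j=\zeta_{2r}^{\alpha^2/2}\zeta_{2r}^{-(i+j)\alpha}\zeta_{2r}^{2ij}v_i\otimes v_j$) and of the state-sum decomposition made explicit right afterwards in Proposition \ref{ADO_form}, where each crossing coupon contributes the label-independent factor $\zeta_{2r}^{\pm\alpha^2/2}$ and all other local factors (quantum binomials, $\{\alpha-a_k;i_k\}_{\zeta_{2r}}$, powers of $\zeta_{2r}^{\pm\alpha}$ from the pivotal coupons $K^{\pm(1-r)}$) lie in $\Z[\zeta_{2r},\zeta_{2r}^{\pm\alpha}]$, so the writhe $=$ framing count gives the global prefactor $\zeta_{2r}^{f\alpha^2/2}$. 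Your extra care about the ordering of the diagonal and off-diagonal parts of $R^{\pm1}$ is sound and matches the paper's bookkeeping.
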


\paragraph{}
More precisely, in order to calculate a useful form of this invariant one may look at \textit{state diagram} of a knot. For any knot seen as a $(1,1)$ tangle, take a diagram $D$, label the top and bottom strands $0$ and starting from the bottom strand, label the strand after the $k$-th crossing encountered with the rule described in Figure \ref{crossings_simple}. The resulting diagram is called a \textit{state diagram} of $D$.

\begin{figure}[h!]
\begin{subfigure}[b]{0.5\textwidth}
 \centering
  \def\svgwidth{25mm}
\begingroup%
  \makeatletter%
  \providecommand\color[2][]{%
    \errmessage{(Inkscape) Color is used for the text in Inkscape, but the package 'color.sty' is not loaded}%
    \renewcommand\color[2][]{}%
  }%
  \providecommand\transparent[1]{%
    \errmessage{(Inkscape) Transparency is used (non-zero) for the text in Inkscape, but the package 'transparent.sty' is not loaded}%
    \renewcommand\transparent[1]{}%
  }%
  \providecommand\rotatebox[2]{#2}%
  \newcommand*\fsize{\dimexpr\f@size pt\relax}%
  \newcommand*\lineheight[1]{\fontsize{\fsize}{#1\fsize}\selectfont}%
  \ifx\svgwidth\undefined%
    \setlength{\unitlength}{666.14173228bp}%
    \ifx\svgscale\undefined%
      \relax%
    \else%
      \setlength{\unitlength}{\unitlength * \real{\svgscale}}%
    \fi%
  \else%
    \setlength{\unitlength}{\svgwidth}%
  \fi%
  \global\let\svgwidth\undefined%
  \global\let\svgscale\undefined%
  \makeatother%
  \begin{picture}(1,1.34042553)%
    \lineheight{1}%
    \setlength\tabcolsep{0pt}%
    \put(0,0){\includegraphics[width=\unitlength,page=1]{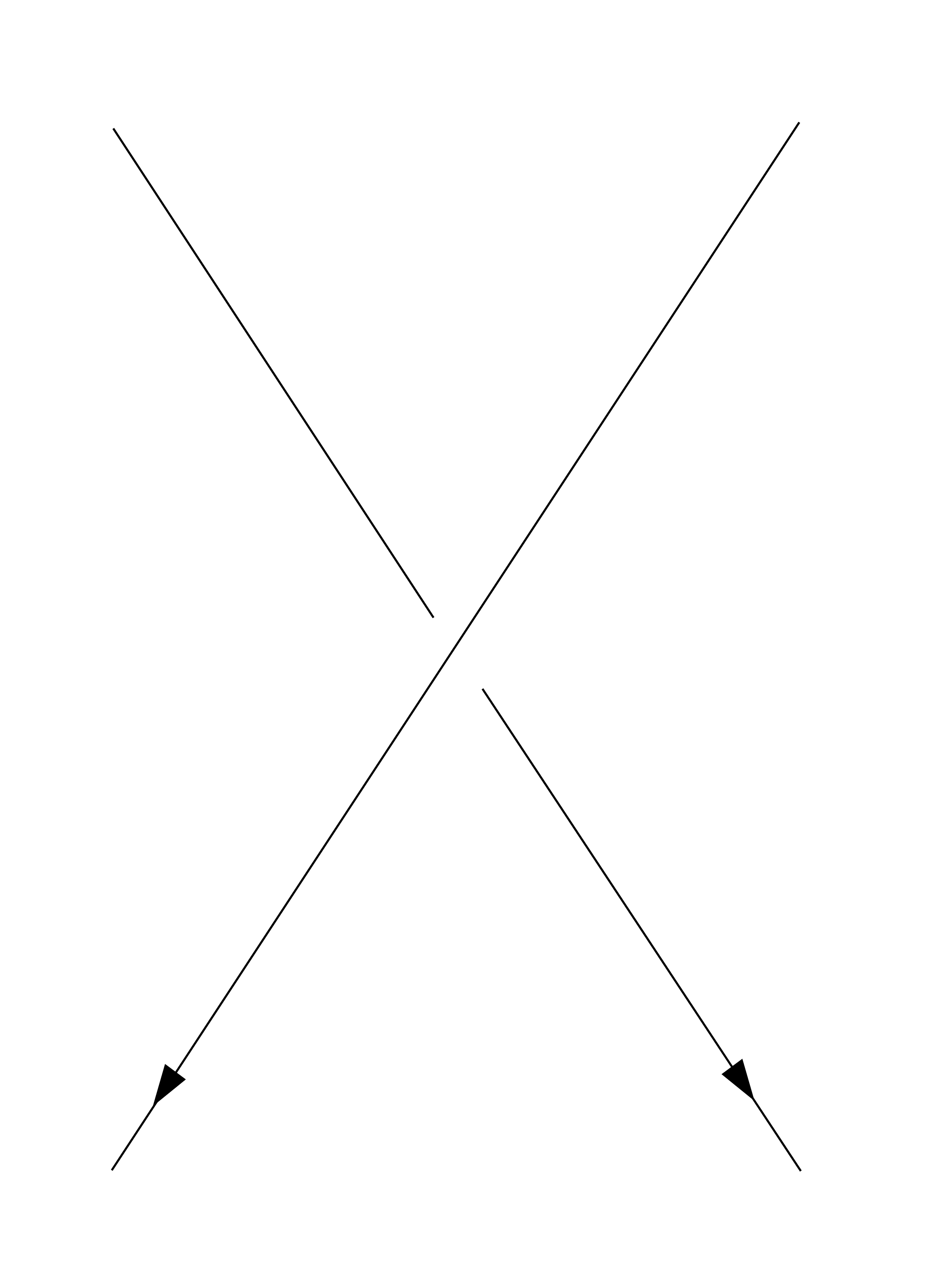}}%
    \put(0.84572423,0.02332179){\color[rgb]{0,0,0}\makebox(0,0)[lt]{\lineheight{1.25}\smash{\begin{tabular}[t]{l}$a_k$\end{tabular}}}}%
    \put(0.00726549,0.02199668){\color[rgb]{0,0,0}\makebox(0,0)[lt]{\lineheight{1.25}\smash{\begin{tabular}[t]{l}$b_k$\end{tabular}}}}%
    \put(0.73748325,1.25886345){\color[rgb]{0,0,0}\makebox(0,0)[lt]{\lineheight{1.25}\smash{\begin{tabular}[t]{l}$b_k -i_k$\end{tabular}}}}%
    \put(0.07803548,1.2652971){\color[rgb]{0,0,0}\makebox(0,0)[lt]{\lineheight{1.25}\smash{\begin{tabular}[t]{l}$a_k +i_k$\end{tabular}}}}%
  \end{picture}%
\endgroup%
  \label{cross_simple}
   \caption{Positive crossing.}
 \end{subfigure}%
 \begin{subfigure}[b]{0.5\textwidth}
 \centering
  \def\svgwidth{25mm}
\begingroup%
  \makeatletter%
  \providecommand\color[2][]{%
    \errmessage{(Inkscape) Color is used for the text in Inkscape, but the package 'color.sty' is not loaded}%
    \renewcommand\color[2][]{}%
  }%
  \providecommand\transparent[1]{%
    \errmessage{(Inkscape) Transparency is used (non-zero) for the text in Inkscape, but the package 'transparent.sty' is not loaded}%
    \renewcommand\transparent[1]{}%
  }%
  \providecommand\rotatebox[2]{#2}%
  \newcommand*\fsize{\dimexpr\f@size pt\relax}%
  \newcommand*\lineheight[1]{\fontsize{\fsize}{#1\fsize}\selectfont}%
  \ifx\svgwidth\undefined%
    \setlength{\unitlength}{666.14173228bp}%
    \ifx\svgscale\undefined%
      \relax%
    \else%
      \setlength{\unitlength}{\unitlength * \real{\svgscale}}%
    \fi%
  \else%
    \setlength{\unitlength}{\svgwidth}%
  \fi%
  \global\let\svgwidth\undefined%
  \global\let\svgscale\undefined%
  \makeatother%
  \begin{picture}(1,1.36170213)%
    \lineheight{1}%
    \setlength\tabcolsep{0pt}%
    \put(0.02895136,0.06594472){\color[rgb]{0,0,0}\makebox(0,0)[lt]{\lineheight{1.25}\smash{\begin{tabular}[t]{l}$a_k$\end{tabular}}}}%
    \put(0.84393733,0.06301101){\color[rgb]{0,0,0}\makebox(0,0)[lt]{\lineheight{1.25}\smash{\begin{tabular}[t]{l}$b_k$\end{tabular}}}}%
    \put(0.02204024,1.29666105){\color[rgb]{0,0,0}\makebox(0,0)[lt]{\lineheight{1.25}\smash{\begin{tabular}[t]{l}$b_k -i_k$\end{tabular}}}}%
    \put(0.65897041,1.30148629){\color[rgb]{0,0,0}\makebox(0,0)[lt]{\lineheight{1.25}\smash{\begin{tabular}[t]{l}$a_k +i_k$\end{tabular}}}}%
    \put(0,0){\includegraphics[width=\unitlength,page=1]{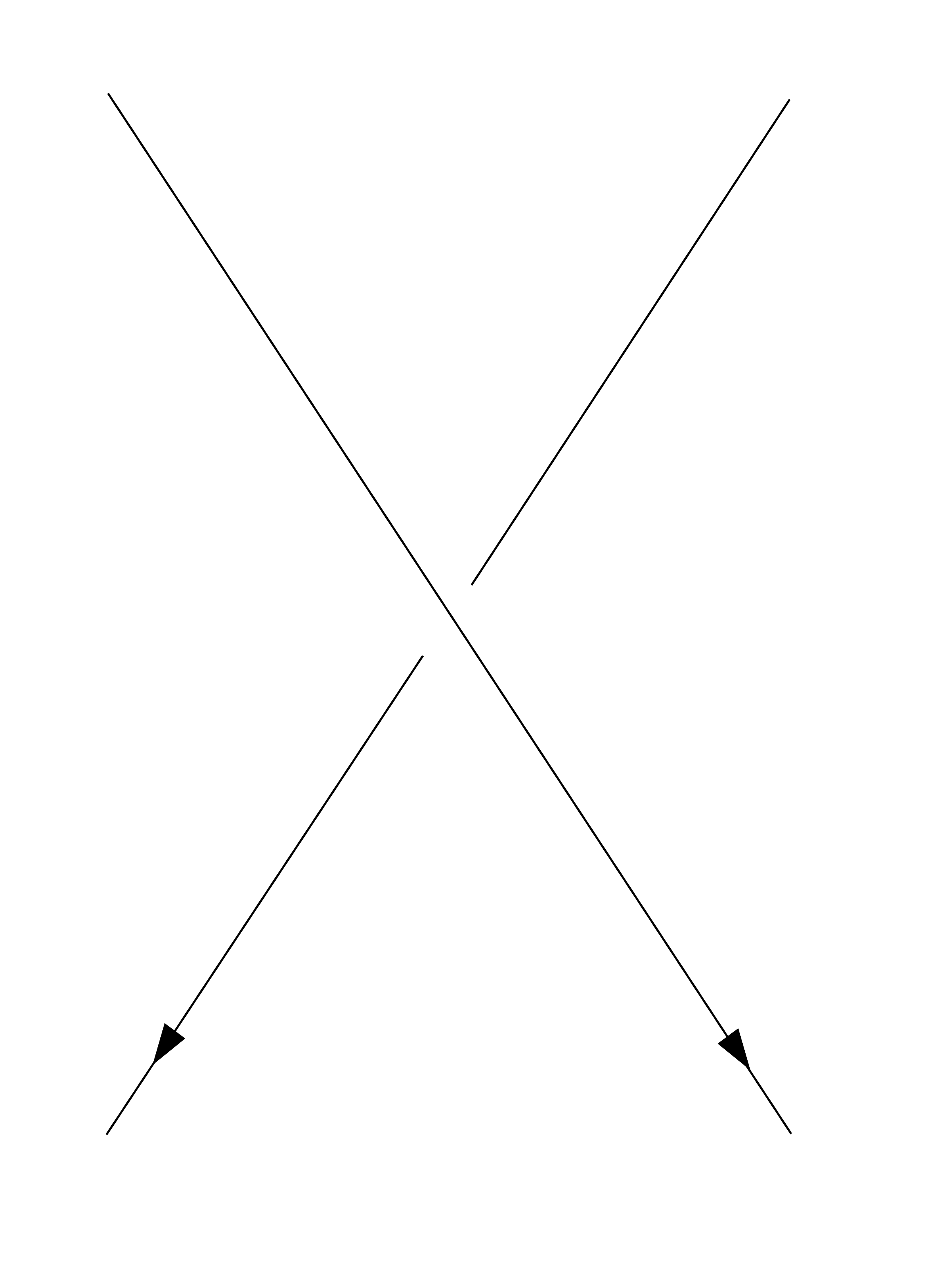}}%
  \end{picture}%
\endgroup%
  \label{negcross_simple}
   \caption{Negative crossing.}
 \end{subfigure}%
 \caption{The two possibilities for the k-th crossing in $D$.}
 \label{crossings_simple}
 \end{figure}

\paragraph{}
Let $\mathcal{K}$ a knot and $D$ a diagram of the knot seen as a $(1,1)$ tangle. Suppose the diagram has $N$ crossings.
Now for any state diagram of $D$ we can associate an element:
\begin{align*}
D_r(i_1, \dots, i_N) = &(\prod_{j=1}^{S} \zeta_{2r}^{\pm (r-1)(\alpha -2 \epsilon_j)})\prod_{k \in pos} \zeta_{2r}^{\frac{i_k(i_k-1)}{2}} \qbinom{a_k+i_k}{i_k}_{\zeta_{2r}}  \{\alpha-a_k; i_k\}_{\zeta_{2r}} \\ &  \times  \zeta_{2r}^{-(a_k+b_k) \alpha} \zeta_{2r}^{2(a_k+ i_k)(b_k-i_k)} \prod_{k \in neg}  (-1)^{i_k}\zeta_{2r}^{-\frac{i_k(i_k-1)}{2}} \\ &  \times  \qbinom{a_k+i_k}{i_k}_{\zeta_{2r}}  \{\alpha-a_k; i_k\}_{\zeta_{2r}} \zeta_{2r}^{(a_k+b_k) \alpha} \zeta_{2r}^{- 2a_k b_k}
\end{align*}

where $neg \ \cup \ pos = [|1, N|]$ and $k \in pos$ if the $k$-th crossing of D is positive, else $k \in neg$. $a_k, b_k$ are the strands labels at the $k$-th crossing of the state diagram (see Figure \ref{crossings_simple}), $S$ is the number of \def\svgwidth{5mm}
\begingroup%
  \makeatletter%
  \providecommand\color[2][]{%
    \errmessage{(Inkscape) Color is used for the text in Inkscape, but the package 'color.sty' is not loaded}%
    \renewcommand\color[2][]{}%
  }%
  \providecommand\transparent[1]{%
    \errmessage{(Inkscape) Transparency is used (non-zero) for the text in Inkscape, but the package 'transparent.sty' is not loaded}%
    \renewcommand\transparent[1]{}%
  }%
  \providecommand\rotatebox[2]{#2}%
  \newcommand*\fsize{\dimexpr\f@size pt\relax}%
  \newcommand*\lineheight[1]{\fontsize{\fsize}{#1\fsize}\selectfont}%
  \ifx\svgwidth\undefined%
    \setlength{\unitlength}{867.71048959bp}%
    \ifx\svgscale\undefined%
      \relax%
    \else%
      \setlength{\unitlength}{\unitlength * \real{\svgscale}}%
    \fi%
  \else%
    \setlength{\unitlength}{\svgwidth}%
  \fi%
  \global\let\svgwidth\undefined%
  \global\let\svgscale\undefined%
  \makeatother%
  \begin{picture}(1,0.55772942)%
    \lineheight{1}%
    \setlength\tabcolsep{0pt}%
    \put(0,0){\includegraphics[width=\unitlength,page=1]{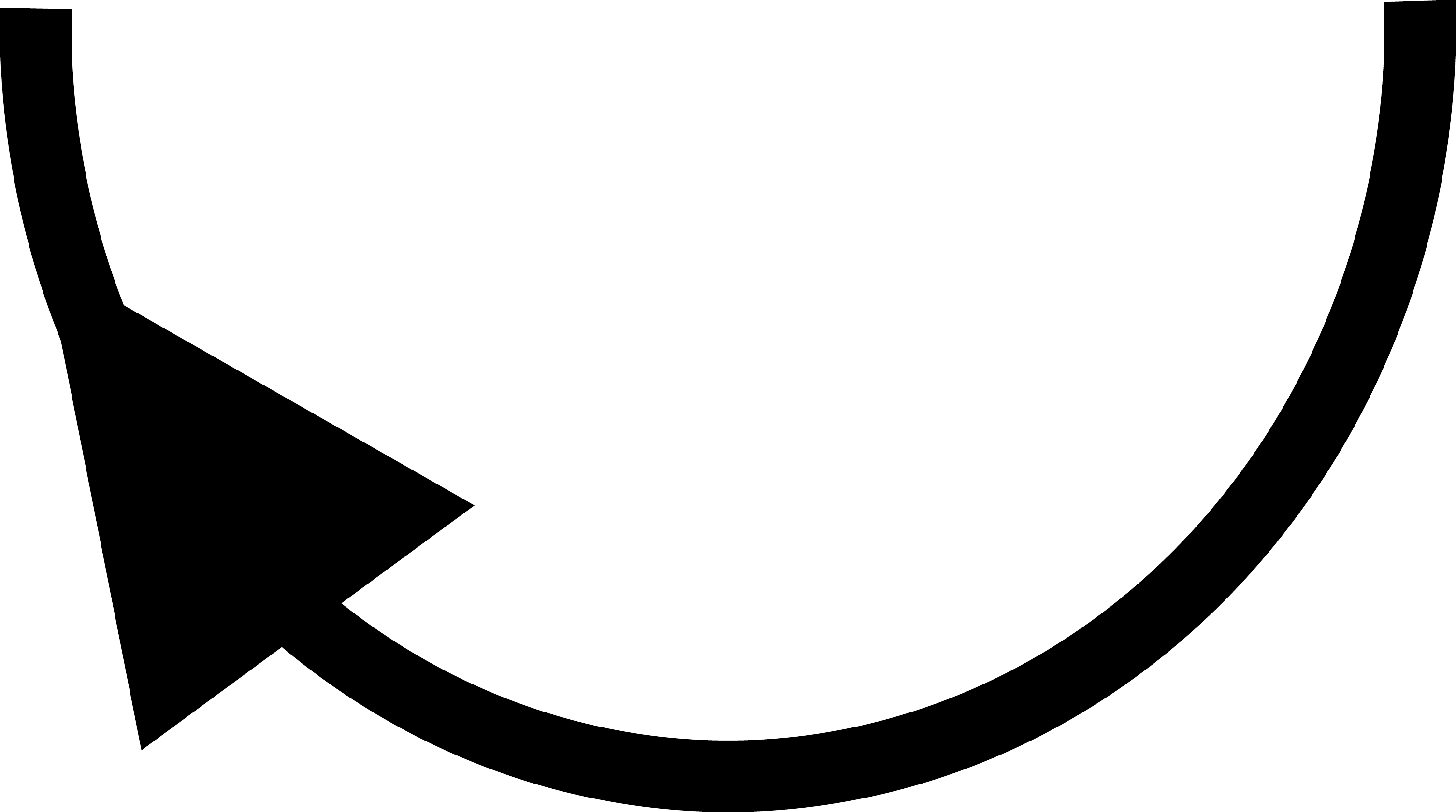}}%
  \end{picture}%
\endgroup%
 $+$ \def\svgwidth{5mm}
\begingroup%
  \makeatletter%
  \providecommand\color[2][]{%
    \errmessage{(Inkscape) Color is used for the text in Inkscape, but the package 'color.sty' is not loaded}%
    \renewcommand\color[2][]{}%
  }%
  \providecommand\transparent[1]{%
    \errmessage{(Inkscape) Transparency is used (non-zero) for the text in Inkscape, but the package 'transparent.sty' is not loaded}%
    \renewcommand\transparent[1]{}%
  }%
  \providecommand\rotatebox[2]{#2}%
  \newcommand*\fsize{\dimexpr\f@size pt\relax}%
  \newcommand*\lineheight[1]{\fontsize{\fsize}{#1\fsize}\selectfont}%
  \ifx\svgwidth\undefined%
    \setlength{\unitlength}{867.71048959bp}%
    \ifx\svgscale\undefined%
      \relax%
    \else%
      \setlength{\unitlength}{\unitlength * \real{\svgscale}}%
    \fi%
  \else%
    \setlength{\unitlength}{\svgwidth}%
  \fi%
  \global\let\svgwidth\undefined%
  \global\let\svgscale\undefined%
  \makeatother%
  \begin{picture}(1,0.55772942)%
    \lineheight{1}%
    \setlength\tabcolsep{0pt}%
    \put(0,0){\includegraphics[width=\unitlength,page=1]{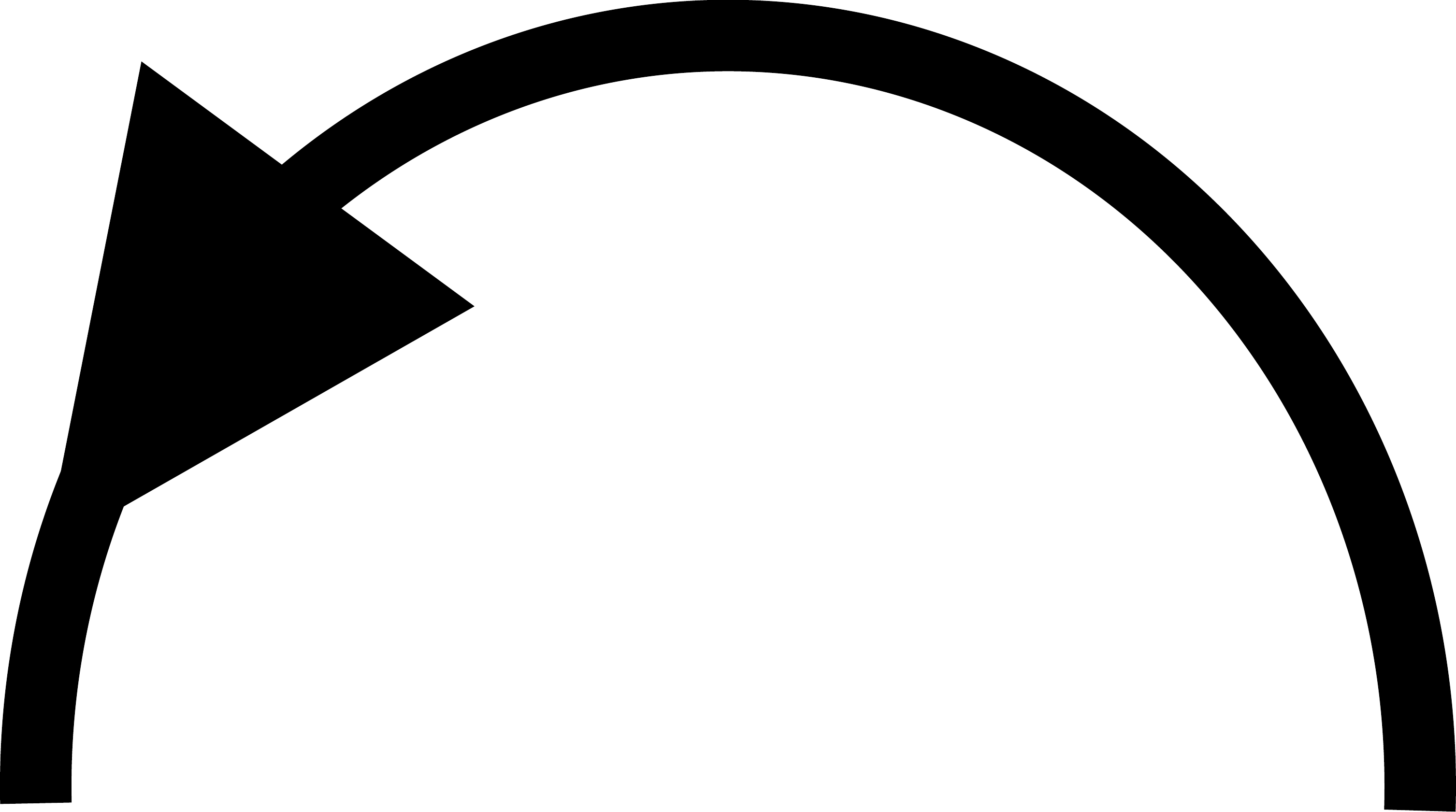}}%
  \end{picture}%
\endgroup%
 appearing in the diagram, and $\epsilon_j$ the strand label at the j-th \def\svgwidth{5mm} or \def\svgwidth{5mm}, the $\pm$ sign is positive if \def\svgwidth{5mm} and negative if \def\svgwidth{5mm}.

\begin{rem}
Note that the $a_k$ and $b_k$ appearing are defined in terms of $i_j$. You can find some examples of state diagrams in Section \ref{section_compute} Figure \ref{trefoil_w}, \ref{figure8_w}, \ref{torus_2_5_w}, \ref{figure_of_nine_w}.
\end{rem}

\begin{prop} \label{ADO_form}
If $D$ is a diagram of $\mathcal{K}$ seen as a a 1-1 tangle we have:
\begin{align*}
ADO_r(A,\mathcal{K})& = \zeta_{2r}^{\frac{f \alpha^2}{2}} \underset{\overline{i}=0}{\overset{r-1}{\sum}} D_r(i_1, \dots, i_N)\\
&= \zeta_{2r}^{\frac{f \alpha^2}{2}} \underset{\overline{i}=0}{\overset{r-1}{\sum}} (\prod_{j=1}^{S} \zeta_{2r}^{\pm (r-1)(\alpha -2 \epsilon_j)})\prod_{k \in pos} \zeta_{2r}^{\frac{i_k(i_k-1)}{2}} \qbinom{a_k+i_k}{i_k}_{\zeta_{2r}} \\ &  \times  \{\alpha-a_k; i_k\}_{\zeta_{2r}} \zeta_{2r}^{-(a_k+b_k) \alpha} \zeta_{2r}^{2(a_k+ i_k)(b_k-i_k)}  \prod_{k \in neg}  (-1)^{i_k} \\ &  \times \zeta_{2r}^{-\frac{i_k(i_k-1)}{2}} \qbinom{a_k+i_k}{i_k}_{\zeta_{2r}}  \{\alpha-a_k; i_k\}_{\zeta_{2r}} \zeta_{2r}^{(a_k+b_k) \alpha} \zeta_{2r}^{- 2a_k b_k}
\end{align*}

where $\overline{i}= (i_1, \dots, i_N)$, $N$ is the number of crossings, $S$ the number of \def\svgwidth{5mm} $+$ \def\svgwidth{5mm} and $f$ is the framing of the knot.

\end{prop}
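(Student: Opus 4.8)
The plan is to compute $RT(T)v_0$ directly from a diagram $D$ of $T$ by slicing it into elementary tangles and applying the ribbon functor levelwise. Recall that $ADO_r(A,\mathcal{K})$ is by definition the scalar $ADO_r(T)$ determined by $RT(T)v_0=ADO_r(T)\,v_0$, so it suffices to evaluate the endomorphism $RT(D)$ of $V_{\alpha-r+1}$ on the highest weight vector $v_0$ and read off its eigenvalue. I would present $D$ as a vertical stacking, read from bottom to top, of the elementary pieces: positive and negative crossings (to which $RT$ assigns the braidings $\tau\circ R$ and $\tau\circ R^{-1}$, with $\tau$ the flip), cups and caps (to which $RT$ assigns the (co)evaluation maps, the ``wrong way'' ones carrying the pivotal element $K^{1-r}$), and identity strands. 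Functoriality turns $RT(D)$ into the composite of the corresponding linear maps.

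Next I would insert, between two consecutive levels, the basis $v_0,\dots,v_{r-1}$ of $V_{\alpha-r+1}$, thereby writing $RT(D)v_0$ as a sum over all assignments of a basis vector to each strand segment. The crucial reduction is weight conservation: since $E$, $F^{(n)}$, $K$ and the pivotal all shift the weight level in a prescribed way and every elementary map preserves the total weight, a nonzero summand is entirely determined once one fixes, at each crossing $k$, the integer $i_k\in\{0,\dots,r-1\}$ transferred by the factor $E^{i_k}\otimes F^{(i_k)}$ of $R^{\pm1}$. This is exactly the data of a state diagram: the remaining labels $a_k,b_k$ on the two strands entering the $k$-th crossing are then forced, as functions of the $i_j$, by the labelling rule of Figure~\ref{crossings_simple}, and the top and bottom open strands carry the label $0$ because we evaluate on $v_0$. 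Out of range labels make the relevant quantum binomial vanish, so the sum may be taken over all $(i_1,\dots,i_N)\in\{0,\dots,r-1\}^N$.

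It then remains to compute the scalar contributed by each elementary piece to a fixed state and to multiply them. For a positive crossing I would feed $v_{b_k}\otimes v_{a_k}$ into $\tau\circ R$ and use the explicit actions of $E^{i_k}$, $F^{(i_k)}$ and $\zeta_{2r}^{H\otimes H/2}$ recorded earlier; the $i_k$-dependent part reproduces $\zeta_{2r}^{i_k(i_k-1)/2}\qbinom{a_k+i_k}{i_k}_{\zeta_{2r}}\{\alpha-a_k;i_k\}_{\zeta_{2r}}\zeta_{2r}^{-(a_k+b_k)\alpha}\zeta_{2r}^{2(a_k+i_k)(b_k-i_k)}$, and the negative crossing similarly yields the $neg$ factor, the sign $(-1)^{i_k}$ coming from the $(-1)^n$ in $R^{-1}$. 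Each cap or cup contributes the pivotal eigenvalue $K^{1-r}v_{\epsilon_j}=\zeta_{2r}^{\pm(r-1)(\alpha-2\epsilon_j)}v_{\epsilon_j}$, accounting for the product $\prod_{j=1}^{S}$. Finally, the $i,j$-independent scalar $\zeta_{2r}^{\pm\alpha^2/2}$ extracted from each $\zeta_{2r}^{\pm H\otimes H/2}$ accumulates over the crossings to $\zeta_{2r}^{(n_+-n_-)\alpha^2/2}=\zeta_{2r}^{f\alpha^2/2}$, where $n_+$ and $n_-$ count the positive and negative crossings, since the writhe $n_+-n_-$ of a blackboard-framed diagram equals the framing $f$. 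Summing the products over all states gives the stated formula.

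The main obstacle is not any single computation but the bookkeeping that ties them together: making the weight-conservation argument rigorous enough to guarantee that a state is exactly a tuple $(i_1,\dots,i_N)$ and that the induced labels coincide with the $a_k,b_k$ of Figure~\ref{crossings_simple}; and tracking all the orientation and side conventions (which strand sits left or right at a crossing, whether an extremum is a cup or a cap, and hence the sign in the pivotal exponent and in the extracted $\zeta_{2r}^{\pm\alpha^2/2}$) so that the signs in front of $(r-1)(\alpha-2\epsilon_j)$ and the identification of the global factor with $\zeta_{2r}^{f\alpha^2/2}$ come out correctly. Once these conventions are pinned down, the equality follows directly from the explicit matrix coefficients already recorded.
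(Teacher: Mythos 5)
Your proposal is correct and follows essentially the same route as the paper's proof: expand each truncated $R^{\pm1}$ as a sum over $i_k$ of coupons $\zeta_{2r}^{\pm\frac{H\otimes H}{2}}\zeta_{2r}^{\pm\frac{i_k(i_k-1)}{2}}E^{i_k}\otimes F^{(i_k)}$, place the pivotal coupons $K^{\pm(r-1)}$ at the extrema, and evaluate on the highest weight vector $v_0$, identifying each summand with $\zeta_{2r}^{\frac{f\alpha^2}{2}}D_r(i_1,\dots,i_N)$ and summing over all $(i_1,\dots,i_N)$. The paper's proof is a compressed statement of exactly this state-sum computation, so your weight-conservation and writhe-equals-framing bookkeeping is an expansion of the same argument rather than a different one.
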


\begin{proof}
Notice that  $\zeta_{2r}^{\frac{f \alpha^2}{2}}  D_r(i_1, \dots, i_N)$ is the element obtained by adding to the $k$-th crossing  a coupon labeled with: $q^{\frac{H \otimes H}{2}} q^{\frac{i_k(i_k-1)}{2}}  E^{i_k}  \otimes F^{(i_k)}$ if positive and\\ $q^{\frac{-H \otimes H}{2}} q^{\frac{-i_k(i_k-1)}{2}} E^{i_k}  \otimes F^{(i_k)}$ if negative. Then add a coupon to \def\svgwidth{5mm} labeled $K^{r-1}$ and \def\svgwidth{5mm} labeled $K^{1-r}$. We get an element of $U_{\zeta_{2r}}^H(\mathfrak{sl}_2)$, its action on $v_0 \in V_{\alpha+r-1}$ the highest weight vector gives the element $\zeta_{2r}^{\frac{f \alpha^2}{2}}  D_r(i_1, \dots, i_N)$. Summing them over $i_k$ for all $k$ gives the ADO polynomial.

\end{proof}
\medskip

Now that we have an explicit formula, can we construct from it a suitable element that can be evaluated at roots of unity and recover the ADO invariants?

We have two main issues here, first of all, $\zeta_{2r}$ appears in the formula, we will have to replace each occurrence with some variable $q$, in order to see it as a polynomial or a formal series.

The second one is more difficult to solve: the action of the $R$ matrices makes appear sums that range to $r-1$, which depends on the order of the root of unity. A solution to this problem, as we will explicit it, is to let the sum range to infinity and define a ring in which such sums converge. Then we will see how to factorize the ADO invariant from this new unified form.

\section{Unified form for ADO invariants of knots} \label{unifysection}

The approach here will be to unify the invariants: using completions of rings and algebras, we will explicit an integral invariant in some variable $q$ that can be evaluated at any root of unity, recovering ADO invariants defined previously.

The first subsection will create the right setup to define a unified form inspired by the useful form of the ADO invariant in Proposition \ref{ADO_form}. Using a completion of the ring of integral Laurent polynomials in two variables $q, A$, we define a unified form $F_{\infty}(q,A,D)$ by taking the previous form of ADO, replacing the root of unity $ \zeta_{2r}$ by $q$, $\zeta_{2r}^{\alpha}$ by $A$ and letting the truncated sums coming from the $R$-matrices action go to infinity. Note that at this point, the defined form is not a knot invariant, as it \textit{a priori} depends on the diagram $D$ of the knot.

The second subsection will make the bridge between the first two sections. By evaluating the unified form at roots of unity $\zeta_{2r}$ with $r \in \N^*$, we factor out the ADO invariant. We will then explicit a map sending the unified form of a knot to the corresponding ADO invariants. This will show that the ADO invariants are contained in the unified form and that we can recover them from it.

\subsection{Ring completion for the unified form}
Let's lay the groundwork for an unified form to exist. It must be a ring in which infinite sums previously mentioned converge.
\bigskip

Let $R=\Z[q^{\pm 1}, A^{\pm 1}]$, we will construct a completion of that ring. For the sake of simplicity, we will denote $q^{\alpha} := A$ and use previous notation for quantum numbers. Keep in mind that, here, $\alpha$ is just a notation, not a complex number.

\noindent We denote $ \{ \alpha \}_q = q^{\alpha}-q^{\alpha}$, $\{ \alpha +k \}_q = q^{\alpha +k } - q^{-\alpha -k }$, $\{ \alpha;n \}_q= \prod_{i=0}^{n-1} \{ \alpha -i\}_q$.

\begin{defn}
Let $I_n$ be the ideal of $R$ generated by the following set $\{ \ \{ \alpha+l; n \}_q , \ l \in \Z \}$.
\end{defn}

\begin{lemme}
$I_n$ is generated by elements of the form $\{n;i\} \{ \alpha;n-i\}$, $i \in \{0, \dots n \}$.
\end{lemme}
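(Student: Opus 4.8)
The plan is to prove the two inclusions simultaneously by first writing down an explicit expansion of each generator of $I_n$ in terms of the proposed ones. Let $J$ denote the ideal generated by the elements $\{n;i\}\{\alpha;n-i\}_q$, $i\in\{0,\dots,n\}$. I claim that for every $l\in\Z$,
\[
\{\alpha+l;n\}_q=\sum_{i=0}^{n}q^{(n-i)l}A^{-i}\qbinom{l}{i}_q\,\{n;i\}\,\{\alpha;n-i\}_q .
\]
Since the Gaussian binomial $\qbinom{l}{i}_q$ lies in $\Z[q^{\pm1}]\subseteq R$ for every $l\in\Z$, all coefficients on the right belong to $R$; hence this identity exhibits each generator $\{\alpha+l;n\}_q$ of $I_n$ as an element of $J$, giving $I_n\subseteq J$.

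I would prove the displayed identity by induction on $n$, the case $n=0$ being the empty product. For the inductive step I would peel off the last factor, $\{\alpha+l;n\}_q=\{\alpha+l;n-1\}_q\,\{\alpha+l-n+1\}_q$, insert the inductive formula for $\{\alpha+l;n-1\}_q$, and rewrite the extra factor with the elementary addition rule $\{a+b\}_q=q^{b}\{a\}_q+q^{-a}\{b\}$ applied with $a=\alpha-(n-1-i)$ and $b=l-i$, namely
\[
\{\alpha+l-n+1\}_q=q^{\,l-i}\{\alpha-(n-1-i)\}_q+A^{-1}q^{\,n-1-i}\{l-i\}.
\]
The point of this choice is that the first summand extends the initial segment $\{\alpha;n-1-i\}_q$ to $\{\alpha;n-i\}_q$, while the second feeds into the lower-order terms. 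Collecting the coefficient of $\{\alpha;n-i\}_q$ then reduces the whole step to the purely numerical identities $\{n;i\}=\{n\}\{n-1;i-1\}$, the binomial relation $[i]\qbinom{l}{i}_q=[l-i+1]\qbinom{l}{i-1}_q$, and finally $q^{-i}[n-i]+q^{\,n-i}[i]=[n]$, each of which is immediate from the definitions.

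For the reverse inclusion $J\subseteq I_n$ I would specialise the displayed identity to $l=0,1,\dots,n$. Because $\qbinom{l}{i}_q=0$ whenever $i>l$, the resulting $(n+1)\times(n+1)$ matrix of coefficients $c_{l,i}=q^{(n-i)l}A^{-i}\qbinom{l}{i}_q$ is lower triangular, with diagonal entries $c_{l,l}=q^{(n-l)l}A^{-l}$, which are units of $R$. Its determinant is therefore a unit, so the matrix is invertible over $R$ and each $\{n;i\}\{\alpha;n-i\}_q$ is expressed as an $R$-linear combination of the elements $\{\alpha+l;n\}_q\in I_n$. This yields $J\subseteq I_n$, and together with the first inclusion it gives $I_n=J$, completing the proof.

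The main obstacle is the expansion identity itself, and specifically keeping every coefficient inside $R$: a naive expansion of $\{\alpha+l;n\}_q$ produces products of quantum numbers that are \emph{not} initial segments and integer parts that are \emph{not} obviously multiples of the $\{n;i\}$ (already at $n=2$ one meets terms like $\{1\}\{\alpha\}_q$ that are not $R$-multiples of $\{2\}\{\alpha\}_q$). The argument only closes because the relevant contributions recombine through the divisibilities packaged in $[i]\qbinom{l}{i}_q=[l-i+1]\qbinom{l}{i-1}_q$ and $q^{-i}[n-i]+q^{\,n-i}[i]=[n]$. Once the identity is established, both inclusions are routine, the second being just a triangular linear-algebra argument over $R$.
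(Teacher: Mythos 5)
Your proof is correct. I checked the expansion identity directly for $n=1,2$, and your inductive step closes exactly as you describe: after peeling off $\{\alpha+l-n+1\}_q$ and applying the addition rule, the coefficient of $\{\alpha;n-i\}_q$ reduces to $\{n;i\}=\{n\}\{n-1;i-1\}$, $[i]\qbinom{l}{i}_q=[l-i+1]\qbinom{l}{i-1}_q$ and $q^{-i}[n-i]+q^{n-i}[i]=[n]$ (the boundary terms $i=0$ and $i=n$ are handled by the same identities), and every coefficient stays in $R$ since $\qbinom{l}{i}_q\in\Z[q^{\pm1}]$ for all $l\in\Z$, including negative $l$. The reverse inclusion is also sound: $\qbinom{l}{i}_q=0$ for $0\le l<i$, so specializing to $l=0,\dots,n$ gives a lower-triangular matrix with unit diagonal entries $q^{(n-l)l}A^{-l}$, invertible over $R$.

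As for the comparison: the paper offers no argument of its own here — its ``proof'' is a citation of Proposition 5.1 in Habiro's article \cite{habiro2007integral}, with the instruction to substitute $q^{\pm\alpha}=A^{\pm 1}$ for $K^{\pm 1}$. What you have written is in substance a self-contained reconstruction of that cited proposition in the commutative setting: your displayed identity is the specialization of Habiro's expansion of $\{H+l;n\}$ in terms of $K^{-i}\{H;n-i\}$ with Gaussian-binomial coefficients, and your triangular specialization argument plays the role of his finite-generation step. So the mathematical route coincides with the one the paper points to, but your version can be verified without consulting the reference, which the paper's one-line proof cannot.
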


\begin{proof}
The proof can be found in Habiro's article \cite{habiro2007integral}:\\
Replacing $K$ (resp. $K^{-1}$) by $q^{\alpha}$ (resp. $q^{-\alpha}$) in Proposition 5.1, one gets the proof of this lemma. 
\end{proof}

We then have a projective system : \[ \hat{I} : I_1 \supset I_2 \supset \dots \supset I_n \supset \dots \]
From which we can define the completion of $R$, taking the projective limit:

\begin{defn}
Let $\hat{R}^{\hat{I}} = \underset{\underset{n}{\leftarrow}}{\lim} \dfrac{R}{I_n} = \{ (a_n)_{n \in \N^*} \in \prod_{i=1}^{\infty} \frac{R}{I_n} \ | \ \rho_n(a_{n+1})=a_n \}$  where $\rho_n: \frac{R}{I_{n+1}} \to \frac{R}{I_{n}}$ is the projection map.
\end{defn}

This completion is a bigger ring containing $R$:
\begin{prop}
The canonical projection maps induce an injective map $R \xhookrightarrow{} \hat{R}^{\hat{I}}$
\end{prop}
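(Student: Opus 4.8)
The plan is to identify the induced map explicitly and reduce injectivity to a separation statement. The canonical quotient maps $R \to R/I_n$ are compatible with the structure maps $\rho_n$ of the projective system (precisely because $I_{n+1} \subseteq I_n$), so they assemble into a ring homomorphism $R \to \hat{R}^{\hat{I}}$ sending $x$ to the coherent sequence $(x + I_n)_{n \in \N^*}$. Its kernel is exactly $\bigcap_{n \geq 1} I_n$, so injectivity is equivalent to $\bigcap_n I_n = \{0\}$. The whole content is therefore to show that a nonzero element of $R$ cannot lie in every $I_n$.

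To separate the filtration I would introduce two families of auxiliary data. For each $N \in \Z$ let $\psi_N : R \to \Z[q^{\pm 1}]$ be the evaluation sending $A = q^{\alpha}$ to $q^N$, and for a nonzero Laurent polynomial in $q$ let $\mathrm{ord}(\cdot)$ denote its order of vanishing at $q=1$, i.e. its valuation under the embedding $\Z[q^{\pm 1}] \hookrightarrow \Z[[q-1]]$. The single elementary input is that $\mathrm{ord}(\{ m \}_q) = 1$ whenever $m \neq 0$, since $\{ m \}_q = q^m - q^{-m}$ vanishes simply at $q = 1$.

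The crucial step will be to prove that $\psi_N(I_n) \subseteq (q-1)^n\,\Z[[q-1]]$ for all $N$ and all $n$. Here I would use the clean generating set furnished by the Lemma, namely $\{ n; i \}_q \{ \alpha; n-i \}_q$ for $0 \leq i \leq n$, where $\{ n; i \}_q = \prod_{j=0}^{i-1} \{ n-j \}_q$; applying $\psi_N$ replaces $\{ \alpha; n-i \}_q$ by $\prod_{j=0}^{n-i-1} \{ N-j \}_q$. Counting orders at $q=1$, the factor $\{ n; i \}_q$ is a product of $i$ terms $\{ m \}_q$ with $m \neq 0$ and so has order exactly $i$, while $\psi_N(\{ \alpha; n-i \}_q)$ is a product of $n-i$ terms, each either of order $1$ or identically zero, hence of order at least $n-i$. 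Each generator thus maps to an element of order at least $n$, and since these generate $I_n$ over $R$ and $\Z[q^{\pm 1}] \subseteq \Z[[q-1]]$, every element of $\psi_N(I_n)$ has order at least $n$.

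Granting this estimate the conclusion is quick. If $x \in \bigcap_n I_n$ then, for each fixed $N$, the Laurent polynomial $\psi_N(x)$ has order at least $n$ at $q=1$ for every $n$, so it vanishes to infinite order and is therefore $0$. Writing $x = \sum_k c_k(q)\, A^k$ with finitely many $c_k \in \Z[q^{\pm 1}]$, the relations $\psi_N(x) = \sum_k c_k(q)\, q^{Nk} = 0$ for $N = 0, 1, 2, \dots$ form, over the field $\Q(q)$, a Vandermonde system in the pairwise distinct nodes $q^k$; its invertibility forces every $c_k$ to vanish, so $x = 0$ and $\bigcap_n I_n = \{0\}$. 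I expect the order estimate to be the real obstacle, and it is exactly where integrality matters: over $\Q(q)$ the ideal $I_n$ already contains the unit $\{ n \}_q!$ and becomes the whole ring, so the argument must stay over $\Z[q^{\pm 1}]$ and exploit that $\{ n \}_q!$ nevertheless vanishes to high order at $q=1$. This is also why having the Lemma's generators $\{ n; i \}_q \{ \alpha; n-i \}_q$ rather than the defining generators $\{ \alpha + l; n \}_q$ is what makes the bookkeeping transparent.
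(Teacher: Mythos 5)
Your proof is correct, and it shares the paper's overall strategy: reduce injectivity to $\bigcap_{n} I_n = \{0\}$, compose with the evaluations $A \mapsto q^N$ (denoted $f_k$ in the paper), and control the image of $I_n$ via the Lemma's generators $\{n;i\}_q\,\{\alpha;n-i\}_q$. But you execute both key steps differently. For the smallness of the image, the paper proves the sharper divisibility statement $f_k(I_n) \subset \{n\}!\,\Z[q^{\pm 1}]$ --- which silently relies on the integrality of $q$-binomial coefficients, i.e.\ that $\{k;n-i\}_q/\{n-i\}!$ lies in $\Z[q^{\pm1}]$ --- and then concludes $f_k(x)=0$ because a nonzero Laurent polynomial cannot be divisible by $\{n\}!$ for every $n$ (the paper invokes factoriality of $\Z[q^{\pm1}]$). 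You instead embed into $\Z[[q-1]]$ and track only the order of vanishing at $q=1$; this estimate is weaker than divisibility but entirely sufficient, and it needs nothing beyond $\mathrm{ord}(\{m\}_q)=1$ for $m\neq 0$, so your version of the key lemma is more elementary. For the endgame, the paper uses a single well-chosen evaluation $f_{2N}$, with $N$ dominating the $q$-degree and $q$-valuation of $x$, so that the monomials $q^{2Nn}$ push the coefficients $a_n$ into disjoint degree windows that cannot cancel; you instead use the whole family $N=0,1,2,\dots$ and invert a Vandermonde matrix over $\Q(q)$. Both finishes are valid: the paper's needs only one evaluation at the cost of some degree bookkeeping, while yours replaces that bookkeeping by a standard linear-algebra fact at the cost of invoking all the evaluations at once.
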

\begin{proof}
It is sufficient to prove that $\underset{n \in \N^*}{\bigcap} I_n =\{0\}$.

\noindent Since $R= \Z[q^{\pm 1}][A^{\pm1}]$, it is a Laurent polynomial ring. Let's define $\deg_q(x)$, $val_q(x)$ the degree and valuation of $x$ in the variable $q$.
\smallskip

Let $f_k: \Z[q^{\pm1}, A^{\pm1}] \to \Z[ q^{\pm1}]$, $A \mapsto q^k$. We have  $f_k (I_n) \subset \{n\}!  \Z[ q^{\pm1}]$ because $I_n$ is generated by elements of the form $\{n;i\} \{ \alpha;n-i\}$ that maps to $\{n;i\} \{ k;n-i\}$, which is divisible by $\{n\}! $.

Hence if $x \in \underset{n \in \N^*}{\bigcap} I_n$, $f_k(x) \in \{n\}!\Z[ q^{\pm1}]$ for all $n$, since $\Z[ q^{\pm1}]$ is factorial, $f_k(x)=0$ for all $k$.

Take $x \in \underset{n \in \N^*}{\bigcap} I_n$, written $x= \sum a_n A^n$ with $a_n \in \Z[ q^{\pm1}]$. Take $N$ such that $\deg_q(x)<N$ and $val_q(x)>-N$.\\
This implies that $deg_q(a_n) <N$ and $val_q(a_n)>-N$ (since it is the case for $x$ and any higher or lower terms could not compensate since the power of $A$ is different before each $a_n$).

Thus since $f_{2N}(x)=0$, $\sum a_n q^{2Nn}=0$, we have $\deg_q(a_n q^{2Nn}) < N(1+2n)$ and $val_q(a_n q^{2Nn})>N(2n-1)$, then each terms $a_n q^{2Nn}$ must be $0$. Hence $a_n=0$ for all $n$, meaning that $x=0$.

\end{proof}

\begin{rem}
 If $b_0 \in R$ and $b_n \in I_{n-1}$ for $n \geq 1$, the partial sums $ \underset{i=0}{\overset{N}{\sum}} b_n $ converges in $\hat{R}^{\hat{I}}$ as $N$ goes to infinity.\\
 We denote the limit $\underset{i=0}{\overset{+\infty}{\sum}} b_n := (\overline{\underset{i=0}{\overset{N}{\sum}} b_n})_{N \in \N^*}$.\\
 Conversely, if $a= (\overline{a_N})_{N  \in \N^*} \in \hat{R}^{\hat{I}}$, let $a_n \in R$ be any representative of $\overline{a_n}$ in $R$, then $a= \underset{i=0}{\overset{+\infty}{\sum}} b_n$ where $b_0=a_1$ and $b_{n}=a_{n+1}-a_n$ for $n \in \N^*$.
\end{rem}

\paragraph{}
We proceed similarly as in the paragraph preceding Proposition \ref{ADO_form}. Let $\mathcal{K}$ be a knot seen as a $(1,1)$ tangle and $D$ a diagram of it. For a state diagram of $D$ we define:
\begin{align*}
D(i_1, \dots, i_N) = &(\prod_{j=1}^{S} q^{\mp(\alpha -2 \epsilon_j)})\prod_{k \in pos} q^{\frac{i_k(i_k-1)}{2}} \qbinom{a_k+i_k}{i_k}_{q}  \{\alpha-a_k; i_k\}_{q} \\ &  \times  q^{-(a_k+b_k) \alpha} q^{2(a_k+ i_k)(b_k-i_k)} \prod_{k \in neg}  (-1)^{i_k} q^{-\frac{i_k(i_k-1)}{2}} \qbinom{a_k+i_k}{i_k}_{q} \\ &  \times  \{\alpha-a_k; i_k\}_{q} q^{(a_k+b_k) \alpha} q^{- 2a_k b_k}
\end{align*}
where $neg \ \cup \ pos = [|1, N|]$ and $k \in pos$ if the $k$-th crossing of D is positive, else $k \in neg$. $a_k, b_k$ are the strands labels at the $k$-th crossing of the state diagram (see Figure \ref{crossings_simple}), $S$ is the number of \def\svgwidth{5mm} $+$ \def\svgwidth{5mm} appearing in the diagram, and $\epsilon_j$ the strand label at the j-th \def\svgwidth{5mm} or \def\svgwidth{5mm}, the $\mp$ sign is negative if \def\svgwidth{5mm} and positive if \def\svgwidth{5mm}.

\begin{rem}
Note that the $a_k$ and $b_k$ appearing are defined in terms of $i_j$. As mentioned previously, you can find some examples of state diagrams in Section \ref{section_compute} Figure \ref{trefoil_w}, \ref{figure8_w}, \ref{torus_2_5_w}, \ref{figure_of_nine_w}.
\end{rem}

\begin{defn}\label{unified_form}
Let $\mathcal{K}$ a knot and $T$ 1-1 tangle whose closure is $\mathcal{K}$. Let $D$ be a diagram of $T$.\\
We define:
\begin{align*}
F_{\infty}(q,A,D)&:= q^{\frac{f \alpha^2}{2}} \underset{\overline{i}=0}{\overset{+\infty}{\sum}}D(i_1, \dots, i_N)\\
&= q^{\frac{f \alpha^2}{2}} \underset{\overline{i}=0}{\overset{+\infty}{\sum}} (\prod_{j=1}^{S} q^{\mp(\alpha -2 \epsilon_j)})\prod_{k \in pos} q^{\frac{i_k(i_k-1)}{2}} \qbinom{a_k+i_k}{i_k}_{q}  \{\alpha-a_k; i_k\}_{q} \\ &  \times  q^{-(a_k+b_k) \alpha} q^{2(a_k+ i_k)(b_k-i_k)} \prod_{k \in neg}  (-1)^{i_k} q^{-\frac{i_k(i_k-1)}{2}} \qbinom{a_k+i_k}{i_k}_{q}  \\ &  \times \{\alpha-a_k; i_k\}_{q} q^{(a_k+b_k) \alpha} q^{- 2a_k b_k}
\end{align*}
where $\overline{i}= (i_1, \dots, i_N)$, $N$ is the number of crossings, $S$ the number of \def\svgwidth{5mm} $+$ \def\svgwidth{5mm} and $f$ is the framing of the knot.

We have that $F_{\infty}(q, A, D)$ is a well defined element of $q^{\frac{f \alpha^2}{2}} \hat{R}^{\hat{I}}$.
\end{defn}

Note that it is not clear that this element is a knot invariant, it could depend \textit{a priori} on the diagram $D$ and we will have to prove later that it does not.

\subsection{Recovering the ADO invariant}
In this subsection we will see how to evaluate at a root of unity an element of $\hat{R}^{\hat{I}}$. We will first need some useful lemma.

Let $r$ any integer, $R_r=\Z[\zeta_{2r}, A^{\pm1}]$, we use the same previous notations and $\zeta_{2r}^{\alpha} := A$.
\begin{lemme} For any $k$,
$\{ \alpha -k ; r\}_{\zeta_{2r}}= (-1)^k \zeta_{2r}^{\frac{-r(r-1)}{2}} \{r\alpha\}_{\zeta_{2r}}$.
\label{ralphaLemma}
\end{lemme}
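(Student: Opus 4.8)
The plan is to expand the definition $\{\alpha - k; r\}_{\zeta_{2r}} = \prod_{i=0}^{r-1}\{\alpha - k - i\}_{\zeta_{2r}}$ directly, pull the powers of $\zeta_{2r}$ out of each factor, and recognize the remaining product as a product over a complete set of $r$-th roots of unity. Writing $q = \zeta_{2r}$ for brevity and reindexing by $j = k+i$, the quantity becomes $\prod_{j=k}^{k+r-1}\bigl(q^{\alpha-j} - q^{-\alpha+j}\bigr)$, so everything reduces to evaluating this windowed product.

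The key step is to factor $q^{-j}$ out of each term, giving
\[ \{\alpha-k;r\}_{\zeta_{2r}} = q^{-\sum_{j=k}^{k+r-1} j}\prod_{j=k}^{k+r-1}\bigl(q^{\alpha} - q^{-\alpha}q^{2j}\bigr). \]
Two elementary facts then finish the computation. First, $\sum_{j=k}^{k+r-1} j = rk + \tfrac{r(r-1)}{2}$, and since $q^r = \zeta_{2r}^r = e^{i\pi} = -1$ we have $q^{-rk} = (-1)^k$, which produces the prefactor $(-1)^k q^{-r(r-1)/2}$. Second, $q^2 = \zeta_{2r}^2$ is a \emph{primitive} $r$-th root of unity, so as $j$ runs over the $r$ consecutive integers $k,\dots,k+r-1$ the powers $q^{2j}$ run exactly once through all $r$-th roots of unity. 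Factoring $q^{-\alpha}$ out of each term and applying $\prod_{j=0}^{r-1}(T-\omega^j) = T^r - 1$ with $\omega = q^2$ and $T = q^{2\alpha}$, I would obtain
\[ \prod_{j=k}^{k+r-1}\bigl(q^{\alpha} - q^{-\alpha}q^{2j}\bigr) = q^{-r\alpha}\bigl(q^{2r\alpha} - 1\bigr) = q^{r\alpha} - q^{-r\alpha} = \{r\alpha\}_{\zeta_{2r}}. \]

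Combining the two factors yields $\{\alpha-k;r\}_{\zeta_{2r}} = (-1)^k \zeta_{2r}^{-r(r-1)/2}\{r\alpha\}_{\zeta_{2r}}$, which is exactly the claim. The computation is essentially routine; the only point requiring care is the bookkeeping of the sign and the $\zeta_{2r}^{-r(r-1)/2}$ prefactor, and the whole result hinges on the single identity $\zeta_{2r}^r = -1$, which both supplies the $(-1)^k$ through $q^{-rk}$ and guarantees that a window of $r$ consecutive powers of $q^2$ is a full orbit of $r$-th roots of unity. An equivalent route, which isolates where the sign comes from, is to prove the base case $k=0$ by the same root-of-unity product and then use the one-step relation $\{\alpha-k;r\}_{\zeta_{2r}} = -\{\alpha-(k-1);r\}_{\zeta_{2r}}$, which follows from $\{\beta-r\}_{\zeta_{2r}} = -\{\beta\}_{\zeta_{2r}}$ (again a consequence of $\zeta_{2r}^r = -1$); a short induction on $k$ then recovers the factor $(-1)^k$.
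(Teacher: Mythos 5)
Your proof is correct and rests on exactly the same two ingredients as the paper's: the identity $\zeta_{2r}^{r}=-1$ and the factorization $X^{r}-1=\prod_{\omega^{r}=1}(X-\omega)$ applied at $X=\zeta_{2r}^{2\alpha}$. The only difference is bookkeeping: you handle general $k$ in a single pass, with the sign $(-1)^{k}$ emerging from the collected power $\zeta_{2r}^{-rk}$, whereas the paper first reduces to $k=0$ via the shift relation $\{\beta-r\}_{\zeta_{2r}}=-\{\beta\}_{\zeta_{2r}}$ and then evaluates $\{\alpha;r\}_{\zeta_{2r}}$ by the same root-of-unity product --- which is precisely the ``equivalent route'' you sketch in your closing remark.
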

\begin{proof}~

$
 \begin{array}{lll}
 \{ \alpha-k; r\} &= \{ \alpha-k \} \dots \{ \alpha-k-r+1 \} \\
  &= (-1)  \{ \alpha-k+1 \} \dots \{ \alpha-k-r+2 \} \\
  &= (-1)^k  \{ \alpha \} \dots \{ \alpha-r+1 \} \\
  &= (-1)^k  \{ \alpha; r \}
 \end{array}
 $

$
\begin{array}{lll}
\{ \alpha; r \} &= \prod_{j=0}^{r-1} (\zeta_{2r}^{\alpha-j} - \zeta_{2r}^{- \alpha+j}) \\
&= \zeta_{2r}^{\frac{-r(r-1)}{2}} \zeta_{2r}^{-r \alpha} \prod_{j=0}^{r-1} (\zeta_{2r}^{2 \alpha} -\zeta_{2r}^{2 j}) \\
&=\zeta_{2r}^{\frac{-r(r-1)}{2}} \zeta_{2r}^{-r \alpha} (\zeta_{2r}^{2 r\alpha} -1)\\
&= \zeta_{2r}^{\frac{-r(r-1)}{2}} \{r \alpha \}
\end{array}
$

where the fourth equality is just developing the factorized form of $X^r -1$ at $X=\zeta_{2r}^{2\alpha}$.

\end{proof}

\medskip
Let $I=\{ r \alpha \}_{\zeta_{2r}} R_r$ and we build the $I$-adic completion of $R_r$:
\begin{defn}
$\hat{R_r^I}=\underset{\underset{n}{\leftarrow}}{\lim} \dfrac{R_r}{I^n} = \{ (a_n)_{n \in \N^*} \in \prod_{i=1}^{\infty} \frac{R_r}{I^n} \ | \ \rho'_n(a_{n+1})=a_n \}$  where $\rho'_n: \frac{R_r}{I^{n+1}} \to \frac{R_r}{I^{n}}$ is the projection map.
\end{defn}

This completion is a bigger ring containing $R_r$:
\begin{prop}
The canonical projection maps induce an injective map $R_r \xhookrightarrow{} \hat{R_r^I}$
\end{prop}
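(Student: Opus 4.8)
The map $R_r \to \hat{R_r^I}$ is the one induced by the quotient projections $R_r \to R_r/I^n$ into the projective limit, so its kernel is precisely $\bigcap_{n \in \N^*} I^n$. Hence, exactly as in the previous proposition, the plan is to reduce injectivity to the statement that $\bigcap_{n \in \N^*} I^n = \{0\}$.

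The first simplification I would make is to rewrite the generator of $I$. Since $\zeta_{2r}^{\alpha} = A$, we have $\{ r\alpha \}_{\zeta_{2r}} = \zeta_{2r}^{r\alpha} - \zeta_{2r}^{-r\alpha} = A^r - A^{-r} = A^{-r}(A^{2r}-1)$, and $A^{-r}$ is a unit of the Laurent polynomial ring $R_r = \Z[\zeta_{2r}][A^{\pm 1}]$. Therefore $I = (A^{2r}-1)R_r$ and $I^n = (A^{2r}-1)^n R_r$, which is the clean form I want to work with.

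Next I would run a degree/valuation argument in the variable $A$, in the same spirit as the proof of the preceding proposition. Since $\zeta_{2r} = e^{i\pi/r} \in \C$, the ring $\Z[\zeta_{2r}]$ is an integral domain, and hence so is $R_r$. For nonzero $u \in R_r$ write $u = \sum_{i=a}^{b} c_i A^i$ with $c_a, c_b \neq 0$ and set $\deg_A(u) = b$, $val_A(u) = a$. Because the top and bottom coefficients live in the domain $\Z[\zeta_{2r}]$, they cannot cancel under multiplication, so $\deg_A$ and $val_A$ are additive on nonzero elements; in particular the width $w(u) := \deg_A(u) - val_A(u) \geq 0$ satisfies $w(uv) = w(u) + w(v)$. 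Note $w(A^{2r}-1) = 2r$, so $w\bigl((A^{2r}-1)^n\bigr) = 2rn$.

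Finally, take $x \in \bigcap_n I^n$ and suppose $x \neq 0$. For each $n$ write $x = (A^{2r}-1)^n y_n$; since $R_r$ is a domain, $y_n \neq 0$, and additivity of the width gives $w(x) = 2rn + w(y_n) \geq 2rn$. This is impossible as soon as $2rn > w(x)$, forcing $x = 0$, so the intersection is trivial and the map is injective. The computation is elementary; the only point requiring care is the additivity of the width, which rests on $\Z[\zeta_{2r}]$ being a domain (so leading and trailing coefficients never vanish upon multiplication) and on having first rewritten $\{ r\alpha \}_{\zeta_{2r}}$ as the unit multiple $A^{2r}-1$. I expect this to be the only real obstacle, everything else following the template already established.
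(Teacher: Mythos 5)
Your proof is correct and is essentially the paper's own argument: the paper also reduces to $\bigcap_n I^n = \{0\}$ and uses the quantity $len(x) = \deg_A(x) - val_A(x)$ (your width $w$), whose additivity forces $len(x) \geq 2rn$ for all $n$, a contradiction. Your extra steps — rewriting $\{r\alpha\}_{\zeta_{2r}}$ as a unit times $A^{2r}-1$ and justifying additivity via $\Z[\zeta_{2r}]$ being a domain — just make explicit what the paper leaves implicit.
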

\begin{proof}
It is sufficient to prove that $\underset{n \in \N^*}{\bigcap} I^n =\{0\}$.\\
Since $R_r= \Z[\zeta_{2r}][A^{\pm 1}]$, it is a Laurent polynomial ring. Hence, any non zero element $x$ can be uniquely written $x=\sum_{i=l}^n a_n A^n$ where $a_k \in \Z[\zeta_{2r}], \forall k \in \{l, l+1, \dots, n-1, n \}$ and $a_n, a_l \neq 0$.\\
Let's define $len(x)= n-l$. We have that $len(xy)=len(x) + len(y)$.\\
Thus if $x \in \underset{n \in \N^*}{\bigcap} I^n$ is non zero, of length $n$, $\exists y \in R_r$ such that $x=\{r \alpha \}^n y$ hence $len(x) =2rn + len(y)$, contradiction.

\end{proof}
\bigskip
Let's now define the evaluation map from $\hat{R}^{\hat{I}}$ to $\hat{R_r^I}$.

At the level of $R$ and $R_r$ we have a well defined evaluation map, $ev_{\zeta_{2r}}: R \to R_r, \ q \mapsto \zeta_{2r}$. We will extend this map to the completions.

\begin{prop}
$ev_{\zeta_{2r}}(I_{rn}) = I^n$
\end{prop}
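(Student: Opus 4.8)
The plan is to reduce the identity to a computation on a generating set of $I_{rn}$ and then invoke Lemma \ref{ralphaLemma}. First I would record that $ev_{\zeta_{2r}}\colon R \to R_r$ is surjective: its image contains $A^{\pm1}$ and $\Z[\zeta_{2r}]$ (note $q^{-1}\mapsto \zeta_{2r}^{-1}\in \Z[\zeta_{2r}]$, since $\zeta_{2r}$ is a root of unity). Consequently the image $ev_{\zeta_{2r}}(I_{rn})$ of the ideal $I_{rn}$ is itself an ideal of $R_r$, and it is generated by the images of any generating family of $I_{rn}$. So it suffices to evaluate $ev_{\zeta_{2r}}$ on generators and identify the resulting ideal with $\{r\alpha\}_{\zeta_{2r}}^n R_r = I^n$.

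I would work with the defining generators $\{\alpha+l;rn\}_q$, $l\in\Z$. Since $ev_{\zeta_{2r}}$ only replaces $q$ by $\zeta_{2r}$ while fixing $A=\zeta_{2r}^{\alpha}$, it sends such a generator to $\{\alpha+l;rn\}_{\zeta_{2r}}=\prod_{j=0}^{rn-1}\{\alpha+l-j\}_{\zeta_{2r}}$. The crux is to split this product of $rn$ consecutive factors into $n$ blocks of length $r$, the $m$-th block (for $0\le m\le n-1$) being $\{\alpha+l-mr;r\}_{\zeta_{2r}}$. Applying Lemma \ref{ralphaLemma} to each block (with $k=mr-l$) rewrites it as $(-1)^{mr-l}\zeta_{2r}^{-r(r-1)/2}\{r\alpha\}_{\zeta_{2r}}$, that is, a unit of $R_r$ times $\{r\alpha\}_{\zeta_{2r}}$. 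Taking the product over the $n$ blocks yields $\{\alpha+l;rn\}_{\zeta_{2r}}=u_l\,\{r\alpha\}_{\zeta_{2r}}^{\,n}$ with $u_l$ a unit (a sign times a power of $\zeta_{2r}$). Hence every generator maps to a unit multiple of $\{r\alpha\}_{\zeta_{2r}}^n$, giving $ev_{\zeta_{2r}}(I_{rn})=\{r\alpha\}_{\zeta_{2r}}^n R_r = I^n$.

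As a consistency check I would redo the computation with the alternative generators $\{rn;i\}\{\alpha;rn-i\}$ coming from the earlier lemma: for $i\ge 1$ the factor $\{rn\}$ occurs in $\{rn;i\}$ and maps to $\zeta_{2r}^{rn}-\zeta_{2r}^{-rn}=(-1)^n-(-1)^n=0$, so all of these generators vanish and only the $i=0$ generator $\{\alpha;rn\}_{\zeta_{2r}}$ survives, recovering the same answer. I do not expect a serious obstacle here; the only points needing care are the block re-indexing, the verification that each length-$r$ block really is a unit multiple of $\{r\alpha\}_{\zeta_{2r}}$ (which is exactly the content of Lemma \ref{ralphaLemma}), and the check that the accumulated prefactor $u_l$ is a genuine unit of $R_r$ rather than merely nonzero.
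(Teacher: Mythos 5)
Your proof is correct and takes essentially the same route as the paper, whose entire proof is ``Direct application of Lemma~\ref{ralphaLemma}'': your splitting of $ev_{\zeta_{2r}}(\{\alpha+l;rn\}_q)$ into $n$ consecutive blocks of length $r$, each equal by that lemma to a unit (a sign times a power of $\zeta_{2r}$) multiplied by $\{r\alpha\}_{\zeta_{2r}}$, is exactly what that phrase abbreviates. The only additions you make --- the surjectivity of $ev_{\zeta_{2r}}$ so that the image of an ideal is the ideal generated by the images of generators, and the consistency check with the generators $\{rn;i\}\{\alpha;rn-i\}$ --- are sound and merely make the paper's implicit steps explicit.
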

\begin{proof}
Direct application of Lemma~\ref{ralphaLemma}. 
\end{proof}
\bigskip
Hence, $ev_r$ factorize into maps $\psi_n: R/I_{rn} \to R_r / I^n$, we can then define the map extension:
\begin{prop}
We have a well defined map: \[ ev_r: \hat{R}^{\hat{I}} \to \hat{R_r^I} \] such that, if $(a_n)_{n \in \N^*} \in \hat{R}^{\hat{I}}$, $ev_r((a_n)_{n \in \N^*}) = (\psi_n(a_{rn}))_{n \in \N^*}$.

\end{prop}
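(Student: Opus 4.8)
The plan is to verify that the prescription $(a_n)_{n\in\N^*} \mapsto (\psi_n(a_{rn}))_{n\in\N^*}$ genuinely lands in $\hat{R_r^I}$, i.e.\ that the resulting sequence is coherent for the projective system defining $\hat{R_r^I}$; once this is checked the map is unambiguous, since $a_{rn}$ is simply the $rn$-th coordinate of the given element and $\psi_n$ is already available. First I would recall from the preceding proposition that $ev_{\zeta_{2r}}(I_{rn}) = I^n$, so in particular $ev_{\zeta_{2r}}(I_{rn}) \subseteq I^n$; this inclusion is exactly what makes each $\psi_n \colon R/I_{rn} \to R_r/I^n$ a well-defined ring homomorphism, namely the unique map induced by $ev_{\zeta_{2r}}$ on the quotients.

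The heart of the argument is a single commutativity check. Let $\pi \colon R/I_{r(n+1)} \to R/I_{rn}$ denote the natural projection coming from the inclusion $I_{r(n+1)} \subseteq I_{rn}$; this is precisely the composite $\rho_{rn}\circ \rho_{rn+1}\circ \cdots \circ \rho_{r(n+1)-1}$ inside the system $\hat{I}$, so that the coherence of $(a_n)_{n\in\N^*}$ yields $\pi(a_{r(n+1)}) = a_{rn}$. I would then show that the square with horizontal maps $\psi_{n+1}, \psi_n$, left vertical map $\pi$, and right vertical map $\rho'_n \colon R_r/I^{n+1} \to R_r/I^n$ commutes. This holds because every map in sight is induced by the single ring homomorphism $ev_{\zeta_{2r}} \colon R \to R_r$: lifting to $R$, both composites $\rho'_n\circ\psi_{n+1}$ and $\psi_n\circ\pi$ are represented by $R \xrightarrow{ev_{\zeta_{2r}}} R_r \to R_r/I^n$, hence agree on $R/I_{r(n+1)}$.

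Combining these two facts gives the coherence condition directly: $\rho'_n\bigl(\psi_{n+1}(a_{r(n+1)})\bigr) = \psi_n\bigl(\pi(a_{r(n+1)})\bigr) = \psi_n(a_{rn})$, so $(\psi_n(a_{rn}))_{n\in\N^*}$ satisfies the defining relation of $\hat{R_r^I}$ and $ev_r$ is well defined. As a byproduct, since each $\psi_n$ is a ring homomorphism and a projective limit of ring homomorphisms is again one, $ev_r$ is automatically a ring homomorphism.

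I do not expect a genuine obstacle here: the content is the commutativity of the square, which reduces to the universal property of the quotients and the fact that all maps descend from $ev_{\zeta_{2r}}$. The only point requiring care is the re-indexing $n \mapsto rn$, that is, recognising that passing from $\hat{R}^{\hat{I}}$ to $\hat{R_r^I}$ amounts to identifying the cofinal subsystem $(I_{rn})_{n\in\N^*}$ of $\hat{I}$ with the $I$-adic system on $R_r$ through the equality $ev_{\zeta_{2r}}(I_{rn}) = I^n$; keeping the two index conventions aligned is the one place where a slip would be easy.
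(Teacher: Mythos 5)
Your proof is correct and takes essentially the same approach as the paper: well-definedness of $ev_r$ is reduced to the commutativity of the square formed by $\psi_{n+1}$, $\psi_n$, the projection $R/I_{r(n+1)} \to R/I_{rn}$ (the paper's $\lambda_n$, your $\pi$), and $\rho'_n$. The paper merely displays this diagram and asserts commutativity, whereas you also supply the justification — both composites are induced by the single ring map $ev_{\zeta_{2r}}\colon R \to R_r$ — which is exactly the intended argument.
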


\begin{proof}
If we denote $\lambda_n : R/I_{r(n+1)} \to R/I_{rn}$ the projective maps, it lie on the fact that the following diagram is commutative:
\begin{center}
\begin{tikzpicture}
  \matrix (m) [matrix of math nodes,row sep=3em,column sep=4em,minimum width=2em]
  {
     R/I_{r(n+1)} & R_r / I^{n+1} \\
     R/I_{rn} & R_r / I^n \\};
  \path[-stealth]
    (m-1-1) edge node [left] {$\lambda_n$} (m-2-1)
            edge node [below] {$\psi_{n+1}$} (m-1-2)
    (m-2-1.east|-m-2-2) edge node [below] {$\psi_{n}$} (m-2-2)
    (m-1-2) edge node [right] {$\rho_n'$} (m-2-2);
\end{tikzpicture}
\end{center}

\end{proof}

\bigskip

It is now time to study the element $F_{\infty} (\zeta_{2r}, A, D) := ev_r( F_{\infty}(q,A,D))$, we will see that the ADO invariant $ADO_r(A,\mathcal{K})$ can be factorized from it.\\
In order to do so, we will need some useful computations:

\begin{lemme}
We have the following factorizations:
\begin{itemize}
\item $\zeta_{2r}^{\frac{(i+rl)(i+rl-1)}{2}}=(-1)^{il} \zeta_{2r}^{\frac{rl(rl-1)}{2}} \zeta_{2r}^{\frac{i(i-1)}{2}}$,
\item $\{ \alpha -a -ru; i +rl\}_{\zeta_{2r}} = (-1)^{al+rul +ui+li} \zeta_{2r}^{\frac{-rl(r-1)}{2}} \zeta_{2r}^{\frac{-rl(l-1)}{2}} \{ r\alpha \}_{\zeta_{2r}}^l \{ \alpha -a; i\}_{\zeta_{2r}}$,
\item $\qbinom{a+i+ r(u+l)}{i+rl}_{\zeta_{2r}} =(-1)^{al+rul+ui} \binom{u+l}{l} \qbinom{a+i}{i}_{\zeta_{2r}}$
\item $ \zeta_{2r}^{\frac{-rl(r-1)}{2}} \zeta_{2r}^{\frac{-rl(l-1)}{2}} = \zeta_{2r}^{\frac{-rl(rl-1)}{2}}$
\end{itemize}
\label{factorLemma}
\end{lemme}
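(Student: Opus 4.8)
The plan is to prove the four identities independently; each is an elementary bookkeeping computation in $\Z[\zeta_{2r}]$, resting only on the two facts $\zeta_{2r}^{r}=-1$ and $\zeta_{2r}^{2r}=1$ (so every power of $\zeta_{2r}$ reduces modulo $2r$, and every block of $r$ consecutive steps contributes a sign) together with Lemma~\ref{ralphaLemma}. I would dispatch the two purely exponential identities first. For the first, expand $(i+rl)(i+rl-1)=i(i-1)+rl(rl-1)+2irl$, divide by $2$, and read off $\zeta_{2r}^{\frac{(i+rl)(i+rl-1)}{2}}=\zeta_{2r}^{\frac{i(i-1)}{2}}\zeta_{2r}^{\frac{rl(rl-1)}{2}}\zeta_{2r}^{irl}$; the last factor is $(\zeta_{2r}^{r})^{il}=(-1)^{il}$, which is exactly the claim. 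The fourth identity then follows by adding the two exponents $\frac{-rl(r-1)}{2}+\frac{-rl(l-1)}{2}$ into a single one and reducing it modulo $2r$, again trading multiples of $r$ for signs via $\zeta_{2r}^{r}=-1$.

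For the second identity I would write $\{\alpha-a-ru;\,i+rl\}_{\zeta_{2r}}$ as the product of its $i+rl$ consecutive factors and group them into $l$ consecutive blocks of length $r$ followed by a final block of length $i$. Each length-$r$ block has the form $\{\alpha-k;\,r\}_{\zeta_{2r}}$ and is evaluated by Lemma~\ref{ralphaLemma} to $(-1)^{k}\zeta_{2r}^{-\frac{r(r-1)}{2}}\{r\alpha\}_{\zeta_{2r}}$. Multiplying the $l$ blocks accumulates the factor $\{r\alpha\}_{\zeta_{2r}}^{l}$, the power $\zeta_{2r}^{-\frac{rl(r-1)}{2}}$, and a product of signs $(-1)^{\sum_m k_m}$ whose exponents $k_m$ run through an arithmetic progression; summing that progression yields the sign $(-1)^{al+rul+ui+li}$ together with a leftover block-to-block contribution which one converts into the remaining power of $\zeta_{2r}$ using $\zeta_{2r}^{r}=-1$. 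The final block of $i$ factors is exactly $\{\alpha-a;\,i\}_{\zeta_{2r}}$.

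The third identity is a quantum analogue of Lucas' theorem at the $2r$-th root of unity. The key inputs are that $\{mr\}_{\zeta_{2r}}=(-1)^{m}-(-1)^{m}=0$ and $[mr]=0$, so in the product formula for $\qbinom{a+i+r(u+l)}{i+rl}_{\zeta_{2r}}$ the factors indexed by multiples of $r$ must be paired off and replaced, via a ratio/limiting argument (equivalently the $q$-Vandermonde identity), by the classical binomial $\binom{u+l}{l}$, while the remaining non-multiples of $r$ reassemble into $\qbinom{a+i}{i}_{\zeta_{2r}}$; the sign $(-1)^{al+rul+ui}$ is collected from the $\zeta_{2r}^{r}=-1$ normalisations picked up along the way.

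The routine parts are the expansions and the arithmetic summations. The one place demanding care, and where I expect the main effort, is the sign bookkeeping: tracking the accumulated powers of $(-1)$ across the $l$ blocks in the second and third identities, and correctly reducing the resulting $\zeta_{2r}$-exponents modulo $2r$. This is the step most prone to an off-by-a-sign slip, so I would verify the normalisations on a small case such as $r=2$ before trusting the general computation.
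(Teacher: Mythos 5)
Your strategy coincides with the paper's proof point for point: the first bullet by expanding the exponent (the cross term $irl$ giving $(-1)^{il}$), the second by cutting the product into blocks of length $r$ evaluated by Lemma~\ref{ralphaLemma}, the third by extracting the factors indexed by multiples of $r$ (the paper's concrete version of your ``ratio/limiting argument'' is the evaluation $\frac{\{rk\}_q}{\{r\}_q}\mapsto(-1)^{1-k}k$ together with $\{t+r\}=-\{t\}$), and the fourth by exponent arithmetic; the first and third bullets indeed go through this way. The genuine problem sits exactly in the step you defer, the sign bookkeeping, and it cannot be closed: carried out honestly, your own computation refutes the second and fourth identities rather than proving them, because they are false as stated when $r$ is even and $l\equiv 2,3\pmod 4$. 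For the fourth bullet, the right-hand exponent minus the sum of the two left-hand exponents is $-\frac{rl(rl-1)}{2}+\frac{rl(r-1)}{2}+\frac{rl(l-1)}{2}=-\frac{rl(r-1)(l-1)}{2}$, which is a multiple of $r$ but not of $2r$ in general, so the two sides differ by the sign $(-1)^{l(l-1)(r-1)/2}$. Concretely, for $r=l=2$ the left side is $\zeta_4^{-2}\zeta_4^{-2}=1$ while the right side is $\zeta_4^{-6}=-1$.

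The same defect appears in the second bullet, where your accounting is also slightly garbled: the tail block is $\{\alpha-a-r(u+l);i\}_{\zeta_{2r}}=(-1)^{(u+l)i}\{\alpha-a;i\}_{\zeta_{2r}}$, not ``exactly'' $\{\alpha-a;i\}_{\zeta_{2r}}$ (that shift is where $ui+li$ comes from), while the $m$-th length-$r$ block contributes $(-1)^{a+ru+mr}\zeta_{2r}^{-r(r-1)/2}\{r\alpha\}_{\zeta_{2r}}$ by Lemma~\ref{ralphaLemma}. Collecting everything gives the total $(-1)^{al+rul+ui+li}\,(-1)^{rl(l-1)/2}\,\zeta_{2r}^{-rl(r-1)/2}\{r\alpha\}_{\zeta_{2r}}^{l}\{\alpha-a;i\}_{\zeta_{2r}}$, and the factor $(-1)^{rl(l-1)/2}=\zeta_{2r}^{-r^2l(l-1)/2}$ is \emph{not} the stated $\zeta_{2r}^{-rl(l-1)/2}$ for the parities above: with $r=2$, $l=2$, $a=u=i=0$ one has $\{\alpha;4\}_{\zeta_4}=\{\alpha;2\}_{\zeta_4}^2=\zeta_4^{-2}\{2\alpha\}_{\zeta_4}^2=-\{2\alpha\}_{\zeta_4}^2$, whereas the lemma claims $+\{2\alpha\}_{\zeta_4}^2$. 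So the $r=2$ sanity check you propose would have disproved the statement; it needs to be run, not promised. To be fair, you inherit this from the paper: its own proof commits the same slip (in the fourth bullet it collapses $\prod_{j}\prod_{k}\zeta_{2r}^{-k-rj}$ to $\prod_{j}\zeta_{2r}^{-rj}\prod_{k}\zeta_{2r}^{-k}$, replacing the correct exponent $-r^2j$ by $-rj$), and the error is invisible downstream because in Proposition~\ref{ADO_factor_prop} the factors produced by bullets 1, 2 and 4 are multiplied together and the spurious powers cancel exactly as the correct signs $(-1)^{rl(l-1)/2}$ would. But as identities in $\Z[\zeta_{2r},A^{\pm1}]$, bullets 2 and 4 only become true after replacing $\zeta_{2r}^{-rl(l-1)/2}$ by $(-1)^{rl(l-1)/2}$, and no bookkeeping, yours or the paper's, can establish them as written.
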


\begin{proof}~
\begin{enumerate}
\item The first dot is obtained by developing the product.
\item The second dot is an application of Lemma \ref{ralphaLemma}.\\
First $\{ \alpha -a -ru;i +rl\}= \zeta_{2r}^{(i+rl)ru} \{ \alpha -a;i +rl\}=(-1)^{iu}(-1)^{rul}\{ \alpha -a;i +rl\}$.\\
Then,
\begin{flalign*}
\{ \alpha -a;i +rl\}& = \{ \alpha -a;rl\}\{ \alpha -a-rl;i\} \\
& = (-1)^{al} \zeta_{2r}^{\frac{-rl(l-1)}{2}} \{\alpha; rl\} \{ \alpha -a -rl;i\} \\
& =(-1)^{al} \zeta_{2r}^{\frac{-rl(l-1)}{2}} \zeta_{2r}^{\frac{-rl(r-1)}{2}} \{r\alpha\}^l \{ \alpha -a -rl;i\} 
\end{flalign*}
Finally $\{ \alpha -a -rl;i\}= (-1)^{li} \{ \alpha -a ; i \}$.\\
Put together, we get \begin{multline*} \{ \alpha -a -ru; i +rl\}_{\zeta_{2r}} = (-1)^{al+rul+ui+li}\zeta_{2r}^{\frac{-rl(r-1)}{2}} \zeta_{2r}^{\frac{-rl(l-1)}{2}} \{ r\alpha \}_{\zeta_{2r}}^l \\ \times \{ \alpha -a; i\}_{\zeta_{2r}}. \end{multline*} 
\item The third dot follows from the fact that $ev_{\zeta_{2r}}( \frac{\{rk \}_q }{\{r \}_q })=(-1)^{1-k} k$. In $\qbinom{a+i+ r(u+l)}{i+rl}_{\zeta_{2r}}$ seen as $\frac{\{a+i+ r(u+l)\}!}{\{ a+ru\}! \{ i + rl\}!}$, taking only the terms $\{ rk \}$, we extract $(-1)^{ul} \binom{u+l}{l}$. Now we only have to deal with non multiples of quantum $r$. We use the equality $\{t +r \}= (-1) \{t\}$ in order to have consecutive terms in the denominators (excepted from multiple of $r$), indeed  $\{a + ru \}!= \{ ru\}! \{ a+ru;a \}$ and $ \{i+rl\}) (-1)^{u(i+rl)} \{ i + rl +ru ; i + rl\} $, hence $\frac{\{a+i+ r(u+l)\}!}{\{ a+ru\}! \{ i + rl\}!}=(-1)^{u(i+rl)}  \frac{\{a+i+ r(u+l); a\}}{\{ a+ru;a\}}= (-1)^{u(i+rl)} (-1)^{au} (-1)^{a(u+l)} \frac{\{a+i; a\}}{\{ a;a\}}= (-1)^{ui}(-1)^{rul} (-1)^{al} \qbinom{a+i}{i}_{\zeta_{2r}}$. \\
Putting things together with the quantum $r$ multiple part, we get the desired result.
\item The last dot is obtained as follow: \begin{multline*}\zeta_{2r}^{\frac{-rl(rl-1)}{2}} = \prod_{k=0}^{rl-1} \zeta_{2r}^{-k}=\prod_{j=0}^{l}\prod_{k=0}^{r-1} \zeta_{2r}^{-k-rj}= \prod_{j=0}^{l}\zeta_{2r}^{-rj} \prod_{k=0}^{r-1} \zeta_{2r}^{-k}\\ = \zeta_{2r}^{\frac{-rl(r-1)}{2}} \zeta_{2r}^{\frac{-rl(l-1)}{2}}. \end{multline*}
\end{enumerate}
\end{proof}

\paragraph{}
We proceed similarly as in the paragraph preceding Definition \ref{unified_form} and define an element to each state diagram of $D$ that will be used to factorise $F_{\infty}(q,A,D)$. Let $\mathcal{K}$ be a knot seen as a $(1,1)$ tangle and $D$ a diagram of it. For a state diagram of $D$ we define:
\begin{align*}
D_{C,r}(l_1, \dots, l_N) = &(\prod_{j=1}^{S} \zeta_{2r}^{\mp r\alpha})\prod_{k \in pos} \binom{u_k+l_k}{l_k} \{r\alpha\}_{\zeta_{2r}}^{l_k} \zeta_{2r}^{-(u_k+v_k) r\alpha} \\ & \times  \prod_{k \in neg} (-1)^{l_k} \binom{u_k+l_k}{l_k} \{r\alpha\}_{\zeta_{2r}}^{l_k} \zeta_{2r}^{(u_k+v_k) r\alpha}
\end{align*}
where $neg \ \cup \ pos = [|1, N|]$ and $k \in pos$ if the $k$-th crossing of D is positive, else $k \in neg$, $a_k, b_k \in [|0, \dots, r-1 |]$, $a_k+ ru_k, b_k+ r v_k$ are the strands labels at the $k$-th crossing of the state diagram (see Figure \ref{crossings_factor}), $S$ is the number of \def\svgwidth{5mm} $+$ \def\svgwidth{5mm} appearing in the diagram, and $\epsilon_j$ the strand label at the j-th \def\svgwidth{5mm} or \def\svgwidth{5mm}, the $\mp$ sign is negative if \def\svgwidth{5mm} and positive if \def\svgwidth{5mm}.

\begin{prop} ~\\ \label{ADO_factor_prop}
For a knot $\mathcal{K}$ and a diagram of the knot $D$, $r \in \N^*$, we have the following factorization in $\hat{R_r^I}$:
\[ F_{\infty} (\zeta_{2r}, A, D) = C_{\infty}(r, A, D) \times ADO_r(A,\mathcal{K}) \]
where : \begin{align*}
C_{\infty}(r, A, D)& =\underset{\overline{l}=0}{\overset{+\infty}{\sum}} D_{C,r}(l_1, \dots, l_N)\\
&=\underset{\overline{l}=0}{\overset{+\infty}{\sum}} (\prod_{j=1}^{S} \zeta_{2r}^{\mp r\alpha})\prod_{k \in pos} \binom{u_k+l_k}{l_k} \{r\alpha\}_{\zeta_{2r}}^{l_k} \zeta_{2r}^{-(u_k+v_k) r\alpha} \\ & \hspace{80pt}\times  \prod_{k \in neg} (-1)^{l_k} \binom{u_k+l_k}{l_k} \{r\alpha\}_{\zeta_{2r}}^{l_k} \zeta_{2r}^{(u_k+v_k) r\alpha}
\end{align*}
where $\overline{l}= (l_1, \dots, l_N)$, $N$ is the number of crossings, $S$ the number of \def\svgwidth{5mm} $+$ \def\svgwidth{5mm} and $f$ is the framing of the knot.
\end{prop}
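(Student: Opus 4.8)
The plan is to evaluate $F_\infty(q,A,D)$ term by term at $q=\zeta_{2r}$ and then to split each summation index by Euclidean division modulo $r$, so that the four identities of Lemma~\ref{factorLemma} peel off from every summand an ``ADO part'' and a ``correction part''.

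First I would justify evaluating the series of Definition~\ref{unified_form} termwise. Since $ev_r$ is a ring homomorphism sending $I_{rn}$ into $I^n$ it is continuous for the two completion topologies, hence commutes with the convergent series defining $F_\infty$; the framing prefactor $q^{\frac{f\alpha^2}{2}}$ evaluates to $\zeta_{2r}^{\frac{f\alpha^2}{2}}$ and is simply carried along. Thus $F_\infty(\zeta_{2r},A,D)=\zeta_{2r}^{\frac{f\alpha^2}{2}}\sum_{\overline i\ge 0}ev_{\zeta_{2r}}\big(D(i_1,\dots,i_N)\big)$. I would then write $i_k=\hat i_k+rl_k$ with $\hat i_k\in\{0,\dots,r-1\}$ and $l_k\ge 0$. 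Because the strand labels of a state diagram are integer linear forms in the $i_j$ (they are obtained from the base label $0$ by adding or subtracting the successive $i_j$), each label decomposes additively as $a_k=\hat a_k+ru_k$, $b_k=\hat b_k+rv_k$, $\epsilon_j=\hat\epsilon_j+rw_j$, where $\hat a_k,\hat b_k,\hat\epsilon_j$ are exactly the labels of the ADO state diagram evaluated at $(\hat i_1,\dots,\hat i_N)$ and $u_k,v_k,w_j$ are the same linear forms in the $l_j$. The sum over $\overline i\in\N^N$ then reindexes as $\sum_{\hat{\overline i}=0}^{r-1}\sum_{\overline l\ge 0}$.

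Next I would apply Lemma~\ref{factorLemma} factor by factor to $ev_{\zeta_{2r}}(D(i))$: the first dot splits the Gaussian phase $\zeta_{2r}^{\pm i_k(i_k-1)/2}$, the second dot splits $\{\alpha-a_k;i_k\}_{\zeta_{2r}}$ and extracts $\{r\alpha\}_{\zeta_{2r}}^{l_k}$, and the third dot splits the quantum binomial and extracts $\binom{u_k+l_k}{l_k}$. The purely monomial factors are reduced with $\zeta_{2r}^{2r}=1$ and $\zeta_{2r}^{r}=-1$: in $\zeta_{2r}^{2(a_k+i_k)(b_k-i_k)}$ and $\zeta_{2r}^{-2a_kb_k}$ the $l$-dependence enters only through multiples of $2r$ in the exponent and therefore disappears, leaving purely $\hat i$-dependent phases, whereas $\zeta_{2r}^{\mp(a_k+b_k)\alpha}$ splits as $\zeta_{2r}^{\mp(\hat a_k+\hat b_k)\alpha}\,\zeta_{2r}^{\mp(u_k+v_k)r\alpha}$ with the second factor feeding the correction. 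For the cup/cap prefactor the identity $\zeta_{2r}^{(r-1)x}=\zeta_{2r}^{rx}\zeta_{2r}^{-x}$ with $\zeta_{2r}^{2r\epsilon_j}=1$ turns the ADO convention $\zeta_{2r}^{\pm(r-1)(\alpha-2\epsilon_j)}$ of Proposition~\ref{ADO_form} into the unified convention $\zeta_{2r}^{\mp(\alpha-2\epsilon_j)}$ times a constant $\zeta_{2r}^{\mp r\alpha}$, the latter being exactly the $\prod_{j=1}^{S}\zeta_{2r}^{\mp r\alpha}$ of $D_{C,r}$.

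The computation then collapses: the phase $\zeta_{2r}^{rl_k(rl_k-1)/2}$ coming from the first dot cancels the phase $\zeta_{2r}^{-rl_k(r-1)/2}\zeta_{2r}^{-rl_k(l_k-1)/2}=\zeta_{2r}^{-rl_k(rl_k-1)/2}$ coming from the second dot via the fourth dot, and the $(-1)$-exponents supplied by the three dots at each crossing each occur twice and vanish modulo $2$. Hence $ev_{\zeta_{2r}}(D(i))=D_r(\hat i_1,\dots,\hat i_N)\,D_{C,r}(l_1,\dots,l_N)$, the first factor being the ADO summand $D_r$ of Proposition~\ref{ADO_form} in the variables $\hat i_k$ and the second the correction summand. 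Rearranging the completion-convergent double series and factoring the product of sums --- legitimate because each $D_{C,r}(\overline l)$ lies in $I^{l_1+\dots+l_N}$, so the $\overline l$-sum converges in $\hat{R_r^I}$ while the $\hat{\overline i}$-sum is finite --- gives
\[ F_\infty(\zeta_{2r},A,D)=\Big(\zeta_{2r}^{\frac{f\alpha^2}{2}}\sum_{\hat{\overline i}=0}^{r-1}D_r(\hat i)\Big)\Big(\sum_{\overline l\ge 0}D_{C,r}(l)\Big)=ADO_r(A,\mathcal K)\cdot C_\infty(r,A,D). \]

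I expect the sign and phase accounting to be the main obstacle. One has to verify simultaneously, at every crossing and for both crossing signs, that every auxiliary $(-1)$ and every $\zeta_{2r}^{r(\cdot)}$ produced by Lemma~\ref{factorLemma} and by the reductions $\zeta_{2r}^{r}=-1$, $\zeta_{2r}^{2r}=1$ either cancels in pairs or lands precisely in the prescribed signs of $D_r$ and $D_{C,r}$; in particular the parity of $r$ enters through factors $(-1)^{rl_k}$ in the negative-crossing terms, which must be matched against the explicit $(-1)^{l_k}$ of $D_{C,r}$. Everything else is the mechanical substitution of the additive label decomposition into the identities of Lemma~\ref{factorLemma}.
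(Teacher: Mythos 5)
Your strategy is in essence the paper's own: evaluate termwise, split each summation index by Euclidean division $s_k=\hat i_k+rl_k$, apply Lemma~\ref{factorLemma} crossing by crossing, check that the parasitic signs and phases compensate, and factor the double series, whose $\overline l$-part converges $I$-adically because of the $\{r\alpha\}^{l_1+\dots+l_N}$ factors. The sign bookkeeping, the cup/cap conversion $\zeta_{2r}^{\pm(r-1)(\alpha-2\epsilon_j)}=\zeta_{2r}^{\pm r\alpha}\zeta_{2r}^{\mp(\alpha-2\epsilon_j)}$ and the rearrangement argument are all at the level of the paper's proof or slightly more explicit.

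However, there is a genuine gap at the one step you declare to be ``mechanical'': the label decomposition. You set $a_k=\hat a_k+ru_k$ with $\hat a_k$ the integer linear form in the $\hat i_j$ and $u_k$ the same form in the $l_j$, and then feed these \emph{unreduced} values into Lemma~\ref{factorLemma}. But the binomial identity of that lemma is a $q$-Lucas-type statement whose ``digits'' must lie in $\{0,\dots,r-1\}$; it is simply false otherwise. For instance, with $r=2$ (so $\zeta_{4}=i$), $a=2$, $i=1$, $u=0$, $l=1$, the left-hand side is $\qbinom{5}{3}_{\zeta_{4}}=-2$ while the right-hand side is $(-1)^{2}\binom{1}{1}\qbinom{3}{1}_{\zeta_{4}}=-1$. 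And $\hat a_k$ genuinely escapes $\{0,\dots,r-1\}$ in real diagrams: for the figure-eight knot the label $\hat i+\hat j$ exceeds $r-1$ for many states, and for $5_2$ the label $\hat j-\hat k$ can be negative. On exactly those ``carry'' states your factor-by-factor substitution is invalid. What rescues the proposition is that every such state contributes $0$ to both sides: at the first crossing where the reduced label overflows, the factor $\qbinom{n+m}{n}_{\zeta_{2r}}$ with $n,m\le r-1$, $n+m\ge r$ vanishes, killing the unified summand and the ADO summand alike (for a negative ``borrow'' the vanishing binomial occurs at a later crossing, where the label walk re-enters the admissible range). This vanishing fact is precisely what the paper invokes at this point --- ``this relies on the fact that $\qbinom{n+m}{n}_q=0$ at $q=\zeta_{2r}$ if $n,m\le r-1$ and $n+m\ge r$'' --- to guarantee that the Euclidean reductions $a_k,b_k\in\{0,\dots,r-1\}$ depend only on the $\hat i_j$ and the quotients only on the $l_j$. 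Your write-up never mentions it, so the termwise identity $ev_{\zeta_{2r}}(D(\overline i))=D_r(\hat i_1,\dots,\hat i_N)D_{C,r}(l_1,\dots,l_N)$ is unproved on the carry states. The repair is short --- isolate those states and kill them with the binomial vanishing --- but it is a necessary ingredient, not a mechanical substitution.
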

\begin{proof}
For the sake of simplicity, we will only consider positive crossings in the following proof. We factorize as follows:
\begin{align*}
 F_{\infty}(\zeta_{2r},A, D)&=\zeta_{2r}^{\frac{f \alpha^2}{2}} \underset{\overline{s}=0}{\overset{+\infty}{\sum}} (\prod_{j=1}^{S} \zeta_{2r}^{\mp(\alpha -2 \epsilon_j)})\prod_{k=1}^N \zeta_{2r}^{\frac{s_k(s_k-1)}{2}} \qbinom{z_k+s_k}{s_k}_{\zeta_{2r}} \\ & \times \{\alpha-z_k; s_k\}_{\zeta_{2r}} \zeta_{2r}^{(-z_k-y_k) \alpha} \zeta_{2r}^{ 2(z_k+s_k)(y_k-s_k)}\\
 &=\zeta_{2r}^{\frac{f \alpha^2}{2}} \underset{\overline{i+rl}=0}{\overset{+\infty}{\sum}} (\prod_{j=1}^{S} \zeta_{2r}^{\mp(\alpha -2 \epsilon_j)})\prod_{k=1}^N \zeta_{2r}^{\frac{(i_k+rl_k)(i_k+rl_k-1)}{2}} \\ & \times \qbinom{a_k+i_k+r(u_k+l_k)}{i_k+rl_k}_{\zeta_{2r}}  \{\alpha-(a_k+ru_k); i_k+rl_k\}_{\zeta_{2r}} \\ & \times \zeta_{2r}^{(-(a_k+ru_k)-(b_k+rv_k)) \alpha} \zeta_{2r}^{ 2((a_k+ru_k)+(i_k+rl_k))(b_k+rv_k-(i_k+rl_k))}\\
 &=\zeta_{2r}^{\frac{f \alpha^2}{2}} \underset{\overline{i}=0}{\overset{r-1}{\sum}} (\prod_{j=1}^{S} \zeta_{2r}^{\pm (r-1)(\alpha -2 \epsilon_j)})\prod_{k=1}^N \zeta_{2r}^{\frac{i_k(i_k-1)}{2}} \qbinom{a_k+i_k}{i_k}_{\zeta_{2r}}\\ & \times \{\alpha-a_k; i_k\}_{\zeta_{2r}}  \zeta_{2r}^{(-a_k-b_k) \alpha} \zeta_{2r}^{ 2(a_k+i_k)(b_k-i_k)} \\ & \times \underset{\overline{l}=0}{\overset{+\infty}{\sum}} (\prod_{j=1}^{S} \zeta_{2r}^{\mp r\alpha}) \prod_{k=1}^N \binom{u_k+l_k}{l_k} \{r\alpha\}_{\zeta_{2r}}^{l_k} \zeta_{2r}^{(-u_k-v_k) r\alpha}
\end{align*}

The second equality is obtained by changing variables $s_k=i_k+rl_k$ $0 \leq i_k \leq r-1$ and writing the strands labels at crossings $z_k$ as $z_k=a_k + ru_k$ $0 \leq a_k \leq r-1$ and $y_k$ as $y_k=b_k + rv_k$ $0 \leq b_k \leq r-1$. \\ Note that $a_k$, $b_k$ solely depends on $i_k$ and $u_k$, $b_k$ on $l_k$. This relies on the fact that $\qbinom{n+m}{n}_q =0$ at $q=\zeta_{2r}$ if $n,m \leq r-1$ and $n+m \geq r$.

\medskip
\noindent The third one is obtained by replacing each term with its factorization given by Lemma \ref{factorLemma}, the crossed terms between $i_k$ and $l_k$ are just signs, that eventually compensate. Hence, we have the factorization. 
\end{proof}

\begin{figure}[h!]
\begin{subfigure}[b]{0.5\textwidth}
 \centering
  \def\svgwidth{25mm}
\begingroup%
  \makeatletter%
  \providecommand\color[2][]{%
    \errmessage{(Inkscape) Color is used for the text in Inkscape, but the package 'color.sty' is not loaded}%
    \renewcommand\color[2][]{}%
  }%
  \providecommand\transparent[1]{%
    \errmessage{(Inkscape) Transparency is used (non-zero) for the text in Inkscape, but the package 'transparent.sty' is not loaded}%
    \renewcommand\transparent[1]{}%
  }%
  \providecommand\rotatebox[2]{#2}%
  \newcommand*\fsize{\dimexpr\f@size pt\relax}%
  \newcommand*\lineheight[1]{\fontsize{\fsize}{#1\fsize}\selectfont}%
  \ifx\svgwidth\undefined%
    \setlength{\unitlength}{666.14173228bp}%
    \ifx\svgscale\undefined%
      \relax%
    \else%
      \setlength{\unitlength}{\unitlength * \real{\svgscale}}%
    \fi%
  \else%
    \setlength{\unitlength}{\svgwidth}%
  \fi%
  \global\let\svgwidth\undefined%
  \global\let\svgscale\undefined%
  \makeatother%
  \begin{picture}(1,1.34042553)%
    \lineheight{1}%
    \setlength\tabcolsep{0pt}%
    \put(0,0){\includegraphics[width=\unitlength,page=1]{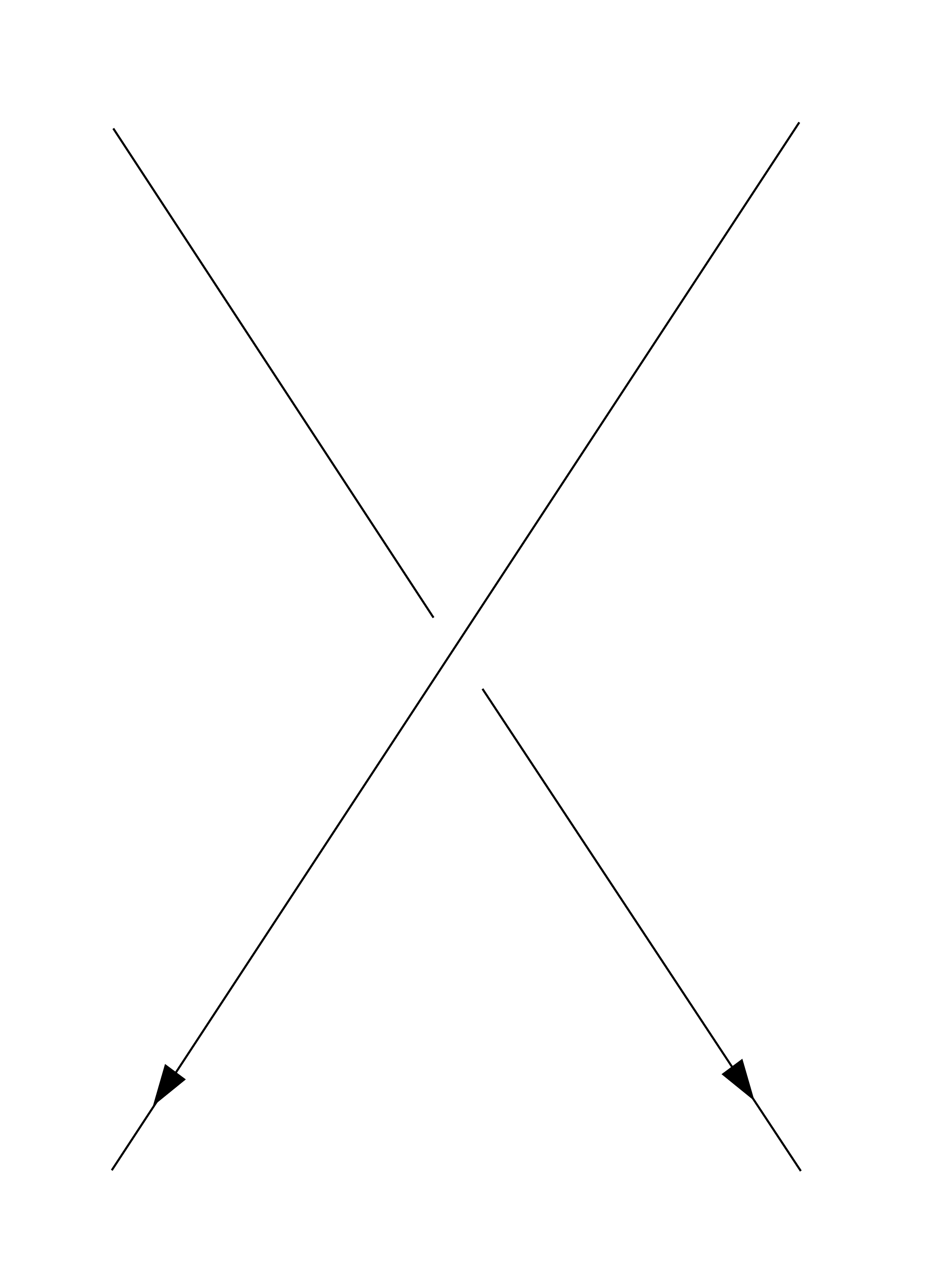}}%
    \put(0.84572423,0.02332179){\color[rgb]{0,0,0}\makebox(0,0)[lt]{\lineheight{1.25}\smash{\begin{tabular}[t]{l}$a_k + r u_k$\end{tabular}}}}%
    \put(-0.29029025,0.02038821){\color[rgb]{0,0,0}\makebox(0,0)[lt]{\lineheight{1.25}\smash{\begin{tabular}[t]{l}$b_k +r v_k$\end{tabular}}}}%
    \put(0.73748325,1.25886345){\color[rgb]{0,0,0}\makebox(0,0)[lt]{\lineheight{1.25}\smash{\begin{tabular}[t]{l}$b_k +r v_k -i_k - r l_k$\end{tabular}}}}%
    \put(-0.6103637,1.26690551){\color[rgb]{0,0,0}\makebox(0,0)[lt]{\lineheight{1.25}\smash{\begin{tabular}[t]{l}$a_k+r u_k +i_k +r l_k$\end{tabular}}}}%
  \end{picture}%
\endgroup%

  \label{cross_factor}
   \caption{Positive crossing.}
 \end{subfigure}%
 \begin{subfigure}[b]{0.5\textwidth}
 \centering
  \def\svgwidth{25mm}
\begingroup%
  \makeatletter%
  \providecommand\color[2][]{%
    \errmessage{(Inkscape) Color is used for the text in Inkscape, but the package 'color.sty' is not loaded}%
    \renewcommand\color[2][]{}%
  }%
  \providecommand\transparent[1]{%
    \errmessage{(Inkscape) Transparency is used (non-zero) for the text in Inkscape, but the package 'transparent.sty' is not loaded}%
    \renewcommand\transparent[1]{}%
  }%
  \providecommand\rotatebox[2]{#2}%
  \newcommand*\fsize{\dimexpr\f@size pt\relax}%
  \newcommand*\lineheight[1]{\fontsize{\fsize}{#1\fsize}\selectfont}%
  \ifx\svgwidth\undefined%
    \setlength{\unitlength}{666.14173228bp}%
    \ifx\svgscale\undefined%
      \relax%
    \else%
      \setlength{\unitlength}{\unitlength * \real{\svgscale}}%
    \fi%
  \else%
    \setlength{\unitlength}{\svgwidth}%
  \fi%
  \global\let\svgwidth\undefined%
  \global\let\svgscale\undefined%
  \makeatother%
  \begin{picture}(1,1.34042553)%
    \lineheight{1}%
    \setlength\tabcolsep{0pt}%
    \put(0,0){\includegraphics[width=\unitlength,page=1]{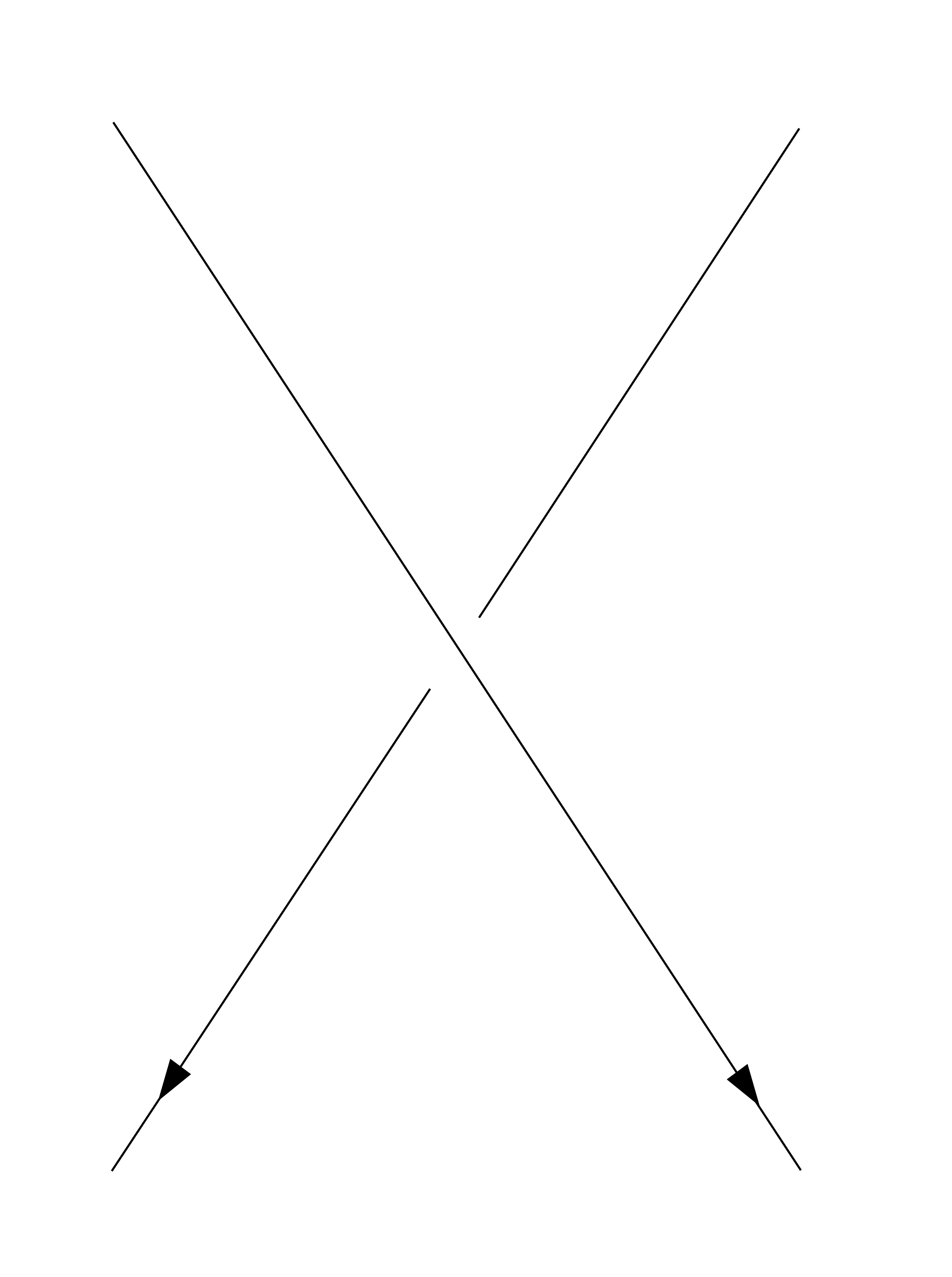}}%
    \put(-0.28820434,0.02975555){\color[rgb]{0,0,0}\makebox(0,0)[lt]{\lineheight{1.25}\smash{\begin{tabular}[t]{l}$a_k + r u_k$\end{tabular}}}}%
    \put(0.83881304,0.01234624){\color[rgb]{0,0,0}\makebox(0,0)[lt]{\lineheight{1.25}\smash{\begin{tabular}[t]{l}$b_k +r v_k$\end{tabular}}}}%
    \put(-0.59749652,1.26047186){\color[rgb]{0,0,0}\makebox(0,0)[lt]{\lineheight{1.25}\smash{\begin{tabular}[t]{l}$b_k +r v_k -i_k - r l_k$\end{tabular}}}}%
    \put(0.77286831,1.26047187){\color[rgb]{0,0,0}\makebox(0,0)[lt]{\lineheight{1.25}\smash{\begin{tabular}[t]{l}$a_k+r u_k +i_k +r l_k$\end{tabular}}}}%
  \end{picture}%
\endgroup%

  \label{negcross_factor}
   \caption{Negative crossing.}
 \end{subfigure}%
 \caption{The two possibilities for the k-th crossing in $D$ when factorizing.}
 \label{crossings_factor}
 \end{figure}
 
\medskip
In order to get back $ADO_r(A,\mathcal{K})$ from $F_{\infty}(\zeta_{2r},A,D)$, we need to prove that $C_{\infty}(r, A,D)$ is a unit in $\hat{R_r^I}$.

\begin{prop}
If $a=(a_n)_{n \in \N^*} \in \hat{R_r^I}$ and $a_1 \in R_r / I$ is a unit, then $a$ is a unit in $\hat{R_r^I}$.
\end{prop}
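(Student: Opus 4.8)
The plan is to exploit the fact that, modulo any power $I^n$, the ideal $I$ becomes nilpotent, so that a unit in $R_r/I$ automatically lifts to a unit in every quotient $R_r/I^n$; then to assemble the fibrewise inverses into a single coherent sequence by invoking uniqueness of inverses. This is the standard mechanism by which $I$-adic completions detect units through their residue modulo $I$.

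First I would isolate the elementary ring-theoretic input: if $S$ is a commutative ring and $J \subset S$ is an ideal with $J^m = 0$, then any $x \in S$ whose image in $S/J$ is a unit is itself a unit. Indeed, choose $y \in S$ with $xy = 1 - z$ for some $z \in J$; since $z^m = 0$, the element $1 - z$ is invertible with inverse $1 + z + \cdots + z^{m-1}$, and therefore $x \cdot \left( y(1 + z + \cdots + z^{m-1}) \right) = 1$, exhibiting $x$ as a unit.

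Next I would apply this to $S = R_r/I^n$ with $J = I/I^n$, which satisfies $J^n = 0$. The coherence condition $\rho'_k(a_{k+1}) = a_k$ forces the image of $a_n$ under the canonical surjection $R_r/I^n \to R_r/I$ to equal $a_1$, which is a unit by hypothesis. Hence each $a_n$ is a unit in $R_r/I^n$, and I let $b_n$ denote its unique inverse.

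Finally I would verify that $b := (b_n)_{n \in \N^*}$ is a well-defined element of $\hat{R_r^I}$ inverting $a$. Applying the ring homomorphism $\rho'_n$ to $a_{n+1} b_{n+1} = 1$ and using $\rho'_n(a_{n+1}) = a_n$ gives $a_n \, \rho'_n(b_{n+1}) = 1$, so $\rho'_n(b_{n+1})$ is an inverse of $a_n$ in $R_r/I^n$; by uniqueness of inverses it must equal $b_n$. Thus $b$ satisfies the compatibility relations, so $b \in \hat{R_r^I}$, and $ab = (a_n b_n)_{n} = (1)_n = 1$. The only subtle point is the compatibility of the successive inverses, which is exactly what uniqueness of inverses supplies; there is no convergence or analytic obstacle, since everything happens at finite levels of the projective system.
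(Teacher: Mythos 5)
Your proof is correct and follows essentially the same route as the paper: reduce modulo $I^n$, use nilpotence of $I/I^n$ to lift the unit $a_1$ to a unit $a_n$ in each quotient, and assemble the inverses. Your explicit verification that $\rho'_n(b_{n+1})=b_n$ via uniqueness of inverses is a small point the paper leaves implicit, but it is the same argument.
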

\begin{proof}
Let $a=(a_n)_{n \in \N^*} \in \hat{R_r^I}$ such that $a_1$ is a unit of $R_r / I$.\\
Let's prove that $a_n$ is also a unit in $R_r / I^n$. Indeed, if $y$ is an element of $R_r/I^n$ such that $a_n y = a_1 y= 1$ mod $I$ then $\exists z \in I . R_r / I^n$ such that $a_n y= 1 + z$, $z=a_n y -1$ thus $0=z^n= (a_ny-1)^n$, which proves that $a_n$ is invertible.\\
Hence, $a^{-1}=(a_n^{-1})_{n \in \N^*}$ is the inverse of $a$ in $\hat{R_r^I}$. 
\end{proof}

\medskip
Since $C_{\infty}(r, A,D)=(\prod_{j=1}^{S} \zeta_{2r}^{\mp r\alpha})$ mod $\{ r \alpha \}_{\zeta_{2r}} $ is an invertible element of $R_r/ I$, then $C_{\infty}(r, A, D)$ is a unit of $\hat{R_r^I}$.

\begin{cor}
\[ADO_r(A,\mathcal{K})  = F_{\infty} (\zeta_{2r}, A, D) C_{\infty}(r, A, D)^{-1}  \]
\end{cor}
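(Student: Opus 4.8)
The plan is to deduce the identity directly from the factorization already in hand, together with the invertibility of $C_{\infty}(r, A, D)$ established just above; there is essentially no new content. First I would invoke Proposition \ref{ADO_factor_prop}, which gives
\[ F_{\infty}(\zeta_{2r}, A, D) = C_{\infty}(r, A, D) \times ADO_r(A,\mathcal{K}) \]
as an equality in $\hat{R_r^I}$. One should keep in mind that $ADO_r(A,\mathcal{K})$ is honestly an element of $R_r$ (indeed of $\zeta_{2r}^{\frac{f\alpha^2}{2}}\Z[\zeta_{2r}, \zeta_{2r}^{\pm\alpha}]$), and is read here as its image under the injection $R_r \hookrightarrow \hat{R_r^I}$, so that the equation makes sense in the completion.

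Next I would recall why $C_{\infty}(r, A, D)$ is a unit of $\hat{R_r^I}$: reducing modulo $I = \{r\alpha\}_{\zeta_{2r}} R_r$ annihilates every summand with some $l_k > 0$, because each such term carries a factor $\{r\alpha\}_{\zeta_{2r}}^{l_k}$, so only the $\overline{l} = 0$ term survives, leaving $\prod_{j=1}^{S} \zeta_{2r}^{\mp r\alpha}$, which is a unit in $R_r/I$. By the preceding proposition on completions, an element of $\hat{R_r^I}$ whose first coordinate is invertible modulo $I$ is itself invertible, so $C_{\infty}(r, A, D)^{-1}$ exists in $\hat{R_r^I}$.

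Finally I would multiply the factorization on the right by $C_{\infty}(r, A, D)^{-1}$, obtaining
\[ ADO_r(A,\mathcal{K}) = F_{\infty}(\zeta_{2r}, A, D)\, C_{\infty}(r, A, D)^{-1}, \]
which is exactly the assertion; commutativity of $\hat{R_r^I}$ makes the side of multiplication irrelevant.

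The only thing needing care — and the nearest thing to an obstacle — is purely a matter of bookkeeping rather than of mathematics: ensuring that $C_{\infty}$ and $ADO_r$ are compared in the single ring $\hat{R_r^I}$ and that the a priori ``polynomial'' $ADO_r(A,\mathcal{K}) \in R_r$ is identified with its image in the completion. All of the genuinely computational work (the change of variables $s_k = i_k + r l_k$ and the four factorizations of Lemma \ref{factorLemma}) has already been absorbed into the proof of Proposition \ref{ADO_factor_prop}, so the corollary reduces to this single line of rearrangement.
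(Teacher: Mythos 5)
Your proposal is correct and takes essentially the same approach as the paper: the paper obtains this corollary precisely by combining the factorization of Proposition \ref{ADO_factor_prop} with the unit criterion for $\hat{R_r^I}$ and the observation that $C_{\infty}(r,A,D) \equiv \prod_{j=1}^{S}\zeta_{2r}^{\mp r\alpha} \bmod \{r\alpha\}_{\zeta_{2r}}$ is invertible in $R_r/I$, then multiplying the factorization by $C_{\infty}(r,A,D)^{-1}$. Your identification of the surviving $\overline{l}=0$ term modulo $I$ matches the paper's justification exactly, so there is nothing to add.
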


\bigskip
Finally, one can recover $C_{\infty}(r, A, D)$ with $F_{\infty}(q,A,D)$, this will prove that not only that ADO is contained in $F_{\infty}(q,A,D)$ but that it's possible to extract them with the sole datum of $F_{\infty}(q,A,D)$.

For $r=1$, one gets
\begin{align*}
ev_1(F_{\infty} (q,A,D))&=F_{\infty} (\zeta_{2},A,D)\\ &= C_{\infty}(1,A,D) \times ADO_1 (A,\mathcal{K})\\ &= q^{\frac{f \alpha^2}{2}}C_{\infty}(1,A,D) 
\end{align*}
\begin{rem}
Note that $ADO_1(A,\mathcal{K})$ is only defined as the case $r=1$ in Prop \ref{ADO_form}, which is well defined. Nevertheless, the algebraic setup at Section \ref{ADOsection} fails at $r=1$ since $[E,F]$ is not well defined.
\end{rem}

But then $ C_{\infty}(1,A,D) \in \widehat{\Z[A^{\pm 1}]^{\{ \alpha\} }}:= \underset{\underset{n}{\leftarrow}}{\lim} \dfrac{\Z[A^{\pm 1}]}{\{ \alpha\}^n} $, for each $r$ we have a well defined map: \[g_r:\widehat{\Z[A^{\pm 1}]^{\{ \alpha\} }} \to \widehat{\Z[A^{\pm 1}]^{\{ r \alpha\} }}, \ \ q^{\alpha} \mapsto q^{r\alpha} \] such that $g_r(C_{\infty}(1,D))= C_{\infty}(r, A, D)$.

This proves the following proposition:
\begin{prop}\label{propFC}
For all $r$, we have a well defined map $FC_r=g_r \circ ev_{1}:\hat{R}^{\hat{I}} \to \widehat{\Z[A^{\pm 1}]^{\{ r \alpha\} }}$ and for any knot $\mathcal{K}$ and any diagram $D$ of the knot, $F_{\infty}(q,A,D) \mapsto C_{\infty}(r, A, D)$.
\end{prop}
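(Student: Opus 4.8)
The plan is to assemble the proposition from the three ingredients prepared just above its statement: the $r=1$ evaluation of $F_\infty$, the well-definedness of $g_r$, and the substitution identity relating $C_\infty(1,A,D)$ and $C_\infty(r,A,D)$.

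First I would check that $FC_r$ is well defined. The map $ev_1$ is the $r=1$ instance of the evaluation $ev_r:\hat R^{\hat I}\to\hat R_r^{I}$ already constructed, and since $\zeta_2=-1$ we have $R_1=\Z[\zeta_2,A^{\pm1}]=\Z[A^{\pm1}]$ and $I=\{\alpha\}R_1$, so its target is exactly $\widehat{\Z[A^{\pm1}]^{\{\alpha\}}}$. For $g_r$, I would start from the ring endomorphism of $\Z[A^{\pm1}]$ sending $A\mapsto A^r$ (that is, $q^\alpha\mapsto q^{r\alpha}$); it sends $\{\alpha\}=A-A^{-1}$ to $A^r-A^{-r}=\{r\alpha\}$, hence $\{\alpha\}^n\Z[A^{\pm1}]$ into $\{r\alpha\}^n\Z[A^{\pm1}]$, so it descends to compatible maps $\Z[A^{\pm1}]/\{\alpha\}^n\to\Z[A^{\pm1}]/\{r\alpha\}^n$ and, by the universal property of the projective limits, to $g_r$ on the completions. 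The composite $FC_r=g_r\circ ev_1$ is then well defined with the stated target.

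Next I would evaluate $F_\infty$. By definition $ev_1(F_\infty(q,A,D))=F_\infty(\zeta_2,A,D)$, and Proposition \ref{ADO_factor_prop} at $r=1$ factorizes it as $C_\infty(1,A,D)\cdot ADO_1(A,\mathcal K)$. A direct inspection of Proposition \ref{ADO_form} at $r=1$ shows $ADO_1$ is trivial: the state sum collapses to the single term with all $i_k=0$, every strand label is $0$, and each factor (the $(r-1)$-powers, the quantum binomials $\qbinom{0}{0}$, the empty products $\{\alpha;0\}$) equals $1$, leaving only the framing monomial $\zeta_2^{f\alpha^2/2}$, which is trivial for a $0$-framed diagram. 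Hence $ev_1(F_\infty(q,A,D))=C_\infty(1,A,D)$, which indeed lies in $\widehat{\Z[A^{\pm1}]^{\{\alpha\}}}$.

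Finally I would verify $g_r(C_\infty(1,A,D))=C_\infty(r,A,D)$ termwise. In the formula for $C_\infty(r,A,D)$ all dependence on $\alpha$ enters only through $\zeta_{2r}^{r\alpha}=A^r$: the prefactors $\zeta_{2r}^{\mp r\alpha}=A^{\mp r}$, the powers $\zeta_{2r}^{\pm(u_k+v_k)r\alpha}=A^{\pm(u_k+v_k)r}$, and $\{r\alpha\}^{l_k}=(A^r-A^{-r})^{l_k}$, while the binomials $\binom{u_k+l_k}{l_k}$ and the signs $(-1)^{l_k}$ are integers independent of $r$. Comparing with $C_\infty(1,A,D)$, where the same data appears with $A$ in place of $A^r$, the two state sums have identical combinatorial structure, the point being that, by the clean splitting in the proof of Proposition \ref{ADO_factor_prop}, the quotient labels $u_k,v_k$ are determined by the $l_j$ alone and obey the same crossing recursion at both levels. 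Therefore the substitution $A\mapsto A^r$ defining $g_r$ sends each summand of $C_\infty(1,A,D)$ to the corresponding summand of $C_\infty(r,A,D)$, giving $FC_r(F_\infty(q,A,D))=C_\infty(r,A,D)$. The step requiring the most care is this last identity: one must be sure that no residual $\alpha$-dependence escapes the substitution and that the indexing data of the two state sums coincide; once the splitting of Proposition \ref{ADO_factor_prop} is in hand, this becomes a bookkeeping check rather than a genuinely new difficulty.
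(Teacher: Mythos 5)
Your proof is correct and follows essentially the same route as the paper: the paper's own argument is precisely the discussion preceding the proposition --- evaluate at $r=1$ via Proposition \ref{ADO_factor_prop} (where $ADO_1$ collapses to the framing monomial), observe that $C_{\infty}(1,A,D)\in\widehat{\Z[A^{\pm 1}]^{\{\alpha\}}}$, and apply the substitution $q^{\alpha}\mapsto q^{r\alpha}$ --- and you simply fill in the details the paper asserts without proof (well-definedness of $g_r$ on the completions and the termwise identity $g_r(C_{\infty}(1,A,D))=C_{\infty}(r,A,D)$). One shared caveat: like the paper, you only get $F_{\infty}(q,A,D)\mapsto C_{\infty}(r,A,D)$ on the nose in the $0$-framed case, since $ev_1(F_{\infty}(q,A,D))=q^{f\alpha^2/2}C_{\infty}(1,A,D)$ carries the framing monomial; you flag this where the paper glosses over it.
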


\begin{cor}
For all $r$, we have a well defined map $ev_r \times \frac{1}{FC_r}:(\hat{R}^{\hat{I}})^{\times} \to (\hat{R_r^I})^{\times} $ and for any knot $\mathcal{K}$ and any diagram $D$ of the knot, $F_{\infty}(q,A,D) \mapsto ADO_r(A,\mathcal{K})$.
\end{cor}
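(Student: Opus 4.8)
The plan is to obtain this corollary simply by stacking the three preceding results, so that no genuinely new computation is needed. Write the map as $x\mapsto ev_r(x)\cdot FC_r(x)^{-1}$. First I would record that both $ev_r\colon\hat R^{\hat I}\to\hat R_r^I$ and $FC_r=g_r\circ ev_1$ are ring homomorphisms, hence carry units to units. Moreover $FC_r$ takes values in $\widehat{\Z[A^{\pm1}]^{\{r\alpha\}}}$, which is a subring of $\hat R_r^I$: indeed $\{r\alpha\}_{\zeta_{2r}}=A^r-A^{-r}$ lies in $\Z[A^{\pm1}]$ and generates the ideal $I$ defining $\hat R_r^I$, so the reduction maps $\Z[A^{\pm1}]/\{r\alpha\}^n\hookrightarrow R_r/I^n$ assemble into an injective ring map of completions. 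Since this inclusion is a ring homomorphism it preserves inverses, so for a unit $x$ the element $FC_r(x)^{-1}$ computed in $\widehat{\Z[A^{\pm1}]^{\{r\alpha\}}}$ coincides with its inverse in $\hat R_r^I$, and the product $ev_r(x)FC_r(x)^{-1}$ is a genuine unit of $\hat R_r^I$. This yields the well-defined map $(\hat R^{\hat I})^\times\to(\hat R_r^I)^\times$.

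It then remains to evaluate the map on $F_\infty(q,A,D)$. By definition $ev_r(F_\infty(q,A,D))=F_\infty(\zeta_{2r},A,D)$, which by Proposition~\ref{ADO_factor_prop} factors as $C_\infty(r,A,D)\cdot ADO_r(A,\mathcal K)$; and by Proposition~\ref{propFC} one has $FC_r(F_\infty(q,A,D))=C_\infty(r,A,D)$. Since $C_\infty(r,A,D)$ is a unit, its reduction modulo $\{r\alpha\}_{\zeta_{2r}}$ being the monomial $\prod_{j=1}^S\zeta_{2r}^{\mp r\alpha}$, the two copies of $C_\infty(r,A,D)$ cancel and the image is exactly $ADO_r(A,\mathcal K)$. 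This is the entire computational content of the statement.

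The point that actually needs care — and the main obstacle — is checking that $F_\infty(q,A,D)$ really lies in the domain $(\hat R^{\hat I})^\times$, i.e. that it is a unit of $\hat R^{\hat I}$. Unlike the principal-ideal completion $\hat R_r^I$, where invertibility followed immediately from invertibility of the first coordinate, here the filtration ideals $I_n$ are \emph{not} powers of $I_1$, and $I_1$ is not topologically nilpotent (for instance $\{\alpha\}^k\notin I_2$ for every $k$, as one sees by specialising $A\mapsto q^3$ and recalling $f_k(I_2)\subseteq\{2\}!\,\Z[q^{\pm1}]$). Consequently one cannot deduce invertibility of $F_\infty$ from its monomial leading term by the earlier unit criterion. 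The clean route is to verify invertibility coordinate by coordinate: show that the reduction of $F_\infty$ in $R/I_n$ is a unit for every $n$, the $\overline{i}=0$ contribution being the monomial unit $q^{\frac{f\alpha^2}{2}}\prod_{j=1}^S q^{\mp(\alpha-2\epsilon_j)}$ and the remaining state-sum terms being controlled modulo $I_n$ by the explicit form of Definition~\ref{unified_form}.

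Alternatively, one may observe that the division $x\mapsto ev_r(x)FC_r(x)^{-1}$ only requires $FC_r(x)$ to be invertible, which for $x=F_\infty(q,A,D)$ is automatic since $FC_r(F_\infty)=C_\infty(r,A,D)$ is manifestly a unit. One can therefore first define the assignment on the set of $x$ with $FC_r(x)$ invertible, check on this set that $F_\infty(q,A,D)\mapsto ADO_r(A,\mathcal K)$ by the cancellation above, and only then invoke the coordinatewise argument to upgrade the domain to $(\hat R^{\hat I})^\times$ and conclude that $ADO_r(A,\mathcal K)$ is itself a unit of $\hat R_r^I$. Either way, the substantive obstacle is the unit membership of $F_\infty$ in $\hat R^{\hat I}$, not the final factorisation.
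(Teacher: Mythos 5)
Your first two paragraphs are exactly the paper's proof: $ev_r$ and $FC_r$ are ring morphisms, hence carry units to units, which gives the map on unit groups, and the image of $F_{\infty}(q,A,D)$ is obtained by combining Proposition~\ref{ADO_factor_prop}, Proposition~\ref{propFC} and the invertibility of $C_{\infty}(r,A,D)$ in $\hat{R_r^I}$. The gap is in what you add after that. The step you call the ``main obstacle'' --- proving $F_{\infty}(q,A,D)\in(\hat{R}^{\hat{I}})^{\times}$, say by showing its reduction in $R/I_n$ is a unit for every $n$ --- is not a missing step of the paper's argument but a statement that is in general \emph{false}, so the route you propose cannot be completed. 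Concretely, take $\mathcal{K}$ the $0$-framed trefoil and $r=2$. The evaluation $A\mapsto\zeta_4$ kills $I=(\{2\alpha\}_{\zeta_4})=(A^2-A^{-2})$, since $\zeta_4^2-\zeta_4^{-2}=0$, so it passes to the completion, and composing it with $ev_2$ gives a ring morphism $\hat{R}^{\hat{I}}\to\Z[\zeta_4]=\Z[i]$. By the paper's identity $F_{\infty}(\zeta_{2r},\zeta_{2r}^N,\mathcal{K})=J_N(\zeta_r,\mathcal{K})=ADO_r(\zeta_{2r}^N,\mathcal{K})$ (case $r=2$, $N=1$), this morphism sends $F_{\infty}(q,A,\mathcal{K})$ to $J_1(\zeta_2,\mathcal{K})=J_1(-1,\mathcal{K})$; from the paper's computation of $F_{\infty}(q,A,3_1)$ together with Corollary~\ref{cor_Jones} one gets $J_1(t,3_1)=t^{-1}+t^{-3}-t^{-4}$, so this value is $-3$, minus the determinant of the trefoil. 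Since $3$ is not a unit of $\Z[i]$ (its norm is $9$) and ring morphisms preserve units, $F_{\infty}(q,A,\mathcal{K})$ is not a unit of $\hat{R}^{\hat{I}}$; the same evaluation shows that $ADO_2(A,\mathcal{K})$ is not a unit of $\hat{R_2^I}$, so the concluding ``upgrade'' of your last paragraph (that $ADO_r(A,\mathcal{K})$ is a unit of $\hat{R_r^I}$) is false as well. Your own correct observation that $I_1$ is not topologically nilpotent was the warning sign: being congruent to a monomial modulo $I_1$ carries no invertibility information in $\hat{R}^{\hat{I}}$, and indeed none is available here.

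The repair is the observation that opens your final paragraph, and you should have stopped there: it is precisely how the paper's proof is structured. The corollary is two independent assertions. First, the formula $x\mapsto ev_r(x)\,FC_r(x)^{-1}$ restricts to a well-defined map $(\hat{R}^{\hat{I}})^{\times}\to(\hat{R_r^I})^{\times}$ --- your paragraph one. Second, the same formula applied to the particular (in general non-unit) element $F_{\infty}(q,A,D)$ makes sense and equals $ADO_r(A,\mathcal{K})$, solely because $FC_r(F_{\infty}(q,A,D))=C_{\infty}(r,A,D)$ is a unit of $\hat{R_r^I}$ --- your paragraph two. The arrow ``$F_{\infty}(q,A,D)\mapsto ADO_r(A,\mathcal{K})$'' in the statement is shorthand for this second assertion; no claim that $F_{\infty}$ lies in $(\hat{R}^{\hat{I}})^{\times}$, or that $ADO_r$ lies in $(\hat{R_r^I})^{\times}$, is made, needed, or true.
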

\begin{proof}
Let $x \in (\hat{R}^{\hat{I}})^{\times}$ an invertible element, since $FC_r$ is a ring morphism, $FC_r (x)$ is invertible. Then $Id \times \frac{1}{FC_r} (F_{\infty}(q,A,D)) = F_{\infty}(\zeta_{2r},A,D) \times C_{\infty}(r, A, D)^{-1}= ADO_r(A,\mathcal{K})$. 
\end{proof}

\section{Universal invariant and Verma module} \label{universalVermasection}
We have built by hand an element $F_{\infty}(q,A,D)$ in some completion of a ring, from which we have evaluation maps that recovers the ADO invariants. This element is built from the diagram of a knot, thus it depends \textit{a priori} on it. In order to prove that this element is indeed a knot invariant, we will see how to obtain it using Hopf algebra machinery.

The first subsection will be dedicated to create an integral subalgebra of the $h$-adic version of quantum $\mathfrak{sl}_2$ containing the universal invariant of a $0$ framed knot.

This will allow us to define, in the second subsection, a Verma module on it. Since the algebra previously defined is integral, this will also be the case for the Verma module, whose coefficients will lie in $\hat{R}^{\hat{I}}$. The unified form $F_{\infty}(q,A,D)$ will be seen as the scalar action of the universal invariant on this Verma module. Since the universal invariant is a knot invariant, so will be $F_{\infty}(q,A,D)$.

This algebraic setup is made to get back the unified form and prove its invariance, and it is a completion which is very close to that of Habiro's in \cite{habiro2007integral}. But they are not the same, and we will see in the third subsection how to connect this work to Habiro's setup in the article. We will interpret our ring completion $\hat{R}^{\hat{I}}$ as some subalgebra completion found in \cite{habiro2007integral}, allowing to prove some nice properties on the ring structure (integral domain, subring of some h adic ring). Moreover we will show that the unified invariant can also be recovered from Habiro's algebraic setup, using the same process as in the second subsection, but with his completions.

Afterwards, we will see that we can also recover the colored Jones polynomials from the unified invariant. First this will allow us to study the factorisation in Proposition \ref{ADO_factor_prop}, and find that $C_{\infty}(r, A, D)$ is just the inverse of the Alexander polynomial. Lastly, using the unified invariant as a bridge between the family of colored Jones polynomials and the family of ADO polynomials, we will show that they are equivalent, meaning that we can recover one family with the other.

As a direct application of this facts, we will show that the unified invariant and every ADO polynomials follow the same holonomic rule as the colored Jones function (see \cite{garoufalidis2005colored}).

\subsection{The universal invariant}
In order to build $F_{\infty}(q,A,D)$ from Hopf algebra, we will need some "big enough" integral version quantum $\mathfrak{sl}_2$, but not too big in order to have a $\hat{R}^{\hat{I}}$ Verma module on it.
\medskip

First let's define the biggest integral quantum $\mathfrak{sl}_2$, $U_h$.

\begin{defn}
We set $U_h := U_h(\mathfrak{sl}_2)$ the $\Q[[h]]$ algebra topologically generated by $H,E,F$ and relations \[ [H,E]=2E, \ [H,F]=-2F, \ [E,F]=\frac{K-K^{-1}}{q-q^{-1}} \]
where $q=e^h$ and $K=q^K= e^{hH}$.
\end{defn}

It is endowed with an Hopf algebra structure:
$$
\begin{array}{lll}
\Delta(E)=1 \otimes E + E \otimes K & \epsilon(E)=0 &S(E)=-EK^{-1} \\
\Delta(F)=K^{-1} \otimes F + F \otimes 1 & \epsilon(F)=0 &S(E)=-KF \\
\Delta(H)=1 \otimes H + H \otimes 1 & \epsilon(H)=0 &S(H)=-H \\
\end{array}
$$

And an $R$-matrix:

\[R= q^{\frac{H \otimes H}{2}}\underset{i=0}{\overset{\infty}{\sum}} \frac{\{1\}^n q^{\frac{n(n-1)}{2}}}{[n]!} E^n \otimes F^{n}\] \[ R^{-1} = \underset{i=0}{\overset{\infty}{\sum}} \frac{(-1)^n \{1\}^n q^{ \frac{-n(n-1)}{2}}}{[n]!} E^n \otimes F^{n} q^{-\frac{H \otimes H}{2}}\]

Altogether with a ribbon element: $K^{-1} u$ where $u=\sum S(\beta) \alpha$ if $R= \sum \alpha \otimes \beta$.

\bigskip
\noindent Hence, if $\mathcal{K}$ is a knot and $T$ a 1-1 tangle whose closure is $\mathcal{K}$.\\
We set $Q^{U_h}(\mathcal{K}) \in U_h$ the universal invariant associated to $T$ in $U_h$. The definition of this element is given in Ohtsuki's book \cite{ohtsuki2002quantum} subsection 4.2.\\
It is a knot invariant.
\bigskip

Let us now build a suitable subalgebra of $U_h$ and a $\hat{R}^{\hat{I}}$ Verma module on it. We will then see that the universal invariant is in some extent inside the subalgebra and its scalar action on the Verma module will give us $F_{\infty}(q,A,D)$.

The subalgebra considered is an integral version of $U_q(\mathfrak{sl}_2)$ defined by:

\begin{defn}
Let $\mathcal{U}:=U_q^{D}(\mathfrak{sl}_2)$ the $\Z[q^{\pm 1}]$ subalgebra of $U_h$ generated by $E,\  F^{(n)},\ K$ where $F^{(n)}= \frac{\{1\}^n F^n }{[n]!}$.
\end{defn}

It inherits the Hopf Algebra structure from $U_h$.

\begin{rem}
The $R$-matrix is not an element of $\mathcal{U}$ but we have:
\[ R= q^{\frac{H \otimes H}{2}}\underset{i=0}{\overset{\infty}{\sum}} \frac{ \{ 1 \}^n q^{\frac{n(n-1)}{2}}}{[n]!} E^n \otimes F^{n} = q^{\frac{H \otimes H}{2}}\underset{i=0}{\overset{\infty}{\sum}} q^{\frac{n(n-1)}{2}} E^n \otimes F^{(n)} \]
\end{rem}

Hence, aside from $q^{\frac{H \otimes H}{2}}$ (that we can control in the universal invariant as we will see further on), we need the convergence of $\underset{i=0}{\overset{\infty}{\sum}} q^{\frac{n(n-1)}{2}} E^n \otimes F^{(n)}$ in some tensor product of the algebra with itself. \\
Thus we need to complete the algebra $\mathcal{U}$.

\medskip
\noindent We denote $\{H+m \}_q=Kq^m-K^{-m}q^{-m}$, $\{H+m;n \}_q= \prod_{i=0}^{n-1} \{H+m-i\}_q$.

\begin{defn}
Let $L_n$ be the $\Z[q^{\pm 1} ] $ ideal generated by $\{ n\}!$.\\
Let $J_n$ be the $\mathcal{U}$ two sided ideal generated by the following elements:
\[ F^{(i)} \{H+m;n-i \}_q \] where $m \in \Z$ and $i \in \{0, \dots, n \}$.
\end{defn}

\begin{lemme}
$J_n$ is finitely generated by elements of the form $F^{(i)} \{n-i;j\} \{ H;n-i-j\}$, $j \in \{0, \dots, n-i \}$, $i \in \{ 0, \dots, n \}$.
\end{lemme}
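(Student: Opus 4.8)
The plan is to reduce everything to a computation inside the commutative ``Cartan part'' of $\mathcal{U}$, where the claim becomes exactly the Lemma on $I_n$ proved above (equivalently Habiro's Proposition 5.1) read through the substitution $A\leftrightarrow K$. Write $\mathcal{U}^0:=\Z[q^{\pm 1}][K^{\pm 1}]\subset\mathcal{U}$ for the subalgebra generated by $K^{\pm 1}$; it is commutative, and every $\{H+m;N\}_q$ as well as every $\{N;j\}\{H;N-j\}_q$ lies in it. The assignment $A\mapsto K$, i.e. $q^{\alpha}\mapsto q^{H}$, is a ring isomorphism $R\xrightarrow{\sim}\mathcal{U}^0$ carrying $\{\alpha+m;N\}_q$ to $\{H+m;N\}_q$ and $\{N;j\}\{\alpha;N-j\}_q$ to $\{N;j\}\{H;N-j\}_q$.

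First I would record the Cartan-level identity. Applying the Lemma on $I_n$ with $n$ replaced by $N:=n-i$ and transporting it along the isomorphism above yields, inside $\mathcal{U}^0$, the equality of ideals
\[ \big(\{H+m;n-i\}_q : m\in\Z\big)=\big(\{n-i;j\}\{H;n-i-j\}_q : 0\le j\le n-i\big). \]
Concretely this gives, for each $m$, an expansion $\{H+m;n-i\}_q=\sum_j d_j\,\{n-i;j\}\{H;n-i-j\}_q$ and, for each $j$, an expansion $\{n-i;j\}\{H;n-i-j\}_q=\sum_m c_m\,\{H+m;n-i\}_q$, with all coefficients $c_m,d_j\in\mathcal{U}^0$. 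No new commutative algebra is needed here: it is precisely the content already established.

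Next I would lift this to the two-sided ideals of the noncommutative algebra $\mathcal{U}$, treating one power $F^{(i)}$ at a time. Let $J_n'$ denote the two-sided ideal generated by the finite family $F^{(i)}\{n-i;j\}\{H;n-i-j\}_q$ with $0\le i\le n$, $0\le j\le n-i$. The crucial point is that the two expansions above involve only Cartan coefficients, so one never has to commute anything past $F^{(i)}$: since $c_m$ and $\{H+m;n-i\}_q$ commute in $\mathcal{U}^0$,
\[ F^{(i)}\{n-i;j\}\{H;n-i-j\}_q=\sum_m F^{(i)}\{H+m;n-i\}_q\,c_m\in J_n, \]
which gives $J_n'\subseteq J_n$, and symmetrically
\[ F^{(i)}\{H+m;n-i\}_q=\sum_j F^{(i)}\{n-i;j\}\{H;n-i-j\}_q\,d_j\in J_n', \]
which gives $J_n\subseteq J_n'$. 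In each case one merely right-multiplies a generator of the target ideal by an element $c_m,d_j\in\mathcal{U}^0\subset\mathcal{U}$, which is legitimate because the ideals are two-sided, hence right ideals. As the indexing set $\{(i,j):0\le i\le n,\ 0\le j\le n-i\}$ is finite, this exhibits $J_n$ as finitely generated in the stated form.

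The only real subtlety I anticipate is ensuring that the decoupling between the $F^{(i)}$-factor and the Cartan factor is clean: one must check that the coefficients produced by the Lemma on $I_n$ genuinely live in $\mathcal{U}^0$ (Laurent polynomials in $K$) rather than in the full algebra, so that right-multiplication alone suffices and no $E,F$ straightening relations are ever invoked. Granting that, the statement is a direct transport of the commutative result, and finiteness is immediate from the bound $0\le j\le n-i$.
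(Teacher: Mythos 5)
Your proof is correct and is essentially the argument the paper compresses into its one-line citation of Habiro's Proposition 5.1: that proposition is precisely the commutative statement you transport (it is the paper's earlier lemma on $I_n$ read under the substitution $A \leftrightarrow K$), and your right-multiplication lifting is the implicit step connecting it to the two-sided ideal $J_n$. The subtlety you flag — that the coefficients produced by the commutative lemma lie in the Cartan subalgebra $\Z[q^{\pm 1}][K^{\pm 1}]$, so the expansions multiply generators $F^{(i)}\cdot(\text{Cartan element})$ only on the right and no straightening past $F^{(i)}$ is ever needed — is exactly the point that makes the reduction legitimate, and your treatment of it is sound.
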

\begin{proof}
The proof can be found in Habiro's article \cite{habiro2007integral}, Proposition 5.1. 
\end{proof}

\medskip
Following the completion described by Habiro in is article \cite{habiro2007integral} Section 4.
We have:
\begin{enumerate}
\item $L_n \subset J_n$ (see Prop 5.1 in Habiro's article \cite{habiro2007integral} ),
\item $\Delta( J_n) \subset \underset{i+j=n}{\sum} J_i \otimes J_j$,
\item $ \epsilon(J_n) \subset L_n$,
\item $S(J_n) \subset J_n$.
\end{enumerate}

Thus we can define the completion $\mathcal{\hat{U}}:= \underset{\underset{n}{\leftarrow}}{\lim} \dfrac{\mathcal{U}}{J_n}$ as a $\widehat{\Z[q^{\pm 1}]}:= \underset{\underset{n}{\leftarrow}}{\lim} \dfrac{\Z[q^{\pm 1}]}{L_n}$ algebra ($\widehat{\Z[q^{\pm 1}]}$ is Habiro's ring).\\
And it is endowed with a complete Hopf algebra structure: \[ \hat{\Delta}: \mathcal{\hat{U}} \to \mathcal{\hat{U}} \hat{\otimes} \mathcal{\hat{U}}, \ \ \hat{\epsilon}: \mathcal{\hat{U}} \to \widehat{\Z[q^{\pm 1}]},\ \  \hat{S}: \mathcal{\hat{U}} \to \mathcal{\hat{U}}\]

where : $\mathcal{\hat{U}} \hat{\otimes} \mathcal{\hat{U}} = \underset{\underset{k,l}{\leftarrow}}{\lim} \dfrac{\mathcal{\hat{U}} \otimes_{\widehat{\Z[q^{\pm 1}]}} \mathcal{\hat{U}}}{\mathcal{\hat{U}}  \otimes_{\widehat{\Z[q^{\pm 1}]}} \overline{J_k} + \overline{J_l} \otimes_{\widehat{\Z[q^{\pm 1}]}} \mathcal{\hat{U}} }$ and $ \overline{J_n}$ is the closure of $J_n$ in $\mathcal{\hat{U}}$.

\bigskip
This completion is a bigger algebra than $\mathcal{U}$:
\begin{prop}
The canonical projection maps induce an injective map $\mathcal{U} \xhookrightarrow{}  \mathcal{\hat{U}}$.
\end{prop}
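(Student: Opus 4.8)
The plan is to mimic the two earlier injectivity arguments in the excerpt: since $\mathcal{\hat{U}} = \underset{\underset{n}{\leftarrow}}{\lim}\, \mathcal{U}/J_n$ and the map in question is the canonical $x \mapsto (x \bmod J_n)_{n}$, its kernel is exactly $\bigcap_{n} J_n$, so it suffices to prove $\bigcap_{n} J_n = \{0\}$. The noncommutativity of $\mathcal{U}$ makes a direct degree/valuation argument (as was used for $\hat{R}^{\hat{I}}$ and $\hat{R_r^I}$) awkward, so instead I would exploit the ambient inclusion $\mathcal{U} \subseteq U_h$ together with the fact that $U_h$ is $h$-adically separated, that is $\bigcap_n h^n U_h = \{0\}$, since it is topologically free over $\Q[[h]]$ on the PBW basis $\{F^a H^b E^c\}$.

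The key step is the estimate $J_n \subseteq h^n U_h$. By the preceding Lemma, $J_n$ is generated as a two-sided ideal by the elements $F^{(i)}\{n-i;j\}_q\{H;n-i-j\}_q$ with $0 \le i \le n$ and $0 \le j \le n-i$, so it is enough to bound the $h$-adic order of each such generator. Writing $q = e^h$ and $K = e^{hH}$, one checks factor by factor: $\{m\}_q = q^m - q^{-m} = 2mh + O(h^2)$ has order at least $1$, and each factor $\{H-l\}_q = Kq^{-l} - K^{-1}q^{l} = 2h(H-l) + O(h^3)$ likewise has order at least $1$; hence $\{n-i;j\}_q$ has order $\ge j$ and $\{H;n-i-j\}_q$ has order $\ge n-i-j$. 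For the divided power, $F^{(i)} = \frac{\{1\}^i}{[i]!}F^i$ where $\{1\}^i$ has order $i$ and $[i]!$ is a unit in $\Q[[h]]$ (its constant term is $i!$, since $[l] = l + O(h^2)$), so $F^{(i)} \in h^i U_h$. Multiplying the three factors, each generator has $h$-order at least $i + j + (n-i-j) = n$, i.e. it lies in $h^n U_h$.

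Since every element of $\mathcal{U} \subseteq U_h$ has non-negative $h$-order (the generators $E$, $K^{\pm1}$, $q^{\pm1}$ have order $0$ and $F^{(n)}$ has order $\ge 0$), and $h^n U_h$ is a two-sided ideal of $U_h$, left- and right-multiplying the generators by arbitrary elements of $\mathcal{U}$ keeps the order $\ge n$; summing finitely many such products stays in $h^n U_h$. Therefore $J_n \subseteq h^n U_h$, and consequently $\bigcap_n J_n \subseteq \bigcap_n h^n U_h = \{0\}$, which yields the injectivity.

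The main obstacle I anticipate is essentially bookkeeping rather than conceptual: justifying rigorously that the $h$-adic order is subadditive under multiplication (so that $h^a U_h \cdot h^b U_h \subseteq h^{a+b} U_h$) and that it can be read off factorwise as claimed, both of which rest on $U_h$ being $h$-torsion-free and $h$-adically complete over $\Q[[h]]$, together with a careful appeal to the Lemma to ensure the listed elements genuinely generate $J_n$. The estimates on the individual quantum factors are routine, and crucially I only need \emph{lower} bounds on the $h$-order, which are stable and require no tracking of leading coefficients.
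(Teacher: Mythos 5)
Your proof is correct and follows essentially the same route as the paper: the kernel of the canonical map is $\bigcap_n J_n$, which is shown to vanish via the inclusion $J_n \subset h^n U_h$ and the $h$-adic separation $\bigcap_n h^n U_h = \{0\}$ of $U_h$. The only difference is that you verify the inclusion $J_n \subset h^n U_h$ in detail by estimating the $h$-order of the generators, whereas the paper simply asserts this inclusion as known.
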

\begin{proof}
Take the projective maps $j_n : \mathcal{U} \to \mathcal{U}/J_n$, they induce a map $j: \mathcal{U} \to \mathcal{\hat{U}}$.
This map in injective because if $j(x)=0$ then $x \in \underset{n \in \N^*}{\bigcap} J_n $. But since $J_n \subset h^n U_h$ then $\underset{n \in \N^*}{\bigcap} J_n  \subset \underset{n \in \N^*}{\bigcap} h^n U_h$. It is a well known fact that $\underset{n \in \N^*}{\bigcap} h^n U_h=\{0\}$. 
\end{proof}

\medskip

Moreover, since $J_n \subset h^n U_h$ we have a map $i: \mathcal{\hat{U}} \to U_h$.
Since we do not know if this map is injective, we consider $\tilde{\mathcal{U}}:= i(\mathcal{\hat{U}})$ the image in $U_h$. It is also an Hopf algebra.

\begin{rem}
$\underset{i=0}{\overset{\infty}{\sum}} q^{\frac{n(n-1)}{2}} E^n \otimes F^{(n)} \in \mathcal{\tilde{U}} \hat{\otimes} \mathcal{\tilde{U}} $
\end{rem}

\bigskip
We will need a lemma to compute some commutation rules.
\begin{lemme}~\\
$(E \otimes 1)  \times q^{\frac{H \otimes H}{2}}  =q^{\frac{H \otimes H}{2}} \times (E \otimes 1)  \times (1 \otimes K) $\\
$ (F^{(n)} \otimes 1) \times q^{\frac{H \otimes H}{2}} =q^{\frac{H \otimes H}{2}} \times (F^{(n)} \otimes 1) \times (1 \otimes K^{-n})$
\label{lemmaCommutation}
\end{lemme}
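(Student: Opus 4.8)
The statement is a pair of commutation identities in $\tilde{\mathcal{U}} \hat{\otimes} \tilde{\mathcal{U}}$ (or in $U_h \hat{\otimes} U_h$), and both have the same underlying source: the element $q^{\frac{H \otimes H}{2}}$ is the "Cartan part" of the $R$-matrix, and conjugating a weight vector past it produces a $K$-correction in the opposite tensor factor. The natural strategy is to reduce everything to a single scalar commutation rule for $q^{\frac{H \otimes H}{2}}$ and then push it through $E$ and $F^{(n)}$ using their weights under the adjoint action of $H$.

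\medskip
First I would establish the primitive identity on weight vectors. Working in $U_h \hat{\otimes} U_h$, the element $q^{\frac{H \otimes H}{2}} = e^{\frac{h}{2} H \otimes H}$ acts on a tensor of weight vectors by a scalar depending on the two weights. Concretely, if $x$ has $H$-weight $\mu$ in the left factor, i.e. $Hx = xH + \mu x$ (equivalently $[H,x] = \mu x$), then I claim
\begin{equation*}
(x \otimes 1)\, q^{\frac{H \otimes H}{2}} = q^{\frac{H \otimes H}{2}}\, (x \otimes 1)\,(1 \otimes q^{-\frac{\mu H}{2} \cdot ?}),
\end{equation*}
more precisely $(x\otimes 1)\,(H\otimes H) = (H\otimes H)(x\otimes 1) + \mu\,(x \otimes H)$, so that commuting $x$ past $e^{\frac{h}{2}H\otimes H}$ replaces the right-hand $H$ by $H - \mu \cdot(\text{shift})$; this is just the standard fact $x\, f(H) = f(H+\mu)\, x$ applied inside the exponential in the left slot. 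The clean way to phrase it is: since $[H\otimes H, x\otimes 1] = [H,x]\otimes H = \mu\,(x\otimes H)$, one gets $(x\otimes 1)\, q^{\frac{H\otimes H}{2}} = q^{\frac{H\otimes H}{2}}(x\otimes 1)\,q^{\frac{\mu}{2}(1\otimes H)} = q^{\frac{H\otimes H}{2}}(x\otimes 1)(1\otimes K^{\mu/2})$, recalling $K = e^{hH} = q^H$.

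\medskip
Then I would specialize to the two cases. For the first identity, $E$ has weight $\mu = 2$ under $[H,\cdot]$ (from $[H,E]=2E$), so the correction factor is $1 \otimes K^{2/2} = 1\otimes K$, giving exactly
\[
(E\otimes 1)\, q^{\frac{H\otimes H}{2}} = q^{\frac{H\otimes H}{2}}\,(E\otimes 1)\,(1\otimes K).
\]
For the second, $F$ has weight $-2$ (from $[H,F]=-2F$), hence $F^n$ has weight $-2n$ and so does $F^{(n)} = \frac{\{1\}^n F^n}{[n]!}$ since it differs from $F^n$ only by a central scalar in $\Z[q^{\pm1}]$; the correction is $1\otimes K^{-2n/2} = 1\otimes K^{-n}$, yielding the stated formula. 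The one point needing care is that these manipulations take place in the completed tensor product $\tilde{\mathcal{U}} \hat{\otimes} \tilde{\mathcal{U}}$, so I would check the identity first at the level of $U_h \hat{\otimes} U_h$, where $q^{\frac{H\otimes H}{2}}$ is a genuine (topologically convergent) element and the exponential manipulations are legitimate, and then note it descends to the completion since $E, F^{(n)}, K$ all lie in $\tilde{\mathcal{U}}$ and the identity is an equality of elements already living there.

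\medskip
I expect the only real obstacle to be bookkeeping rather than anything conceptual: making the step $x\,f(H) = f(H+\mu)\,x$ rigorous inside the exponential $e^{\frac{h}{2}H\otimes H}$, which is a power series in $h$, so that one must either argue term-by-term on $(H\otimes H)^k$ (using $[H\otimes H, x\otimes 1] = \mu\,(x\otimes H)$ repeatedly) or invoke the standard weight-conjugation rule for $U_h$. Everything else is a direct substitution of the weights $\mu = 2$ and $\mu = -2n$.
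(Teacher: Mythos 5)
Your strategy is the same as the paper's: reduce everything to the $\mathrm{ad}_H$-weight of the element in the left tensor slot and push it through $q^{\frac{H\otimes H}{2}}$ term by term (the paper phrases this as ``$EH^n=(H+2)^nE$'' plus the expansion $q^{\frac{H\otimes H}{2}}=\sum_n \frac{h^n}{2^n n!}H^n\otimes H^n$; you phrase it via $\mathrm{ad}_{H\otimes H}$ and exponentiation). The problem is a sign error at exactly the decisive step, visible as an internal contradiction in your write-up. From $[H,x]=\mu x$ one gets $xH=(H-\mu)x$, hence
\[(x\otimes 1)(H\otimes H)=(H\otimes H)(x\otimes 1)-\mu\,(x\otimes H),\]
not $+\mu\,(x\otimes H)$ as you wrote; that $+$ version contradicts your own (correct) formula $[H\otimes H,x\otimes 1]=+\mu\,(x\otimes H)$, since $[A,B]=AB-BA$ forces $BA=AB-[A,B]$. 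Likewise the ``standard fact'' is $x\,f(H)=f(H-\mu)\,x$, not $f(H+\mu)\,x$. Carrying the correct sign through the exponential, $\mathrm{ad}^{\,k}_{\frac{h}{2}H\otimes H}(x\otimes 1)=\bigl(\tfrac{h\mu}{2}\bigr)^k\,x\otimes H^k$, so
\[(x\otimes 1)\,q^{\frac{H\otimes H}{2}}=q^{\frac{H\otimes H}{2}}\,e^{-\mathrm{ad}}(x\otimes 1)=q^{\frac{H\otimes H}{2}}\,(x\otimes 1)\,(1\otimes K^{-\mu/2}),\]
which for $x=E$ ($\mu=2$) gives $1\otimes K^{-1}$ and for $x=F^{(n)}$ ($\mu=-2n$) gives $1\otimes K^{n}$ --- the opposite of what you conclude. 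A rank-one check confirms this: with $Hv_{\pm}=\pm v_{\pm}$, $Ev_-=v_+$, $K=q^H$, one finds $(E\otimes 1)q^{\frac{H\otimes H}{2}}(v_-\otimes v_+)=q^{-1/2}\,v_+\otimes v_+$, whereas $q^{\frac{H\otimes H}{2}}(E\otimes 1)(1\otimes K)(v_-\otimes v_+)=q^{3/2}\,v_+\otimes v_+$.

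In fairness, you have faithfully reproduced not only the paper's method but also its flaw: the printed proof's identity ``$EH^n=(H+2)^nE$'' is false under $[H,E]=2E$ (the correct identities are $EH^n=(H-2)^nE$, equivalently $H^nE=E(H+2)^n$), so under the ordinary reading of products the lemma as printed should itself have $K$ and $K^{-n}$ replaced by $K^{-1}$ and $K^{n}$. The statement becomes correct if one reads the products in the reversed, ``diagrammatic'' order in which coupons are encountered and multiplied when assembling the universal invariant, which is evidently the convention the sliding-coupon argument downstream relies on. So: same route as the paper, same conclusion, but the implication from your correct commutator formula to the final $K$-correction does not hold as written; to repair the proof you must either flip the two $K$-exponents in the statement or explicitly adopt the reversed-product convention before invoking it.
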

\begin{proof}
Notice that since $EH^n=(H+2)^nE$ and $q^{\frac{H \otimes H}{2}}= \sum (\frac{h^n}{2^n n!}) H^n \otimes H^n$, then \[(E \otimes 1)  \times q^{\frac{H \otimes H}{2}}  =q^{\frac{(H+2) \otimes H}{2}} \times (E \otimes 1)=q^{\frac{H \otimes H}{2}} \times (1 \otimes K) \times (E \otimes 1) .\]\\
The same can be done for $F^{(n)}$. 
\end{proof}

\bigskip

Let us now construct the universal invariant $Q^{U_h}(\mathcal{K})$ by hand, seeing it as the $(1,1)$-tangle with coupons.

We can picture it as a 1-1 tangle with (2,2) coupons for $R$-matrix i.e. \def \svgwidth{7mm} 
\begingroup%
  \makeatletter%
  \providecommand\color[2][]{%
    \errmessage{(Inkscape) Color is used for the text in Inkscape, but the package 'color.sty' is not loaded}%
    \renewcommand\color[2][]{}%
  }%
  \providecommand\transparent[1]{%
    \errmessage{(Inkscape) Transparency is used (non-zero) for the text in Inkscape, but the package 'transparent.sty' is not loaded}%
    \renewcommand\transparent[1]{}%
  }%
  \providecommand\rotatebox[2]{#2}%
  \newcommand*\fsize{\dimexpr\f@size pt\relax}%
  \newcommand*\lineheight[1]{\fontsize{\fsize}{#1\fsize}\selectfont}%
  \ifx\svgwidth\undefined%
    \setlength{\unitlength}{666.14173228bp}%
    \ifx\svgscale\undefined%
      \relax%
    \else%
      \setlength{\unitlength}{\unitlength * \real{\svgscale}}%
    \fi%
  \else%
    \setlength{\unitlength}{\svgwidth}%
  \fi%
  \global\let\svgwidth\undefined%
  \global\let\svgscale\undefined%
  \makeatother%
  \begin{picture}(1,1.34042553)%
    \lineheight{1}%
    \setlength\tabcolsep{0pt}%
    \put(0,0){\includegraphics[width=\unitlength,page=1]{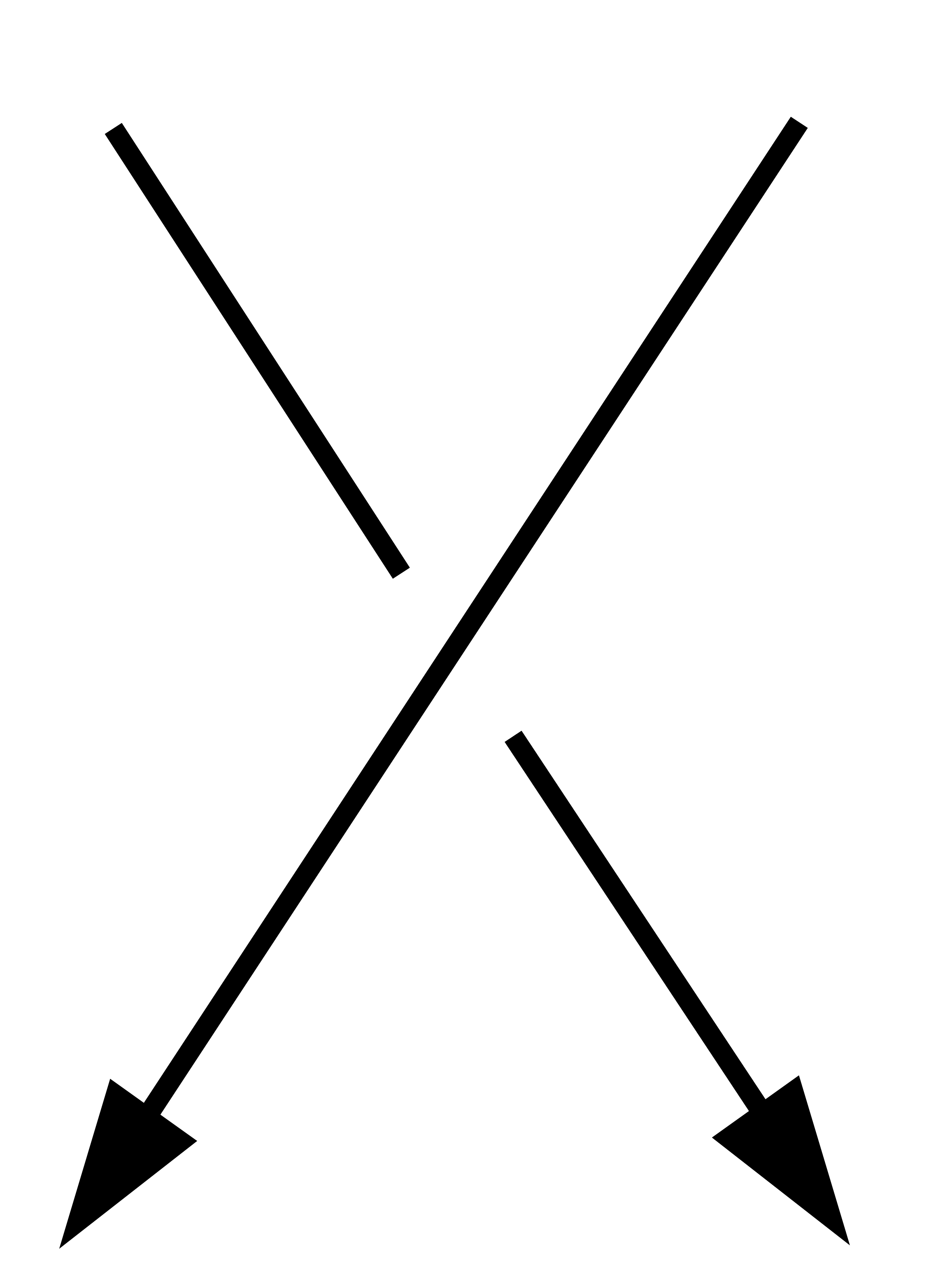}}%
  \end{picture}%
\endgroup%
 $=$ \def \svgwidth{5mm} 
\begingroup%
  \makeatletter%
  \providecommand\color[2][]{%
    \errmessage{(Inkscape) Color is used for the text in Inkscape, but the package 'color.sty' is not loaded}%
    \renewcommand\color[2][]{}%
  }%
  \providecommand\transparent[1]{%
    \errmessage{(Inkscape) Transparency is used (non-zero) for the text in Inkscape, but the package 'transparent.sty' is not loaded}%
    \renewcommand\transparent[1]{}%
  }%
  \providecommand\rotatebox[2]{#2}%
  \newcommand*\fsize{\dimexpr\f@size pt\relax}%
  \newcommand*\lineheight[1]{\fontsize{\fsize}{#1\fsize}\selectfont}%
  \ifx\svgwidth\undefined%
    \setlength{\unitlength}{510.41066087bp}%
    \ifx\svgscale\undefined%
      \relax%
    \else%
      \setlength{\unitlength}{\unitlength * \real{\svgscale}}%
    \fi%
  \else%
    \setlength{\unitlength}{\svgwidth}%
  \fi%
  \global\let\svgwidth\undefined%
  \global\let\svgscale\undefined%
  \makeatother%
  \begin{picture}(1,1.73024817)%
    \lineheight{1}%
    \setlength\tabcolsep{0pt}%
    \put(0,0){\includegraphics[width=\unitlength,page=1]{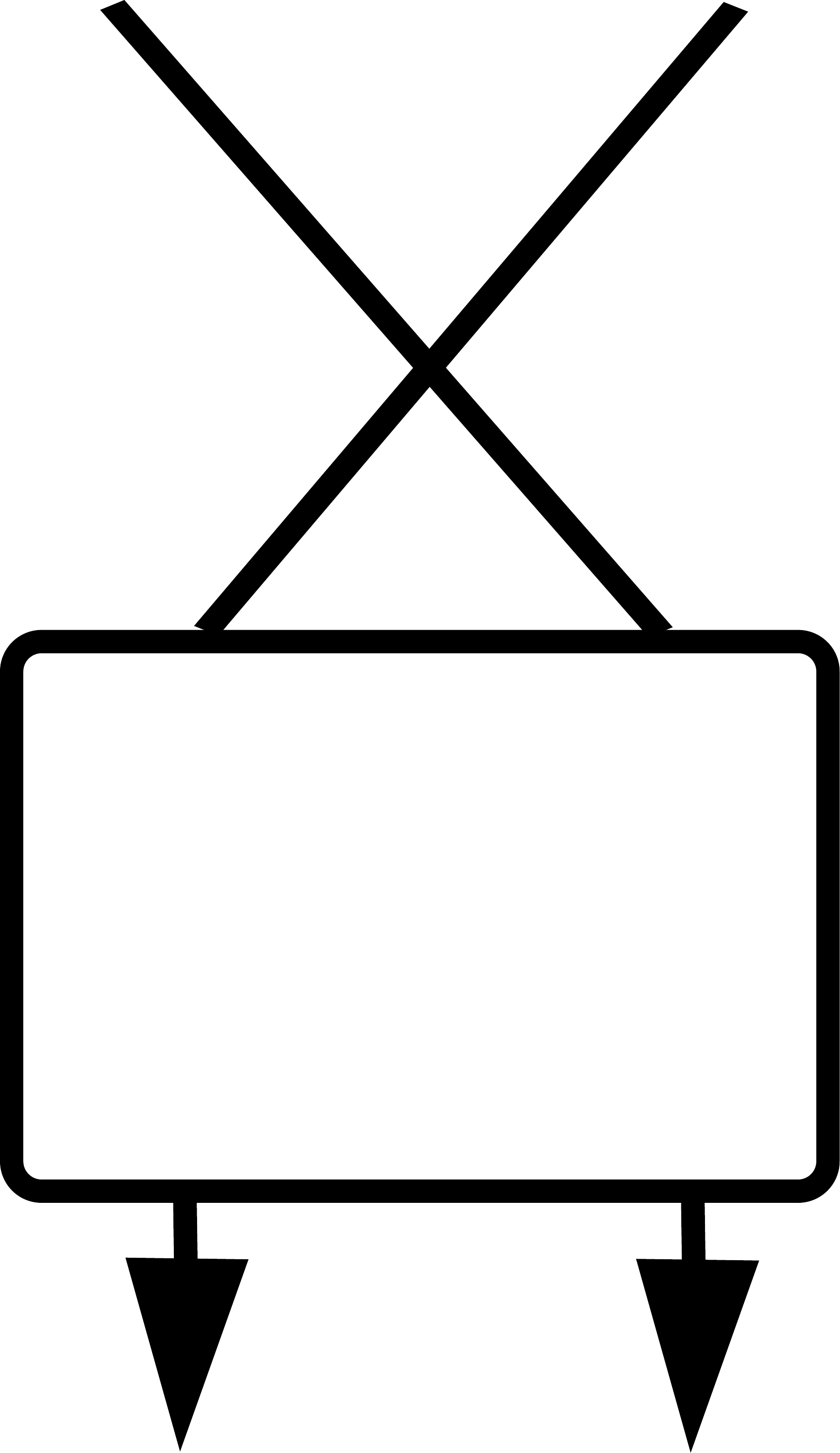}}%
    \put(0.20624071,0.4597886){\color[rgb]{0,0,0}\makebox(0,0)[lt]{\lineheight{1.25}\smash{\begin{tabular}[t]{l}$R$\end{tabular}}}}%
  \end{picture}%
\endgroup%
, \def \svgwidth{7mm} 
\begingroup%
  \makeatletter%
  \providecommand\color[2][]{%
    \errmessage{(Inkscape) Color is used for the text in Inkscape, but the package 'color.sty' is not loaded}%
    \renewcommand\color[2][]{}%
  }%
  \providecommand\transparent[1]{%
    \errmessage{(Inkscape) Transparency is used (non-zero) for the text in Inkscape, but the package 'transparent.sty' is not loaded}%
    \renewcommand\transparent[1]{}%
  }%
  \providecommand\rotatebox[2]{#2}%
  \newcommand*\fsize{\dimexpr\f@size pt\relax}%
  \newcommand*\lineheight[1]{\fontsize{\fsize}{#1\fsize}\selectfont}%
  \ifx\svgwidth\undefined%
    \setlength{\unitlength}{666.14173228bp}%
    \ifx\svgscale\undefined%
      \relax%
    \else%
      \setlength{\unitlength}{\unitlength * \real{\svgscale}}%
    \fi%
  \else%
    \setlength{\unitlength}{\svgwidth}%
  \fi%
  \global\let\svgwidth\undefined%
  \global\let\svgscale\undefined%
  \makeatother%
  \begin{picture}(1,1.34042553)%
    \lineheight{1}%
    \setlength\tabcolsep{0pt}%
    \put(0,0){\includegraphics[width=\unitlength,page=1]{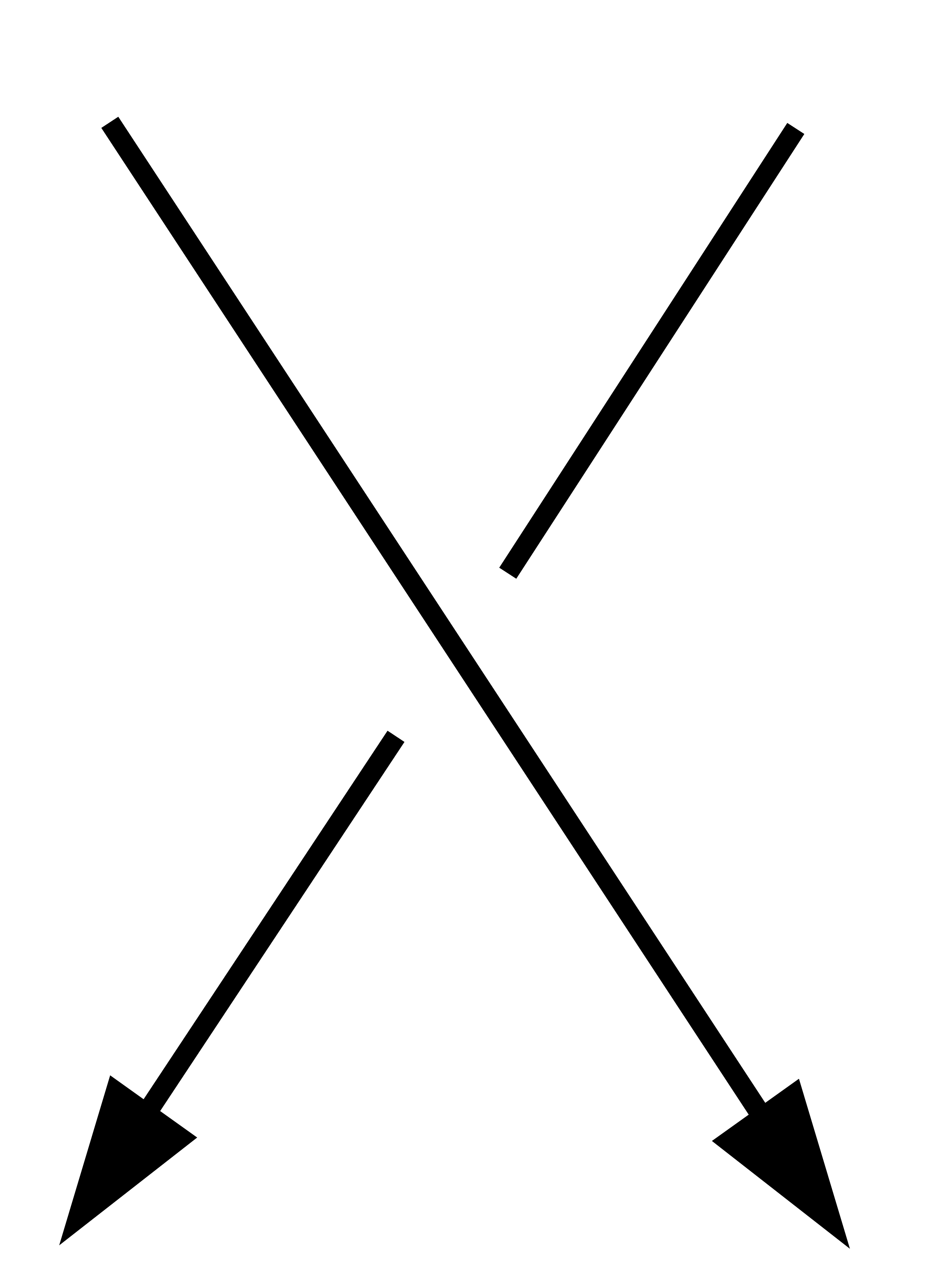}}%
  \end{picture}%
\endgroup%
 $=$ \def \svgwidth{7mm} 
\begingroup%
  \makeatletter%
  \providecommand\color[2][]{%
    \errmessage{(Inkscape) Color is used for the text in Inkscape, but the package 'color.sty' is not loaded}%
    \renewcommand\color[2][]{}%
  }%
  \providecommand\transparent[1]{%
    \errmessage{(Inkscape) Transparency is used (non-zero) for the text in Inkscape, but the package 'transparent.sty' is not loaded}%
    \renewcommand\transparent[1]{}%
  }%
  \providecommand\rotatebox[2]{#2}%
  \newcommand*\fsize{\dimexpr\f@size pt\relax}%
  \newcommand*\lineheight[1]{\fontsize{\fsize}{#1\fsize}\selectfont}%
  \ifx\svgwidth\undefined%
    \setlength{\unitlength}{595.27559055bp}%
    \ifx\svgscale\undefined%
      \relax%
    \else%
      \setlength{\unitlength}{\unitlength * \real{\svgscale}}%
    \fi%
  \else%
    \setlength{\unitlength}{\svgwidth}%
  \fi%
  \global\let\svgwidth\undefined%
  \global\let\svgscale\undefined%
  \makeatother%
  \begin{picture}(1,1.41428571)%
    \lineheight{1}%
    \setlength\tabcolsep{0pt}%
    \put(0,0){\includegraphics[width=\unitlength,page=1]{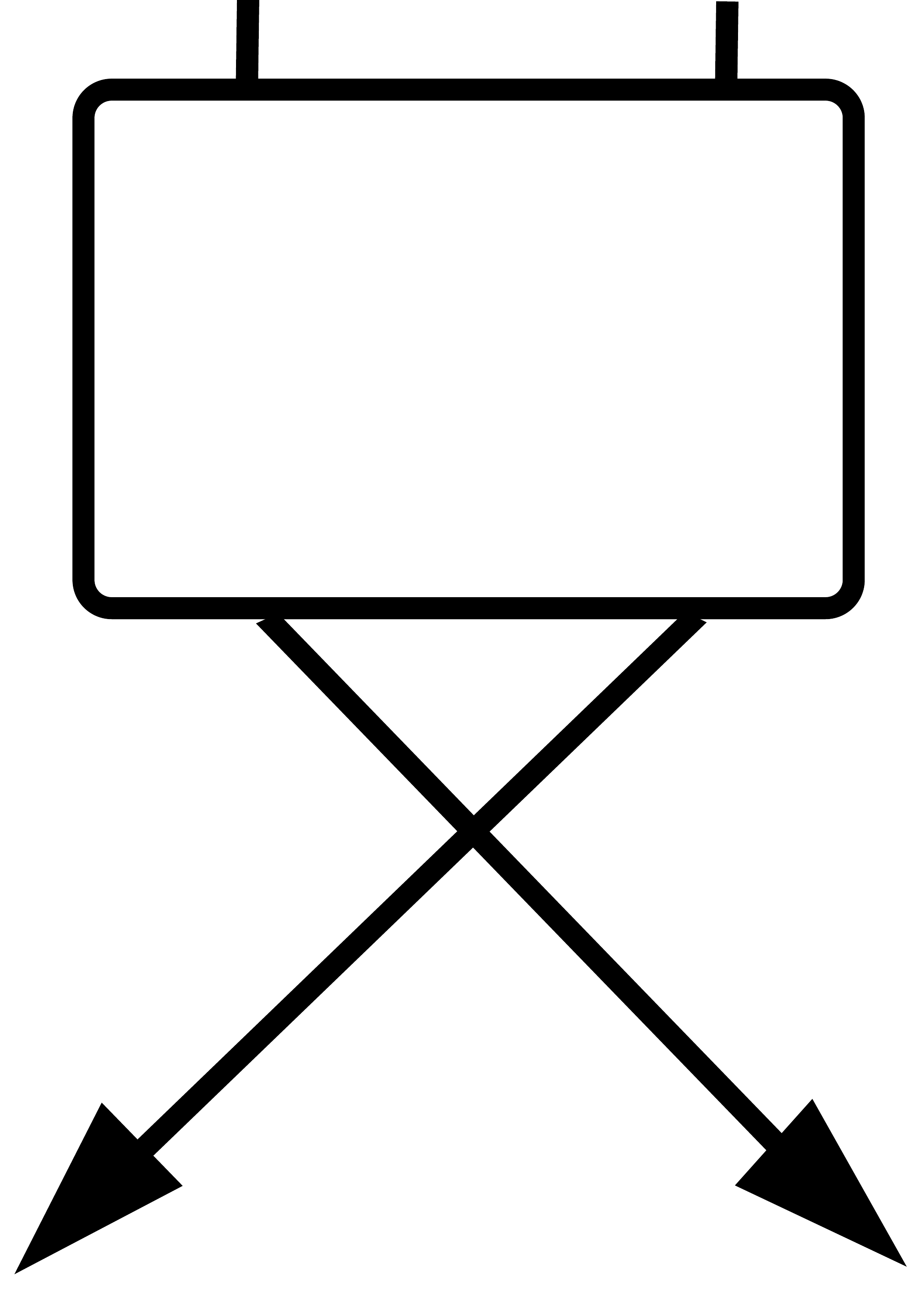}}%
    \put(0.22172944,0.85214321){\color[rgb]{0,0,0}\makebox(0,0)[lt]{\lineheight{1.25}\smash{\begin{tabular}[t]{l}$R^{\scalebox{0.5}{-1}}$\end{tabular}}}}%
  \end{picture}%
\endgroup%
. And also with (1,1) coupons for the pivotal element: \def \svgwidth{7mm}  $=$ \def \svgwidth{9mm} 
\begingroup%
  \makeatletter%
  \providecommand\color[2][]{%
    \errmessage{(Inkscape) Color is used for the text in Inkscape, but the package 'color.sty' is not loaded}%
    \renewcommand\color[2][]{}%
  }%
  \providecommand\transparent[1]{%
    \errmessage{(Inkscape) Transparency is used (non-zero) for the text in Inkscape, but the package 'transparent.sty' is not loaded}%
    \renewcommand\transparent[1]{}%
  }%
  \providecommand\rotatebox[2]{#2}%
  \newcommand*\fsize{\dimexpr\f@size pt\relax}%
  \newcommand*\lineheight[1]{\fontsize{\fsize}{#1\fsize}\selectfont}%
  \ifx\svgwidth\undefined%
    \setlength{\unitlength}{1094.17322835bp}%
    \ifx\svgscale\undefined%
      \relax%
    \else%
      \setlength{\unitlength}{\unitlength * \real{\svgscale}}%
    \fi%
  \else%
    \setlength{\unitlength}{\svgwidth}%
  \fi%
  \global\let\svgwidth\undefined%
  \global\let\svgscale\undefined%
  \makeatother%
  \begin{picture}(1,0.54404145)%
    \lineheight{1}%
    \setlength\tabcolsep{0pt}%
    \put(0,0){\includegraphics[width=\unitlength,page=1]{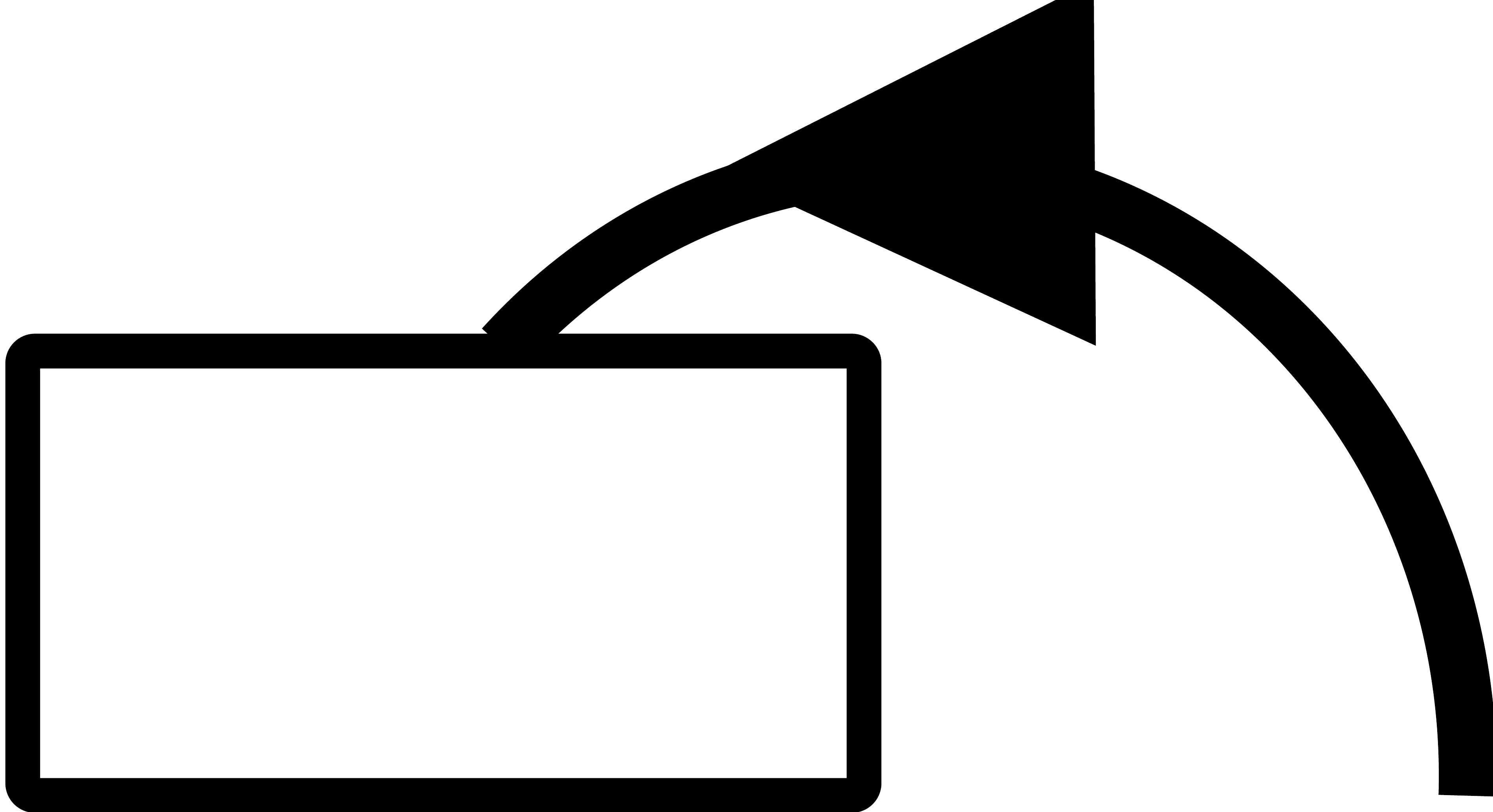}}%
    \put(0.04708376,0.04569897){\color[rgb]{0,0,0}\makebox(0,0)[lt]{\lineheight{1.25}\smash{\begin{tabular}[t]{l} \scalebox{0.75}{$K$}\end{tabular}}}}%
  \end{picture}%
\endgroup%
, \def \svgwidth{7mm}  $=$ \def \svgwidth{9mm} 
\begingroup%
  \makeatletter%
  \providecommand\color[2][]{%
    \errmessage{(Inkscape) Color is used for the text in Inkscape, but the package 'color.sty' is not loaded}%
    \renewcommand\color[2][]{}%
  }%
  \providecommand\transparent[1]{%
    \errmessage{(Inkscape) Transparency is used (non-zero) for the text in Inkscape, but the package 'transparent.sty' is not loaded}%
    \renewcommand\transparent[1]{}%
  }%
  \providecommand\rotatebox[2]{#2}%
  \newcommand*\fsize{\dimexpr\f@size pt\relax}%
  \newcommand*\lineheight[1]{\fontsize{\fsize}{#1\fsize}\selectfont}%
  \ifx\svgwidth\undefined%
    \setlength{\unitlength}{1094.17322835bp}%
    \ifx\svgscale\undefined%
      \relax%
    \else%
      \setlength{\unitlength}{\unitlength * \real{\svgscale}}%
    \fi%
  \else%
    \setlength{\unitlength}{\svgwidth}%
  \fi%
  \global\let\svgwidth\undefined%
  \global\let\svgscale\undefined%
  \makeatother%
  \begin{picture}(1,0.54404145)%
    \lineheight{1}%
    \setlength\tabcolsep{0pt}%
    \put(0,0){\includegraphics[width=\unitlength,page=1]{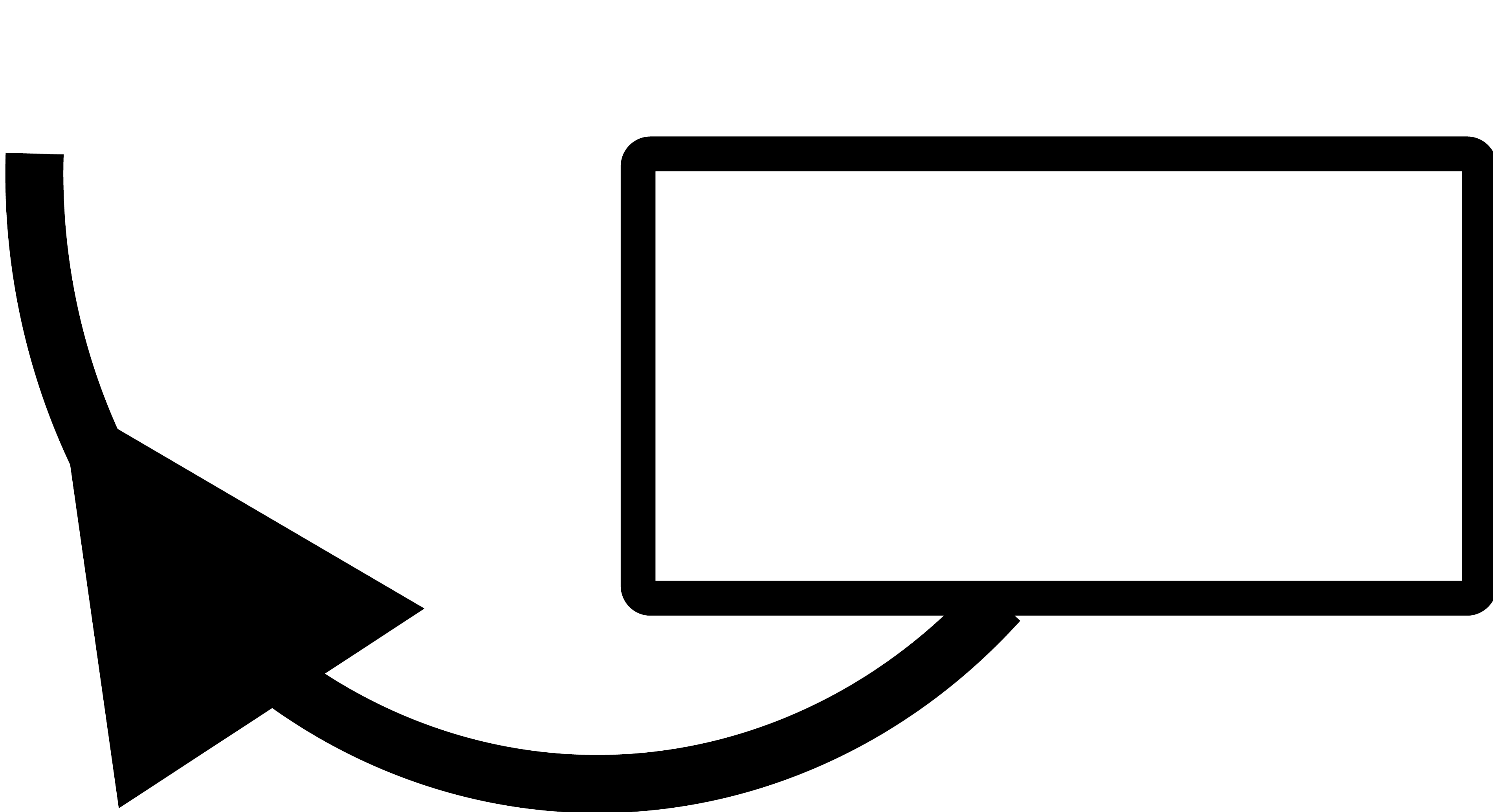}}%
    \put(0.44729559,0.17725649){\color[rgb]{0,0,0}\makebox(0,0)[lt]{\lineheight{1.25}\smash{\begin{tabular}[t]{l} \scalebox{0.65}{$K^{-1}$}\end{tabular}}}}%
  \end{picture}%
\endgroup%
. The rest of the tangle remains unchanged.

By replacing the  $R$ matrix with its formula, we get sums of diagrams with (1,1)-coupons  $E^n$, $F^{(n)}$, $K$ and (2,2)-coupons $ q^{\frac{H \otimes H}{2}}$, $q^{-\frac{H \otimes H}{2}}$ (that we can decompose into sum of (1,1)-coupons if seen as exponentials).\\ 
To compute the universal invariant, start from the top of the tangle and multiply (to the right) every coupons encountered.

\paragraph*{}
Now let's see that we can separate the universal invariant into two pieces. The example of the trefoil knot will illustrate the process all along.\\
The first step is to represent the knot with $R$ matrices (2,2)-coupons and $K^{\pm 1}$ (1,1)-coupons for the pivotal elements, as illustrated in Figure \ref{trefoil_coupons1} .\\
Now we write $R$ as a sum, so the (2,2)-coupons labeled by $R$ become the composition of (2,2)-coupons labeled by $ q^{\frac{H \otimes H}{2}}$ and (1,1)-coupons labeled by $E^n$ or $F^{(n)}$ (see Figure \ref{trefoil_coupons2} ).

\begin{figure}[h!]
\begin{subfigure}[b]{0.5\textwidth}
 \centering
  \def\svgwidth{25mm}
    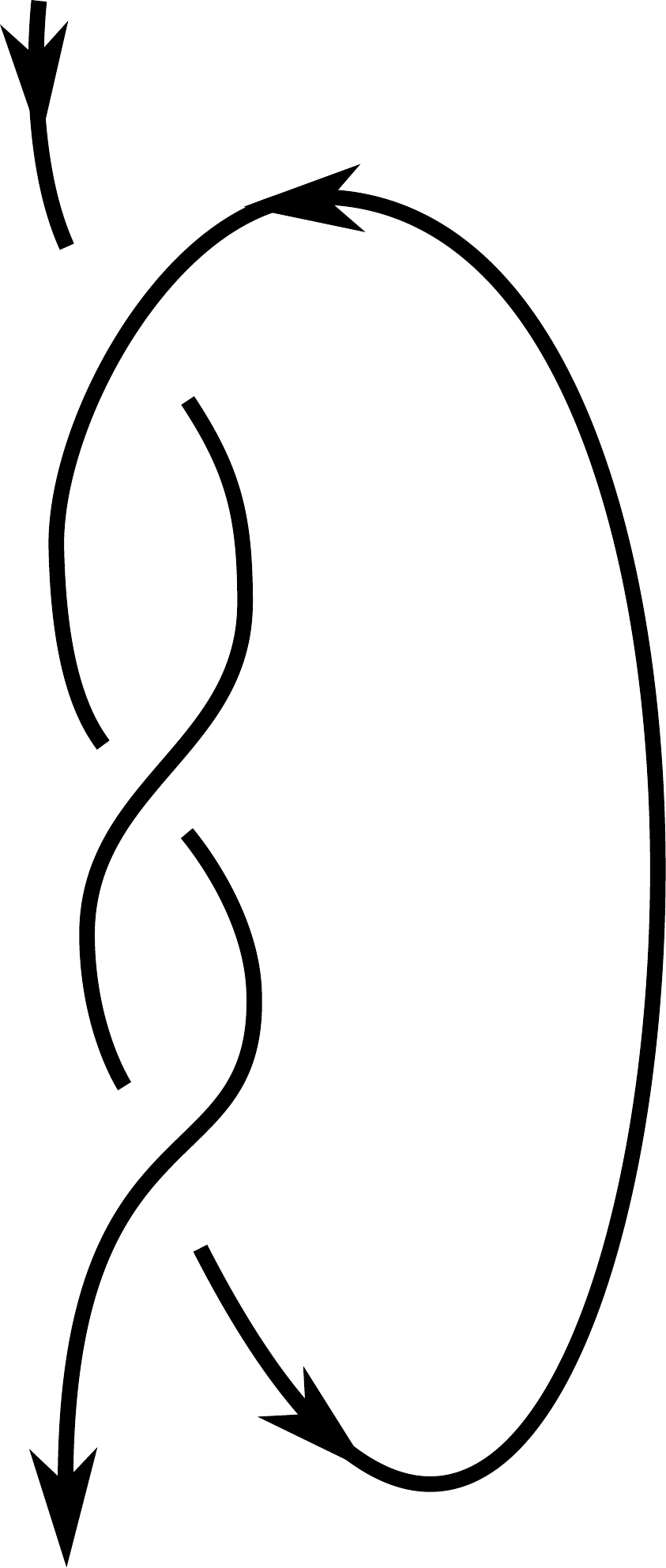%
   \caption{The trefoil knot.}
	\label{trefoil}
 \end{subfigure}%
 \begin{subfigure}[b]{0.5\textwidth}
 \centering
  \def\svgwidth{30mm}
\begingroup%
  \makeatletter%
  \providecommand\color[2][]{%
    \errmessage{(Inkscape) Color is used for the text in Inkscape, but the package 'color.sty' is not loaded}%
    \renewcommand\color[2][]{}%
  }%
  \providecommand\transparent[1]{%
    \errmessage{(Inkscape) Transparency is used (non-zero) for the text in Inkscape, but the package 'transparent.sty' is not loaded}%
    \renewcommand\transparent[1]{}%
  }%
  \providecommand\rotatebox[2]{#2}%
  \newcommand*\fsize{\dimexpr\f@size pt\relax}%
  \newcommand*\lineheight[1]{\fontsize{\fsize}{#1\fsize}\selectfont}%
  \ifx\svgwidth\undefined%
    \setlength{\unitlength}{296.43961508bp}%
    \ifx\svgscale\undefined%
      \relax%
    \else%
      \setlength{\unitlength}{\unitlength * \real{\svgscale}}%
    \fi%
  \else%
    \setlength{\unitlength}{\svgwidth}%
  \fi%
  \global\let\svgwidth\undefined%
  \global\let\svgscale\undefined%
  \makeatother%
  \begin{picture}(1,1.92285286)%
    \lineheight{1}%
    \setlength\tabcolsep{0pt}%
    \put(0,0){\includegraphics[width=\unitlength,page=1]{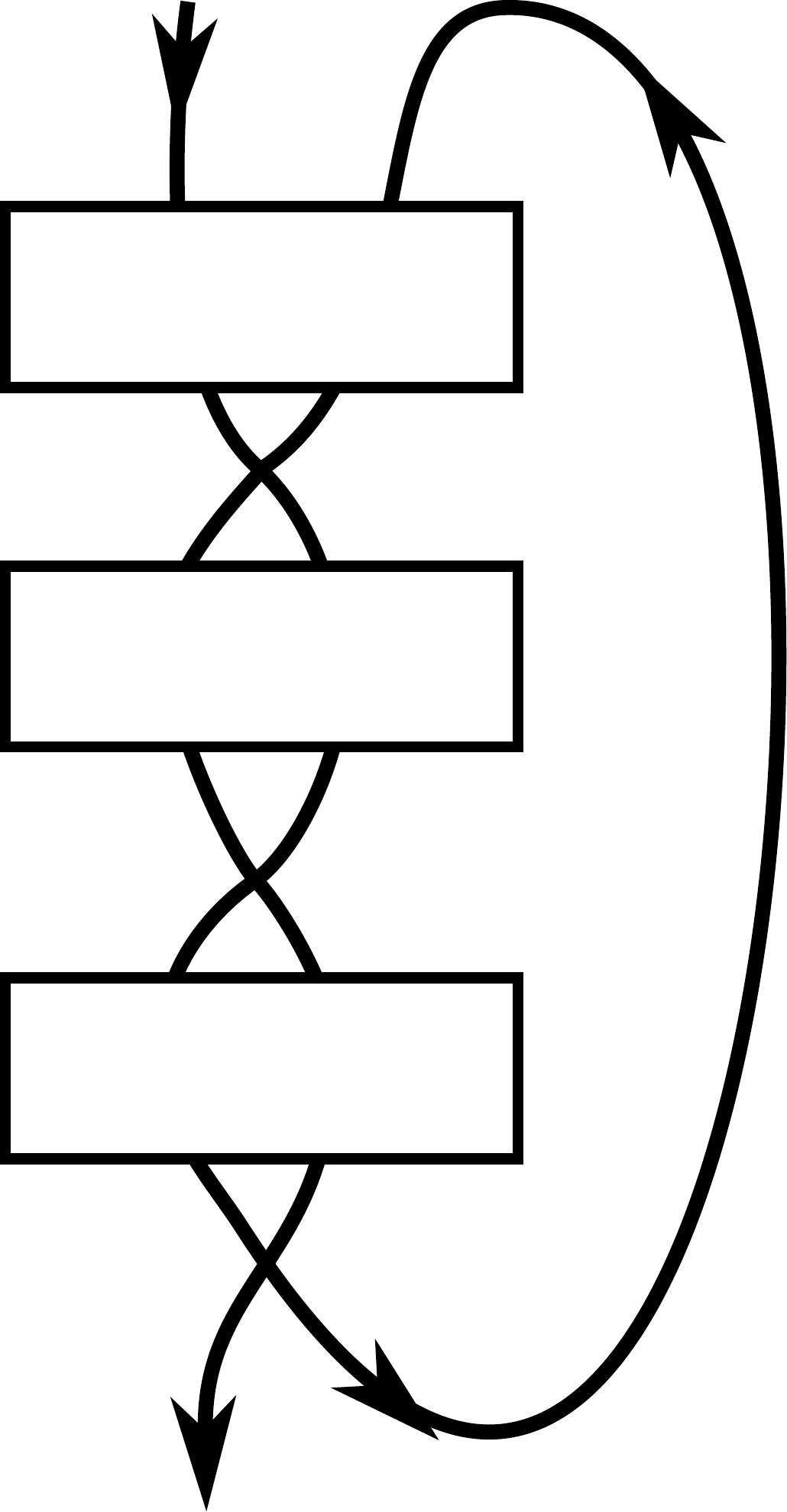}}%
    \put(0.29760448,1.52199067){\color[rgb]{0,0,0}\makebox(0,0)[lt]{\lineheight{1.25}\smash{\begin{tabular}[t]{l}$R$\end{tabular}}}}%
    \put(0.29760448,1.04444788){\color[rgb]{0,0,0}\makebox(0,0)[lt]{\lineheight{1.25}\smash{\begin{tabular}[t]{l}$R$\end{tabular}}}}%
    \put(0.29760448,0.52052683){\color[rgb]{0,0,0}\makebox(0,0)[lt]{\lineheight{1.25}\smash{\begin{tabular}[t]{l}$R$\end{tabular}}}}%
    \put(0,0){\includegraphics[width=\unitlength,page=2]{trefoil_coupons_1.pdf}}%
    \put(0.47083806,1.7494649){\color[rgb]{0,0,0}\makebox(0,0)[lt]{\lineheight{1.25}\smash{\begin{tabular}[t]{l}\scalebox{0.75}{$K $}\end{tabular}}}}%
  \end{picture}%
\endgroup%
   \caption{The universal invariant.}
   \label{trefoil_coupons1}
 \end{subfigure}%
 
 \begin{subfigure}[b]{1\textwidth}
 \centering
  \def\svgwidth{135mm}
    \resizebox{85mm}{!}{ 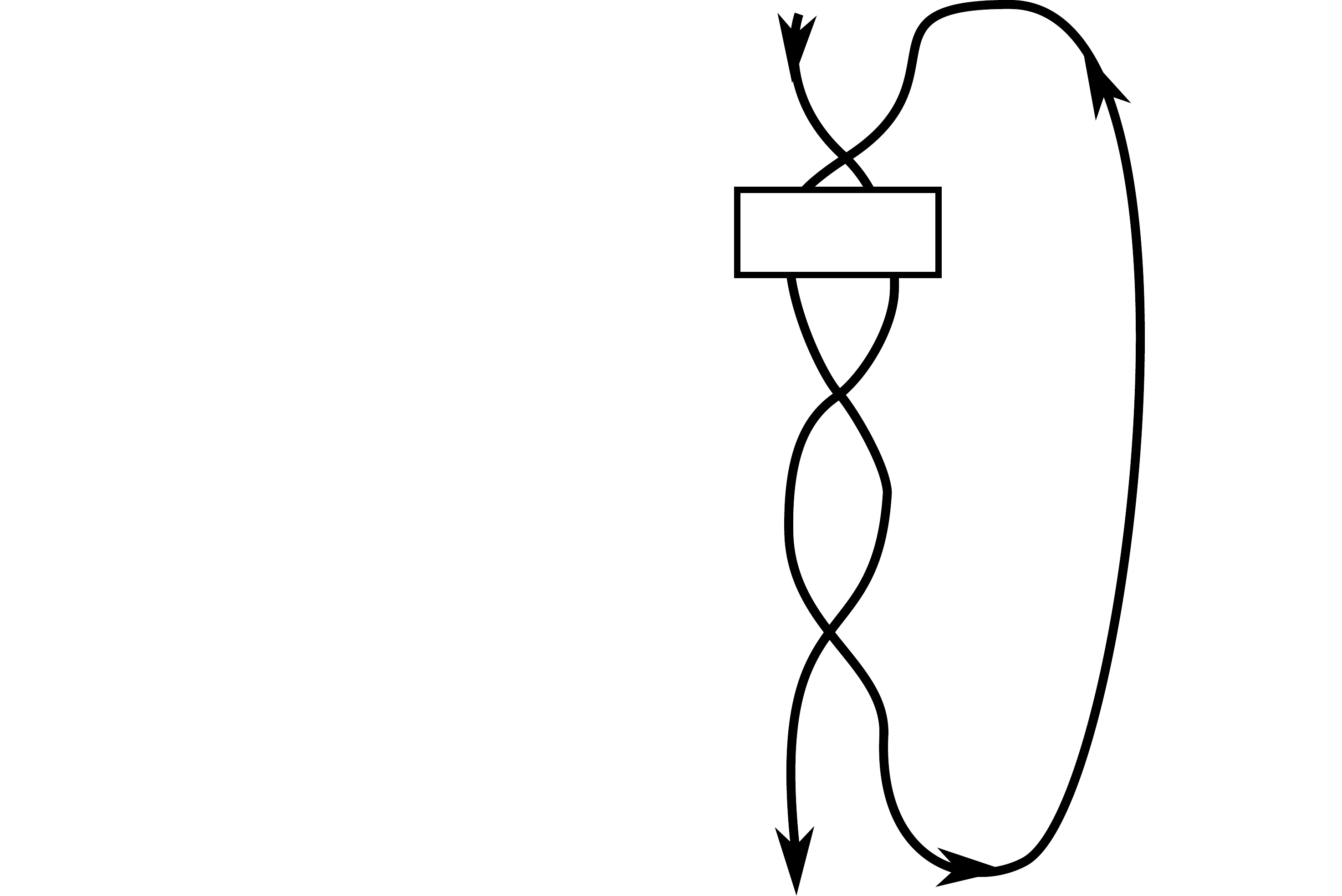}%
   \caption{R-matrices seen has sums.}
   \label{trefoil_coupons2}
 \end{subfigure}%
 \caption{The example of the trefoil knot.}
 \label{Trefoil_universal}
 \end{figure}

\paragraph*{}
We now slide down - following the orientation - the (1,1)-coupons $E^n$, $F^{(n)}$ and $K^n$, taking first (at any step) the closest to the bottom (see Figure \ref{trefoil_coupons_move1}). During this process, the only non trivial commutations that appear are between $E^n \otimes 1$  or $1 \otimes E^n$ or $F^{(n)} \otimes 1$ or $1 \otimes F^{(n)}$ and $q^{\frac{H \otimes H}{2}}$ or $q^{-\frac{H \otimes H}{2}}$. By Lemma \ref{lemmaCommutation}, this only add some $(1,1)$ coupons labeled by $K^{\pm n}$ (see Figure \ref{trefoil_coupons_move2}).

\begin{figure}[h!]
\begin{subfigure}[b]{1\textwidth}
 \centering
  \def\svgwidth{135mm}
    \resizebox{85mm}{!}{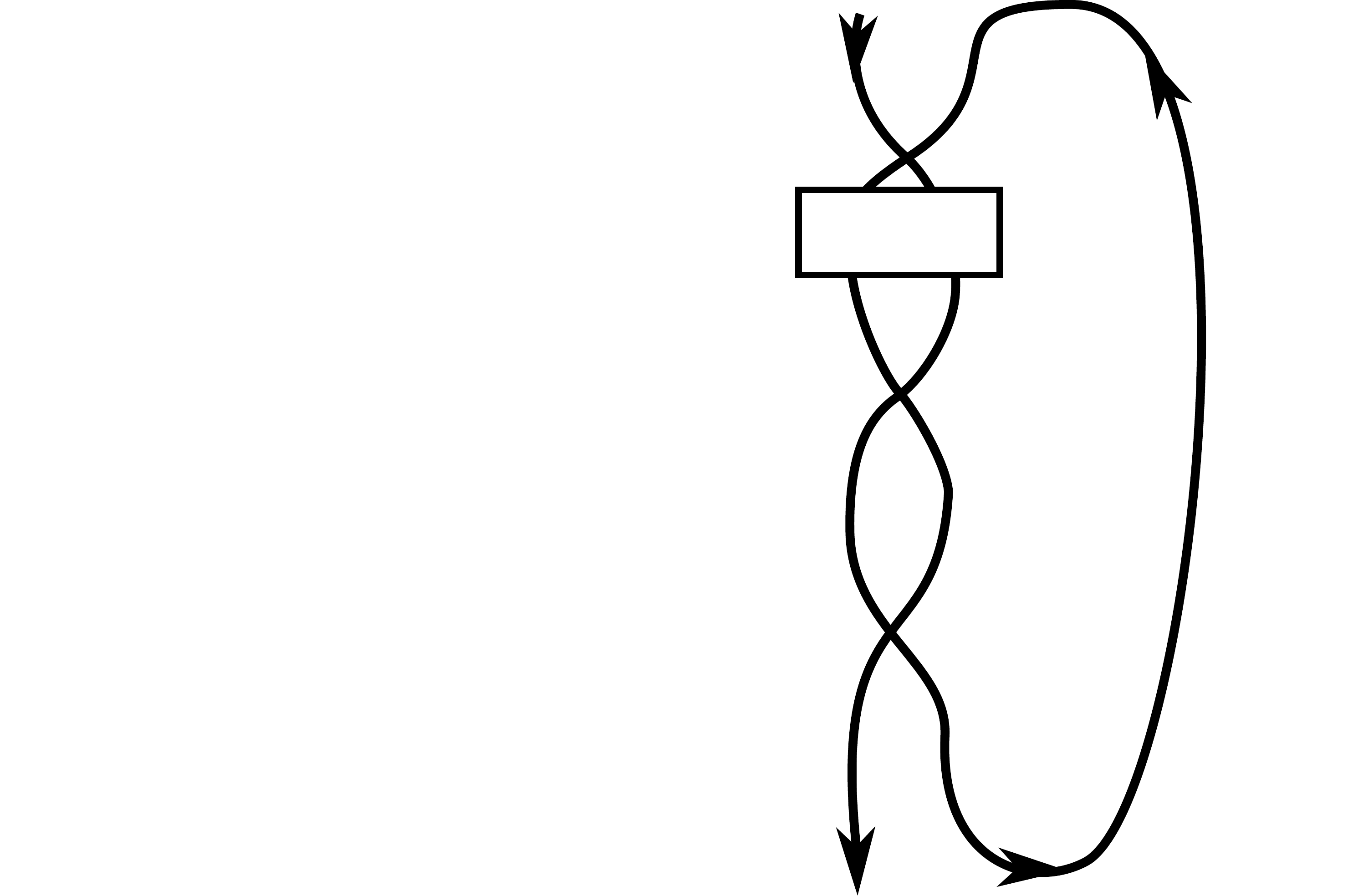}%
   \caption{The first two steps, sliding coupons.}
	\label{trefoil_coupons_move1}
 \end{subfigure}%
 
 \begin{subfigure}[b]{1\textwidth}
 \centering
  \def\svgwidth{135mm}
    \resizebox{85mm}{!}{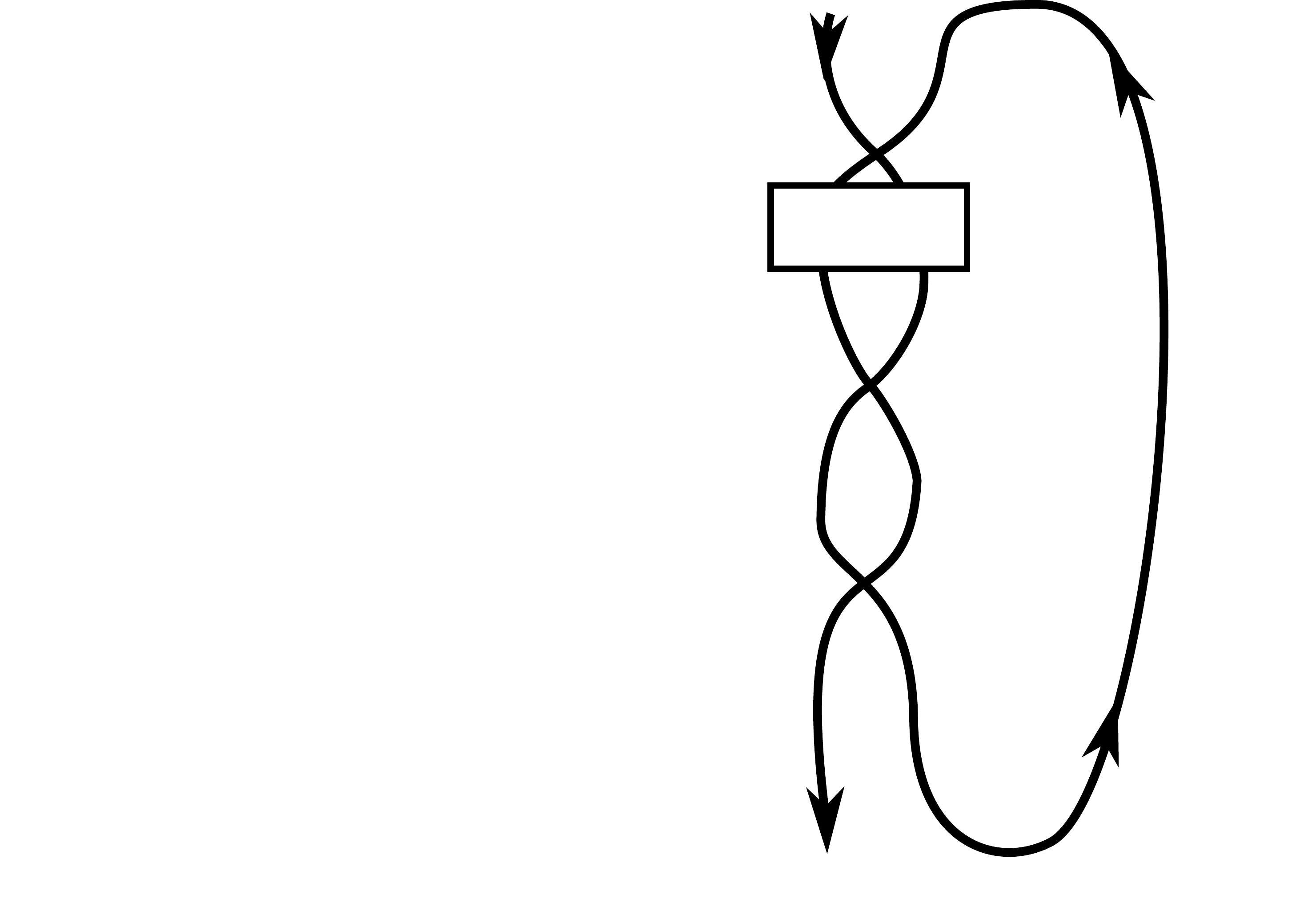}%
  \caption{The third step, passing through the (2,2) coupons.}
   \label{trefoil_coupons_move2}
 \end{subfigure}%
 \caption{Sliding coupons example with the trefoil knot.}
 \label{Trefoil_move}
 \end{figure}

\paragraph*{}
We are only left with coupons labeled by $q^{\frac{H \otimes H}{2}}$ and $q^{-\frac{H \otimes H}{2}}$, you can see Figure \ref{trefoil_coupons_quad1} for the example of the trefoil knot. These are exponentials and we can see them as the sum $q^{\pm \frac{H \otimes H}{2}}= \sum (\frac{(\pm h)^n}{2^n n!}) H^n \otimes H^n$, the (2,2)-coupons now become a sum of (1,1)-coupons as shown for the trefoil knot in Figure \ref{trefoil_coupons_quad2}.\\
Now take one of the (1,1)-coupon labeled by $H^n$ and slide it towards the second (1,1)-coupon labeled by $H^n$ (see Figure \ref{trefoil_coupons_quad2}). Since one is only left with coupons labeled by powers of $H$, everything commutes and we get coupons $H^{2n}$ (as shown in Figure \ref{trefoil_coupons_quad3}), summing them over $n$ gives us (1,1)-coupons labeled by $q^{\pm \frac{H^2}{2}}$ (see Figure \ref{trefoil_coupons_quad4}).

\begin{figure}[h!]
\begin{subfigure}[b]{0.45\textwidth}
 \centering
  \def\svgwidth{32mm}
    \resizebox{25mm}{!}{
\begingroup%
  \makeatletter%
  \providecommand\color[2][]{%
    \errmessage{(Inkscape) Color is used for the text in Inkscape, but the package 'color.sty' is not loaded}%
    \renewcommand\color[2][]{}%
  }%
  \providecommand\transparent[1]{%
    \errmessage{(Inkscape) Transparency is used (non-zero) for the text in Inkscape, but the package 'transparent.sty' is not loaded}%
    \renewcommand\transparent[1]{}%
  }%
  \providecommand\rotatebox[2]{#2}%
  \newcommand*\fsize{\dimexpr\f@size pt\relax}%
  \newcommand*\lineheight[1]{\fontsize{\fsize}{#1\fsize}\selectfont}%
  \ifx\svgwidth\undefined%
    \setlength{\unitlength}{259.77534016bp}%
    \ifx\svgscale\undefined%
      \relax%
    \else%
      \setlength{\unitlength}{\unitlength * \real{\svgscale}}%
    \fi%
  \else%
    \setlength{\unitlength}{\svgwidth}%
  \fi%
  \global\let\svgwidth\undefined%
  \global\let\svgscale\undefined%
  \makeatother%
  \begin{picture}(1,2.180772)%
    \lineheight{1}%
    \setlength\tabcolsep{0pt}%
    \put(0,0){\includegraphics[width=\unitlength,page=1]{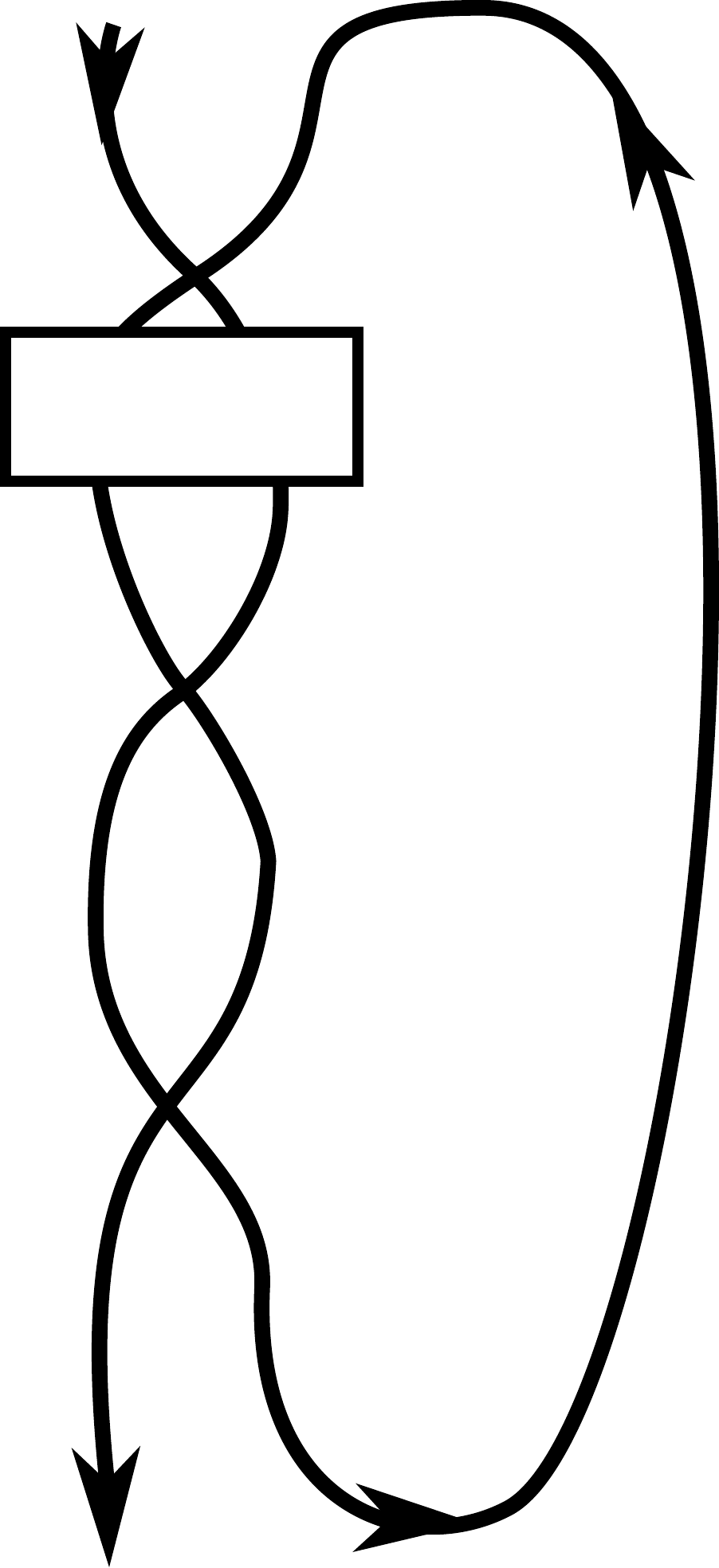}}%
    \put(0.12366914,1.58007269){\color[rgb]{0,0,0}\makebox(0,0)[lt]{\lineheight{1.25}\smash{\begin{tabular}[t]{l}$q^{\frac{H \otimes H}{2}}$\end{tabular}}}}%
    \put(0,0){\includegraphics[width=\unitlength,page=2]{trefoil_coupons_quad_1.pdf}}%
    \put(0.12366914,1.01436425){\color[rgb]{0,0,0}\makebox(0,0)[lt]{\lineheight{1.25}\smash{\begin{tabular}[t]{l}$q^{\frac{H \otimes H}{2}}$\end{tabular}}}}%
    \put(0,0){\includegraphics[width=\unitlength,page=3]{trefoil_coupons_quad_1.pdf}}%
    \put(0.12366914,0.4281635){\color[rgb]{0,0,0}\makebox(0,0)[lt]{\lineheight{1.25}\smash{\begin{tabular}[t]{l}$q^{\frac{H \otimes H}{2}}$\end{tabular}}}}%
  \end{picture}%
\endgroup%
}%
   \caption{Quadratic part of trefoil universal invariant.}
	\label{trefoil_coupons_quad1}
 \end{subfigure}\hfill%
 \begin{subfigure}[b]{0.45\textwidth}
 \centering
  \def\svgwidth{55mm}
    \resizebox{42mm}{!}{
\begingroup%
  \makeatletter%
  \providecommand\color[2][]{%
    \errmessage{(Inkscape) Color is used for the text in Inkscape, but the package 'color.sty' is not loaded}%
    \renewcommand\color[2][]{}%
  }%
  \providecommand\transparent[1]{%
    \errmessage{(Inkscape) Transparency is used (non-zero) for the text in Inkscape, but the package 'transparent.sty' is not loaded}%
    \renewcommand\transparent[1]{}%
  }%
  \providecommand\rotatebox[2]{#2}%
  \newcommand*\fsize{\dimexpr\f@size pt\relax}%
  \newcommand*\lineheight[1]{\fontsize{\fsize}{#1\fsize}\selectfont}%
  \ifx\svgwidth\undefined%
    \setlength{\unitlength}{428.36134543bp}%
    \ifx\svgscale\undefined%
      \relax%
    \else%
      \setlength{\unitlength}{\unitlength * \real{\svgscale}}%
    \fi%
  \else%
    \setlength{\unitlength}{\svgwidth}%
  \fi%
  \global\let\svgwidth\undefined%
  \global\let\svgscale\undefined%
  \makeatother%
  \begin{picture}(1,1.32250679)%
    \lineheight{1}%
    \setlength\tabcolsep{0pt}%
    \put(0,0){\includegraphics[width=\unitlength,page=1]{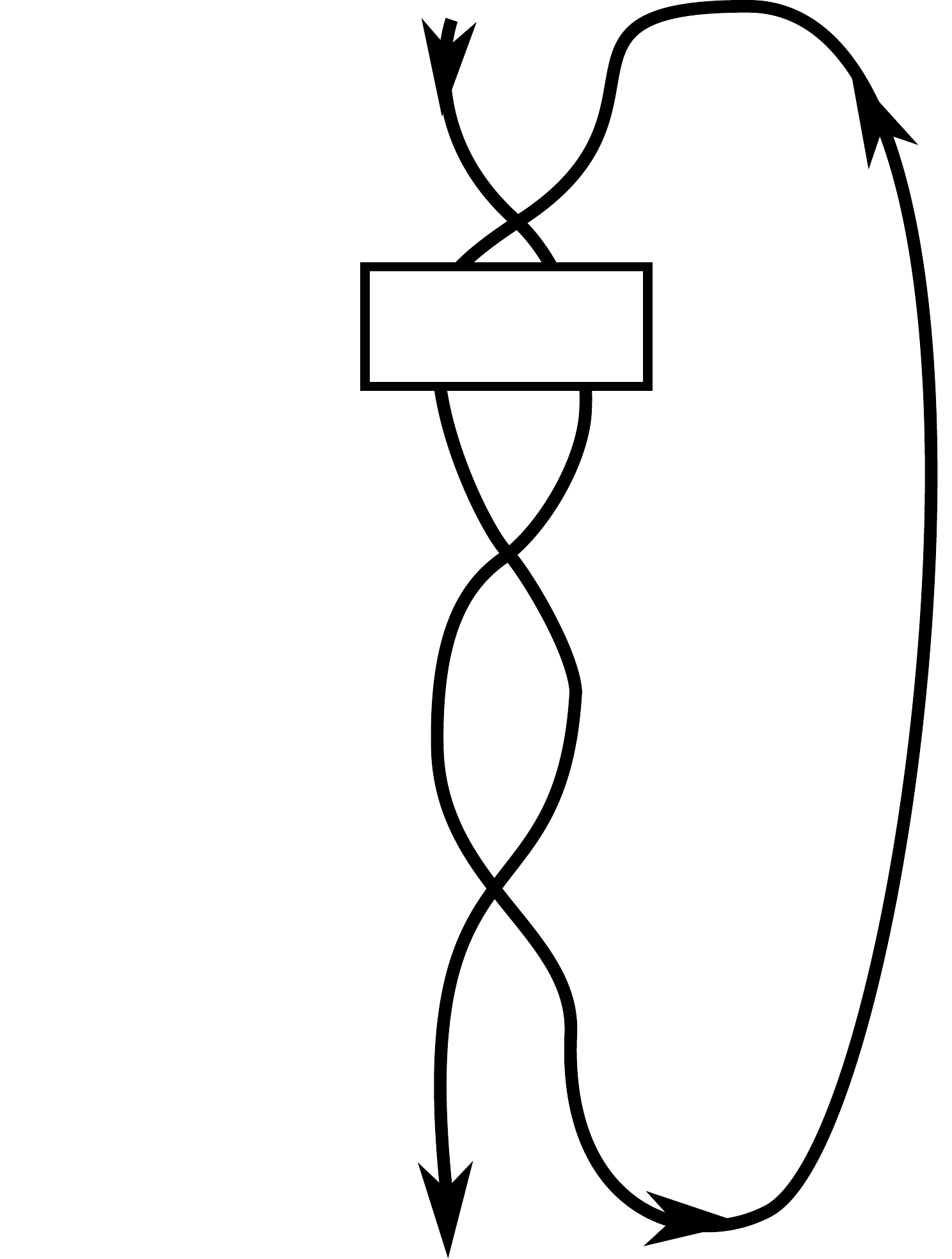}}%
    \put(0.45375596,0.95821884){\color[rgb]{0,0,0}\makebox(0,0)[lt]{\lineheight{1.25}\smash{\begin{tabular}[t]{l}$q^{\frac{H \otimes H}{2}}$\end{tabular}}}}%
    \put(0,0){\includegraphics[width=\unitlength,page=2]{trefoil_coupons_quad_2.pdf}}%
    \put(0.45375596,0.61515081){\color[rgb]{0,0,0}\makebox(0,0)[lt]{\lineheight{1.25}\smash{\begin{tabular}[t]{l}$q^{\frac{H \otimes H}{2}}$\end{tabular}}}}%
    \put(0,0){\includegraphics[width=\unitlength,page=3]{trefoil_coupons_quad_2.pdf}}%
    \put(0.40784122,0.21960767){\color[rgb]{0,0,0}\makebox(0,0)[lt]{\lineheight{1.25}\smash{\begin{tabular}[t]{l}$H^n$\end{tabular}}}}%
    \put(0.55163256,0.21960767){\color[rgb]{0,0,0}\makebox(0,0)[lt]{\lineheight{1.25}\smash{\begin{tabular}[t]{l}$H^n$\end{tabular}}}}%
    \put(0,0.63294421){\color[rgb]{0,0,0}\makebox(0,0)[lt]{\lineheight{1.25}\smash{\begin{tabular}[t]{l}\scalebox{1.3}{$\underset{n}{\sum} \frac{h^n}{2^n [n]!} \times $}\end{tabular}}}}%
    \put(0,0){\includegraphics[width=\unitlength,page=4]{trefoil_coupons_quad_2.pdf}}%
  \end{picture}%
\endgroup%
}%
  \caption{Illustration of quadratic simplification: first step.}
   \label{trefoil_coupons_quad2}
 \end{subfigure}%

 \begin{subfigure}[b]{0.45\textwidth}
 \centering
  \def\svgwidth{55mm}
    \resizebox{42mm}{!}{
\begingroup%
  \makeatletter%
  \providecommand\color[2][]{%
    \errmessage{(Inkscape) Color is used for the text in Inkscape, but the package 'color.sty' is not loaded}%
    \renewcommand\color[2][]{}%
  }%
  \providecommand\transparent[1]{%
    \errmessage{(Inkscape) Transparency is used (non-zero) for the text in Inkscape, but the package 'transparent.sty' is not loaded}%
    \renewcommand\transparent[1]{}%
  }%
  \providecommand\rotatebox[2]{#2}%
  \newcommand*\fsize{\dimexpr\f@size pt\relax}%
  \newcommand*\lineheight[1]{\fontsize{\fsize}{#1\fsize}\selectfont}%
  \ifx\svgwidth\undefined%
    \setlength{\unitlength}{428.36134543bp}%
    \ifx\svgscale\undefined%
      \relax%
    \else%
      \setlength{\unitlength}{\unitlength * \real{\svgscale}}%
    \fi%
  \else%
    \setlength{\unitlength}{\svgwidth}%
  \fi%
  \global\let\svgwidth\undefined%
  \global\let\svgscale\undefined%
  \makeatother%
  \begin{picture}(1,1.32250679)%
    \lineheight{1}%
    \setlength\tabcolsep{0pt}%
    \put(0,0){\includegraphics[width=\unitlength,page=1]{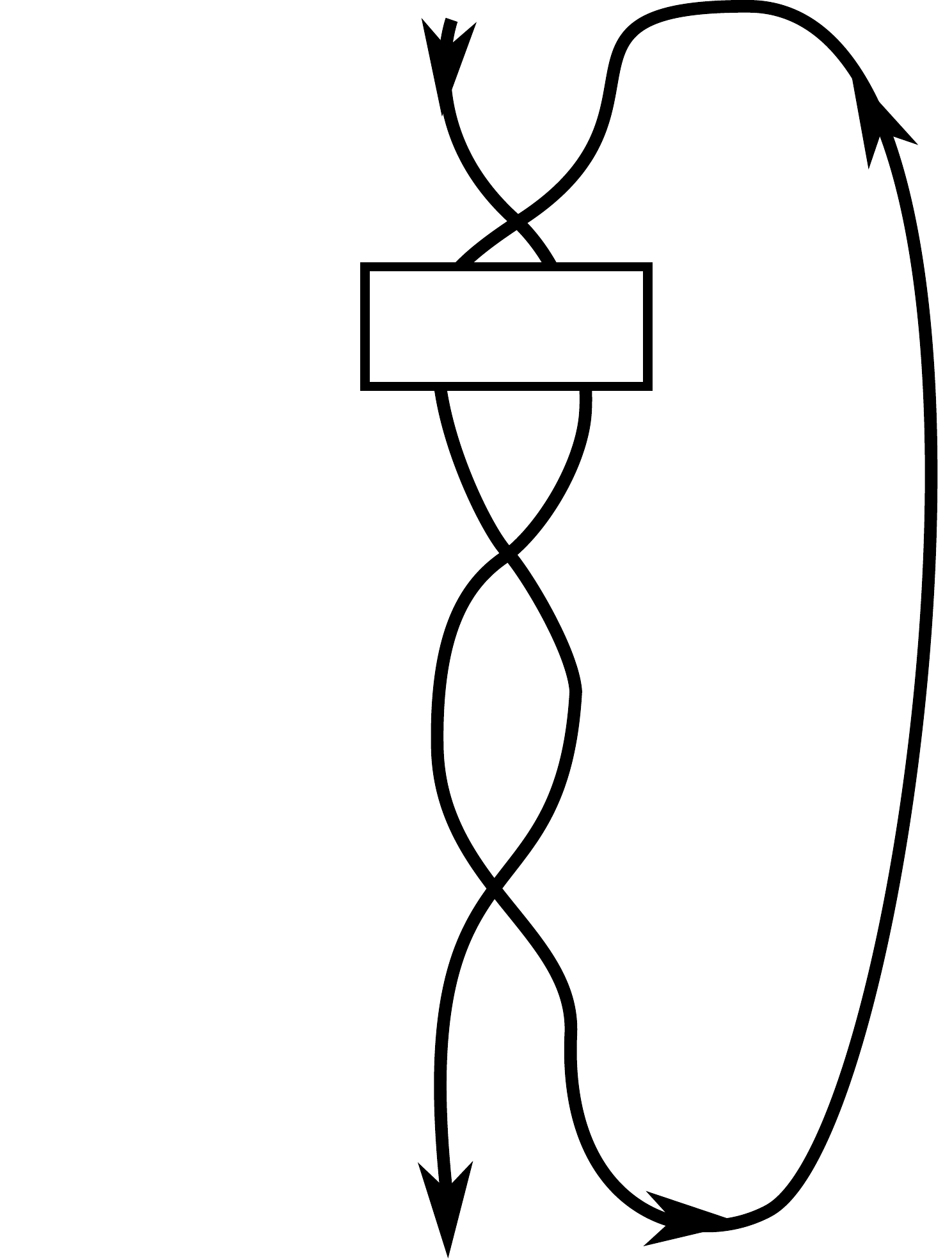}}%
    \put(0.45375603,0.95821887){\color[rgb]{0,0,0}\makebox(0,0)[lt]{\lineheight{1.25}\smash{\begin{tabular}[t]{l}$q^{\frac{H \otimes H}{2}}$\end{tabular}}}}%
    \put(0,0){\includegraphics[width=\unitlength,page=2]{trefoil_coupons_quad_3.pdf}}%
    \put(0.45375603,0.61515074){\color[rgb]{0,0,0}\makebox(0,0)[lt]{\lineheight{1.25}\smash{\begin{tabular}[t]{l}$q^{\frac{H \otimes H}{2}}$\end{tabular}}}}%
    \put(0,0){\includegraphics[width=\unitlength,page=3]{trefoil_coupons_quad_3.pdf}}%
    \put(0.40033757,0.21710638){\color[rgb]{0,0,0}\makebox(0,0)[lt]{\lineheight{1.25}\smash{\begin{tabular}[t]{l}$H^{2n}$\end{tabular}}}}%
    \put(-0,0.63294413){\color[rgb]{0,0,0}\makebox(0,0)[lt]{\lineheight{1.25}\smash{\begin{tabular}[t]{l}\scalebox{1.3}{$\underset{n}{\sum} \frac{h^n}{2^n [n]!} \times $}\end{tabular}}}}%
  \end{picture}%
\endgroup%
}%
   \caption{Illustration of quadratic simplification: second step.}
	\label{trefoil_coupons_quad3}
 \end{subfigure}\hfill%
 \begin{subfigure}[b]{0.45\textwidth}
 \centering
  \def\svgwidth{32mm}
    \resizebox{25mm}{!}{
\begingroup%
  \makeatletter%
  \providecommand\color[2][]{%
    \errmessage{(Inkscape) Color is used for the text in Inkscape, but the package 'color.sty' is not loaded}%
    \renewcommand\color[2][]{}%
  }%
  \providecommand\transparent[1]{%
    \errmessage{(Inkscape) Transparency is used (non-zero) for the text in Inkscape, but the package 'transparent.sty' is not loaded}%
    \renewcommand\transparent[1]{}%
  }%
  \providecommand\rotatebox[2]{#2}%
  \newcommand*\fsize{\dimexpr\f@size pt\relax}%
  \newcommand*\lineheight[1]{\fontsize{\fsize}{#1\fsize}\selectfont}%
  \ifx\svgwidth\undefined%
    \setlength{\unitlength}{259.77529691bp}%
    \ifx\svgscale\undefined%
      \relax%
    \else%
      \setlength{\unitlength}{\unitlength * \real{\svgscale}}%
    \fi%
  \else%
    \setlength{\unitlength}{\svgwidth}%
  \fi%
  \global\let\svgwidth\undefined%
  \global\let\svgscale\undefined%
  \makeatother%
  \begin{picture}(1,2.18077253)%
    \lineheight{1}%
    \setlength\tabcolsep{0pt}%
    \put(0,0){\includegraphics[width=\unitlength,page=1]{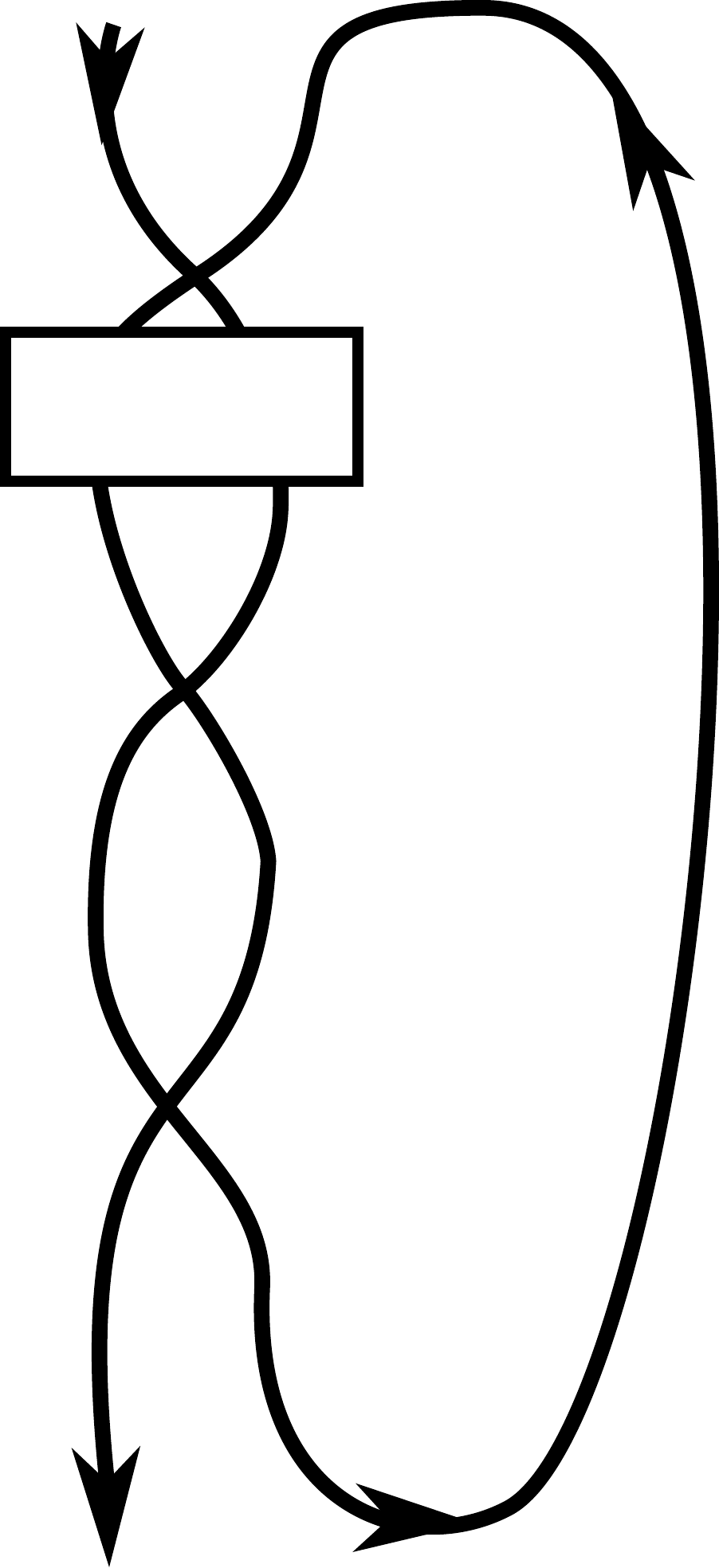}}%
    \put(0.12366899,1.5800732){\color[rgb]{0,0,0}\makebox(0,0)[lt]{\lineheight{1.25}\smash{\begin{tabular}[t]{l}$q^{\frac{H \otimes H}{2}}$\end{tabular}}}}%
    \put(0,0){\includegraphics[width=\unitlength,page=2]{trefoil_coupons_quad_4.pdf}}%
    \put(0.12366899,1.01436449){\color[rgb]{0,0,0}\makebox(0,0)[lt]{\lineheight{1.25}\smash{\begin{tabular}[t]{l}$q^{\frac{H \otimes H}{2}}$\end{tabular}}}}%
    \put(0,0){\includegraphics[width=\unitlength,page=3]{trefoil_coupons_quad_4.pdf}}%
    \put(0.03570925,0.36639101){\color[rgb]{0,0,0}\makebox(0,0)[lt]{\lineheight{1.25}\smash{\begin{tabular}[t]{l}$q^{\frac{H^2}{2}}$\end{tabular}}}}%
  \end{picture}%
\endgroup%
}%
  \caption{Illustration of quadratic simplification: third step.}
   \label{trefoil_coupons_quad4}
 \end{subfigure}%
 \caption{Quadratic factorization and simplification for the trefoil knot.}
 \label{Trefoil_quad}
 \end{figure}

\paragraph*{}
We thus have the following proposition:

\begin{prop}
If $\mathcal{K}$ a knot and $D$ a diagram of a 1-1 tangle $T$ whose closure is $\mathcal{K}$, then:
\[ Q^{U_h}(\mathcal{K})= q^{f \frac{H^2}{2}} Q^{\mathcal{\tilde{U}}} (D) \]
where $Q^{\mathcal{\tilde{U}}} (D) \in \mathcal{\tilde{U}}$ and $f$ is the writhe of the diagram.
\end{prop}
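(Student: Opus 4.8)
The plan is to carry out explicitly the coupon manipulation already illustrated for the trefoil in Figures~\ref{Trefoil_universal} through \ref{Trefoil_quad}, and to check at each stage that everything stays inside $\mathcal{\tilde{U}}$ while the accumulated quadratic part is governed precisely by the writhe. First I would present $Q^{U_h}(\mathcal{K})$ as the ordered product of coupons read off $D$: a coupon $R$ (resp. $R^{-1}$) for each positive (resp. negative) crossing and a coupon $K$ (resp. $K^{-1}$) for each relevant cap/cup. Substituting $R = q^{\frac{H \otimes H}{2}} \sum_n q^{\frac{n(n-1)}{2}} E^n \otimes F^{(n)}$ and its analogue for $R^{-1}$, each crossing coupon splits into a quadratic $(2,2)$-coupon $q^{\pm \frac{H \otimes H}{2}}$ together with $(1,1)$-coupons $E^n$ and $F^{(n)}$ weighted by $q^{\pm \frac{n(n-1)}{2}}$.

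Next I would slide all the $E^{n}$, $F^{(n)}$ and $K^{\pm1}$ coupons downward along the orientation, always moving the lowest one first. The only nontrivial commutations are those of $E^n \otimes 1$, $1 \otimes E^n$, $F^{(n)} \otimes 1$, $1 \otimes F^{(n)}$ past a $q^{\pm \frac{H \otimes H}{2}}$ coupon, and by Lemma~\ref{lemmaCommutation} each such move produces only an extra $K^{\pm n}$ coupon. Hence the collecting procedure keeps us inside the $\Z[q^{\pm 1}]$-subalgebra generated by $E$, $F^{(n)}$ and $K$, namely $\mathcal{U}$. For a fixed multi-index $(n_1,\dots,n_N)$ the collected element is a single product in $\mathcal{U}$, and the sum over the indices converges in $\mathcal{\hat{U}}$: since $F^{(n)} \in J_n$ and the $J_n$ form a decreasing two-sided filtration, any term with some $n_k \geq m$ lies in $J_{n_k} \subseteq J_m$, so only finitely many terms survive modulo $J_m$. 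The image of this limit in $U_h$ is the element $Q^{\mathcal{\tilde{U}}}(D) \in \mathcal{\tilde{U}}$.

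After this step the only $(2,2)$-coupons left are the quadratic ones $q^{\pm \frac{H \otimes H}{2}}$, one of each sign per positive/negative crossing. Writing $q^{\pm \frac{H \otimes H}{2}} = \sum_n \frac{(\pm h)^n}{2^n n!} H^n \otimes H^n$ turns each into a sum of pairs of $(1,1)$-coupons $H^n$, one on each of the two strands meeting at the crossing. Since every surviving coupon is now a power of $H$ and all powers of $H$ commute, I can slide the two $H^n$'s of a given crossing together along the connected knot strand; they merge into $H^{2n}$, and resumming yields a single $(1,1)$-coupon $q^{\pm \frac{H^2}{2}}$ per crossing. The signs then combine to $q^{(\#\mathrm{pos}-\#\mathrm{neg})\frac{H^2}{2}} = q^{f \frac{H^2}{2}}$ with $f$ the writhe of $D$, and because these quadratic coupons commute with the collected element they factor out on the left, giving $Q^{U_h}(\mathcal{K}) = q^{f\frac{H^2}{2}} Q^{\mathcal{\tilde{U}}}(D)$.

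The main obstacle I anticipate is the bookkeeping in the two merging steps: one must verify that sliding the coupons never introduces an $H \otimes H$-dependence obstructing the clean separation (this is exactly what Lemma~\ref{lemmaCommutation} guarantees, as only $K^{\pm n}$ factors are ever created), and that the signed count of quadratic coupons surviving the closure is the writhe rather than some other crossing statistic. A careful induction on the number of coupons being slid, together with the fact that the two legs of each crossing lie on the same component of the knot, settles both points; the convergence of the collected sum in $\mathcal{\hat{U}}$ is then immediate from $F^{(n)} \in J_n$.
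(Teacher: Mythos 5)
Your proposal is correct and follows essentially the same route as the paper: decompose each $R^{\pm 1}$ coupon into its quadratic part $q^{\pm\frac{H\otimes H}{2}}$ and the $(1,1)$-coupons $E^n$, $F^{(n)}$, slide the $E^n$, $F^{(n)}$, $K^{\pm 1}$ coupons to the bottom via Lemma~\ref{lemmaCommutation} (which only creates $K^{\pm n}$ coupons, so everything stays in $\mathcal{U}$ and converges in $\mathcal{\hat{U}}$ since $F^{(n)}\in J_n$), then split each quadratic coupon into $H^n\otimes H^n$ pairs and merge them into one $q^{\pm\frac{H^2}{2}}$ per crossing, with signed count the writhe. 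One small correction: the final factorization should not be justified by saying the quadratic coupons ``commute with the collected element'' --- a priori $q^{f\frac{H^2}{2}}$ does not commute with elements of $\mathcal{\tilde{U}}$ (e.g.\ $q^{\frac{H^2}{2}}E=Eq^{\frac{(H+2)^2}{2}}$), and invoking such commutation would implicitly require centrality of $Q^{U_h}(\mathcal{K})$; what is actually needed, and what the construction gives, is that all quadratic coupons sit strictly above the collected element in the traversal order, so their mutually commuting product appears as the left factor.
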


\subsection{The Verma module}

Now it is time to construct the Verma module on which the universal invariant will act as $F_{\infty}(q,A,D)$.

Let $V^{\alpha}$ be a $\hat{R}^{\hat{I}}$-module freely generated by vectors $\{ v_0, v_1, \dots\} $, and we endow it with a action of $\mathcal{\tilde{U}}$:
\[ Ev_0 = 0, \ \ Ev_{i+1} = v_i, \ \ Kv_i=q^{\alpha -2i} v_i, \ \ F^{(n)} v_i=\qbinom{n+i}{i}_q \{ \alpha -i ; n \}_q v_{n+i}\]

\begin{prop}
The endomorphism of $V^{\alpha}$ are scalars i.e. \[End_{\mathcal{\tilde{U}}} (V^{\alpha}) =\hat{R}^{\hat{I}} Id_{V^{\alpha}}.\]
\end{prop}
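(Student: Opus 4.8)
The plan is to show that any $\tilde{\mathcal{U}}$-linear endomorphism $\phi$ of $V^{\alpha}$ is forced, first, to be diagonal in the system $\{v_i\}$ by the $K$-action, and then to have constant diagonal coefficients by the $E$-action. The reverse inclusion is immediate: for $\lambda \in \hat{R}^{\hat{I}}$ the map $\lambda\, Id_{V^{\alpha}}$ commutes with the $\tilde{\mathcal{U}}$-action because $\tilde{\mathcal{U}}$ acts by $\hat{R}^{\hat{I}}$-linear operators and $\hat{R}^{\hat{I}}$ is commutative, so $\hat{R}^{\hat{I}}\, Id_{V^{\alpha}} \subseteq End_{\tilde{\mathcal{U}}}(V^{\alpha})$; moreover $\lambda \mapsto \lambda\, Id_{V^{\alpha}}$ is injective since $v_0$ is a free generator. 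Only the inclusion $End_{\tilde{\mathcal{U}}}(V^{\alpha}) \subseteq \hat{R}^{\hat{I}}\, Id_{V^{\alpha}}$ carries content.

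First I would fix $\phi$ and, for each $n$, write $\phi(v_n) = \sum_j c_{n,j} v_j$ as a finite $\hat{R}^{\hat{I}}$-combination (elements of the freely generated module are finite sums). Applying $K$-equivariance, $K\phi(v_n) = \phi(Kv_n) = q^{\alpha-2n}\phi(v_n)$, and comparing coefficients against $Kv_j = q^{\alpha-2j}v_j$ gives $(q^{\alpha-2n}-q^{\alpha-2j})\,c_{n,j}=0$ for all $j$. Since $q^{\alpha-2n}-q^{\alpha-2j}$ equals a unit of $\hat{R}^{\hat{I}}$ times $q^{2(j-n)}-1$, the vanishing of the off-diagonal coefficients $c_{n,j}$ (for $j\neq n$) will follow as soon as each $q^{2s}-1$ (with $s\neq 0$) is a non-zero-divisor in $\hat{R}^{\hat{I}}$. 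Granting this, $\phi(v_n)=\lambda_n v_n$ for scalars $\lambda_n \in \hat{R}^{\hat{I}}$.

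Next I would use the lowering operator to identify all the $\lambda_n$. From $Ev_{n+1}=v_n$ and $\tilde{\mathcal{U}}$-linearity, $\lambda_n v_n = \phi(v_n) = \phi(Ev_{n+1}) = E\phi(v_{n+1}) = \lambda_{n+1} E v_{n+1} = \lambda_{n+1} v_n$, hence $\lambda_n = \lambda_{n+1}$ for every $n\geq 0$. Thus all diagonal entries coincide with $\lambda := \lambda_0$, giving $\phi = \lambda\, Id_{V^{\alpha}}$ and the desired inclusion. One could equally propagate $\phi(v_0)=\lambda v_0$ upward through $F^{(n)}v_0 = \{\alpha;n\}_q v_n$, but that route needs $\{\alpha;n\}_q$ to be a non-zero-divisor, so the $E$-argument is cleaner.

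The main obstacle is precisely the non-zero-divisor property of $q^{2s}-1$ in the completion: at each finite level $R/I_n$ the quantum integer $\{s\}_q$ genuinely becomes a zero-divisor (for $s\leq n$ it divides $\{n\}!\in I_n$), so the claim cannot be checked level by level and is exactly the point where one needs $\hat{R}^{\hat{I}}$ to be an integral domain. I would obtain this either from the relation to Habiro's completion and the embedding $\tilde{\mathcal{U}}\hookrightarrow U_h$ discussed later in the paper, which realizes $\hat{R}^{\hat{I}}$ inside a torsion-free $h$-adic ring, or, to stay self-contained, by extending the evaluations $f_k\colon A\mapsto q^k$ from the injectivity proof to maps $\hat{R}^{\hat{I}} \to \widehat{\Z[q^{\pm 1}]}$ and using that Habiro's ring $\widehat{\Z[q^{\pm 1}]}$ is a domain in which $\{s\}_q\neq 0$, together with the degree and valuation estimate already used to prove $\bigcap_n I_n = \{0\}$.
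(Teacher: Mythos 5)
Your proof is correct and follows the same skeleton as the paper's: $K$-equivariance forces $\phi(v_i)=\lambda_i v_i$, then $Ev_{i+1}=v_i$ forces $\lambda_{i+1}=\lambda_i$, and the reverse inclusion is immediate. The difference is one of rigor, and it is to your credit. The paper's proof is a two-line deduction that passes directly from ``$f(v_i)$ is a $K$-eigenvector of eigenvalue $q^{\alpha-2i}$'' to ``$f(v_i)=\lambda_i v_i$'' without comment; as you correctly observe, this step silently uses that $q^{\alpha-2i}-q^{\alpha-2j}$, equivalently $q^{2s}-1$ for $s\neq 0$, is a non-zero-divisor in $\hat{R}^{\hat{I}}$, and this is genuinely non-trivial because in each finite quotient $R/I_n$ the quantum integers $\{s\}_q$ with $s\leq n$ \emph{are} zero divisors (they divide $\{n\}!\in I_n$), so no level-by-level argument can work. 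Your resolution via the embedding $\hat{R}^{\hat{I}}\subset \Q[\alpha][[h]]$ (Proposition \ref{integral_domain_completion_ring}, proved in the later subsection on Habiro's work) is sound and non-circular, since that subsection nowhere uses the present proposition; one could argue the paper's logical ordering would be cleaner if that fact were available before the Verma module is introduced. Your alternative ``self-contained'' route is the only shaky part: extending the evaluations $f_k$ to maps $\hat{R}^{\hat{I}}\to\widehat{\Z[q^{\pm 1}]}$ and then proving that $\bigcap_k \ker(f_k)=\{0\}$ on the completion is not a routine transcription of the degree--valuation argument used for $\bigcap_n I_n=\{0\}$ in $R$, since elements of $\hat{R}^{\hat{I}}$ have no degree or valuation; you would still need something like the $h$-adic embedding to make that precise. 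Rely on the first route.
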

\begin{proof}
Let $f \in End_{\mathcal{\tilde{U}}} (V^{\alpha}) $.\\
$K f(v_i)=f(K v_i)=q^{\alpha-2i} f(v_i)$, thus $\exists \lambda_i \in \hat{R}^{\hat{I}}$ such that $f(v_i)= \lambda_i v_i$.\\
Now since $E f(v_{i+1})=f(Ev_{i+1})=f(v_i)$, then $\lambda_{i+1} v_i= \lambda_i v_i$ hence we define $\lambda:=\lambda_i$ and we have $f=\lambda Id_{V^{\alpha}}$. 
\end{proof}

We set $q^{\pm \frac{H^2}{2}} v_i = q^{\pm \frac{(\alpha-2i)^2}{2}} v_i \in q^{\pm \frac{\alpha^2}{2}} V^{\alpha}$.

\begin{prop}
$ Q^{U_h}(\mathcal{K})$ is in the center of $U_h$.
\end{prop}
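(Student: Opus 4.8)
The goal is to prove that $Q^{U_h}(\mathcal{K})$ lies in the center of $U_h$, i.e. that it commutes with every element. Since $U_h$ is topologically generated by $E,F,H$ and multiplication is $h$-adically continuous, it suffices to show that $Q^{U_h}(\mathcal{K})$ commutes with each of these generators, and commutation with $H$ yields commutation with $K=e^{hH}$. Fix such a generator $x$ and realise the two products diagrammatically, in the same coupon calculus used above: presenting $\mathcal{K}$ as the closure of a $1$-$1$ tangle $T$ and reading the universal invariant along the open strand, inserting a coupon labelled $x$ at the top endpoint yields $x\,Q^{U_h}(\mathcal{K})$, while inserting it at the bottom endpoint yields $Q^{U_h}(\mathcal{K})\,x$.

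The plan is then to slide this coupon along the open strand, once around the whole knot, from the top endpoint to the bottom endpoint. As the bead is pushed past an $R$-matrix coupon, the passage is governed by quasitriangularity $R\,\Delta(x)=\Delta^{\mathrm{op}}(x)\,R$ (and its inverse), which replaces the bead by the two legs of $\Delta(x)$; as it is pushed through the pivotal coupons $K^{\pm1}$ at the cap and the cup, the compatibility of the pivotal element with the antipode introduces factors $S^{\pm1}$. Tracking all of these decorations around the single closed component and reassembling them via coassociativity and the antipode axioms produces the ad-invariance identity $\sum_{(x)} x_{(1)}\,Q^{U_h}(\mathcal{K})\,S(x_{(2)})=\epsilon(x)\,Q^{U_h}(\mathcal{K})$.

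It then remains to deduce centrality from ad-invariance, which is a purely Hopf-algebraic step valid for any $z\in U_h$ satisfying $\sum x_{(1)}\,z\,S(x_{(2)})=\epsilon(x)z$. Expand $\sum x_{(1)}\,z\,S(x_{(2)})\,x_{(3)}$ in two ways: collapsing $S(x_{(2)})\,x_{(3)}$ through the antipode axiom leaves $x z$, whereas applying the ad-invariance relation to the group $x_{(1)}\,z\,S(x_{(2)})$ leaves $z x$; hence $xz=zx$ for all $x$, which is the desired centrality.

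The main obstacle is the bookkeeping in the sliding step: one must check that the coproduct decorations collected at the crossings and the antipode decorations collected at the cap and the cup combine \emph{exactly} into the adjoint action, with no stray terms left over. This is delicate precisely because centrality is a global phenomenon — note that in the factorisation $Q^{U_h}(\mathcal{K})=q^{f\frac{H^2}{2}}Q^{\tilde{\mathcal{U}}}(D)$ the framing factor $q^{f\frac{H^2}{2}}$ is itself \emph{not} central (since $[H,E]=2E\neq 0$), so the argument must be run on the full universal invariant rather than on either factor separately. A useful consistency check is the functorial reformulation: the $1$-$1$ tangle defines a natural endomorphism of the identity functor of $\mathrm{Rep}(U_h)$, and the algebra of such natural endomorphisms is exactly the center of $U_h$; this is also what guarantees, in the next proposition, that $Q^{U_h}(\mathcal{K})$ acts on $V^{\alpha}$ as a scalar.
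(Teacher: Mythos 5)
Your overall skeleton is the right one, and it matches the proof the paper actually points to: the paper's ``proof'' of this proposition is a single citation to Habiro (\cite{habiro2006bottom}, Proposition 8.2), whose argument has exactly your two-step structure --- (i) the universal invariant of a knot is invariant under the adjoint action, and (ii) ad-invariant elements are central. Your purely Hopf-algebraic step (ii) is correct and complete as written: from $\sum x_{(1)}\,z\,S(x_{(2)})=\epsilon(x)z$ one gets $xz=\sum x_{(1)}\,z\,S(x_{(2)})\,x_{(3)}=\sum\epsilon(x_{(1)})\,z\,x_{(2)}=zx$, using only coassociativity and the antipode axiom. Your remark that the framing factor $q^{f\frac{H^2}{2}}$ is not central, so that the argument must be run on the full invariant $Q^{U_h}(\mathcal{K})$ rather than on $Q^{\mathcal{\tilde{U}}}(D)$, is also a sound observation.

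The genuine gap is in step (i), and it is exactly the step you yourself flag as ``the main obstacle'': the sliding mechanism you describe would fail as stated. Quasitriangularity $R\,\Delta(x)=\Delta^{\mathrm{op}}(x)\,R$ governs the passage of a \emph{coproduct pair} --- one leg on each of the two strands entering a crossing --- through an $R$-coupon; it says nothing about pushing a \emph{single} bead $x$, sitting on one strand only, past a crossing. To invoke it you must first rewrite $x\otimes 1=\sum x_{(1)}\otimes x_{(2)}S(x_{(3)})$ and feed the pair $x_{(1)}\otimes x_{(2)}$ into the crossing, leaving the leg $S(x_{(3)})$ stranded; and since both strands at every crossing belong to the same knot component, the beads proliferate on distant parts of the strand and the naive ``once around the knot'' traversal never closes up into the adjoint action on its own. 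The rigorous versions organize this bookkeeping differently: Habiro implements the whole adjoint action $\sum x_{(1)}(\cdot)S(x_{(2)})$ at once as an auxiliary strand decorated by $x$ passing over the tangle, and then removes it by an isotopy pulling that strand off (possible precisely because a $1$-$1$ tangle has a free side), so that invariance of the universal invariant under isotopy yields ad-invariance. Your closing ``consistency check'' hides a similar unproved point: natural endomorphisms of the identity functor compute the center of $U_h$ only if the chosen class of representations separates points of $U_h$, which itself requires an argument. So the proposal is the correct proof in outline --- the same outline as the cited one --- but its central diagrammatic step is asserted rather than established, and the specific mechanism proposed for it does not work without the reorganization above.
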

\begin{proof}
See \cite{habiro2006bottom} Proposition 8.2. 
\end{proof}
\begin{prop}
If $\mathcal{K}$ is a knot and $D$ is a diagram of a 1-1 tangle $T$ whose closure is $\mathcal{K}$, then:
\[Q^{U_h}(\mathcal{K}) v_0 = q^{f \frac{H^2}{2}} Q^{\mathcal{\tilde{U}}} (D) v_0= F_{\infty}(q,A,D) v_0.\]
Hence, in particular, $F_{\infty}(q,A,D)$ is independent of the choice of the diagram and we denote it $F_{\infty}(q,A,  \mathcal{K})$.
\end{prop}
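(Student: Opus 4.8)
The plan is to read the scalar off the preceding proposition and then invoke centrality to get rid of the diagram. The first equality $Q^{U_h}(\mathcal{K})v_0 = q^{f\frac{H^2}{2}}Q^{\mathcal{\tilde{U}}}(D)v_0$ is just the preceding proposition evaluated at $v_0$. For the second equality I would expand $Q^{\mathcal{\tilde{U}}}(D)$ exactly as in the coupon–sliding description given just before this statement: replacing each $R$-matrix by its series $q^{\frac{H\otimes H}{2}}\sum_n q^{\frac{n(n-1)}{2}}E^n\otimes F^{(n)}$ turns the invariant into a sum over states $\overline{i}=(i_1,\dots,i_N)$, and sliding the $E^{i_k}$, $F^{(i_k)}$ and $K^{\pm}$ coupons to the bottom (using Lemma~\ref{lemmaCommutation} to record the $K^{\pm}$ factors) leaves only the quadratic coupons, which act on the Verma module as the scalars $q^{\pm\frac{(\alpha-2i)^2}{2}}$.

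The core of the argument is then a term-by-term identification with Definition~\ref{unified_form}. I would run precisely the computation that produced Proposition~\ref{ADO_form}, but on the Verma module $V^{\alpha}$ in place of the finite-dimensional $V_{\alpha-r+1}$ and with $q$ kept formal. Following one state through the state diagram, the strand labels $a_k,b_k$ are exactly the weight levels of the vectors entering the $k$-th crossing; the action $F^{(i_k)}v_{a_k}=\qbinom{a_k+i_k}{i_k}_q\{\alpha-a_k;i_k\}_q v_{a_k+i_k}$ supplies the binomial and bracket factors, while the coupons $q^{\pm\frac{H\otimes H}{2}}$ supply the quadratic factors $q^{-(a_k+b_k)\alpha}q^{2(a_k+i_k)(b_k-i_k)}$ and their negative-crossing analogues. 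The framing coupon gives $q^{f\frac{H^2}{2}}v_0=q^{f\frac{\alpha^2}{2}}v_0$ since $v_0$ has weight $\alpha$. Because the Verma module is not truncated, the sum over each $i_k$ now ranges over all of $\N$; the point to certify is that this is legitimate, i.e. that the partial sums converge in $\hat{R}^{\hat{I}}$, which is the content of Definition~\ref{unified_form}, since $\{\alpha-a_k;i_k\}_q$ lands in ever deeper ideals $I_n$ as $i_k$ grows. Assembling the factors reproduces $F_{\infty}(q,A,D)v_0$ verbatim. I expect this bookkeeping — matching every sign, $K$-power and quadratic exponent against Definition~\ref{unified_form}, and certifying convergence — to be the main obstacle; the rest is formal.

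Finally, for independence of $D$, I would argue that $Q^{U_h}(\mathcal{K})$ is central in $U_h$ and hence commutes with the subalgebra $\mathcal{\tilde{U}}$; consequently its action on the $\mathcal{\tilde{U}}$-module $V^{\alpha}$ — well defined through the factorization $q^{f\frac{H^2}{2}}Q^{\mathcal{\tilde{U}}}(D)$, whose individual factors act on $V^{\alpha}$ — is a $\mathcal{\tilde{U}}$-module endomorphism. By the endomorphism proposition this action is a scalar $\lambda\,\mathrm{Id}_{V^{\alpha}}$ with $\lambda\in\hat{R}^{\hat{I}}$, and evaluating at $v_0$ together with the two equalities above identifies $\lambda=F_{\infty}(q,A,D)$. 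Since $Q^{U_h}(\mathcal{K})$ is a knot invariant it does not depend on the chosen diagram, and as $V^{\alpha}$ is free over $\hat{R}^{\hat{I}}$ neither does $\lambda$; hence $F_{\infty}(q,A,D)=F_{\infty}(q,A,D')$ for any two diagrams, which justifies the notation $F_{\infty}(q,A,\mathcal{K})$.
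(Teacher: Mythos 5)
Your proposal is correct and follows essentially the same route as the paper: the paper's proof is a one-line appeal to redoing the ``useful form'' computation of Proposition \ref{ADO_form} on the Verma module, which is exactly the term-by-term identification you carry out, and the diagram-independence via knot invariance of $Q^{U_h}(\mathcal{K})$, centrality, and the scalar-endomorphism proposition is precisely what the paper's preceding propositions are set up to provide. You simply make explicit the bookkeeping (weight levels as strand labels, convergence in $\hat{R}^{\hat{I}}$, freeness of $V^{\alpha}$) that the paper leaves implicit.
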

\begin{proof}
Using analogously the useful form of ADO invariant defined in Proposition \ref{ADO_form} to create a similar "useful form" for the action of $Q^{U_h}(\mathcal{K})$, one gets exactly the definition of $F_{\infty} (q, A,D)$ in Definition \ref{unified_form}. 
\end{proof}

\subsection{Connection to Habiro's work}
This subsection will be dedicated to connect our setup (the ring setup $\hat{R}^{\hat{I}}$ and the quantum algebra setup $\tilde{\mathcal{U}}$) to Habiro's algebra setup in \cite{habiro2007integral}. As we will see, we will get that our ring $\hat{R}^{\hat{I}}$ is contained in some $h$-adic ring, and hence that it is an integral domain. Moreover we will see how to get back our unified invariant $F_{\infty}(q,A,\mathcal{K})$ from Habiro's quantum algebra completions.

\medskip
Let us define $U_h^0$ the $\Q[[h]]$ subalgebra of $U_h$ topologically generated by $H$. And $\mathcal{U}^0$ the $\Z[q^{\pm1}]$ subalgebra of $\mathcal{U}$ generated by $H$. We can complete the algebra into  \[\hat{\mathcal{U}}^0= \underset{\underset{n}{\leftarrow}}{\lim} \dfrac{\mathcal{U}^0}{< \{H+m;n \}, \ m \in \Z>}.\]

We then have the following proposition.

\begin{prop}~\\
$U_h^0 \cong \Q[\alpha] [[h]]$ the $h$-adic completion of the polynomial ring $\Q[\alpha]$,\\
$\hat{\mathcal{U}}^0 \cong \hat{R}^{\hat{I}}$.
\end{prop}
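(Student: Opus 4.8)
The plan is to realize both isomorphisms by identifying the relevant Cartan-type subalgebras with (completions of) polynomial rings under the identifications $H \leftrightarrow \alpha$ and $K=q^H \leftrightarrow A=q^{\alpha}$, and then to match the families of generators defining the two completions. In both cases the content is an abstract ring identification plus a purely formal passage to a projective limit.

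First I would treat $U_h^0$. Since $\{E^aF^bH^c\}$ is a topological PBW basis of $U_h$ over $\Q[[h]]$, the generator $H$ is transcendental over $\Q[[h]]$, so the $\Q[[h]]$-subalgebra it generates is a genuine polynomial ring $\Q[[h]][H]$. By definition $U_h^0$ is its $h$-adic closure in $U_h$, that is, its $h$-adic completion. I would describe this completion explicitly: an element of $\Q[[h]][H]$ is a series $\sum_{m\geq 0} p_m(H)h^m$ with $\deg p_m$ bounded, and allowing these degrees to grow yields exactly $\Q[H][[h]]$, power series in $h$ with coefficients in $\Q[H]$; the point worth care is that completing enlarges the ring precisely by removing the bound on the $H$-degree. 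Sending $H\mapsto \alpha$ then identifies $U_h^0$ with $\Q[\alpha][[h]]$, the first isomorphism.

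For the second isomorphism, $\mathcal{U}^0=\Z[q^{\pm1}][K^{\pm1}]$ is the subalgebra generated by $H$, i.e. by $K=q^H$. I would first prove the surjection $R=\Z[q^{\pm1}][A^{\pm1}]\twoheadrightarrow\mathcal{U}^0$, $A\mapsto K$, is injective, i.e. that $K$ is a free Laurent variable over $\Z[q^{\pm1}]$. Composing $\mathcal{U}^0\hookrightarrow U_h^0\cong\Q[\alpha][[h]]$ sends $q\mapsto e^h$ and $K\mapsto e^{h\alpha}$, so any relation becomes $\sum n_{a,b}\,e^{(a+b\alpha)h}=0$ with $n_{a,b}\in\Z$; reading off the coefficient of each $h^N$ gives $\sum n_{a,b}(a+b\alpha)^N=0$ in $\Q[\alpha]$ for all $N$, and since the linear forms $a+b\alpha$ are pairwise distinct, a Vandermonde argument (specialising $\alpha$ to a value separating them) forces all $n_{a,b}=0$. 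Hence $\mathcal{U}^0\cong R$ via $K\mapsto A$.

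It remains to match the ideals and pass to the limit. Under $K\mapsto A=q^{\alpha}$ the substitution carries $\{H+m\}_q$ to $\{\alpha+m\}_q$, hence $\{H+m;n\}_q$ to $\{\alpha+m;n\}_q$, so the ideal $\langle\{H+m;n\}_q:m\in\Z\rangle$ corresponds exactly to $I_n=\langle\{\alpha+m;n\}_q:m\in\Z\rangle$. Therefore $\mathcal{U}^0/\langle\{H+m;n\}_q\rangle\cong R/I_n$ compatibly with the transition maps of the two projective systems, and taking $\varprojlim$ gives $\hat{\mathcal{U}}^0\cong\hat{R}^{\hat{I}}$. The main obstacle is the two freeness statements — that $H$ is transcendental over $\Q[[h]]$ and that $K$ is a free Laurent variable over $\Z[q^{\pm1}]$ — together with the bookkeeping in describing the completion of $\Q[[h]][H]$; once these are secured, matching the generators of the ideals and passing to projective limits is formal.
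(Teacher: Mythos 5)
Your proof is correct and takes essentially the same route as the paper's: identify $H$ with the formal variable $\alpha$ and $K$ with $A$, observe that the generators $\{H+m;n\}_q$ of the defining ideals correspond to the generators $\{\alpha+m;n\}_q$ of $I_n$, and pass to the projective limits. The paper treats both ring identifications as essentially definitional (its entire proof is two sentences), so your PBW-transcendence and Vandermonde-injectivity arguments are added rigor filling in what the paper takes for granted, not a different method.
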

\begin{proof}
The first statement is the definition of $U_h^0$ replacing $H$ with formal variable $\alpha$.\\
The second statement comes from the fact that, replacing $K$ by $A$, $U^0 \cong \Z[q^{\pm1},A^{\pm1}]$ and $\{H+m;n\} \cong I_n$.
\end{proof}

Now, one can use Proposition 6.8 and 6.9 in Habiro's article \cite{habiro2007integral} and we have:
\begin{prop}\label{integral_domain_completion_ring}
We have that $\hat{R}^{\hat{I}} \subset \Q[\alpha] [[h]]$, and thus $\hat{R}^{\hat{I}}$ is an integral domain.
\end{prop}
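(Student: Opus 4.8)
The plan is to transport the problem through the two isomorphisms just established, $\hat{\mathcal{U}}^0 \cong \hat{R}^{\hat{I}}$ and $U_h^0 \cong \Q[\alpha][[h]]$, so that it suffices to exhibit an injection $\hat{\mathcal{U}}^0 \hookrightarrow U_h^0$. The natural candidate is the restriction to the Cartan part of the map $i : \hat{\mathcal{U}} \to U_h$ considered earlier; call it $i_0 : \hat{\mathcal{U}}^0 \to U_h^0$. First I would check that $i_0$ is \emph{well defined}, which rests on a single divisibility: in $U_h^0 \cong \Q[\alpha][[h]]$ (with $H \mapsto \alpha$) one has $\{H+m\}_q = q^{H+m} - q^{-H-m} = e^{h(\alpha+m)} - e^{-h(\alpha+m)} \in h\,\Q[\alpha][[h]]$, since $e^{hx} - e^{-hx} \in h\,\Q[x][[h]]$. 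Hence $\{H+m;n\}_q = \prod_{i=0}^{n-1} \{H+m-i\}_q \in h^n U_h^0$, so the defining ideal $\langle \{H+m;n\}_q,\ m \in \Z\rangle$ of the $n$-th quotient of $\mathcal{U}^0$ is carried into $h^n U_h^0$. This gives compatible ring maps $\mathcal{U}^0/\langle\{H+m;n\}_q\rangle \to U_h^0/h^n U_h^0$, and passing to the inverse limit yields $i_0$.

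The substantive point is the \emph{injectivity} of $i_0$. Unlike the uncompleted embedding $\mathcal{U} \hookrightarrow \hat{\mathcal{U}}$, injectivity of $i_0$ does not follow from the ideal containment alone: it requires that the $\{H+m;n\}$-filtration on $\mathcal{U}^0$ agree with the $h$-adic filtration pulled back from $U_h^0$, i.e.\ that an element of $\mathcal{U}^0$ whose image lies in $h^n U_h^0$ already lies in $\langle\{H+m;n\}_q\rangle$ up to the completion. This is exactly the structural content of Habiro's Propositions~6.8 and 6.9 in \cite{habiro2007integral}, which identify $\hat{\mathcal{U}}^0$ with its image inside the $h$-adic completion $U_h^0$. (Recall that for the full $\hat{\mathcal{U}} \to U_h$ injectivity was \emph{not} asserted, which is why $\tilde{\mathcal{U}} := i(\hat{\mathcal{U}})$ was introduced; the Cartan part behaves better.) Once $i_0$ is injective, composing with the two isomorphisms realizes $\hat{R}^{\hat{I}}$ as a subring of $\Q[\alpha][[h]]$.

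It then remains only to conclude the integral-domain statement. The ring $\Q[\alpha]$ is an integral domain, being a polynomial ring over a field, so its formal power series ring $\Q[\alpha][[h]]$ is again an integral domain. Any subring of an integral domain is an integral domain, inheriting the absence of zero divisors and containing $1 \neq 0$; applying this to $\hat{R}^{\hat{I}} \subset \Q[\alpha][[h]]$ finishes the proof.

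The main obstacle is precisely the injectivity of $i_0$: one must know that completing $\mathcal{U}^0$ along the chain $\langle\{H+m;n\}_q\rangle$ loses no information relative to the $h$-adic topology on $U_h^0$. The formal part (the divisibility $\{H+m;n\}_q \in h^n U_h^0$ and $h$-adic separatedness of $\Q[\alpha][[h]]$) is routine; the genuine input is the matching of the two filtrations, which I would cite directly from Habiro's Propositions~6.8 and 6.9 rather than reprove.
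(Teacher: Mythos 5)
Your proposal is correct and takes essentially the same route as the paper: transport the question through the isomorphisms $\hat{\mathcal{U}}^0 \cong \hat{R}^{\hat{I}}$ and $U_h^0 \cong \Q[\alpha][[h]]$, and invoke Habiro's Propositions~6.8 and~6.9 for the embedding of $\hat{\mathcal{U}}^0$ into the $h$-adic completion, then conclude since a subring of an integral domain is an integral domain. The paper gives no more detail than this (it simply cites Habiro after the isomorphism proposition), so your explicit verification of well-definedness via the divisibility $\{H+m;n\}_q \in h^n U_h^0$ and your flagging of injectivity as the genuine input only make the argument more complete.
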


Moreover, elements in $\hat{R}^{\hat{I}}$ can be uniquely expressed. This fact comes from Corollary 5.5 in \cite{habiro2007integral}. Recall that $q^{\alpha}:=A$ and let $\{\alpha;n\}'=\prod_{i=0}^{n-1} (q^{2\alpha}-q^{i})$ we have the following proposition:
\begin{prop}\label{unique_expression_completion_ring} We have the following isomorphism:
\[\hat{R}^{\hat{I}}\cong \underset{\underset{n}{\leftarrow}}{\lim} \dfrac{\widehat{\Z[q]}[A]}{(\{\alpha;n\}')}.\]
Moreover, any element $t\in \hat{R}^{\hat{I}}$ can be uniquely written $\sum_{n=0}^{\infty} t_n \{\alpha;n\}'$ where $t_n \in \widehat{\Z[q]}+ \widehat{\Z[q]}A$.
\end{prop}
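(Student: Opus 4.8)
The plan is to obtain both assertions from Corollary 5.5 of \cite{habiro2007integral}, transported along the isomorphism $\hat{\mathcal{U}}^0 \cong \hat{R}^{\hat{I}}$ established just above (under which $K$ corresponds to $A$ and the defining ideals of $\hat{\mathcal{U}}^0$ correspond to the $I_n$). Before invoking Habiro, I would first reconcile the two presentations. On the right-hand side $A$ is not formally inverted, but this is harmless: the constant term in $A$ of $\{\alpha;n\}'=\prod_{i=0}^{n-1}(A^2-q^i)$ equals $(-1)^n q^{\binom{n}{2}}$, a unit of $\widehat{\Z[q]}$, so $A$ is invertible in each quotient $\widehat{\Z[q]}[A]/(\{\alpha;n\}')$, and the polynomial and Laurent presentations yield the same completion.

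For the isomorphism itself, I would compare filtrations. By the earlier Lemma, $I_n$ is generated by the products $\{n;i\}\{\alpha;n-i\}$ for $0\le i\le n$: the scalar factor $\{n;i\}\in\Z[q^{\pm1}]$ is divisible by $\{i\}!$ and governs the $q$-adic (Habiro) direction defining $\widehat{\Z[q]}$, while $\{\alpha;n-i\}$ governs the $A$-direction. Rewriting $\{\alpha;m\}_q$ as a unit times a product $\prod(A^2-q^{\bullet})$ and matching it against $\{\alpha;m\}'$, one checks that the single mixed filtration $(I_n)$ and the combined filtration of $\widehat{\Z[q]}[A]$ by powers of $\{n\}!$ and by $(\{\alpha;m\}')$ are mutually cofinal. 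This produces compatible maps between the two projective systems and hence the asserted isomorphism. This cofinality estimate is the step I expect to be the real obstacle, and it is exactly the technical content Habiro proves in his Section 5; I would therefore cite it rather than reprove the estimate.

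Granting the isomorphism, the unique expansion is a formal consequence obtained level by level. Since $\{\alpha;k\}'$ is monic of degree $2k$ in $A$, the family $\{\{\alpha;k\}',\,A\{\alpha;k\}' : 0\le k<n\}$ has, when expressed in the monomial basis $1,A,\dots,A^{2n-1}$, a unitriangular transition matrix; hence it is a $\widehat{\Z[q]}$-basis of $\widehat{\Z[q]}[A]/(\{\alpha;n\}')$. Consequently every residue is uniquely of the form $\sum_{k=0}^{n-1} t_k\{\alpha;k\}'$ with $t_k\in\widehat{\Z[q]}+\widehat{\Z[q]}A$. These expansions are compatible under the projections $\widehat{\Z[q]}[A]/(\{\alpha;n+1\}')\to\widehat{\Z[q]}[A]/(\{\alpha;n\}')$, which simply discard the top digit $t_n\{\alpha;n\}'$, so they assemble in the inverse limit to a unique convergent series $\sum_{n\ge0} t_n\{\alpha;n\}'$ representing the given element $t\in\hat{R}^{\hat{I}}$. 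Uniqueness is immediate from the freeness of the basis at each level; if desired one may also read it off from the fact that $\hat{R}^{\hat{I}}$ is an integral domain (Proposition \ref{integral_domain_completion_ring}), cancelling the non-zero-divisors $\{\alpha;k\}'$ one at a time.
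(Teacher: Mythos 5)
Your proposal is correct, and at bottom it is the same proof as the paper's: the paper's entire argument for this proposition is the citation ``See Corollary 5.5 in \cite{habiro2007integral}'', read through the identification $\hat{\mathcal{U}}^0\cong\hat{R}^{\hat{I}}$, which is exactly the transport you describe. The difference is one of granularity, and your write-up is the more informative one. You localize the appeal to Habiro to the single genuinely hard step --- the cofinality of the mixed filtration $(I_n)$ with the two-stage filtration by $(\{n\}!)$ and $(\{\alpha;m\}')$, which is indeed what his Section 5 establishes; note that the generators of $I_n$ only produce factors $A^2-q^{2j}$ (even powers, up to units), while $\{\alpha;m\}'$ runs over all powers $q^i$, so this interleaving is not formal and the citation is doing real work there --- and you then prove the expansion statement yourself: since $\{\alpha;k\}'$ and $A\{\alpha;k\}'$ for $0\le k<n$ are monic of the distinct degrees $0,1,\dots,2n-1$, their transition matrix to the monomial basis $1,A,\dots,A^{2n-1}$ is unitriangular, so they form a $\widehat{\Z[q]}$-basis of the free module $\widehat{\Z[q]}[A]/(\{\alpha;n\}')$; the digits $t_k$ then stabilize under the projections and assemble uniquely in the limit. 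That argument is correct and self-contained, as is your preliminary observation that $A$ is invertible in each quotient because the constant term $(-1)^nq^{\binom{n}{2}}$ of $\{\alpha;n\}'$ is a unit of $\widehat{\Z[q]}$, so the polynomial presentation on the right loses nothing against the Laurent ring $R$. The one weak spot is your optional closing remark: uniqueness by ``cancelling the non-zero-divisors $\{\alpha;k\}'$ one at a time'' in an infinite series does not follow from integrality alone --- to conclude $t_0=0$ you would need $(\widehat{\Z[q]}+\widehat{\Z[q]}A)\cap\overline{(\{\alpha;1\}')}=0$, which is precisely the level-one freeness you had just established, so that alternative is circular as stated --- but since you offer it only as an aside, it does not affect the validity of the proof.
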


\begin{proof}
See Corollary 5.5 in \cite{habiro2007integral}.
\end{proof}

\begin{rem}
This means that the unified invariant $F_{\infty}(q,A,\mathcal{K})$ can be uniquely written as a serie $\sum_{n=0}^{\infty} t_n \{\alpha;n\}'$ where $t_n \in \widehat{\Z[q]}+ \widehat{\Z[q]}A$.
\end{rem}

\medskip
Now let us present the quantum algebra setup used by Habiro. Our algebra and completion was done for the sole purpose of getting a nice form for our unified invariant, allowing us to factorize it at each roots of unity. Habiro's quantum algebra setup has been studied more in details, and thus have more proprieties.
\medskip

\noindent Let $\mathcal{U}_{Hab}$ be the $\Z[q^{\pm 1}]$ subalgebra of $U_h$ generated by elements $K^{\pm1}, \ e, \ F^{[n]}$ where $e= \{1\}E$ and $F^{[n]}= \frac{F^n}{\{n\}!}$. Let $\tilde{J}_n$ be the ideal generated by elements $e^i \{H+m; n-i \}$ for all $m \in \Z$.\\
We denote $\mathcal{\hat{U}}_{Hab}:= \underset{\underset{n}{\leftarrow}}{\lim} \dfrac{\mathcal{U}_{Hab}}{\tilde{J}_n}$.

Now that his algebra setup is stated, let us make the connection with our unified invariant $F_{\infty}(q,A,\mathcal{K})$. To do so, we will build a unified invariant with Habiro's setup and prove that it is in fact $F_{\infty}(q,A,\mathcal{K})$.

First remark that:
\begin{rem}
If $\mathcal{K}$ a knot and $D$ a diagram of a 1-1 tangle $T$ whose closure is $\mathcal{K}$, then:
\[ Q^{U_h}(\mathcal{K})= q^{f \frac{H^2}{2}} Q^{\mathcal{\tilde{U}}} (D) \]
where $Q^{\mathcal{\tilde{U}}} (D) \in \mathcal{\hat{U}}_{Hab}$ and $f$ is the writhe of the diagram.
\end{rem}

We can then define the corresponding Verma module on $\hat{R}^{\hat{I}}$.\\
Let $V^{\alpha}_{Hab}$ be the $\hat{R}^{\hat{I}}$-module freely generated by vectors $\{ v_0, v_1, \dots\} $, endowed with an action of $\mathcal{\hat{U}}_{Hab}$:
\[ ev_0 = 0, \ \ ev_{i+1} = \{ \alpha -i \}_q v_i, \ \ Kv_i=q^{\alpha -2i} v_i, \ \ F^{[n]} v_i=\qbinom{n+i}{i}_q  v_{n+i}.\]

Then in a similar fashion we have that:

\begin{rem}
If $\mathcal{K}$ is a knot and $D$ is a diagram of a 1-1 tangle $T$ whose closure is $\mathcal{K}$, then there exists an element $F_{\infty}^{Hab}(q,A,\mathcal{K}) \in  q^{f \frac{\alpha^2}{2}} \times \hat{R}^{\hat{I}}$ such that:
\[Q^{U_h}(\mathcal{K}) v_0 = q^{f \frac{H^2}{2}} Q^{\mathcal{\tilde{U}}} (D) v_0= F_{\infty}^{Hab}(q,A,\mathcal{K}) v_0.\]
\end{rem}

Now, since $V^{\alpha}_{Hab} \hat{\otimes} \Q[[H]] \cong V^{\alpha} \hat{\otimes} \Q[[H]]$ as $U_h$ module with coefficient in $\Q[\alpha][[h]]$. Then $F_{\infty}^{Hab}(q,A,\mathcal{K})=F_{\infty}(q,A,\mathcal{K})$ as elements of $\Q[\alpha][[h]]$. And since $\hat{R}^{\hat{I}} \subset \Q[\alpha] [[h]]$, $F_{\infty}^{Hab}(q,A,\mathcal{K})=F_{\infty}(q,A,\mathcal{K})$ as elements of $\hat{R}^{\hat{I}}$.

\subsection{About the colored Jones polynomials}
\paragraph{The colored Jones polynomials and the study of $C_{\infty}(1, A,\mathcal{K})$}~\\
Knowing that the unified invariant comes from the universal invariant, we can use this fact to recover the colored Jones polynomials. When we evaluate $A=q^{\alpha}$ at $q^n$ in $F_{\infty}(q, A, \mathcal{K})$, we obtain the $n$-colored Jones polynomial denoted $J_n (q, \mathcal{K})$ (with normalization $J_n(q, unknot)=1$). Moreover we can use this fact to compute
$C_{\infty}(1, A,\mathcal{K})$ as the inverse of the Alexander polynomial, simplifying the factorization in Proposition \ref{ADO_factor_prop}.

\begin{lemme}
If we denote $V_n$ the $(n+1)$ dimensional highest weight module of $\mathcal{\tilde{U}}$ and $V^n$ the Verma module of highest weight $q^n$ and highest weight vector $v_0$, then $V_n \cong \mathcal{\tilde{U}} v_0 \subset V^n$.
\end{lemme}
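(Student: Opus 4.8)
The plan is to compute the cyclic submodule $\mathcal{\tilde{U}} v_0$ of $V^n$ explicitly and then recognize it as the standard $(n+1)$-dimensional highest weight module. First I would observe that $v_0\in V^n$ is a highest weight vector of weight $q^n$: by the action of $\mathcal{\tilde{U}}$ on the Verma module $V^\alpha$ specialized at $\alpha=n$ we have $E v_0=0$ and $Kv_0=q^{n}v_0$. Consequently $\mathcal{\tilde{U}} v_0$ is a highest weight module of highest weight $q^{n}$, and the whole point is to pin down its dimension.

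The key computation is the divided-power raising action. Specializing $\alpha=n$ in $F^{(m)}v_i=\qbinom{m+i}{i}_q\{\alpha-i;m\}_q v_{m+i}$ produces the scalar $\{n-i;m\}_q=\prod_{j=0}^{m-1}\{n-i-j\}_q$, which vanishes precisely when $i\le n<m+i$, since the product then contains the factor $\{0\}_q=q^0-q^0=0$. In particular $F^{(1)}v_n=[n+1]\{0\}_q v_{n+1}=0$, so the chain $v_0,v_1,\dots$ obtained by raising halts at $v_n$. This vanishing shows that $W:=\bigoplus_{i=0}^{n}\hat{R}^{\hat{I}} v_i$ is stable under $K$, under $E$ (since $Ev_{i+1}=v_i$ and $Ev_0=0$), and under every $F^{(m)}$ (the target index $m+i$ either stays $\le n$ or the coefficient is $0$). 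Hence $W$ is a $\mathcal{\tilde{U}}$-submodule of $V^n$ containing $v_0$.

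Next I would establish $\mathcal{\tilde{U}} v_0=W$ by two inclusions. Since $W$ is a submodule containing $v_0$, minimality gives $\mathcal{\tilde{U}} v_0\subseteq W$ immediately. For the reverse inclusion, $F^{(m)}v_0=\{n;m\}_q v_m$ with $\{n;m\}_q=\{n\}_q\{n-1\}_q\cdots\{n-m+1\}_q\neq 0$ for $0\le m\le n$, so after inverting these nonzero quantum integers each $v_m$ with $m\le n$ lies in $\mathcal{\tilde{U}} v_0$, whence $W\subseteq\mathcal{\tilde{U}} v_0$. Thus $\mathcal{\tilde{U}} v_0=W$ is free of rank $n+1$. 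To finish, the assignment sending the highest weight generator of $V_n$ to $v_0$ extends, by the universal property of a highest weight module of weight $q^{n}$, to a surjective $\mathcal{\tilde{U}}$-homomorphism $V_n\to\mathcal{\tilde{U}} v_0$; comparing the dimensions ($n+1$ on each side) forces it to be an isomorphism, so $V_n\cong\mathcal{\tilde{U}} v_0\subset V^n$.

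The main obstacle is twofold. The genuinely delicate step is verifying that $W$ is closed under all the divided powers $F^{(m)}$ and not merely under $F^{(1)}$; this rests entirely on the identity $\{n-i;m\}_q=0$ whenever $m+i>n$, i.e.\ on the forced appearance of the factor $\{0\}_q$. Secondly, one must be careful about the ground ring: the scalars $\{n;m\}_q$ are not units in $\Z[q^{\pm1}]$, so the inclusion $W\subseteq\mathcal{\tilde{U}} v_0$ and the final identification with $V_n$ are cleanest after passing to $\Q(q)$ (or the appropriate localization), where the quantum integers become invertible.
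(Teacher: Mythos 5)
Your proof is correct and follows essentially the same route as the paper: the paper's entire proof consists of the single computation $F^{(n+1)}v_0=\{n;n+1\}_q\, v_{n+1}=0$, which is exactly your key vanishing observation that $\{n-i;m\}_q$ acquires the factor $\{0\}_q=0$ whenever $i\le n<m+i$. The extra bookkeeping you supply (stability of $W$, the two inclusions, and the passage to $\Q(q)$ to invert the scalars $\{n;m\}_q$) just makes explicit what the paper leaves implicit, and your caveat about the ground ring is consistent with the paper's own level of detail at this point.
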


\begin{proof}
It comes down to $F^{(n+1)} v_0 = \{ n ;n+1 \} v_{n+1} = 0 \times v_{n+1}=0$. 
\end{proof}

\begin{prop}
If $V^n$ is the Verma of highest weight $q^n$, \[Q^{U_h}(\mathcal{K})v_0=q^{\frac{fn^2}{2}} q^{fn} J_n(q^2, \mathcal{K})v_0.\]
\end{prop}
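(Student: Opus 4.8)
The plan is to reduce the computation on the full Verma module $V^n$ to the finite-dimensional simple module sitting inside it, and then to identify the resulting scalar with the colored Jones invariant up to an explicit framing and variable normalization.

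First I would invoke the preceding lemma: the submodule $\mathcal{\tilde{U}} v_0 \subset V^n$ generated by the highest weight vector is isomorphic to the $(n+1)$-dimensional simple module $V_n$, since $F^{(n+1)} v_0 = \{ n;n+1 \} v_{n+1}=0$. Because $Q^{U_h}(\mathcal{K})$ lies in the center of $U_h$ (previous proposition) and $Q^{U_h}(\mathcal{K}) v_0 = q^{f \frac{H^2}{2}} Q^{\mathcal{\tilde{U}}}(D) v_0$ is, by the earlier proposition, a scalar multiple of $v_0$, the scalar by which $Q^{U_h}(\mathcal{K})$ acts on $v_0 \in V^n$ coincides with the scalar by which it acts on the simple module $V_n$. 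By Schur's lemma this scalar is exactly the Reshetikhin--Turaev invariant of the $(1,1)$-tangle $T$ colored by $V_n$, i.e. the framed, $(1,1)$-normalized colored Jones invariant of $\mathcal{K}$ computed from the diagram $D$; this is the defining property of the universal invariant, so that the unknot (colored by $V_n$) gives the identity endomorphism and hence the scalar $1$.

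Next I would separate the framing anomaly. Since $D$ has writhe $f$, the invariant read off from $D$ differs from the framing-independent colored Jones $J_n$ by the $f$-th power of the twist $\theta_{V_n}$ of the ribbon structure. Evaluating the ribbon element $K^{-1}u$ on the weight-$n$ highest weight vector $v_0$ gives $\theta_{V_n}=q^{\frac{n^2}{2}+n}$ (equivalently $q^{(\lambda,\lambda+2\rho)}$ for the highest weight $\lambda$, which for $\mathfrak{sl}_2$ equals $\frac{n^2}{2}+n$). Hence the framing factor is $\theta_{V_n}^f=q^{\frac{fn^2}{2}}q^{fn}$: the quadratic part $q^{\frac{fn^2}{2}}$ is precisely the action of the coupon $q^{f\frac{H^2}{2}}$ on $v_0$, while the linear part $q^{fn}$ multiplied by the framing-independent value yields $q^{fn}J_n(q^2,\mathcal{K})$. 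The appearance of $q^2$ rather than $q$ is the matching of the paper's quantum-integer convention $\{m\}=q^m-q^{-m}$ against the standard normalization of the colored Jones polynomial.

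The main obstacle is bookkeeping of normalizations rather than any conceptual difficulty: one must pin down the exact twist eigenvalue $q^{\frac{n^2}{2}+n}$ on $V_n$ and confirm that Ohtsuki's universal invariant, evaluated on $V_n$ in the variable $q$, reproduces the standard colored Jones polynomial in the variable $q^2$ with the normalization $J_n(q^2,\text{unknot})=1$. Both are standard facts about $U_q(\mathfrak{sl}_2)$ and its ribbon structure, so once the conventions are fixed the identity $Q^{U_h}(\mathcal{K})v_0 = q^{\frac{fn^2}{2}}q^{fn}J_n(q^2,\mathcal{K})v_0$ follows.
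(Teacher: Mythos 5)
Your proof is correct and is precisely the argument the paper intends: the paper states this proposition with no written proof, placing it immediately after the lemma $V_n \cong \mathcal{\tilde{U}} v_0 \subset V^n$ so that the scalar action of the central element $Q^{U_h}(\mathcal{K})$ on $v_0 \in V^n$ is read off from the finite-dimensional module $V_n$, where the defining property of the universal invariant identifies it with the framed Reshetikhin--Turaev (colored Jones) invariant. Your write-up correctly fills in exactly these implicit steps, including the twist eigenvalue $q^{\frac{n^2}{2}+n}$ and its splitting into the quadratic coupon $q^{f\frac{H^2}{2}}$ acting as $q^{\frac{fn^2}{2}}$ on $v_0$ plus the remaining linear factor $q^{fn}$.
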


Moreover, if we denote $\widehat{\Z[q^{\pm 1}]}$ Habiro's ring completion of $\Z[q^{\pm 1}]$ by ideals $(\{n\}!)$, we have some well defined evaluation maps $j_n:\hat{R}^{\hat{I}} \to \widehat{\Z[q^{\pm 1}]}$ $q^{\alpha} \mapsto q^n$.

This allows us to state the following corollary.

\begin{cor}\label{cor_Jones}
$F_{\infty}(q,q^n,  \mathcal{K})=q^{\frac{f n^2}{2}} q^{fn} J_n(q^2, \mathcal{K})$
\end{cor}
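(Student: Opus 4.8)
The plan is to interpret the substitution $A = q^n$ as a specialization of Verma modules and then to invoke the preceding Proposition. Recall that $F_{\infty}(q,A,\mathcal{K})$ is characterized by the scalar identity $Q^{U_h}(\mathcal{K}) v_0 = F_{\infty}(q,A,\mathcal{K}) v_0$ in the $\hat{R}^{\hat{I}}$-Verma module $V^{\alpha}$ (this makes sense because $Q^{U_h}(\mathcal{K})$ is central, hence acts as a scalar), and that by definition of $j_n$ one has $F_{\infty}(q,q^n,\mathcal{K}) = j_n\bigl(F_{\infty}(q,A,\mathcal{K})\bigr)$.

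First I would check that $j_n$ carries the $\tilde{\mathcal{U}}$-module structure of $V^{\alpha}$ onto that of the Verma module $V^n$ of highest weight $q^n$. Indeed the structure constants of $V^{\alpha}$ lie in $\hat{R}^{\hat{I}}$: the $K$-eigenvalue on $v_i$ is $q^{\alpha-2i}$, and the action $F^{(m)} v_i = \qbinom{m+i}{i}_q \{\alpha-i;m\}_q v_{m+i}$ has coefficient $\qbinom{m+i}{i}_q \{\alpha-i;m\}_q$; applying $j_n$ (which substitutes $q^{\alpha} = A \mapsto q^n$) sends these to $q^{n-2i}$ and $\qbinom{m+i}{i}_q \{n-i;m\}_q$, i.e. exactly the structure constants of $V^n$. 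The well-definedness of $j_n$ on the completions is established as for $ev_r$, from the inclusion $f_n(I_m) \subset (\{m\}!)$ already used in the injectivity proof, so $j_n$ is a continuous ring homomorphism $\hat{R}^{\hat{I}} \to \widehat{\Z[q^{\pm1}]}$.

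Next I would apply $j_n$ to the defining identity. Since $j_n$ is continuous it may be applied term by term to the convergent sum defining $F_{\infty}$, and since $Q^{U_h}(\mathcal{K})$ is represented by a fixed, diagram-independent element of $\tilde{\mathcal{U}}$ whose matrix entries on the $v_i$ are precisely those structure constants, specializing the module and then acting coincides with acting and then specializing the scalar. Hence, working now in $V^n$,
\[ Q^{U_h}(\mathcal{K}) v_0 = j_n\bigl(F_{\infty}(q,A,\mathcal{K})\bigr) v_0 = F_{\infty}(q,q^n,\mathcal{K}) v_0. \]
Comparing with the preceding Proposition, which gives $Q^{U_h}(\mathcal{K}) v_0 = q^{\frac{fn^2}{2}} q^{fn} J_n(q^2,\mathcal{K}) v_0$ on the same module $V^n$, and reading off the scalar yields the claimed equality.

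The hard part is not the final comparison but justifying the interchange in the middle step: that specializing the coefficients of the Verma module and then letting $Q^{U_h}(\mathcal{K})$ act gives the same scalar as acting generically and then specializing. This is the naturality of $j_n$ with respect to the projective-limit topologies together with the centrality and diagram-independence of $Q^{U_h}(\mathcal{K})$; once this intertwining is in place, the Corollary is immediate from the Proposition.
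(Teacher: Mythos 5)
Your proof is correct and takes essentially the same approach as the paper: there the corollary is stated without an explicit proof, as an immediate consequence of the preceding Proposition ($Q^{U_h}(\mathcal{K})v_0 = q^{\frac{fn^2}{2}}q^{fn}J_n(q^2,\mathcal{K})v_0$ on the Verma module $V^n$ of highest weight $q^n$) together with the evaluation map $j_n\colon \hat{R}^{\hat{I}}\to\widehat{\Z[q^{\pm 1}]}$, $q^{\alpha}\mapsto q^n$. Your write-up simply makes explicit the point the paper leaves implicit, namely that $j_n$ intertwines the module structures of $V^{\alpha}$ and $V^n$ so that specializing the scalar $F_{\infty}(q,A,\mathcal{K})$ agrees with acting on $V^n$.
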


Hence, $F_{\infty}(q,A,  \mathcal{K})$ plays a double role in this dance, evaluating its first variable $q$ at a root of unity $\zeta_{2r}$, gives us the $r$-th ADO polynomial multiplied by this $C_{\infty}(r,A,  \mathcal{K})$ element. But if one evaluates the second variable $A$ at $q^n$, one gets the $n$-th colored Jones polynomial.

We will use this double role to study the factorization of ADO polynomials. Indeed, the Melvin-Morton-Rozansky conjecture (MMR) proved by Bar-Natan and Garoufalidis in \cite{bar1996melvin} makes the junction between the inverse of the Alexander polynomial and the colored Jones polynomials. We will use the $h$-adic version that we state below in Theorem \ref{MMR} (see \cite{garoufalidis2005analytic} Theorem 2).
\medskip

We denote $A_{\mathcal{K}}(t)$ the Alexander polynomial of the knot $\mathcal{K}$, with normalisation $A_{unknot}(t)=1$ and $A_{\mathcal{K}}(1)=1$.

\begin{thm}(Bar-Natan, Garoufalidis) \label{MMR} ~\\
For $\mathcal{K}$ a knot, we have the following equality in $\Q[[h]]$:
\[ \underset{n \to \infty}{\lim} \  J_n (e^{h/n}) = \frac{1}{A_{\mathcal{K}} (e^h)}\]
\end{thm}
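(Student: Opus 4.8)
The plan is to recast the limit inside the $h$-adic picture of the universal invariant and then isolate a ``leading loop'' part, which I expect to carry the Alexander polynomial. First I would work in $\Q[\alpha][[h]]$ with $q=e^{h}$ and $A=q^{\alpha}=e^{h\alpha}$, where $F_{\infty}(q,A,\mathcal{K})$ is a genuine element by Proposition \ref{integral_domain_completion_ring}. For a $0$-framed knot Corollary \ref{cor_Jones} reads $J_{n}(q^{2},\mathcal{K})=F_{\infty}(q,q^{n},\mathcal{K})$ with no framing prefactor, so the substitution $q=e^{h/n}$ identifies $J_{n}(e^{h/n})$ (up to the harmless rescaling $q\mapsto q^{2}$) with $F_{\infty}$ evaluated along the ray $\alpha=n$, $h\mapsto h/(2n)$, on which the product $h\alpha$ is held fixed while $h\to0$. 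The whole statement thus becomes a claim about the top-$\alpha$-degree part of the $h$-coefficients of $F_{\infty}$.

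Writing $F_{\infty}=\sum_{k\ge0}c_{k}(\alpha)\,h^{k}$ with $c_{k}\in\Q[\alpha]$, I would first record the degree bound $\deg_{\alpha}c_{k}\le k$. This is essentially free here: by Proposition \ref{unique_expression_completion_ring} any element of $\hat{R}^{\hat{I}}$ expands as $\sum_{n}t_{n}\{\alpha;n\}'$ with $t_{n}\in\widehat{\Z[q]}+\widehat{\Z[q]}A$, and since $\{\alpha;n\}'=\prod_{i=0}^{n-1}(e^{2h\alpha}-e^{ih})$ and $A=e^{h\alpha}$ both have the feature that the coefficient of $h^{k}$ has $\alpha$-degree at most $k$, so does every $c_{k}$. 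Consequently, along the MMR ray only the monomials $\alpha^{k}h^{k}=(h\alpha)^{k}$ survive: every term with $\deg_{\alpha}<k$ carries a surplus positive power of $h$, and by the bound there are none with $\deg_{\alpha}>k$. Hence $\lim_{n}J_{n}(e^{h/n})=g(h)$, where $g(x)=\sum_{k}\ell_{k}\,x^{k}$ and $\ell_{k}=[\alpha^{k}]c_{k}$ is the leading coefficient.

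It remains to identify the ``diagonal'' series $g$ with $1/A_{\mathcal{K}}$, and this is the real content of the Bar-Natan--Garoufalidis theorem. The route I would take is through the Kontsevich integral $Z(\mathcal{K})$ and the $\mathfrak{sl}_{2}$ weight system $W_{V_{n}}$, for which $J_{n}(e^{h})=\langle W_{V_{n}},Z(\mathcal{K})\rangle$: a Jacobi diagram of degree $m$ has $W_{V_{n}}$-value polynomial in $n$ of degree at most $m$ (the diagrammatic form of the bound above), and its top-degree-in-$n$ part is computed by the rank-one abelian reduction of the weight system. The crux is that this abelian reduction evaluates to the inverse Alexander polynomial; equivalently, in the present language, the leading loop of $F_{\infty}$ depends only on the commuting Cartan part $H$ of $U_{h}$, whose action on the Verma module $V^{\alpha}$ reproduces the (reduced Burau) Alexander data and hence $1/A_{\mathcal{K}}(e^{h})$. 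Establishing this identification of the abelianized weight system with the Alexander polynomial is the main obstacle; granting it, the diagonal resummation $g=1/A_{\mathcal{K}}$ finishes the proof. I would be careful not to invoke the paper's own computation $C_{\infty}(1,A,\mathcal{K})=1/A_{\mathcal{K}}$ here, since that is derived \emph{from} this theorem and using it would be circular.
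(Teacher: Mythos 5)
You should first be aware that the paper contains no proof of this statement to compare against: Theorem \ref{MMR} is an imported result, attributed in its very statement to Bar-Natan and Garoufalidis, cited from \cite{bar1996melvin} with the $h$-adic formulation taken from \cite{garoufalidis2005analytic} (Theorem 2). The paper treats it as a black box and uses it as input — it is precisely what lets the author identify $C_{\infty}(1,A,\mathcal{K})$ with $1/A_{\mathcal{K}}(A^2)$ and hence prove Theorem \ref{ADO_factor_thm}. So the task here is to supply a proof of an external theorem, not to rederive an argument from the paper.

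Judged on its own terms, your attempt has a genuine gap, and you essentially concede it yourself. The first half is sound and even nice: from Proposition \ref{integral_domain_completion_ring}, Proposition \ref{unique_expression_completion_ring}, and the fact that $\{\alpha;n\}'=\prod_{i=0}^{n-1}(e^{2h\alpha}-e^{ih})$ lies in $h^n\Q[\alpha][[h]]$ with each $h^k$-coefficient of $\alpha$-degree at most $k$, you correctly deduce the bound $\deg_{\alpha}c_k\le k$, hence the coefficient-wise existence of $\lim_n J_n(e^{h/n})$ and its identification with the diagonal series $\sum_k \ell_k h^k$; this derivation is non-circular, since both propositions come from Habiro's work independently of MMR, and you are right that invoking $C_{\infty}(1,A,\mathcal{K})=1/A_{\mathcal{K}}$ would be circular. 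But this only proves the "convergence" half of MMR. The second half — that the diagonal equals $1/A_{\mathcal{K}}(e^h)$ — is the actual content of the theorem, and your proposal does not prove it: you reduce it to the claim that the rank-one abelian reduction of the $\mathfrak{sl}_2$ weight system applied to the Kontsevich integral yields the inverse Alexander polynomial, and then grant that claim. That identification (via the Burau/Alexander interpretation of the abelianized weight system and the resummation of the wheel contributions) is exactly the hard core of \cite{bar1996melvin}; assuming it is assuming the theorem. Moreover, the intermediate assertion that the top-$n$-degree part of $W_{V_n}$ on a degree-$m$ Jacobi diagram is computed by the abelian reduction is itself a lemma requiring proof. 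As written, you have proved "MMR holds provided the abelianized weight system computes $1/A_{\mathcal{K}}$", which is a restatement of the problem rather than a solution to it.
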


For the sake of simplicity, let's assume that the knot $\mathcal{K}$ is $0$ framed so $f=0$.

Now note that since $F_{\infty}(q,q^n,  \mathcal{K})= J_n(q^2, \mathcal{K}) \in \Z[q^{\pm1}]$ and $\Z[q^{\pm1}] \subset \Q[[h]]$ we have a map $\Q[[h]] \to \Q[[h]]$, $h \mapsto \frac{h}{n}$ that sends $F_{\infty}(q,q^n,  \mathcal{K}) \mapsto F_{\infty}(q^{1/n}, q,  \mathcal{K})$ as elements of $\Q[[h]]$. But now $F_{\infty}(q^{1/n}, q,  \mathcal{K})$ converges to $F_{\infty}(1, q,  \mathcal{K})$ in the sense stated in \cite{garoufalidis2005analytic} below Theorem 2, namely:
\begin{align*} &\underset{n \to \infty}{\lim} F_{\infty}(q^{1/n}, q,  \mathcal{K}) =F_{\infty}(1, q,  \mathcal{K}) \\ \iff &\underset{n \to \infty}{\lim} \text{coeff}(F_{\infty}(q^{1/n}, q,  \mathcal{K}), h^m) =\text{coeff}(F_{\infty}(1, q,  \mathcal{K}),h^m), \ \forall m \in \N, \end{align*} where, for any analytic function $f$, $\text{coeff}(f(h),h^m)=\frac{1}{m!} \frac{d^m}{dh^m} f(h)|_{h=0}.$
\medskip

\noindent By Theorem \ref{MMR}, $F_{\infty}(1, q, \mathcal{K})= \frac{1}{A_{\mathcal{K}} (q^2)}$ in $ \Q[[h]]$.

\bigskip

On the other hand, if we denote $\widehat{\Z[A^{\pm 1}]^{\{1\}_A}} $ the ring completion of $\Z[A^{\pm 1}]$ by ideals $((A-A^{-1})^n)$, then $F_{\infty}(1, A, \mathcal{K})= C_{\infty}(1,A, \mathcal{K})$ in $\widehat{\Z[A^{\pm 1}]^{\{1\}_A}}$. Indeed setting $q=1$ in Definition \ref{unified_form} and looking at the definition of $C_{\infty}(1,A, \mathcal{K})$ in Proposition \ref{ADO_factor_prop}, one gets $F_{\infty}(1, A, \mathcal{K})=C_{\infty}(1,A, \mathcal{K})$.

\bigskip
Since  $\widehat{\Z[A^{\pm 1}]^{\{1\}_A}}  \xhookrightarrow{} \Q[[h]]$, $A \to e^h$ (see \cite{habiro2007integral} Proposition 6.1 and \cite{habiro2004cyclotomic} Corollary 4.1), then we have the following proposition:

\begin{prop} If $\mathcal{K}$ is $0$ framed, $C_{\infty}(1,A, \mathcal{K})= \frac{1}{A_{\mathcal{K}} (A^2)}$
\end{prop}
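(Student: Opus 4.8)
The plan is to prove the equality by pushing both sides into the integral domain $\Q[\alpha][[h]]$ (equivalently $\Q[[h]]$ under $A\mapsto e^h$), where the identification of $F_\infty(1,q,\mathcal{K})$ with $1/A_{\mathcal{K}}(q^2)$ coming from the Melvin--Morton--Rozansky theorem is already at hand, and then to descend the resulting equality back to $\widehat{\Z[A^{\pm 1}]^{\{1\}_A}}$ using injectivity of the embedding $A\mapsto e^h$.

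First I would record that $C_\infty(1,A,\mathcal{K})=F_\infty(1,A,\mathcal{K})$ inside $\widehat{\Z[A^{\pm 1}]^{\{1\}_A}}$: this is obtained by setting $q=1$ in Definition \ref{unified_form} and comparing term by term with the defining sum of $C_\infty(1,A,\mathcal{K})$ in Proposition \ref{ADO_factor_prop}, the only surviving contributions at $q=1$ being exactly those of $C_\infty$. It then suffices to evaluate $F_\infty(1,A,\mathcal{K})$ under the embedding $\widehat{\Z[A^{\pm 1}]^{\{1\}_A}}\hookrightarrow \Q[[h]]$, $A\mapsto e^h$. The image is $F_\infty(1,e^h,\mathcal{K})$, which is literally the same power series in $h$ as $F_\infty(1,q,\mathcal{K})$ with $q=e^h$, since once the first slot is frozen at $1$ the two expressions differ only by the name of the single remaining free variable.

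Next I would invoke the chain already assembled above: by Corollary \ref{cor_Jones} (with $f=0$) we have $F_\infty(q,q^n,\mathcal{K})=J_n(q^2,\mathcal{K})$; applying $h\mapsto h/n$ and letting $n\to\infty$ coefficientwise in $\Q[[h]]$ yields $F_\infty(1,q,\mathcal{K})=\lim_{n\to\infty} J_n(e^{2h/n},\mathcal{K})$, and Theorem \ref{MMR} identifies this limit with $1/A_{\mathcal{K}}(e^{2h})=1/A_{\mathcal{K}}(q^2)$. Transporting through $A=e^h$, this gives $F_\infty(1,A,\mathcal{K})=1/A_{\mathcal{K}}(A^2)$ as elements of $\Q[[h]]$.

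Finally I would check that $1/A_{\mathcal{K}}(A^2)$ is a genuine element of $\widehat{\Z[A^{\pm 1}]^{\{1\}_A}}$: since $A_{\mathcal{K}}(1)=1$, we have $A_{\mathcal{K}}(A^2)\equiv 1 \pmod{\{1\}_A}$, so $A_{\mathcal{K}}(A^2)$ is a unit in the completion by the same argument used earlier to invert an element congruent to a unit modulo the completing ideal. Both $C_\infty(1,A,\mathcal{K})$ and $1/A_{\mathcal{K}}(A^2)$ therefore lie in $\widehat{\Z[A^{\pm 1}]^{\{1\}_A}}$ and have the same image in $\Q[[h]]$; since the embedding $A\mapsto e^h$ is injective, they coincide in $\widehat{\Z[A^{\pm 1}]^{\{1\}_A}}$, which is the claim. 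The main care in this argument is bookkeeping the two roles played by $h$ (once as $q=e^h$, once as $A=e^h$) and ensuring the MMR limit, which a priori lives only in $\Q[[h]]$, actually lands back in the smaller integral completion via the injectivity step; the analytic convergence itself is supplied by Theorem \ref{MMR}.
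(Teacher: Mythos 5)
Your proof is correct and follows essentially the same route as the paper: identify $C_{\infty}(1,A,\mathcal{K})$ with $F_{\infty}(1,A,\mathcal{K})$ by setting $q=1$, use Corollary \ref{cor_Jones}, the rescaling $h\mapsto h/n$, and Theorem \ref{MMR} to compute $F_{\infty}(1,q,\mathcal{K})=1/A_{\mathcal{K}}(q^2)$ in $\Q[[h]]$, then descend via the injective embedding $\widehat{\Z[A^{\pm 1}]^{\{1\}_A}}\hookrightarrow\Q[[h]]$, $A\mapsto e^h$. The one point where you go beyond the paper's write-up is the explicit check that $1/A_{\mathcal{K}}(A^2)$ actually lies in $\widehat{\Z[A^{\pm 1}]^{\{1\}_A}}$ (via $A_{\mathcal{K}}(A^2)\equiv 1 \pmod{\{1\}_A}$), a step the paper leaves implicit but which is needed for the injectivity argument to yield an equality in the completion.
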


By the discussion in the paragraph preceding Proposition \ref{propFC}, we have:
\begin{cor} If $\mathcal{K}$ is $0$ framed, $C_{\infty}(r,A, \mathcal{K})= \frac{1}{A_{\mathcal{K}} (A^{2r})}$
\end{cor}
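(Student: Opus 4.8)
The plan is to derive the corollary directly from the previous proposition together with the substitution morphism $g_r$ recalled in the paragraph preceding Proposition~\ref{propFC}; no new computation is really needed. First I would invoke the identity
\[ C_{\infty}(r,A,\mathcal{K}) = g_r\big(C_{\infty}(1,A,\mathcal{K})\big) \]
established there, where
\[ g_r : \widehat{\Z[A^{\pm 1}]^{\{ \alpha\} }} \to \widehat{\Z[A^{\pm 1}]^{\{ r \alpha\} }}, \qquad q^{\alpha} \mapsto q^{r\alpha}, \]
is the ring morphism realising the substitution $A \mapsto A^r$ (recall $q^{\alpha}:=A$, so $q^{r\alpha}=A^r$). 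In this way everything reduces to evaluating $g_r$ on the closed form of $C_{\infty}(1,A,\mathcal{K})$.

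Next, since $\mathcal{K}$ is $0$-framed, the previous proposition gives $C_{\infty}(1,A,\mathcal{K}) = \frac{1}{A_{\mathcal{K}}(A^2)}$ in $\widehat{\Z[A^{\pm 1}]^{\{ \alpha\} }}$. This expression is legitimate because $A_{\mathcal{K}}(A^2)$ is a unit of that completion: modulo the ideal $(\{\alpha\})=(A-A^{-1})$ one has $A^2\equiv 1$, hence $A_{\mathcal{K}}(A^2)\equiv A_{\mathcal{K}}(1)=1$ by the chosen normalisation, which is a unit (this is exactly the lifting-of-units principle already used in the excerpt). Applying $g_r$, which is a ring homomorphism and therefore sends units to units and commutes with inversion, I would then compute
\[ C_{\infty}(r,A,\mathcal{K}) = g_r\!\left( \frac{1}{A_{\mathcal{K}}(A^2)} \right) = \frac{1}{g_r\big(A_{\mathcal{K}}(A^2)\big)} = \frac{1}{A_{\mathcal{K}}\big((A^r)^2\big)} = \frac{1}{A_{\mathcal{K}}(A^{2r})}, \]
the third equality being just the substitution $A\mapsto A^r$ applied to the polynomial $A_{\mathcal{K}}(A^2)$.

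The only point requiring verification beyond this bookkeeping is that the right-hand side genuinely lies in the target completion, that is, that $A_{\mathcal{K}}(A^{2r})$ is a unit of $\widehat{\Z[A^{\pm 1}]^{\{ r \alpha\} }}$; this holds for the same reason, since $A^{2r}\equiv 1 \pmod{\{r\alpha\}}$ and $A_{\mathcal{K}}(1)=1$. Consequently the main, and only mild, obstacle is making sure that $g_r$ is a bona fide ring morphism compatible with the two ideal-adic topologies, which is precisely what the discussion before Proposition~\ref{propFC} supplies; once that is granted, the corollary is immediate.
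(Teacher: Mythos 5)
Your proof is correct and takes essentially the same route as the paper: the paper's entire argument is to apply the substitution morphism $g_r$ (with $g_r(C_{\infty}(1,A,\mathcal{K}))=C_{\infty}(r,A,\mathcal{K})$, from the discussion preceding Proposition~\ref{propFC}) to the $r=1$ identity $C_{\infty}(1,A,\mathcal{K})=\frac{1}{A_{\mathcal{K}}(A^2)}$, exactly as you do. Your additional checks — that $A_{\mathcal{K}}(A^2)$ and $A_{\mathcal{K}}(A^{2r})$ are units in the respective completions via reduction modulo $\{\alpha\}$ and $\{r\alpha\}$ and the normalisation $A_{\mathcal{K}}(1)=1$ — are details the paper leaves implicit, and they are correct.
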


This allows us to state a factorization theorem:
\begin{thm} (Factorization) \\ \label{ADO_factor_thm}
For a knot $\mathcal{K}$ and an integer $r \in \N^*$, we have the following factorization in $\hat{R_r^I}$:
\[ F_{\infty} (\zeta_{2r}, A, \mathcal{K}) =  \frac{A^{rf} \times ADO_r(A,\mathcal{K})}{A_{\mathcal{K}} (A^{2r})} \]
where $f$ is the framing of the knot.
\end{thm}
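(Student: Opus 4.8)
The plan is to obtain the general-framing statement from the factorization already established, combined with the $0$-framed computation, by tracking separately how each factor behaves under a change of framing. Proposition~\ref{ADO_factor_prop} provides, for any diagram and any framing,
\[ F_{\infty}(\zeta_{2r},A,\mathcal{K}) = C_{\infty}(r,A,\mathcal{K})\, ADO_r(A,\mathcal{K}), \]
and the corollary just proved (via the MMR Theorem~\ref{MMR}) gives $C_{\infty}(r,A,\mathcal{K})=1/A_{\mathcal{K}}(A^{2r})$ when $f=0$. Thus for a $0$-framed knot the theorem is immediate, since then $A^{rf}=1$. The whole content is therefore to produce the factor $A^{rf}$ for general $f$, and I would do this by comparing the framing-dependence of $F_{\infty}$ and of $ADO_r$.

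First I would record how $F_{\infty}(\zeta_{2r},A,\cdot)$ depends on framing. Since the ribbon element of $U_h$ is central, passing from the $0$-framed representative $\mathcal{K}_0$ to the framing-$f$ representative $\mathcal{K}_f$ multiplies the universal invariant by the $f$-th power of the twist $\theta$, which acts on $V^{\alpha}$ as a scalar; that scalar is read off from Corollary~\ref{cor_Jones} applied to the $(+1)$-framed unknot, where it equals $q^{\alpha^2/2+\alpha}$. Using $q^{\alpha}=A$ this yields the identity
\[ F_{\infty}(q,A,\mathcal{K}_f)=q^{f\alpha^2/2}A^{f}\,F_{\infty}(q,A,\mathcal{K}_0) \]
in $\hat{R}^{\hat{I}}$, which descends under the ring map $ev_r$ to the same relation at $q=\zeta_{2r}$.

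Next I would establish the analogous law for $ADO_r$. The ADO functor is built from the same $R$-matrix and twist but with pivotal element $K^{1-r}$ in place of $K$; on the highest-weight line of $V_{\alpha-r+1}$, of $H$-weight $\alpha$, the ratio of the two pivotals is $\zeta_{2r}^{(1-r)\alpha-\alpha}=A^{-r}$, so the twist controlling the framing-dependence of $ADO_r$ is the universal one corrected by $A^{-r}$, namely $\zeta_{2r}^{\alpha^2/2}A^{1-r}$. This gives
\[ ADO_r(A,\mathcal{K}_f)=\zeta_{2r}^{f\alpha^2/2}A^{f(1-r)}\,ADO_r(A,\mathcal{K}_0). \]
Combining the two displayed relations with the $0$-framed factorization $F_{\infty}(\zeta_{2r},A,\mathcal{K}_0)=ADO_r(A,\mathcal{K}_0)/A_{\mathcal{K}}(A^{2r})$, the common prefactor $\zeta_{2r}^{f\alpha^2/2}$ cancels and the surviving power of $A$ is $A^{f-f(1-r)}=A^{rf}$, so that
\[ F_{\infty}(\zeta_{2r},A,\mathcal{K}_f)=\frac{A^{rf}\,ADO_r(A,\mathcal{K}_f)}{A_{\mathcal{K}}(A^{2r})}, \]
which is exactly the claim.

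The main obstacle is the second transformation law, i.e.\ pinning down the exponent $f(1-r)$, and in particular the sign and size of the $A$-power coming from the pivotal discrepancy between $K$ and $K^{1-r}$. This needs care because the explicit framing prefactor in Proposition~\ref{ADO_form} is merely $\zeta_{2r}^{f\alpha^2/2}$, so the remaining $A^{f(1-r)}$ is spread through the state sum and the cup/cap pivotal coupons. The cleanest way to control it is representation-theoretic: compute the ribbon twist of the colored module $V_{\alpha-r+1}$ directly from the ribbon/pivotal data of the ADO category attached to $K^{1-r}$, rather than resumming the diagrammatic expansion. Once that scalar is identified, the remaining steps are the elementary bookkeeping indicated above.
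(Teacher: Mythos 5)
Your proof is correct, and its core coincides with the paper's: Proposition \ref{ADO_factor_prop} together with the MMR-derived identity $C_{\infty}(r,A,\mathcal{K})=1/A_{\mathcal{K}}(A^{2r})$ (which the paper proves only under the assumption $f=0$) gives the $0$-framed case, exactly as in the text. Where you genuinely diverge is the factor $A^{rf}$: the paper never argues it explicitly --- its whole chain of corollaries is carried out ``for simplicity'' in the $0$-framed case and the theorem is then stated for general $f$, the framing factor being implicitly absorbed into $C_{\infty}$ (this is visible in Section \ref{section_compute}, e.g.\ $C_{\infty}(1,A,3_1)=q^{3\alpha}/A_{3_1}(q^{2\alpha})$ for the writhe-$3$ trefoil diagram). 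Your two transformation laws are sound with the paper's conventions: both framing scalars are inverse ribbon elements acting on the weight-$\alpha$ highest-weight vector, and since $u$ acts there as $q^{-\alpha^2/2}$ in both settings (only the Cartan part of the $R$-matrix survives on a highest-weight vector), the relative factor is exactly the ratio of the pivots $K^{1-r}$ and $K$, namely $A^{-r}$; so the exponent $f(1-r)$ is right, the first law is correctly anchored to Corollary \ref{cor_Jones}, and the cancellation $A^{f}\cdot A^{-f(1-r)}=A^{rf}$ gives the claim. One remark worth making: the paper's own machinery lets you bypass the ADO-twist computation that you flag as the main obstacle. Your first law, combined with the $r=1$ identity $F_{\infty}(\zeta_{2},A,D)=\zeta_{2}^{f\alpha^2/2}C_{\infty}(1,A,D)$ (i.e.\ $ADO_1=\zeta_{2}^{f\alpha^2/2}$) and the transfer map $g_r$ of Proposition \ref{propFC}, yields $C_{\infty}(r,A,\mathcal{K}_f)=A^{rf}/A_{\mathcal{K}}(A^{2r})$ for arbitrary framing, after which Proposition \ref{ADO_factor_prop} finishes the proof with no input at all from the ribbon structure of the ADO category; your route instead buys an independent, purely representation-theoretic identification of the ADO framing anomaly, at the cost of having to pin down that twist.
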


\paragraph{The colored Jones polynomials determine the ADO polynomials}~\\
One may ask what is the relationship between ADO invariants and the colored Jones polynomials.\\ Does one family of polynomial determines completely the other? For the sake of simplicity the knot $\mathcal{K}$ is supposed $0$ framed in this paragraph.

\bigskip
This is the case for $\{ ADO_r(A,\mathcal{K}) \}_{r \in \N^*} \to \{ J_n(q^2, \mathcal{K}) \}_{n \in \N^*}$, knowing the ADO polynomials allows to find the colored Jones polynomials. This result was stated in \cite{costantino2015relations} Corollary 15.\\
With our setup, we can get back this result as follows.

\begin{rem}
Notice that $F_{\infty}(\zeta_{2r}, \zeta_{2r}^N, \mathcal{K})= J_N(\zeta_r, \mathcal{K})=\frac{ADO_r(\zeta_{2r}^N,\mathcal{K})}{A_{\mathcal{K}} (1)}=ADO_r(\zeta_{2r}^N,\mathcal{K})$.
\end{rem}

Since $J_N$ is a polynomial knowing an infinite number of value of it determines it. Given the family of polynomials $\{ ADO_r(A,\mathcal{K}) \}_{r \in \N^*}$ we then know each value of $J_N$ at any root of unity hence we know $J_N$ entirely.

\bigskip
But we can also have the other way around: \[\{ J_n(q^2, \mathcal{K}) \}_{n \in \N^*} \to\{ ADO_r(A,\mathcal{K}) \}_{r \in \N^*} .\] Knowing only the colored Jones polynomials recover the ADO polynomials. We will prove it by seeing that the colored Jones polynomials determines the unified invariant $F_{\infty}(q, A, \mathcal{K}) $.
\medskip

Let $\forall k \in \N$, $f_k: \Q[\alpha][[h]] \to \Q[[h]]$, $\alpha \mapsto k$ the evaluation map.

\begin{prop}
$\underset{k \in \N}{\cap} ker(f_k)= \{0\}$
\end{prop}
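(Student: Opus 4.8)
The plan is to reduce this to the elementary fact that a nonzero one-variable polynomial over the field $\Q$ has only finitely many roots, together with the observation that each $f_k$ acts coefficientwise on the $h$-adic expansion.

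First I would write a general element of $\Q[\alpha][[h]]$ as a power series $g=\sum_{m=0}^{\infty} p_m(\alpha)\, h^m$ with each coefficient $p_m \in \Q[\alpha]$. Since $f_k$ is by construction the continuous $\Q[[h]]$-algebra homomorphism sending $\alpha \mapsto k$ and fixing $h$, it acts on such a series simply by evaluating each polynomial coefficient, so $f_k(g)=\sum_{m=0}^{\infty} p_m(k)\, h^m$ in $\Q[[h]]$. Comparing coefficients of $h^m$ then shows that $f_k(g)=0$ if and only if $p_m(k)=0$ for every $m$.

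Next, suppose $g \in \underset{k\in\N}{\cap}\ker(f_k)$. Then for each fixed index $m$ the polynomial $p_m$ vanishes at every $k \in \N$, hence has infinitely many roots in $\Q$. As $\Q$ is a field, a nonzero element of $\Q[\alpha]$ has at most $\deg p_m$ roots, so each $p_m$ must be the zero polynomial. Therefore $g=0$, which is exactly $\underset{k\in\N}{\cap}\ker(f_k)=\{0\}$.

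There is no genuine obstacle in this argument; the only point worth stating carefully is that evaluation at $\alpha=k$ commutes with extraction of $h$-adic coefficients, which is immediate because $f_k$ is a continuous homomorphism fixing $h$. The role of the infinitude of $\N$ (rather than a single value of $k$) is precisely to force each polynomial coefficient $p_m$ to vanish identically.
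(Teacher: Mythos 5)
Your proof is correct and is essentially identical to the paper's: both write an element of $\Q[\alpha][[h]]$ as $\sum_m p_m(\alpha)h^m$, observe that membership in every $\ker(f_k)$ forces each coefficient polynomial to vanish at all of $\N$, and conclude each $p_m=0$ since a nonzero polynomial over $\Q$ has finitely many roots. The only difference is that you spell out explicitly that evaluation commutes with extracting $h$-adic coefficients, which the paper leaves implicit.
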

\begin{proof}
Let $x \in \underset{k \in \N}{\cap} ker(f_k)$, we write $x= \sum_n g_n(\alpha) h^n$ where $g_n(\alpha) \in \Q[\alpha]$.\\
 Then, for each $k \in \N$, we have that $g_n(k)=0$ $\forall n$. Since $g_n$ are polynomials that vanish at an infinite number of point, they are $0$.\\
Hence $\underset{k \in \N}{\cap} ker(f_k)= \{0\}$.
\end{proof}

Let $f: \Q[\alpha][[h]] \to \underset{k \in \N}{\prod} \Q[[h]]$, $x \mapsto (f_k(x))_{k \in \N}$. $ker(f)= \underset{k \in \N}{\cap} ker(f_k)= \{0\}$, hence $f$ is injective.

\begin{rem}
For any knot $\mathcal{K}$, $f(F_{\infty}(q, A, \mathcal{K}))= \{ J_n(q^2, \mathcal{K}) \}_{n \in \N^*}$.
\end{rem}

\begin{prop}\label{prop_F_Jones}
For any knot $\mathcal{K}$,  $F_{\infty}(q, A, \mathcal{K})= f^{-1}(\{ J_n(q^2, \mathcal{K}) \}_{n \in \N^*})$.
\end{prop}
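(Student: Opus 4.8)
The plan is to combine the injectivity of $f$ with the evaluation formula of Corollary~\ref{cor_Jones}; the statement is essentially a formal consequence of facts already in hand. First I would check that $F_{\infty}(q,A,\mathcal{K})$ genuinely lies in the domain of $f$. By Proposition~\ref{integral_domain_completion_ring} we have $\hat{R}^{\hat{I}} \subset \Q[\alpha][[h]]$, and $F_{\infty}(q,A,\mathcal{K})$ is by construction an element of $\hat{R}^{\hat{I}}$; hence it is an element of $\Q[\alpha][[h]]$, on which $f$ is defined. This is the small bookkeeping point that makes the rest meaningful.

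Next I would identify $f(F_{\infty}(q,A,\mathcal{K}))$ component by component. The map $f_k$ is the evaluation $\alpha \mapsto k$, which under the convention $A=q^{\alpha}$ amounts to the substitution $A \mapsto q^k$. Since $\mathcal{K}$ is assumed $0$-framed, the framing prefactor $q^{fn^2/2}q^{fn}$ in Corollary~\ref{cor_Jones} is trivial, so $F_{\infty}(q,q^n,\mathcal{K})=J_n(q^2,\mathcal{K})$, and therefore $f_n(F_{\infty}(q,A,\mathcal{K}))=J_n(q^2,\mathcal{K})$ for every $n$. This is exactly the content of the Remark preceding the statement, namely $f(F_{\infty}(q,A,\mathcal{K}))=\{J_n(q^2,\mathcal{K})\}_{n\in\N^*}$.

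Finally the proposition follows from injectivity. We have already established $\ker f=\underset{k\in\N}{\cap}\ker f_k=\{0\}$, so $f$ is injective and $f^{-1}$ is well defined on its image. Since $\{J_n(q^2,\mathcal{K})\}_{n\in\N^*}$ lies in that image, being equal to $f(F_{\infty}(q,A,\mathcal{K}))$, applying $f^{-1}$ gives $f^{-1}(\{J_n(q^2,\mathcal{K})\}_{n\in\N^*})=F_{\infty}(q,A,\mathcal{K})$, which is the claim.

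I do not expect a genuine obstacle here: the argument is a one-line application of injectivity together with the evaluation identity. The only steps that require care are the two conventions I flagged above, namely verifying that $F_{\infty}(q,A,\mathcal{K})$ sits inside $\Q[\alpha][[h]]$ so that $f$ may be applied to it, and confirming the normalization so that the equality $f_n(F_{\infty})=J_n(q^2,\mathcal{K})$ holds on the nose, with the framing factor vanishing under the $0$-framing hypothesis.
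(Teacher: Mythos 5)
Your proof is correct and follows the same route as the paper: the proposition there is an immediate consequence of the remark that $f(F_{\infty}(q,A,\mathcal{K}))=\{J_n(q^2,\mathcal{K})\}_{n\in\N^*}$ (via Corollary~\ref{cor_Jones} under the $0$-framing convention) together with the injectivity of $f$ established just before it. The two points you flag — that $F_{\infty}\in\hat{R}^{\hat{I}}\subset\Q[\alpha][[h]]$ so $f$ applies, and that the framing prefactor is trivial — are exactly the implicit bookkeeping in the paper's argument, so there is nothing to add.
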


Setting $g: \hat{R}^{\hat{I}} \to \underset{r \in \N^*}{\prod} \hat{R_r^I}$, $x \mapsto (ev_r \times \frac{1}{FC_r}(x))_{r\in \N^*}$, we get the following theorem:

\begin{thm}\label{Thm_ADO_Jones}
The map $h=g \circ f^{-1}: Im(f|_{\hat{R}^{\hat{I}}}) \to \underset{r \in \N^*}{\prod} \hat{R_r^I}$ is such that for every knot $\mathcal{K}$, \[ \{ ADO_r(A,\mathcal{K}) \}_{r \in \N^*}= h(\{ J_n(q^2, \mathcal{K}) \}_{n \in \N^*}). \]
\end{thm}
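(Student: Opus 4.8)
The plan is to prove the identity by simply threading the single element $F_{\infty}(q,A,\mathcal{K})$ through both transport maps, so that the theorem reduces to a formal composition of facts already in hand. The two inputs are: on the Jones side, the remark preceding Proposition \ref{prop_F_Jones} together with Corollary \ref{cor_Jones} (taken with $f=0$), which give $f(F_{\infty}(q,A,\mathcal{K})) = \{ J_n(q^2,\mathcal{K}) \}_{n \in \N^*}$; and on the ADO side, the corollary following Proposition \ref{propFC}, which gives $g(F_{\infty}(q,A,\mathcal{K})) = \{ ADO_r(A,\mathcal{K}) \}_{r \in \N^*}$. The claim $h(\{ J_n(q^2,\mathcal{K}) \}_n) = \{ ADO_r(A,\mathcal{K}) \}_r$ is then obtained by inverting $f$ and applying $g$.

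First I would verify that $h=g \circ f^{-1}$ is well defined on $Im(f|_{\hat{R}^{\hat{I}}})$. The proposition just proved shows $\underset{k}{\cap} ker(f_k)=\{0\}$, hence $f$ is injective on $\Q[\alpha][[h]]$; since $\hat{R}^{\hat{I}} \subset \Q[\alpha][[h]]$ by Proposition \ref{integral_domain_completion_ring}, the restriction $f|_{\hat{R}^{\hat{I}}}$ is injective. Therefore $f^{-1}$ is unambiguous on $Im(f|_{\hat{R}^{\hat{I}}})$ and its values lie in $\hat{R}^{\hat{I}}$, which is exactly the domain of $g$; so the composite $g \circ f^{-1}$ makes sense.

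Next, fixing a $0$-framed knot $\mathcal{K}$, the family $\{ J_n(q^2,\mathcal{K}) \}_n$ equals $f(F_{\infty}(q,A,\mathcal{K}))$ by the remark, and because $F_{\infty}(q,A,\mathcal{K}) \in \hat{R}^{\hat{I}}$ this family belongs to $Im(f|_{\hat{R}^{\hat{I}}})$, so $h$ may be applied to it. By Proposition \ref{prop_F_Jones}, $f^{-1}(\{ J_n(q^2,\mathcal{K}) \}_n)=F_{\infty}(q,A,\mathcal{K})$; applying $g$ and using the corollary after Proposition \ref{propFC}, which sends $F_{\infty}(q,A,\mathcal{K})$ to $(ADO_r(A,\mathcal{K}))_{r \in \N^*}$, we conclude $h(\{ J_n(q^2,\mathcal{K}) \}_n)=g(F_{\infty}(q,A,\mathcal{K}))=\{ ADO_r(A,\mathcal{K}) \}_r$.

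Since all the substantive content has already been established in the earlier propositions, the main obstacle is purely one of bookkeeping: confirming that the domains genuinely compose. Two points deserve explicit attention. First, one must record that $f^{-1}$ lands in $\hat{R}^{\hat{I}}$ rather than merely in the ambient ring $\Q[\alpha][[h]]$, which is why the theorem is stated on the restricted image $Im(f|_{\hat{R}^{\hat{I}}})$. Second, one must ensure that $g$ actually applies to $F_{\infty}(q,A,\mathcal{K})$: this comes down to the invertibility of the factor $FC_r(F_{\infty}(q,A,\mathcal{K}))=C_{\infty}(r,A,\mathcal{K})$, which was established just before Proposition \ref{propFC}, so the term $\frac{1}{FC_r}$ in the definition of $g$ is legitimate. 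With these compatibilities in place the statement follows at once.
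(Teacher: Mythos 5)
Your proposal is correct and follows essentially the same route as the paper: the paper states Theorem \ref{Thm_ADO_Jones} as an immediate consequence of Proposition \ref{prop_F_Jones} (giving $f^{-1}(\{J_n(q^2,\mathcal{K})\}_n)=F_{\infty}(q,A,\mathcal{K})$) and the corollary following Proposition \ref{propFC} (giving $ev_r \times \tfrac{1}{FC_r}(F_{\infty}(q,A,\mathcal{K}))=ADO_r(A,\mathcal{K})$), which is exactly the composition you spell out. Your explicit bookkeeping — injectivity of $f|_{\hat{R}^{\hat{I}}}$ via $\hat{R}^{\hat{I}}\subset\Q[\alpha][[h]]$, and the invertibility of $C_{\infty}(r,A,\mathcal{K})$ making $g$ applicable to $F_{\infty}$ — is if anything slightly more careful than the paper, which leaves these checks implicit.
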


\paragraph{Application 1: the unified invariant and the ADO invariants are $q$-holonomic}~\\
\noindent The fact that the colored Jones polynomials are $q$-holonomic was proved in \cite{garoufalidis2005colored}. Let us state what it means and then let's prove that the unified invariant and the ADO polynomials verify the same holonomic rule. For the sake of simplicity we will work with $0$-framed knot.
\medskip

Let $Q: \Z[q^{\pm1}]^{\N^*} \to \Z[q^{\pm1}]^{\N^*}$ and $E: \Z[q^{\pm1}]^{\N^*} \to \Z[q^{\pm1}]^{\N^*}$ such that: \[(Qf)(n)=q^{2n} f(n), \ \ (Ef)(n)=f(n+1).\]
Note that these operators can be extended to operators on $\Q[[h]]^{\N^*}$.
\medskip

Let us denote $J_{\bullet}(q^2,\mathcal{K})=\{ J_n(q^2, \mathcal{K}) \}_{n \in \N^*}$ the colored Jones function. Now, from Theorem 1 in \cite{garoufalidis2005colored}, for any knot $\mathcal{K}$ there exists a polynomial $\alpha_{\mathcal{K}}(Q,E,q^2)$ such that $\alpha_{\mathcal{K}}(Q,E,q^2)J_{\bullet}(q^2,\mathcal{K})=0$. We say that $J_{\bullet}(q^2,\mathcal{K})$ is $q$-holonomic.
\medskip

We define similar operators on $\Q[\alpha][[h]]$ and show that the same polynomial $\alpha_{\mathcal{K}}$, taken in terms of those new operators, annihilates $F_{\infty}(q,q^{\alpha}, \mathcal{K})$.

Let $\tilde{Q}:\Q[\alpha][[h]] \to \Q[\alpha][[h]]$ and $\tilde{E}: \Q[\alpha][[h]] \to \Q[\alpha][[h]]$ such that if we take $x(\alpha):= \sum_{k=0}^{+\infty} x_k(\alpha) h^k \in \Q[\alpha][[h]]$ with $x_k(\alpha) \in \Q[\alpha]$: 
\[ \tilde{Q}(x(\alpha))=q^{2\alpha} x(\alpha), \ \ \tilde{E}(x(\alpha))=x(\alpha+1).\]
where $x(\alpha+1):= \sum_{k=0}^{+\infty} x_k(\alpha+1) h^k$
\begin{rem}
Here, $\tilde{Q}$ is just the multiplication of any element with $q^{2\alpha}$.
\end{rem}
Notice that if you take the injective map $f: \Q[\alpha][[h]] \to \Q[[h]]^{\N^*}$, $x(\alpha) \mapsto (x(k))_{k \in \N^*}$ previously defined, you have: \[ f \circ \tilde{Q}=Q \circ f, \ \ f \circ \tilde{E}= E \circ f.\]
Hence, $f \circ\alpha_{\mathcal{K}}(\tilde{Q},\tilde{E},q^2)= \alpha_{\mathcal{K}}(Q,E,q^2) \circ f$. Since $\alpha_{\mathcal{K}}(Q,E,q^2)J_{\bullet}(q^2,\mathcal{K})=0$ and $f(F_{\infty}(q,q^{\alpha}, \mathcal{K}))= J_{\bullet}(q^2,\mathcal{K})$ we obtain $f \circ \alpha_{\mathcal{K}}(\tilde{Q},\tilde{E},q^2)(F_{\infty}(q,q^{\alpha}, \mathcal{K}))=0$. The injectivity of $f$ gives the following theorem.

\begin{thm}\label{thm_unified_holonomy}
For any $0$ framed knot $\mathcal{K}$, $\alpha_{\mathcal{K}}(\tilde{Q},\tilde{E},q^2)(F_{\infty}(q,q^{\alpha}, \mathcal{K}))=0$.
\end{thm}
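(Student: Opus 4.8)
The plan is to transport the $q$-holonomy of the colored Jones function, already established in \cite{garoufalidis2005colored}, across the injection $f\colon \Q[\alpha][[h]] \to \Q[[h]]^{\N^*}$ introduced above, exploiting the fact that $f$ intertwines the two pairs of operators $(\tilde{Q},\tilde{E})$ and $(Q,E)$. Concretely, I would first record the two generator-level intertwining identities $f \circ \tilde{Q} = Q \circ f$ and $f \circ \tilde{E} = E \circ f$: evaluating at $\alpha = k$ turns multiplication by $q^{2\alpha} = e^{2\alpha h}$ into multiplication by $q^{2k}$, which is exactly the action of $Q$ on the $k$-th slot, while the shift $\alpha \mapsto \alpha + 1$ becomes the index shift $k \mapsto k+1$ defining $E$. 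Both identities are immediate from the definitions, but they are the only place where the precise form of the operators enters.

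The second step is to promote these relations from the generators to the whole operator polynomial $\alpha_{\mathcal{K}}$. Since $Q$ and $E$ (respectively $\tilde{Q}$ and $\tilde{E}$) do not commute — they obey a quantum-plane relation of the form $EQ = q^{2} QE$ — one must treat $\alpha_{\mathcal{K}}(Q,E,q^2)$ as an element of a noncommutative operator ring, with coefficients in $\Z[q^{\pm 1}]$ acting as scalars. The key observation is that $f$ is $\Z[q^{\pm 1}]$-linear (the scalar $q = e^h$ acts compatibly on both sides), so the two intertwining identities extend by induction over monomials and then by linearity to the full polynomial, yielding $f \circ \alpha_{\mathcal{K}}(\tilde{Q},\tilde{E},q^2) = \alpha_{\mathcal{K}}(Q,E,q^2) \circ f$. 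Here I would also note that $Q, E$ must be understood on $\Q[[h]]^{\N^*}$ via the stated extension, so that the annihilation $\alpha_{\mathcal{K}}(Q,E,q^2) J_{\bullet}(q^2,\mathcal{K}) = 0$ continues to hold after the embedding $\Z[q^{\pm 1}]^{\N^*} \hookrightarrow \Q[[h]]^{\N^*}$.

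Finally, combining the commutation identity with the remark that $f(F_{\infty}(q,q^{\alpha},\mathcal{K})) = J_{\bullet}(q^2,\mathcal{K})$ gives
\[ f\bigl(\alpha_{\mathcal{K}}(\tilde{Q},\tilde{E},q^2)\,F_{\infty}(q,q^{\alpha},\mathcal{K})\bigr) = \alpha_{\mathcal{K}}(Q,E,q^2)\,J_{\bullet}(q^2,\mathcal{K}) = 0, \]
and since $f$ is injective the argument $\alpha_{\mathcal{K}}(\tilde{Q},\tilde{E},q^2)\,F_{\infty}(q,q^{\alpha},\mathcal{K})$ must itself vanish, which is the claim. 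The conceptual content is entirely carried by the intertwining relations; the only point requiring genuine care — and the part I expect to be the main obstacle to write cleanly — is verifying that $f$ respects the noncommutative multiplication of the operator rings, i.e. that the generator identities really do lift through the quantum-plane structure and the $q^2$-dependent coefficients of $\alpha_{\mathcal{K}}$, rather than the verification of the two generator identities themselves, which are routine.
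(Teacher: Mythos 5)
Your proposal is correct and follows essentially the same route as the paper: the two intertwining identities $f \circ \tilde{Q} = Q \circ f$ and $f \circ \tilde{E} = E \circ f$, their extension to the full operator polynomial $\alpha_{\mathcal{K}}$, the identity $f(F_{\infty}(q,q^{\alpha},\mathcal{K})) = J_{\bullet}(q^2,\mathcal{K})$, and the injectivity of $f$ to conclude. The only difference is presentational: you spell out the lift through the noncommutative (quantum-plane) operator ring by induction on monomials, a step the paper asserts in a single line.
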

\medskip

Now let us look at what happens at roots of unity. To do so we must restrict ourselves to a ring allowing evaluation at roots of unity such as $\hat{R}^{\hat{I}}$.\\
Since $\tilde{Q}(I_n) \subset I_n$ and $\tilde{E}(I_n) \subset I_n$, we can restrict the operators $\tilde{Q}$ and $\tilde{E}$ to $\hat{R}^{\hat{I}}$, for the sake of simplicity we will still write them $\tilde{Q}: \hat{R}^{\hat{I}} \to \hat{R}^{\hat{I}}$ and $\tilde{E}: \hat{R}^{\hat{I}} \to \hat{R}^{\hat{I}}$.
\medskip

Now let $r\in \N^*$, let $\overline{Q}: \hat{R_r^I} \to \hat{R_r^I}$ and $\overline{E}:\hat{R_r^I} \to \hat{R_r^I}$ such that if we take $x(\alpha)=\sum_{k=0}^{\infty} x_k(\alpha) \{r\alpha\}^k \in \hat{R_r^I}$ with $x_k(\alpha) \in \Z[\zeta_{2r}, A]$ (recall that we denote $\zeta_{2r}^{\alpha}:=A$): 
\[ \overline{Q}(x(\alpha))= \zeta_{2r}^{2\alpha} x(\alpha), \ \ \overline{E}(x(\alpha))=x(\alpha+1)\]
where $x(\alpha+1)= \sum_{k=0}^{\infty} x_k(\alpha+1)(-1)^k \{r\alpha\}^k$.
\medskip

\noindent Since $ev_r \circ \tilde{Q} = \overline{Q} \circ ev_r$ and $ev_r \circ \tilde{E} = \overline{E} \circ ev_r$, the same formula holds: \[\alpha_{\mathcal{K}}(\overline{Q},\overline{E},\zeta_{2r}^2)(F_{\infty}(\zeta_{2r},\zeta_{2r}^{\alpha}, \mathcal{K}))=0. \]\\
By Theorem \ref{ADO_factor_thm}, $\alpha_{\mathcal{K}}(\overline{Q},\overline{E},\zeta_{2r}^2)( \frac{ADO_r(\zeta_{2r}^{\alpha}, \mathcal{K})}{A_{\mathcal{K}}(\zeta_{2r}^{2r\alpha})}) = 0$.

\begin{rem} We have the following identities:\\
\(\overline{Q}(\frac{ADO_r(\zeta_{2r}^{\alpha}, \mathcal{K})}{A_{\mathcal{K}}(\zeta_{2r}^{2r\alpha})})=q^{2\alpha}\frac{ADO_r(\zeta_{2r}^{\alpha}, \mathcal{K})}{A_{\mathcal{K}}(\zeta_{2r}^{2r\alpha})}= \frac{\overline{Q} (ADO_r(\zeta_{2r}^{\alpha}, \mathcal{K}))}{A_{\mathcal{K}}(\zeta_{2r}^{2r\alpha})}\)\\
\(\overline{E}(\frac{ADO_r(\zeta_{2r}^{\alpha}, \mathcal{K})}{A_{\mathcal{K}}(\zeta_{2r}^{2r\alpha})})= \frac{\overline{E} (ADO_r(\zeta_{2r}^{\alpha}, \mathcal{K}))}{\overline{E}(A_{\mathcal{K}}(\zeta_{2r}^{2r\alpha}))}= \frac{\overline{E} (ADO_r(\zeta_{2r}^{\alpha}, \mathcal{K}))}{A_{\mathcal{K}}(\zeta_{2r}^{2r\alpha})}\) (because $\zeta_{2r}^{2r(\alpha+1)}=\zeta_{2r}^{2r\alpha}$ )
\end{rem}

Hence $\frac{\alpha_{\mathcal{K}}(\overline{Q},\overline{E},\zeta_{2r}^2) (ADO_r(\zeta_{2r}^{\alpha}, \mathcal{K}))}{A_{\mathcal{K}}(\zeta_{2r}^{2r\alpha})} = 0$, which proves the following theorem:

\begin{thm}\label{thm_ado_holonomy}
For any $0$ framed knot $\mathcal{K}$, $\alpha_{\mathcal{K}}(\overline{Q},\overline{E},\zeta_{2r}^2)(ADO_r(\zeta_{2r}^{\alpha}, \mathcal{K}))=0$.
\end{thm}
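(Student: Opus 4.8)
The plan is to transport the holonomy already established for the unified invariant down to the root of unity setting and then remove the Alexander denominator. The starting point is the relation obtained just above the statement, namely that applying $ev_r$ to Theorem~\ref{thm_unified_holonomy} and using the intertwining identities $ev_r \circ \tilde{Q}=\overline{Q}\circ ev_r$ and $ev_r \circ \tilde{E}=\overline{E}\circ ev_r$ yields
\[ \alpha_{\mathcal{K}}(\overline{Q},\overline{E},\zeta_{2r}^2)\bigl(F_{\infty}(\zeta_{2r},\zeta_{2r}^{\alpha},\mathcal{K})\bigr)=0. \]
Establishing these two intertwining identities on the generators $x_k(\alpha)\{r\alpha\}^k$ of $\hat{R_r^I}$ is where the sign twist $\{r\alpha\}\mapsto-\{r\alpha\}$ built into the definition of $\overline{E}$ is needed, so I would verify them there first.

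Next I would substitute the factorization of Theorem~\ref{ADO_factor_thm}. With $f=0$ it reads $F_{\infty}(\zeta_{2r},\zeta_{2r}^{\alpha},\mathcal{K})=ADO_r(\zeta_{2r}^{\alpha},\mathcal{K})/A_{\mathcal{K}}(\zeta_{2r}^{2r\alpha})$, so the displayed relation becomes the statement that $\alpha_{\mathcal{K}}(\overline{Q},\overline{E},\zeta_{2r}^2)$ annihilates this fraction. The core of the argument is then to commute the operator past the denominator. For this I would check two points: that $\overline{Q}$, being multiplication by the scalar $\zeta_{2r}^{2\alpha}$, passes through any quotient unchanged, and that $\overline{E}$ fixes the denominator, since
\[ \overline{E}\bigl(A_{\mathcal{K}}(\zeta_{2r}^{2r\alpha})\bigr)=A_{\mathcal{K}}(\zeta_{2r}^{2r(\alpha+1)})=A_{\mathcal{K}}(\zeta_{2r}^{2r\alpha}), \]
the last equality holding because $\zeta_{2r}^{2r}=1$. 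Applying these two commutations to each monomial of $\alpha_{\mathcal{K}}$ moves the whole operator onto the numerator, giving
\[ \frac{\alpha_{\mathcal{K}}(\overline{Q},\overline{E},\zeta_{2r}^2)\bigl(ADO_r(\zeta_{2r}^{\alpha},\mathcal{K})\bigr)}{A_{\mathcal{K}}(\zeta_{2r}^{2r\alpha})}=0. \]
To finish I would use that $A_{\mathcal{K}}(\zeta_{2r}^{2r\alpha})$ is a nonzero element of the integral domain $\hat{R_r^I}$ (indeed a unit there), so the numerator must itself vanish, which is exactly the asserted identity.

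The step I expect to demand the most care is the $\overline{E}$-invariance of the Alexander factor: it is precisely the collapse $\zeta_{2r}^{2r(\alpha+1)}=\zeta_{2r}^{2r\alpha}$ at a $2r$-th root of unity that makes the denominator fixed by the shift, and this is the only place where passing to roots of unity genuinely changes the behaviour of the operators relative to the generic variable $q$. I would also confirm beforehand that $\overline{Q}$ and $\overline{E}$ are well defined on $\hat{R_r^I}$, i.e.\ that they preserve the $\{r\alpha\}$-adic filtration, so that the polynomial $\alpha_{\mathcal{K}}(\overline{Q},\overline{E},\zeta_{2r}^2)$ acts on the completion at all.
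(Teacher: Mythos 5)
Your proposal is correct and follows essentially the same route as the paper: push Theorem \ref{thm_unified_holonomy} through $ev_r$ via the intertwining identities, substitute the factorization of Theorem \ref{ADO_factor_thm}, commute $\alpha_{\mathcal{K}}(\overline{Q},\overline{E},\zeta_{2r}^2)$ past the denominator using $\zeta_{2r}^{2r(\alpha+1)}=\zeta_{2r}^{2r\alpha}$, and clear the denominator. Your final step is in fact slightly more careful than the paper's, which leaves implicit that $A_{\mathcal{K}}(\zeta_{2r}^{2r\alpha})$ is a unit of $\hat{R_r^I}$ (it is, being the inverse of $C_{\infty}(r,A,\mathcal{K})$), so the vanishing of the quotient forces the vanishing of the numerator.
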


\begin{rem}
In the upcoming article \cite{brown_dimofte_geer2020holonomic}, Brown, Dimofte and Geer proved that the ADO invariant of links are q-holonomic (Theorem 4.3), which generalise Theorem \ref{thm_ado_holonomy}. Their Theorem 4.4 actually gives a converse statement of Theorem \ref{thm_ado_holonomy}: any polynomial annihilating the ADO family will also annihilate the colored Jones. This proves that the ADO and colored Jones family are annihilated by the same polynomials.
\end{rem}

\paragraph{Application 2: the unified invariant is the loop expansion of the colored Jones function}~\\
Let's first introduce the loop expansion of the colored (see section 2 in \cite{garoufalidis2005analytic}). We can write the colored Jones polynomials as an expansion (see \cite{rozansky1998universalr} for more details):
\[ J_n(e^{2h}, \mathcal{K})=\sum_{k=0}^{+\infty} \frac{P_{k}(e^{2nh})}{A_{\mathcal{K}}(e^{2nh})^{2k+1}} h^k \]
where $P_k(X) \in \Q[X,X^{-1}]$.
\medskip

Hence we get an element: \[ J_{\alpha}(q^2, \mathcal{K})= \sum_{k=0}^{+\infty} \frac{P_{k}(e^{2\alpha h})}{A_{\mathcal{K}}(e^{2\alpha h})^{2k+1}}h^k \in \Q[\alpha][[h]] \]
that is such that $f(J_{\alpha}(q^2, \mathcal{K}))= J_{\bullet}(q^2,\mathcal{K})$.\\
This means that it evaluates into the colored Jones at $\alpha =n$, we call it \textit{loop expansion of the colored Jones function}.

\begin{prop}\label{prop_loop_expansion_unified}
For any knot $\mathcal{K}$, we have the following identity in $\Q[\alpha][[h]]$:
 \[J_{\alpha}(q^2, \mathcal{K})= F_{\infty}(q,q^{\alpha},\mathcal{K}). \]
\end{prop}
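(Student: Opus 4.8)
The plan is to exploit the injectivity of the evaluation map $f:\Q[\alpha][[h]] \to \underset{k \in \N}{\prod} \Q[[h]]$ established just above, and to show that $J_{\alpha}(q^2,\mathcal{K})$ and $F_{\infty}(q,q^{\alpha},\mathcal{K})$ have the same image under $f$. Since $f$ is injective, having equal images forces the two elements to coincide, which is exactly the claimed identity.

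First I would check that both quantities genuinely live in the ring $\Q[\alpha][[h]]$ on which $f$ acts. For the loop expansion this is immediate: the denominator $A_{\mathcal{K}}(e^{2\alpha h})$ has constant term $A_{\mathcal{K}}(1)=1$, hence is a unit in $\Q[\alpha][[h]]$, so each summand $P_k(e^{2\alpha h}) A_{\mathcal{K}}(e^{2\alpha h})^{-(2k+1)} h^k$ lies in $\Q[\alpha][[h]]$ and the series converges $h$-adically. For the unified invariant, $F_{\infty}(q,A,\mathcal{K}) \in \hat{R}^{\hat{I}}$, and by Proposition \ref{integral_domain_completion_ring} we have the inclusion $\hat{R}^{\hat{I}} \subset \Q[\alpha][[h]]$; under this inclusion $q=e^h$ and $A=q^{\alpha}=e^{\alpha h}$, so $F_{\infty}(q,q^{\alpha},\mathcal{K})$ is precisely the image of $F_{\infty}(q,A,\mathcal{K})$ in $\Q[\alpha][[h]]$.

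Next I would compute the two images componentwise. By the very definition of the loop expansion recalled above, $f(J_{\alpha}(q^2,\mathcal{K}))=J_{\bullet}(q^2,\mathcal{K})$, since $f_k$ sends $\alpha \mapsto k$ and the substitution $\alpha=k$ turns $J_{\alpha}(q^2,\mathcal{K})$ into $J_k(e^{2h},\mathcal{K})=J_k(q^2,\mathcal{K})$. On the other hand, the $k$-th component of $f(F_{\infty}(q,q^{\alpha},\mathcal{K}))$ is $F_{\infty}(q,q^k,\mathcal{K})$, which equals $J_k(q^2,\mathcal{K})$ by Corollary \ref{cor_Jones} specialized to framing zero (equivalently, this is the content of the Remark asserting $f(F_{\infty}(q,A,\mathcal{K}))=\{J_n(q^2,\mathcal{K})\}_{n \in \N^*}$). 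Hence $f(F_{\infty}(q,q^{\alpha},\mathcal{K}))=J_{\bullet}(q^2,\mathcal{K})$ as well.

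Finally, since $f(J_{\alpha}(q^2,\mathcal{K}))=f(F_{\infty}(q,q^{\alpha},\mathcal{K}))$ and $f$ is injective (its kernel is $\{0\}$ by the preceding proposition), I conclude $J_{\alpha}(q^2,\mathcal{K})=F_{\infty}(q,q^{\alpha},\mathcal{K})$ in $\Q[\alpha][[h]]$. The only genuinely delicate point is the well-definedness in the first step, namely that $F_{\infty}(q,q^{\alpha},\mathcal{K})$ really is the $\Q[\alpha][[h]]$-element obtained by the substitution $A=e^{\alpha h}$; this rests on the embedding $\hat{R}^{\hat{I}} \subset \Q[\alpha][[h]]$ of Proposition \ref{integral_domain_completion_ring}, after which everything is a formal consequence of the injectivity of $f$ together with the two already-established evaluation formulas.
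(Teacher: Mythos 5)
Your proof is correct and follows exactly the paper's approach: the paper's own proof is the one-line observation that the injectivity of $f$ proves the proposition, relying on the same two facts you use, namely $f(J_{\alpha}(q^2,\mathcal{K}))=J_{\bullet}(q^2,\mathcal{K})$ by construction of the loop expansion and $f(F_{\infty}(q,q^{\alpha},\mathcal{K}))=J_{\bullet}(q^2,\mathcal{K})$ from the preceding remark. Your additional verification that both elements genuinely live in $\Q[\alpha][[h]]$ (via Proposition \ref{integral_domain_completion_ring} and the invertibility of $A_{\mathcal{K}}(e^{2\alpha h})$) is a useful expansion of details the paper leaves implicit.
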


\begin{proof}
The fact that $f$ is injective proves the proposition. 
\end{proof}

\bigskip
\begin{rem}
Putting everything together, this subsection implies that the unified invariant $F_{\infty}(q,A,\mathcal{K})$ is an integral version of the colored Jones function, built in a ring allowing evaluations at roots of unity. The integrality and the existence of evaluation maps allow us to recover the ADO polynomials, the fact that the completion ring is a subring of an $h$-adic ring allows us to connect it to other notions of colored Jones function/ invariants.

Another approach, described by Gukov and Manolescu in \cite{gukov2019two}, would be to see the unified invariant as a power serie in $q,A$ (as opposed to a quantum factorial expansion as we have here). This would be another integral version of it.\\
Indeed, because it verifies Proposition \ref{prop_loop_expansion_unified} and Theorem \ref{thm_unified_holonomy}, the unified invariant $F_{\infty}(q,A,\mathcal{K})$ except being a power serie, also verifies conjecture 1.5 and 1.6 in \cite{gukov2019two}.\\
Thus, if $F_{\infty}(q,A,\mathcal{K})$ could be written as a power serie, it would fully verify the conjectures.\\
This is the case for positive braid knots, as show by Park in \cite{park2020large}. This means that for a positive braid knot, the unified invariant and the GM power serie coincide.
\end{rem}
\section{Some computations}~\label{section_compute}
This section will be dedicated to compute the unified invariant $F_{\infty}(q,A,\mathcal{K})$ on some examples. We will also explicitly compute $C_{\infty}(1,A,\mathcal{K})$ and see that it is equal to the inverse of the Alexander polynomial.
\medskip

\noindent To do so we will use state diagrams and compute the unified invariant from it. You can also use them to compute the ADO polynomials (see \cite{murakami2008colored} section 4). Recall that $q^{\alpha} := A$.

\begin{figure}[h!]
\begin{subfigure}[b]{0.5\textwidth}
 \centering
  \def\svgwidth{30mm}
\begingroup%
  \makeatletter%
  \providecommand\color[2][]{%
    \errmessage{(Inkscape) Color is used for the text in Inkscape, but the package 'color.sty' is not loaded}%
    \renewcommand\color[2][]{}%
  }%
  \providecommand\transparent[1]{%
    \errmessage{(Inkscape) Transparency is used (non-zero) for the text in Inkscape, but the package 'transparent.sty' is not loaded}%
    \renewcommand\transparent[1]{}%
  }%
  \providecommand\rotatebox[2]{#2}%
  \newcommand*\fsize{\dimexpr\f@size pt\relax}%
  \newcommand*\lineheight[1]{\fontsize{\fsize}{#1\fsize}\selectfont}%
  \ifx\svgwidth\undefined%
    \setlength{\unitlength}{278.06085061bp}%
    \ifx\svgscale\undefined%
      \relax%
    \else%
      \setlength{\unitlength}{\unitlength * \real{\svgscale}}%
    \fi%
  \else%
    \setlength{\unitlength}{\svgwidth}%
  \fi%
  \global\let\svgwidth\undefined%
  \global\let\svgscale\undefined%
  \makeatother%
  \begin{picture}(1,2.05525577)%
    \lineheight{1}%
    \setlength\tabcolsep{0pt}%
    \put(0,0){\includegraphics[width=\unitlength,page=1]{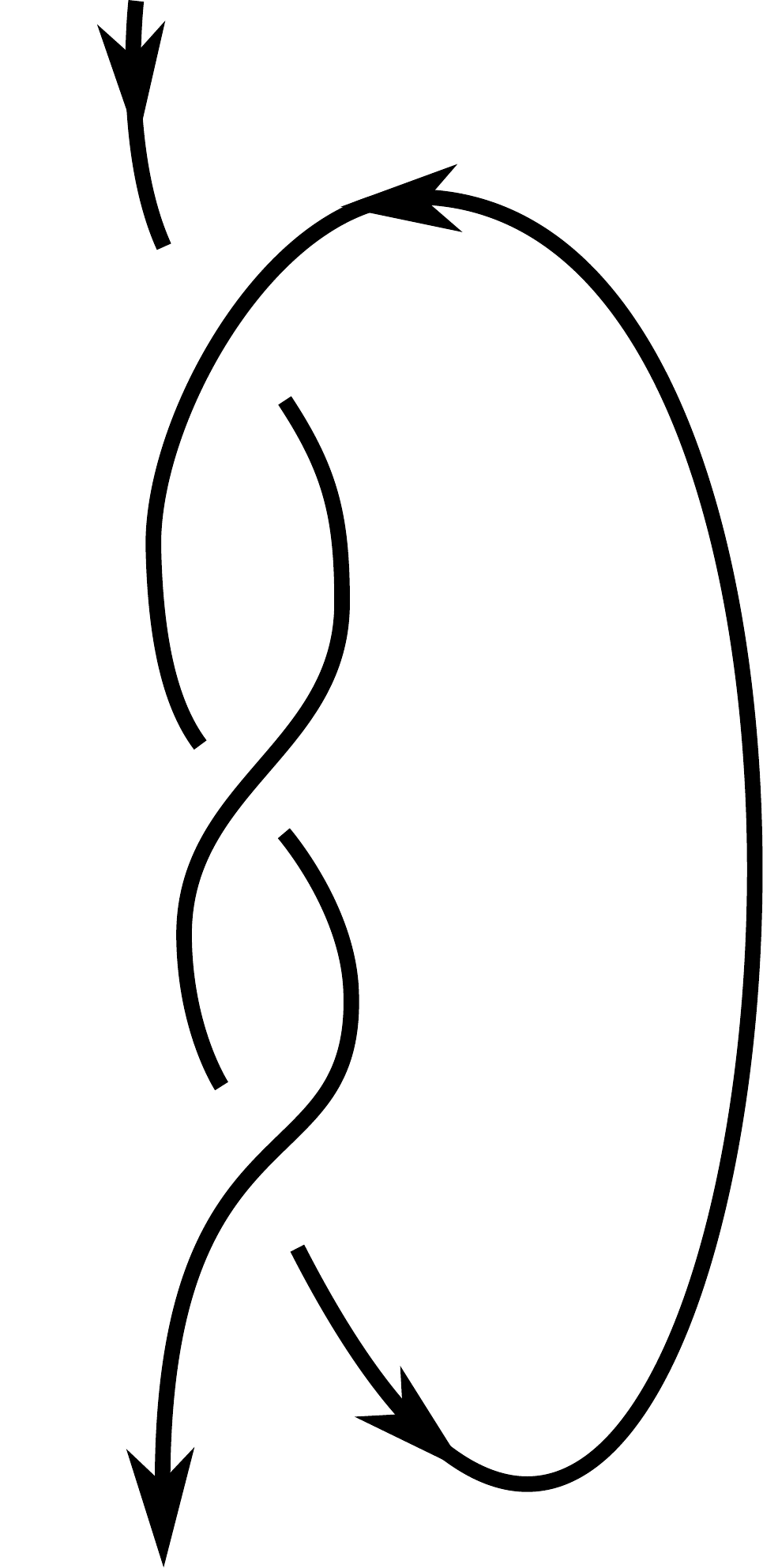}}%
    \put(0.10029114,0.24915924){\color[rgb]{0,0,0}\makebox(0,0)[lt]{\lineheight{1.25}\smash{\begin{tabular}[t]{l}$0$\end{tabular}}}}%
    \put(0.51682291,0.74544307){\color[rgb]{0,0,0}\makebox(0,0)[lt]{\lineheight{1.25}\smash{\begin{tabular}[t]{l}$0$\end{tabular}}}}%
    \put(-0.00358229,1.84347108){\color[rgb]{0,0,0}\makebox(0,0)[lt]{\lineheight{1.25}\smash{\begin{tabular}[t]{l}$0$\end{tabular}}}}%
    \put(0.5031998,1.34486289){\color[rgb]{0,0,0}\makebox(0,0)[lt]{\lineheight{1.25}\smash{\begin{tabular}[t]{l}$0$\end{tabular}}}}%
    \put(0.0512477,1.26544799){\color[rgb]{0,0,0}\makebox(0,0)[lt]{\lineheight{1.25}\smash{\begin{tabular}[t]{l}$i$\end{tabular}}}}%
    \put(0.09408526,0.76996479){\color[rgb]{0,0,0}\makebox(0,0)[lt]{\lineheight{1.25}\smash{\begin{tabular}[t]{l}$i$\end{tabular}}}}%
    \put(0.87060631,1.0015588){\color[rgb]{0,0,0}\makebox(0,0)[lt]{\lineheight{1.25}\smash{\begin{tabular}[t]{l}$i$\end{tabular}}}}%
  \end{picture}%
\endgroup%
   \caption{The trefoil knot.}
	\label{trefoil_w}
 \end{subfigure}%
 \begin{subfigure}[b]{0.5\textwidth}
 \centering
  \def\svgwidth{55mm}
    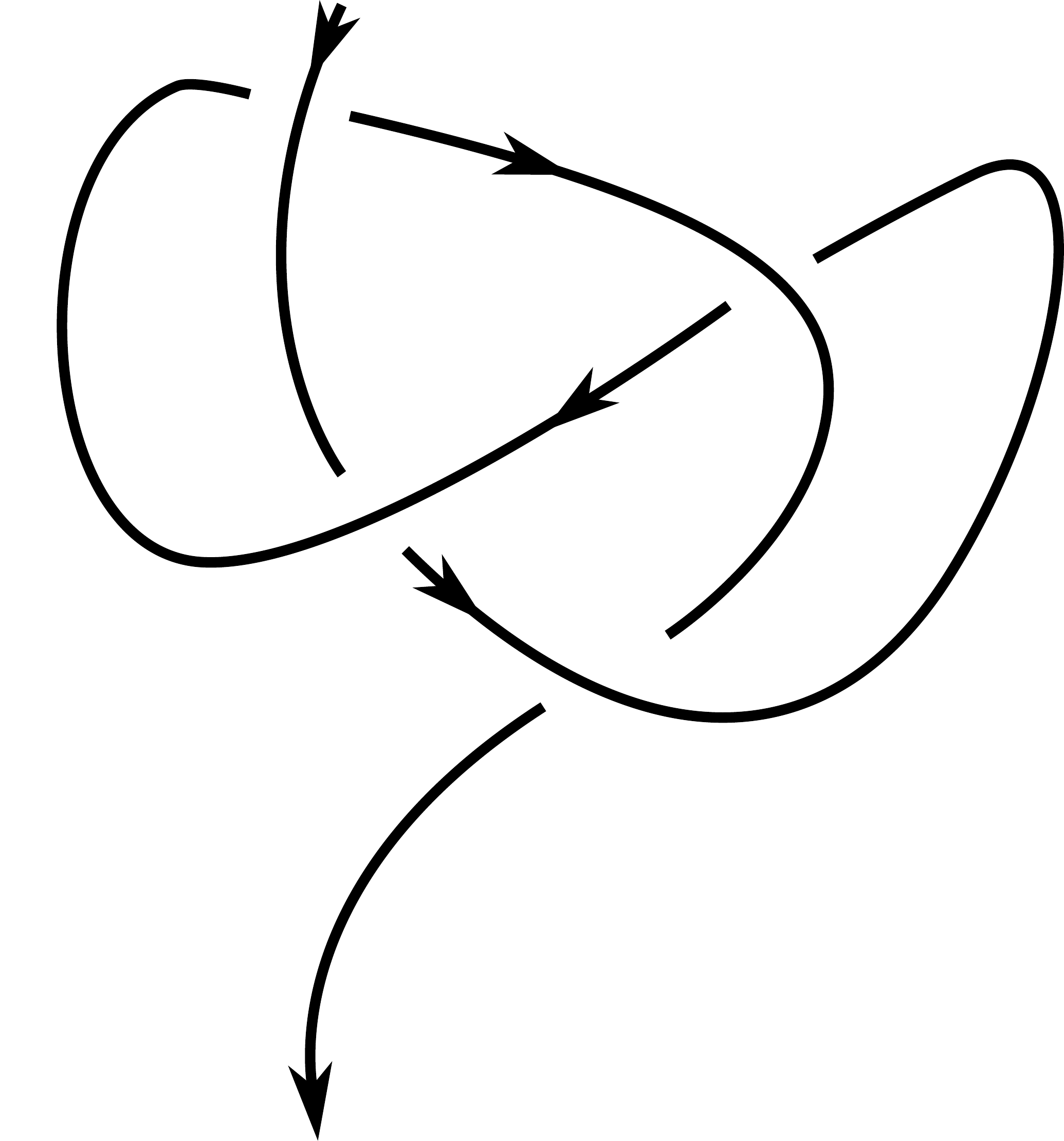%
   \caption{The figure eight knot.}
	\label{figure8_w}
 \end{subfigure}%
 \caption{Examples of state diagrams to compute the invariants.}
 \end{figure}

\paragraph{The Trefoil Knot:}~

We will denote the trefoil knot $3_1$:
\begin{flalign*}
F_{\infty} (q, A, 3_1)&= q^{\frac{3 \alpha^2}{2}} \underset{i}{\sum} q^{\alpha -2i} q^{\frac{i(i-1)}{2}} \{ \alpha; i \}_q q^{-3i \alpha}&
\end{flalign*}
\begin{flalign*}
C_{\infty} (1, A, 3_1)&= q^{\alpha} \underset{i}{\sum} q^{- 3 i \alpha }  \{ \alpha \}_q^i& \\
&= q^{ \alpha} \frac{1}{1-q^{- 3 \alpha }  \{ \alpha \}_q}&\\
&=\frac{q^{3\alpha}}{A_{3_1} (q^{2\alpha})}&
\end{flalign*}

\paragraph{The Figure Eight Knot:}~

We will denote the figure eight knot $4_1$:
\begin{flalign*}
F_{\infty} (q, A, 4_1)&=  \underset{i,j}{\sum} q^{2(i-j)} q^{- i \alpha} q^{j \alpha} (-1)^i q^{-\frac{i(i-1)}{2}} \qbinom{i+j}{j}_q \{\alpha -j;i\}_q q^{(i+j) \alpha}& \\ & \times q^{-2ij} q^{-\frac{j(j-1)}{2}} \{\alpha ;j\}_q q^{-(i+j) \alpha} q^{2ij}&\\
 &= \underset{i,j}{\sum} q^{2(i-j)} q^{(j- i) \alpha} (-1)^i  \qbinom{i+j}{j}_q   q^{-\frac{i(i-1)}{2}} q^{-\frac{j(j-1)}{2}} \{\alpha ;i+j\}_q &
\end{flalign*}
\begin{flalign*}
C_{\infty} (1, A, 4_1)&= \underset{i,j}{\sum}  q^{(j- i) \alpha} (-1)^i  \binom{i+j}{j}  \{\alpha \}_q^{i+j}&\\
&= \underset{N}{\sum} \overset{N}{\underset{i=0}{\sum}} q^{N \alpha} q^{-2i \alpha} (-1)^i  \binom{i+j}{j}\{\alpha \}_q^N&\\
&= \underset{N}{\sum}  q^{N \alpha}\{\alpha \}_q^N \overset{N}{\underset{i=0}{\sum}} (-q^{-2 \alpha})^i  \binom{i+j}{j}&\\
&=  \underset{N}{\sum}  q^{N \alpha}\{\alpha \}_q^N (1-q^{-2 \alpha})^N\\
&= \underset{N}{\sum}  \{\alpha \}_q^{2N}&\\
&=\frac{1}{1-\{\alpha \}_q^{2}}&\\
&=\frac{1}{A_{4_1} (q^{2\alpha})}&
\end{flalign*}


\paragraph{The Cinquefoil Knot:}~

We will denote it by $5_1$:
\begin{flalign*}
F_{\infty} (q, A, 5_1)&= q^{\frac{5 \alpha^2}{2}} \underset{i,j,k}{\sum} q^{ \alpha -2(i-j+k)} q^{-5(i-j+k) \alpha} q^{2i(k-j)} q^{2k(i-j)} q^{\frac{i(i-1)}{2}} q^{\frac{j(j-1)}{2}} &  \\ &  \times q^{\frac{k(k-1)}{2}} \{ \alpha; i \}_q \{ \alpha -k+j; j \}_q \{ \alpha -i+j; k \}_q  &  \\ &  \times \qbinom{k}{k-j}_q \qbinom{i-j+k}{k}_q&
\end{flalign*}
\begin{flalign*}
C_{\infty} (1, A, 5_1)&= q^{\alpha} \underset{i,j,k}{\sum}  q^{-5(i-j+k) \alpha} \{ \alpha \}_q^{i+j+k} \binom{k}{k-j}\binom{i-j+k}{k}&\\
&= q^{ \alpha} \underset{i,j,k}{\sum}  q^{-5(i-j+k) \alpha} \{ \alpha \}_q^{i+j+k} \binom{i-j+k}{j,k-j,i-j}&\\
&= q^{ \alpha} \underset{N}{\sum} \underset{j,k}{\sum}  q^{-5N \alpha} \{ \alpha \}_q^{N+2j} \binom{N}{j,k-j,N-k}&\\
&= q^{\alpha} \underset{N}{\sum}  q^{-5N \alpha} \{ \alpha \}_q^{N} \underset{j,k}{\sum}  \{ \alpha \}_q^{2j} \binom{N}{j,k-j,N-k}&\\
&=q^{\alpha} \underset{N}{\sum}  q^{-5N \alpha} \{ \alpha \}_q^{N} (2+ \{ \alpha \}_q^2)^N&\\
&=q^{\alpha} \frac{1}{1-q^{-5 \alpha} \{ \alpha \}_q (2+ \{ \alpha \}_q^2)}&\\
&= \frac{q^{5 \alpha}}{A_{5_1} (q^{2 \alpha})}&
\end{flalign*}

\paragraph{The three twist Knot:}~

We will denote it by $5_2$:
\begin{flalign*}
F_{\infty} (q, A, 5_2)&= q^{\frac{-5 \alpha^2}{2}} \underset{i,j,k}{\sum} q^{2(i-j+k) - \alpha} q^{(5i+5j-3k) \alpha} q^{2ij} q^{2(j-k)(i+j)} q^{\frac{i(i-1)}{2}}  &  \\ &  \times  q^{\frac{j(j-1)}{2}} q^{\frac{k(k-1)}{2}} (-1)^{i+j+k} \{ \alpha; i \}_q \{ \alpha -i; j \}_q \{ \alpha -j+k; k \}_q &  \\ &  \times \qbinom{j}{j-k}_q \qbinom{i+j}{j}_q&
\end{flalign*}
\begin{flalign*}
C_{\infty} (1, A, 5_2)&=q^{ -\alpha} \underset{i,j,k}{\sum}  q^{(5i+5j-3k) \alpha} (-1)^{i+j+k}  \{ \alpha \}_q^{i+j+k} \binom{j}{j-k} \binom{i+j}{j}&\\
&=q^{ \alpha} \underset{i,j,k}{\sum}  q^{(5i+5j-3k) \alpha} (-1)^{i+j+k}  \{ \alpha \}_q^{i+j+k} \binom{i+j}{k,j-k,i}&\\
&=q^{-\alpha} \underset{N}{\sum} \underset{j,k}{\sum} q^{(5N-3k) \alpha} (-1)^{N+k}  \{ \alpha \}_q^{N+k} \binom{N}{k,j-k,N-i}&\\
&=q^{ - \alpha} \underset{N}{\sum} q^{5N \alpha} (-1)^{N}  \{ \alpha \}_q^{N} \underset{j,k}{\sum} q^{-3k \alpha} (-1)^{k}  \{ \alpha \}_q^{k}&  \\ &  \hspace{150pt} \times \binom{N}{k,j-k,N-i}&\\
&=q^{- \alpha} \underset{N}{\sum} q^{5N \alpha} (-1)^{N}  \{ \alpha \}_q^{N} (2-q^{-3 \alpha} \{ \alpha \}_q)^N&\\
&=q^{- \alpha} \frac{1}{1+q^{5 \alpha}  \{ \alpha \}_q (2-q^{-3 \alpha} \{ \alpha \}_q)}&\\
&= \frac{q^{-5 \alpha}}{A_{5_2} (q^{2 \alpha})}&
\end{flalign*}

 \begin{figure}[h!]
\begin{subfigure}[b]{0.45\textwidth}
 \centering
  \def\svgwidth{40mm}
    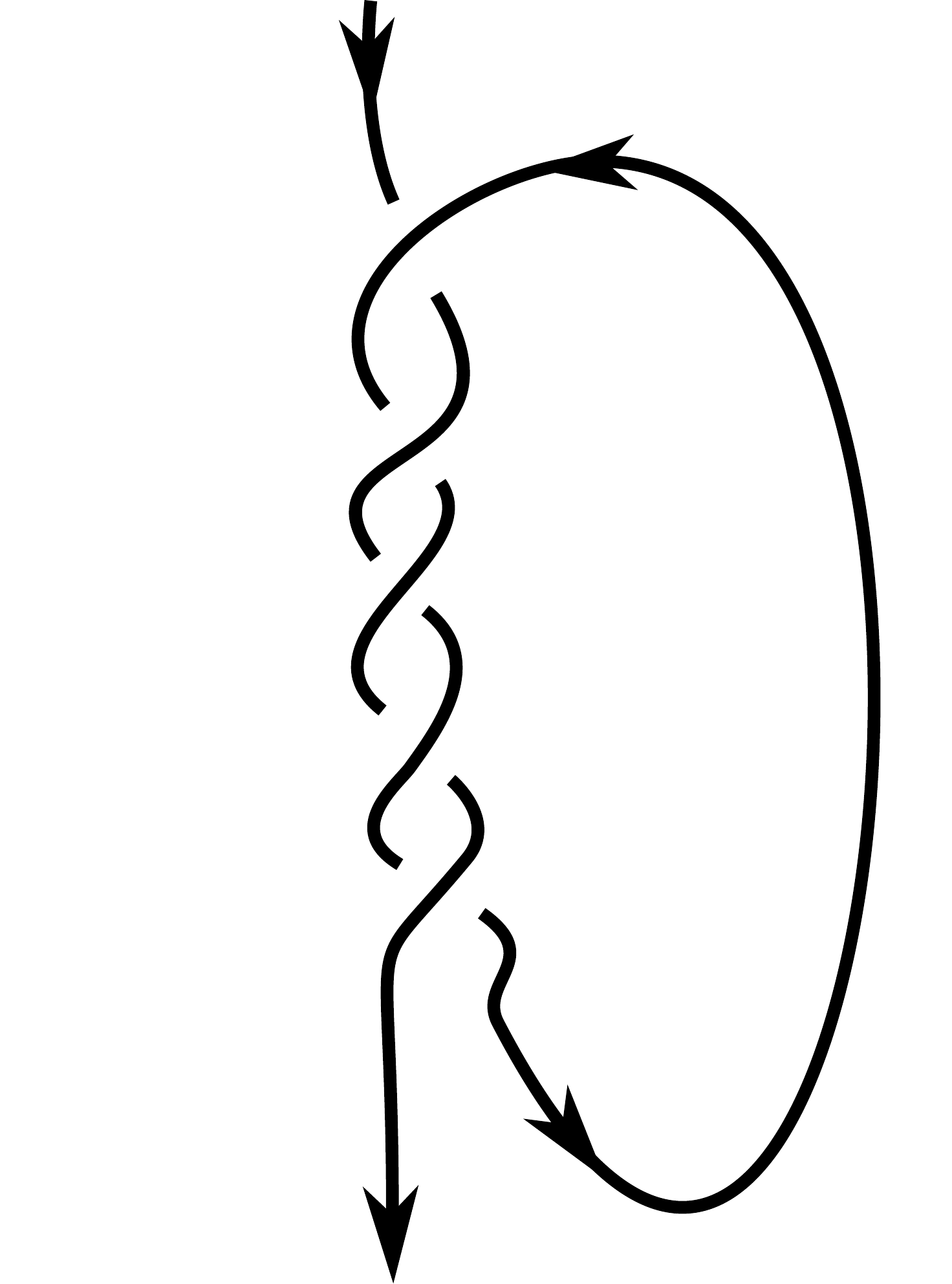%
   \caption{The Cinquefoil Knot.}
	\label{torus_2_5_w}
 \end{subfigure}%
 \begin{subfigure}[b]{0.4\textwidth}
 \centering
  \def\svgwidth{50mm}
    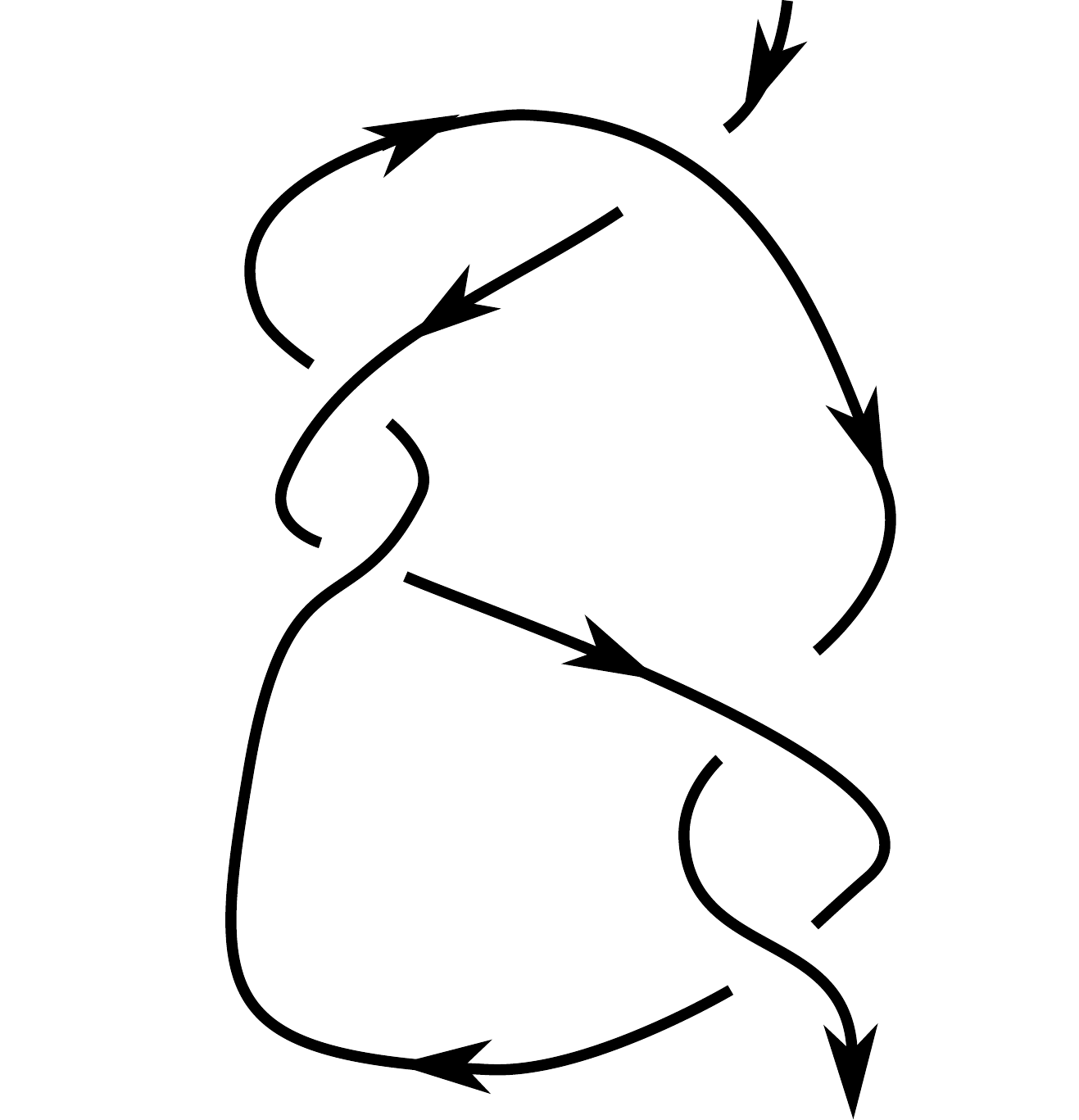%
   \caption{The three twist Knot.}
	\label{figure_of_nine_w}
 \end{subfigure}%
 \caption{Examples of state diagrams to compute the invariants.}
 \end{figure}

\bibliographystyle{abbrv}
\bibliography{unification_ADO_and_colored_Jones_of_knot}

\begin{thebibliography}{10}

\bibitem{akutsu1992invariants}
Y.~Akutsu, T.~Deguchi, and T.~Ohtsuki.
\newblock Invariants of colored links.
\newblock {\em J. Knot Theory Ramifications}, 1(2):161--184, 1992.

\bibitem{bar1996melvin}
D.~Bar-Natan and S.~Garoufalidis.
\newblock On the melvin--morton--rozansky conjecture.
\newblock {\em Inventiones mathematicae}, 125(1):103--133, 1996.

\bibitem{brown_dimofte_geer2020holonomic}
J.~Brown, T.~Dimofte, and N.~Geer.
\newblock The ado invariants are a q-holonomic family.
\newblock {\em arXiv preprint}, 2020.

\bibitem{costantino2014quantum}
F.~Costantino, N.~Geer, and B.~Patureau-Mirand.
\newblock Quantum invariants of 3-manifolds via link surgery presentations and
  non-semi-simple categories.
\newblock {\em Journal of Topology}, 7(4):1005--1053, 2014.

\bibitem{costantino2015relations}
F.~Costantino, N.~Geer, and B.~Patureau-Mirand.
\newblock Relations between witten--reshetikhin--turaev and nonsemisimple sl
  (2) 3--manifold invariants.
\newblock {\em Algebraic \& Geometric Topology}, 15(3):1363--1386, 2015.

\bibitem{garoufalidis2005analytic}
S.~Garoufalidis and T.~T. Le.
\newblock An analytic version of the melvin-morton-rozansky conjecture.
\newblock {\em arXiv preprint math/0503641}, 2005.

\bibitem{garoufalidis2005colored}
S.~Garoufalidis and T.~T. L{\^e}.
\newblock The colored jones function is q-holonomic.
\newblock {\em Geometry \& Topology}, 9(3):1253--1293, 2005.

\bibitem{geer2009modified}
N.~Geer, B.~Patureau-Mirand, and V.~Turaev.
\newblock Modified quantum dimensions and re-normalized link invariants.
\newblock {\em Compositio Mathematica}, 145(1):196--212, 2009.

\bibitem{gukov2019two}
S.~Gukov and C.~Manolescu.
\newblock A two-variable series for knot complements.
\newblock {\em arXiv preprint arXiv:1904.06057}, 2019.

\bibitem{habiro2004cyclotomic}
K.~Habiro.
\newblock Cyclotomic completions of polynomial rings.
\newblock {\em Publications of the Research Institute for Mathematical
  Sciences}, 40(4):1127--1146, 2004.

\bibitem{habiro2006bottom}
K.~Habiro.
\newblock Bottom tangles and universal invariants.
\newblock {\em Algebraic \& Geometric Topology}, 6(3):1113--1214, 2006.

\bibitem{habiro2007integral}
K.~Habiro.
\newblock An integral form of the quantized enveloping algebra of sl2 and its
  completions.
\newblock {\em Journal of Pure and Applied Algebra}, 211(1):265--292, 2007.

\bibitem{lawrence1988universal}
R.~Lawrence.
\newblock A universal link invariant using quantum groups.
\newblock Technical report, PRE-31446, 1988.

\bibitem{lawrence1990universal}
R.~Lawrence.
\newblock A universal link invariant.
\newblock In {\em The interface of mathematics and particle physics}. 1990.

\bibitem{murakami2008colored}
J.~Murakami.
\newblock Colored alexander invariants and cone-manifolds.
\newblock {\em Osaka Journal of Mathematics}, 45(2):541--564, 2008.

\bibitem{ohtsuki2002quantum}
T.~Ohtsuki.
\newblock {\em Quantum Invariants: A Study of Knots, 3-manifolds, and Their
  Sets}.
\newblock K \& E series on knots and everything. World Scientific, 2002.

\bibitem{park2020large}
S.~Park.
\newblock Large color $r$-matrix for knot complements and strange identities,
  2020.

\bibitem{rozansky1998universalr}
L.~Rozansky.
\newblock The universal r-matrix, burau representation, and the melvin--morton
  expansion of the colored jones polynomial.
\newblock {\em Advances in Mathematics}, 134(1):1--31, 1998.

\end{thebibliography}
\address
\end{document}